\newtheorem{thm}{Theorem}[section]
\newtheorem{cor}[thm]{Corollary}
\newtheorem{prop}[thm]{Proposition}
\newtheorem{lem}[thm]{Lemma}
\theoremstyle{definition}
\newtheorem{defn}[thm]{Definition}
\newtheorem{exmp}[thm]{Example}
\theoremstyle{remark}
\newtheorem{rem}[thm]{Remark}
\let\c@equation\c@thm
\numberwithin{equation}{section}
\title{Free Semigroups of Large Critical Exponent}
\author{Aleksander Skenderi}
\address{Department of Mathematics, University of Wisconsin-Madison}
\email{askenderi@wisc.edu}
\begin{document}
\begin{abstract}
For a convergence group equipped with an expanding coarse-cocycle, we construct finitely generated free subsemigroups, which we call $\textit{Bishop--Jones}$ $\textit{semigroups}$, of critical exponent arbitrarily close to but strictly less than the critical exponent of the ambient group. As an application, 
we show that for any non-elementary transverse subgroup $\Gamma$ of a semisimple Lie group $G$, there exist finitely generated free Anosov subsemigroups in the sense of Kassel--Potrie of critical exponent arbitrarily close to but strictly less than that of the ambient transverse group. Furthermore, we show that these semigroups admit quasi-isometric embeddings into the symmetric space $X$ of $G$ with certain additional coarse-geometric properties.
\end{abstract}
\maketitle
\section{Introduction}
An important object in the study of discrete subgroups of Lie groups and discrete subgroups of isometries of Gromov hyperbolic spaces is the $\textit{critical exponent}$. For concreteness, let $\Gamma \subset \mathrm{SO}^{\circ}(n,1) \cong \mathrm{Isom}^{+}(\mathbf{H}_{\mathbb{R}}^{n})$ be a discrete subgroup of isometries of the real hyperbolic $n$-space and fix a basepoint $x \in \mathbf{H}_{\mathbb{R}}^{n}$. The $\textit{critical exponent}$ of $\Gamma$ is the abscissa of convergence of the $\textit{Poincar\'e series}$
\begin{align*}
Q(s) = \sum_{\gamma \in \Gamma} e^{-s d_{\mathbf{H}_{\mathbb{R}}^{n}}(x,\gamma x)},
\end{align*}
i.e.,
\begin{align*}
\delta (\Gamma) = \inf \{ s > 0 : Q(s) < \infty \} \in [0, \infty].
\end{align*} Equivalently, the critical exponent is given by 
\begin{align*}
    \delta (\Gamma) = \limsup_{T \to \infty} \frac{1}{T} \log \# \{\gamma \in \Gamma : d_{\mathbf{H}_{\mathbb{R}}^{n}}(x,\gamma x) \leq T \},
\end{align*} and thus it measures the asymptotic growth rate of the $\Gamma$-orbits in $\mathbf{H}_{\mathbb{R}}^{n}$. We remark that the critical exponent is independent of the choice of basepoint $x \in \mathbf{H}_{\mathbb{R}}^{n}$, hence is well-defined. \par 
A classical theorem of Bishop--Jones in \cite{BJ} shows that for any non-elementary Kleinian group $\Gamma \subset \mathrm{SO}^{\circ}(3,1)$, the Hausdorff dimension of its conical limit set is equal to its critical exponent. Showing that the Hausdorff dimension is bounded above by the critical exponent is well-known and not particularly difficult. The great novelty in their proof was to construct a well-behaved tree in $\mathbf{H}_{\mathbb{R}}^{3}$, related to the group $\Gamma$, and whose set of accumulation points lands inside of the conical limit set of $\Gamma$. One uses properties of the tree to construct a probability measure supported on its set of accumulation points, and then uses Frostman's Lemma to obtain a lower bound on the Hausdorff dimension.
\par 
While this is a truly ingenious construction, the tree is not so easy to work with. Indeed, their construction is not $\Gamma$-equivariant, and hence does not play particularly well with the $\Gamma$-action on $\mathbf{H}_{\mathbb{R}}^{3}$. Motivated by the work of Bishop--Jones, we construct finitely generated free subsemigroups in a broad class of convergence groups. These semigroups, which we call $\textit{Bishop}$--$\textit{Jones semigroups}$, are large from the perspective of the critical exponent of the ambient group. Nevertheless, we do not prove an analog of Bishop--Jones' theorem, since in the general abstract setting we work in, there is no (at least to the author) natural way to measure the Hausdorff dimensions of (conical) limit sets.
\par 
To better illustrate our result, we describe two applications of our main result Theorem \ref{MainThm}. The first application of Theorem \ref{MainThm} pertains to a class of discrete subgroups of semisimple Lie groups known as $\textit{transverse groups}$. Transverse groups (called regular antipodal groups by Kapovich--Leeb--Porti \cite{KLP}) have been widely studied in recent years by many authors; see for instance \cite{CZZ1}, \cite{CZZ2}, \cite{KLP}, and \cite{KOW}. The class of transverse groups include both Anosov and relatively Anosov groups, as well as all discrete subgroups of rank one Lie groups. Given a non-elementary transverse subgroup $\Gamma$ of a connected semisimple real Lie group $G$ without compact factors and with finite center, we show that its critical exponent (with respect to an appropriate linear functional on the partial Cartan subspace) can be approximated by the critical exponents of finitely generated free subsemigroups, which are Anosov in the sense of Kassel--Potrie \cite{KP}. Delaying definitions until section \ref{BackgroundSection}, we prove the following.
\begin{thm} [Theorem \ref{TransverseCritExp}]
\label{TransverseCritExpIntro}    
Suppose $\Gamma$ is a non-elementary $P_{\theta}$-transverse group and $\phi \in \mathfrak{a}_{\theta}^{*}$ is such that $\phi(\kappa(\gamma_{n})) \rightarrow \infty$ for any sequence $\{\gamma_{n}\}$ of pairwise distinct elements of $\Gamma$. Then there exists a sequence $\{\Gamma_{n}\}_{n \geq 1}$ of free $P_{\theta}$-Anosov subsemigroups of $\Gamma$ so that 
\begin{itemize}
    \item[(1)] $\delta^{\phi}(\Gamma_{n}) < \delta^{\phi}(\Gamma)$ for all $n \geq 1$, and
    \item[(2)] $\lim_{n \to \infty} \delta^{\phi}(\Gamma_{n}) = \delta^{\phi}(\Gamma)$.
\end{itemize} 
\end{thm}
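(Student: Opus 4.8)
The plan is to place ourselves in the setting of Theorem~\ref{MainThm} by exhibiting $\Gamma$, acting on its flag limit set, as a non-elementary convergence group carrying an expanding coarse-cocycle built from $\phi$, and then to translate the output of that theorem back into the Lie-theoretic language of Anosov semigroups and the $\phi$-critical exponent.

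First, since $\Gamma$ is $P_\theta$-transverse it acts as a convergence group on its limit set $\Lambda_\theta(\Gamma) \subseteq \mathcal{F}_\theta = G/P_\theta$ (see \cite{KLP}), and this action is non-elementary because $\Gamma$ is non-elementary (so $\Lambda_\theta(\Gamma)$ is infinite and two-point stabilizers are virtually cyclic). Using the partial Busemann cocycle $\beta^\theta$ and the transversality of limit flags, one obtains, as in the Patterson--Sullivan theory for transverse groups \cite{CZZ1, CZZ2}, a coarse-cocycle
\[
c(\gamma,\xi) \;=\; \phi\bigl(\beta^\theta_\xi(\gamma^{-1}o, o)\bigr), \qquad \gamma \in \Gamma,\ \xi \in \Lambda_\theta(\Gamma),
\]
(with the usual normalization). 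The hypothesis that $\phi(\kappa(\gamma_n)) \to \infty$ along every sequence of pairwise distinct elements is precisely the assertion that $c$ is \emph{expanding}, because on the limit set the Busemann cocycle and the Cartan projection are uniformly coarsely comparable: the relevant Gromov-type products are bounded by transversality, so $c(\gamma,\cdot)$ and $\phi(\kappa(\gamma))$ differ by at most a uniform additive constant over $\Lambda_\theta(\Gamma)$. In particular, for every subset $S \subseteq \Gamma$ the Poincar\'e series $\sum_{\gamma \in S} e^{-s\,c(\gamma,\xi)}$ and $\sum_{\gamma \in S} e^{-s\,\phi(\kappa(\gamma))}$ have the same abscissa of convergence, so the critical exponent of $S$ for the coarse-cocycle $c$ coincides with $\delta^\phi(S)$.

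Now apply Theorem~\ref{MainThm} to the convergence group $\Gamma \curvearrowright \Lambda_\theta(\Gamma)$ with the expanding coarse-cocycle $c$: for each $n \geq 1$ it produces a finitely generated free subsemigroup $\Gamma_n \subseteq \Gamma$, a Bishop--Jones semigroup, whose critical exponent for $c$ is strictly less than that of $\Gamma$ and within $1/n$ of it. Translating via the previous paragraph, $\delta^\phi(\Gamma_n) < \delta^\phi(\Gamma)$ and $\delta^\phi(\Gamma) - \delta^\phi(\Gamma_n) < 1/n$, which is conclusions (1) and (2). It remains to see that each $\Gamma_n$ is $P_\theta$-Anosov in the sense of Kassel--Potrie. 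For this I would use that a Bishop--Jones semigroup is \emph{Schottky}: its free generators can be taken to be $P_\theta$-proximal (loxodromic elements of a transverse group are $P_\theta$-proximal), with pairwise-transverse attracting and repelling flags, satisfying a ping-pong condition with respect to disjoint open subsets of $\mathcal{F}_\theta$ --- all of this is built into the assembly of the semigroup along a conical-limit tree. A free semigroup equipped with such a ping-pong system is quasi-isometrically embedded and $P_\theta$-Anosov: the continuous $\Gamma_n$-equivariant limit map from the one-sided boundary of $\Gamma_n$ (a Cantor set of infinite positive words) into $\mathcal{F}_\theta$, obtained by nested intersection of the ping-pong sets, realizes the Anosov condition in the Kassel--Potrie formulation.

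The main obstacle I anticipate is the second paragraph: defining the coarse-cocycle on $\Lambda_\theta(\Gamma)$ associated to an arbitrary $\phi \in \mathfrak{a}_\theta^{*}$, checking that the escape hypothesis $\phi(\kappa(\gamma_n)) \to \infty$ upgrades to the full ``expanding'' axiom of Theorem~\ref{MainThm} (uniformly over the limit set and with the correct coarse-additivity), and verifying that the associated critical exponent is genuinely $\delta^\phi$. Once this dictionary is in place, the construction of $\{\Gamma_n\}$ is an immediate application of Theorem~\ref{MainThm}, and the Anosov (and quasi-isometric-embedding) claims reduce to recognizing Bishop--Jones semigroups as Schottky semigroups.
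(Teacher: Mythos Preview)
Your first two paragraphs are essentially the paper's argument: the paper invokes Proposition~\ref{ConvtoTransv} (which packages exactly the Busemann-cocycle-to-GPS-system dictionary you sketch) to obtain the expanding coarse-cocycle $\sigma_\phi$ with magnitude $\|\gamma\|_{\sigma_\phi} = \phi(\kappa(\gamma))$, so that $\delta^\phi = \delta_{\sigma_\phi}$ on every subset of $\Gamma$, and then applies Theorem~\ref{MainThm} to a sequence $\delta_n \nearrow \delta^\phi(\Gamma)$ to produce the Bishop--Jones semigroups $\Gamma_n$ satisfying (1) and (2). So that part matches.

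Where you diverge is the verification that each $\Gamma_n$ is $P_\theta$-Anosov. You propose a Schottky/ping-pong argument in $\mathcal{F}_\theta$ and then appeal to a general principle that Schottky semigroups are Anosov. This is plausible but circuitous, and as written it leaves a real gap: the existence of an equivariant boundary map from nested ping-pong sets does not by itself yield the quantitative singular-value inequality $\alpha(\kappa(\gamma)) \geq C|\gamma|_S - c$; you would still need to extract a uniform contraction rate. The paper instead uses a feature of Theorem~\ref{MainThm} you overlook: property~(2) there holds not just for the cocycle $\sigma_\phi$ used to \emph{build} the semigroup, but for \emph{every} expanding coarse-cocycle on $\Gamma$. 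Since $\Gamma$ is $P_\theta$-divergent, each simple root $\alpha \in \theta$ satisfies $\alpha(\kappa(\gamma_m)) \to \infty$ along escaping sequences, so by Proposition~\ref{ConvtoTransv} each $\sigma_\alpha$ is itself an expanding coarse-cocycle with magnitude $\alpha(\kappa(\cdot))$. Property~(2) of Theorem~\ref{MainThm} applied to $\sigma_\alpha$ then gives $\alpha(\kappa(\gamma)) \geq A_\alpha |\gamma|_{S_n} - a_\alpha$ directly, and minimizing over the finite set $\theta$ yields the Kassel--Potrie inequality in one line. This is both shorter and avoids the unproved Schottky-implies-Anosov step.
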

We also show that the orbit of a Bishop--Jones semigroup embeds into the symmetric space $X = G/K$ as a $(\mathcal{C},B)$-regular map in the sense of Kapovich--Leeb--Porti \cite{KLP} by using work of Dey--Kim--Oh \cite{DKO}. The precise statement of this result is Theorem \ref{CoarseTriangleInequality}.
\par
The second application is related to the classical critical exponent gap theorem of Corlette for discrete subgroups in Sp$(n,1)$ and $\mathrm{F}_{4}^{-20}$. Denote by $\mathbf{H}_{\mathbb{H}}^{n}$ the $n$-dimensional quaternionic hyperbolic space and $\mathbf{H}_{\mathbb{O}}^{2}$ the Cayley hyperbolic plane (also known as the octonionic projective plane). These are connected, contractible, negatively curved Riemannian manifolds with normalized sectional curvatures between $-4$ and $-1$. The simple Lie groups of real rank one Sp$(n,1)$ and $\mathrm{F}_{4}^{-20}$ are the orientation-preserving isometry groups of $\mathbf{H}_{\mathbb{H}}^{n}$ and $\mathbf{H}_{\mathbb{O}}^{2}$, respectively. For a discrete subgroup $\Gamma$ in either of these isometry groups, the critical exponent $\delta(\Gamma)$ is defined analogously as in the real hyperbolic case. Corlette's renowned gap theorem states the following. \newline
\begin{thm} [Corlette, Theorem 4.4 of \cite{Cor}] ~\
\label{CorletteGapThm}
\begin{itemize}
    \item[(1)] If $\Gamma \subset \mathrm{Sp}(n,1)$, $n \geq 2$, is a discrete subgroup, then $\delta(\Gamma) = 4n+2$ or $\delta(\Gamma) \leq 4n$. Moreover, $\delta(\Gamma) = 4n+2$ if and only if $\Gamma$ is a lattice.
    \item[(2)] If $\Gamma \subset \mathrm{F}_{4}^{-20}$ is a discrete subgroup, then $\delta(\Gamma) = 22$ or $\delta(\Gamma) \leq 16$. Moreover, $\delta(\Gamma) = 22$ if and only if $\Gamma$ is a lattice. 
\end{itemize}
\end{thm}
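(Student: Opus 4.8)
The plan is to translate the theorem into a spectral gap statement and then establish that gap. Write $X = \mathbf{H}_{\mathbb{H}}^{n}$ (resp.\ $\mathbf{H}_{\mathbb{O}}^{2}$), $G = \mathrm{Sp}(n,1)$ (resp.\ $\mathrm{F}_{4}^{-20}$), and let $2\rho$ be the volume entropy of $X$, so $2\rho = 4n+2$ (resp.\ $22$) and $\rho = 2n+1$ (resp.\ $11$). The ingredients are: \textbf{(A)} a lattice in $G$ attains the maximal critical exponent, $\delta(\Gamma) = 2\rho$ (orbit growth equals volume growth in finite covolume, and $\delta \le 2\rho$ always); \textbf{(B)} the rank-one version of Sullivan's formula relating the critical exponent to the bottom $\lambda_{0}(\Gamma \backslash X)$ of the $L^{2}$-spectrum of the positive Laplacian, namely for every discrete $\Gamma$
\begin{align*}
\lambda_{0}(\Gamma \backslash X) =
\begin{cases}
\delta(\Gamma)\bigl(2\rho - \delta(\Gamma)\bigr), & \text{if } \delta(\Gamma) \geq \rho,\\
\rho^{2}, & \text{if } \delta(\Gamma) \leq \rho;
\end{cases}
\end{align*}
and \textbf{(C)} the spectral gap: there is $c_{X} > 0$ with $c_{\mathbf{H}_{\mathbb{H}}^{n}} = 8n$ and $c_{\mathbf{H}_{\mathbb{O}}^{2}} = 96$ such that every discrete $\Gamma \subset G$ that is not a lattice satisfies $\lambda_{0}(\Gamma \backslash X) \geq c_{X}$. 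Granting (A)--(C), the theorem is bookkeeping: on $[\rho, 2\rho]$ the map $\delta \mapsto \delta(2\rho - \delta)$ is strictly decreasing, $4n(4n+2-4n) = 8n$, $16(22-16) = 96$, and $8n < (2n+1)^{2}$, $96 < 11^{2}$; so a non-lattice $\Gamma$ has either $\delta(\Gamma) < \rho$ or $\delta(\Gamma)\bigl(2\rho-\delta(\Gamma)\bigr) \geq c_{X}$, and in both cases $\delta(\Gamma) \leq 4n$ (resp.\ $\leq 16$). Contrapositively, $\delta(\Gamma) > 4n$ (resp.\ $>16$) forces $\Gamma$ to be a lattice, hence $\delta(\Gamma) = 2\rho = 4n+2$ (resp.\ $22$) by (A); and the ``moreover'' is this dichotomy together with (A).

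Everything thus rests on (C). I would prove it by decomposing the right regular representation on $L^{2}(\Gamma \backslash G)$ into a direct integral of irreducibles. Since $\Gamma$ is not a lattice, the constants are not square-integrable on $\Gamma \backslash G$, so the trivial representation does not occur; the Laplacian on $L^{2}(\Gamma \backslash X) = L^{2}(\Gamma \backslash G)^{K}$ acts by the Casimir, so $\lambda_{0}(\Gamma \backslash X)$ is at least the infimum of Casimir eigenvalues over all \emph{non-trivial} spherical unitary representations of $G$. For $\mathrm{Sp}(n,1)$ with $n \geq 2$ and for $\mathrm{F}_{4}^{-20}$, Kostant's determination of the spherical unitary dual shows the complementary series is truncated --- it is parametrized by $\delta \in (\delta^{*}_{X}, 2\rho - \delta^{*}_{X})$ with $\delta^{*}_{X} = 4n$ (resp.\ $16$), which is precisely the isolation of the trivial representation responsible for property (T) --- while the tempered spherical representations have Casimir $\geq \rho^{2}$. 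Hence every non-trivial spherical unitary representation has Casimir $\geq \delta^{*}_{X}(2\rho - \delta^{*}_{X}) = c_{X}$, which is (C). (Alternatively one argues geometrically: a Bochner--Weitzenb\"ock identity on $\Gamma \backslash X$, using the parallel form defining the quaternionic-K\"ahler, resp.\ octonionic, structure, forces the curvature term on the relevant bundle to be positive with a definite gap, so an $L^{2}$-eigenfunction of eigenvalue in $(0, c_{X})$ produces, via its differential, an $L^{2}$ form contradicting the identity unless it is parallel, which again forces finite covolume.)

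The main obstacle is exactly (C) with the sharp constants: proving the truncation of the complementary series with the correct endpoints $\delta^{*}_{X} = 4n, 16$ (equivalently, the sharp constant in the Bochner estimate, which is where the special algebra of the quaternionic/octonionic curvature tensor enters) and with it the implication $\lambda_{0} = 0 \Leftrightarrow$ finite covolume. It is also where the hypotheses matter: the argument genuinely needs $n \geq 2$, since for $\mathrm{Sp}(1,1) \cong \mathrm{SO}^{\circ}(4,1)$ the quaternionic structure degenerates to constant negative curvature, the complementary series fills all of $(0, 2\rho)$, property (T) fails, and no gap exists --- consistent with the theorem being stated only for $n \geq 2$ and for these two Lie groups.
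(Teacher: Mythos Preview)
The paper does not prove this theorem at all: Theorem~\ref{CorletteGapThm} is simply quoted from Corlette's original paper \cite{Cor} as background and motivation for Theorem~\ref{NoGapRank1}. There is therefore no ``paper's own proof'' to compare your proposal against.

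That said, your outline is a faithful sketch of how the result is actually established in \cite{Cor}. Corlette proves ingredient (B) (the rank-one Sullivan formula) in his Theorem~4.2, and then obtains (C) via the geometric route you describe parenthetically: a Bochner--Weitzenb\"ock identity exploiting the parallel $4$-form (resp.\ $8$-form) of the quaternionic-K\"ahler (resp.\ octonionic) structure forces a definite positive lower bound on the curvature term, yielding the sharp spectral gap constants $8n$ and $96$. Your primary route through Kostant's classification of the spherical unitary dual is a legitimate alternative that arrives at the same constants, but it is not the argument Corlette gives. Your identification of the main obstacle --- the sharp truncation of the complementary series, equivalently the sharp Bochner constant, and the necessity of $n \geq 2$ --- is accurate.
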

As a consequence of our main result, Theorem \ref{MainThm}, we show that there is no such gap phenomenon for discrete $\textit{subsemigroups}$ in Sp$(n,1)$ and $\mathrm{F}_{4}^{-20}$. More precisely, we prove the following. \\
\begin{thm} [Theorem \ref{NoGapRank1'}] ~\
\label{NoGapRank1}
\begin{itemize}
    \item[(1)] Let $\Gamma \subset \mathrm{Sp}(n,1)$, $n \geq 2$, be a lattice. Then there exists a sequence $\{\Gamma_{m}\}_{m \geq 1}$ of finitely generated free subsemigroups of $\Gamma$ so that $\delta(\Gamma_{m}) < \delta(\Gamma) = 4n+2$ for all $m \geq 1$ and $\lim_{m \to \infty} \delta(\Gamma_{m}) = 4n+2$.
    \item[(2)] Let $\Gamma \subset \mathrm{F}_{4}^{-20}$ be a lattice. Then there exists a sequence $\{\Gamma_{m}\}_{m \geq 1}$ of finitely generated free subsemigroups of $\Gamma$ so that $\delta(\Gamma_{m}) < \delta(\Gamma) = 22$ for all $m \geq 1$ and $\lim_{m \to \infty} \delta(\Gamma_{m}) = 22$.
\end{itemize}
\end{thm}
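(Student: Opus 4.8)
The strategy is to obtain Theorem~\ref{NoGapRank1} as an immediate corollary of Theorem~\ref{TransverseCritExpIntro} (equivalently, of the main construction Theorem~\ref{MainThm}) applied in the rank-one setting; the only real work is to translate the statement about the functional-weighted critical exponent $\delta^{\phi}$ into one about the classical Riemannian critical exponent $\delta$.

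First I would verify that a lattice $\Gamma$ in $G = \mathrm{Sp}(n,1)$ or $G = \mathrm{F}_{4}^{-20}$ satisfies the hypotheses of Theorem~\ref{TransverseCritExpIntro}. Since $G$ has real rank one, the full flag variety $G/P$ is the visual boundary sphere $\partial_{\infty}X$ of the symmetric space $X = G/K$, the partial Cartan subspace $\mathfrak{a}_{\theta} = \mathfrak{a}$ is one-dimensional, and every discrete subgroup of $G$ is automatically $P$-regular and $P$-antipodal (in rank one two distinct points of the sphere are always in general position), so $\Gamma$ is $P_{\theta}$-transverse. A lattice is non-elementary because its limit set is all of $\partial_{\infty}X$. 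Finally, since $\Gamma$ is discrete and the $G$-action on $X$ is proper, one has $\kappa(\gamma_{n}) \to \infty$ in the closed positive Weyl chamber $\mathfrak{a}^{+}$ along any sequence of pairwise distinct $\gamma_{n} \in \Gamma$; taking $\phi \in \mathfrak{a}_{\theta}^{*}$ to be a generator positive on $\mathfrak{a}^{+}$ then gives $\phi(\kappa(\gamma_{n})) \to \infty$, so Theorem~\ref{TransverseCritExpIntro} applies.

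Next I would record the comparison of critical exponents. Because $\mathfrak{a}$ is one-dimensional and $\kappa(g)$ lies on the ray $\mathfrak{a}^{+}$, there is a constant $c = c(G) > 0$, independent of the subsemigroup, with $\phi(\kappa(g)) = c\, d_{X}(x, gx)$ for all $g \in G$, where $x = eK$. Hence for every subsemigroup $\Gamma' \subseteq \Gamma$ the $\phi$-Poincar\'e series converges exactly when $sc > \delta(\Gamma')$, so $\delta^{\phi}(\Gamma') = \delta(\Gamma')/c$; in particular strict inequalities and limits pass back and forth between $\delta^{\phi}$ and $\delta$ without change. Applying Theorem~\ref{TransverseCritExpIntro} to $\Gamma$ and $\phi$ produces a sequence $\{\Gamma_{m}\}_{m \ge 1}$ of finitely generated free $P_{\theta}$-Anosov subsemigroups of $\Gamma$ with $\delta^{\phi}(\Gamma_{m}) < \delta^{\phi}(\Gamma)$ and $\delta^{\phi}(\Gamma_{m}) \to \delta^{\phi}(\Gamma)$; dividing by $c$ yields $\delta(\Gamma_{m}) < \delta(\Gamma)$ and $\delta(\Gamma_{m}) \to \delta(\Gamma)$. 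Since $\Gamma$ is a lattice, Corlette's Theorem~\ref{CorletteGapThm} (or, more elementarily, the equality of a lattice's critical exponent with the volume entropy of $X$) gives $\delta(\Gamma) = 4n+2$ in the $\mathrm{Sp}(n,1)$ case and $\delta(\Gamma) = 22$ in the $\mathrm{F}_{4}^{-20}$ case, which is exactly the asserted value, completing both parts.

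I do not expect a genuine obstacle beyond Theorem~\ref{TransverseCritExpIntro} itself; the delicate points are purely organizational — spelling out the rank-one dictionary ($P_{\theta} = P$, $\mathfrak{a}_{\theta} = \mathfrak{a}$, limit set $=$ boundary sphere) and the scalar identity $\delta^{\phi} = \delta/c$ that lets the approximation for $\delta^{\phi}$ transfer verbatim to the Riemannian critical exponent. The conceptual payoff is then immediate: for $m$ large, $\Gamma_{m}$ is a finitely generated free subsemigroup with $\delta(\Gamma_{m})$ inside the interval $(4n, 4n+2)$ (respectively $(16,22)$) that Corlette's gap theorem forbids for discrete \emph{subgroups}, so no analogue of the gap holds for subsemigroups.
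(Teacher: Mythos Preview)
Your proposal is correct. In fact the paper explicitly remarks, just before its proof of Theorem~\ref{NoGapRank1'}, that ``since the class of transverse groups contains all discrete subgroups of rank one Lie groups, this result follows immediately from the previous result'' (i.e.\ from Theorem~\ref{TransverseCritExp}), which is precisely the route you take. The paper nonetheless chooses to present a more self-contained argument that bypasses the Lie-theoretic machinery entirely: it views $\mathbf{H}_{\mathbb{H}}^{n}$ (resp.\ $\mathbf{H}_{\mathbb{O}}^{2}$) as a proper geodesic Gromov hyperbolic space, invokes Example~\ref{GromovHypEx} to get the Busemann GPS system $(\beta,\beta,G)$ with magnitude $\|\gamma\|_{\beta}=d(o,\gamma o)$, and then applies parts (1) and (4) of Theorem~\ref{MainThm} directly. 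Your approach buys nothing extra here but is perfectly valid; the paper's approach avoids having to set up $\mathfrak{a}_{\theta}$, the Iwasawa cocycle, and the scalar comparison $\delta^{\phi}=\delta/c$, and makes clear that only the negatively curved geometry (not the Lie structure) is being used. One small slip: to pass from $\delta^{\phi}$ back to $\delta$ you multiply by $c$, not divide, but since the relation is linear this does not affect the conclusion.
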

We mention an interesting group-theoretic consequence of Theorem \ref{NoGapRank1}. Consider the case when $\Gamma \subset \mathrm{Sp}(n,1)$, $n \geq 2$, is a lattice (an entirely analogous discussion holds when $\Gamma \subset \mathrm{F}_{4}^{-20}$ is a lattice). Then the above theorem tells us that we can find a finitely generated free subsemigroup $\mathcal{T}$, with finite generating set $S$, so that $4n < \delta(\mathcal{T}) < 4n+2$. Let $\Gamma' := \langle S \rangle$ be the group generated by $S$. Then $\delta(\Gamma') \geq \delta (\mathcal{T}) > 4n$, so Corlette's gap theorem implies that $\delta(\Gamma') = 4n+2$ and $\Gamma'$ is a lattice with finite index in $\Gamma$. In particular, there is some relation among the elements of $S$ when one is allowed to multiply by negative powers of these elements, even though there is no relation when multiplying together exclusively positive powers of these elements. 
\begin{rem}
We remark that the analog of Corlette's gap theorem is known to fail for discrete subgroups of $\mathrm{SO}(n,1) \cong \mathrm{Isom}(\mathbf{H}_{\mathbb{R}}^{n})$ (see Sullivan \cite{Sul1} for examples when $n=3$ and Kapovich--Kontorovich \cite{KK} for examples in all dimensions) and for discrete subgroups of $\mathrm{SU}(m,1) \cong \mathrm{Isom}(\mathbf{H}_{\mathbb{C}}^{m})$ when $m = 2,3$ by work of Dey--Liu \cite{DL}.
\end{rem}
\subsection*{Similarities and differences in the work of Yang}
After I posted a first version of this paper to the arXiv, Wenyuan Yang informed me that Theorem \ref{NoGapRank1} also follows from his earlier work ``Statistically convex-cocompact actions of groups with contracting elements'' \cite{Y}. Indeed, Theorem A of Yang's paper shows that if a group $G$ admits a proper action on a geodesic metric space with a contracting element, then there exists a sequence of free subsemigroups of $G$ with critical exponent arbitrarily close to, but strictly less than, that of $G$. Thus his Theorem A also provides a proof of Theorem \ref{NoGapRank1}. Nevertheless, our general framework used to prove our main result Theorem \ref{MainThm} is different from Yang's and thus we obtain a new proof of Theorem \ref{NoGapRank1}. I will now discuss in more detail the differences between Yang's framework and the framework of this paper of expanding coarse-cocyles. 
\par 
Both our results apply to many notable classes of groups, including hyperbolic and relatively hyperbolic groups. Moreover, Yang's work applies for instance to groups acting properly and cocompactly on a CAT(0) space with rank-one elements, whereas the present work applies to groups acting properly on CAT(0) visibility spaces (that is, the action need not be cocompact; see section 1.2.6 of \cite{BCZZ1} for further details). Yang's work also applies to certain small cancellation groups as well as to mapping class groups of closed hyperbolic surfaces. One notable difference in the results of our papers is that Yang's general framework requires the group to act on a properly and by isometries on a \emph{geodesic} metric space, whereas in our abstract setting the groups we consider come equipped with a convergence action on a compact space with a coarse-cocycle.
\par 
A notable difference in applications is that our setting applies to transverse subgroups of higher rank semisimple Lie groups, which need not be hyperbolic nor even relatively hyperbolic (see \cite{CZZ2} for an intimate relation between transverse groups and so-called projectively visible subgroups of the automorphism group of a properly convex domain in real projective space). Due to the presence of flats in higher-rank symmetric spaces, it seems unlikely that one could build a suitable metric for which a transverse group acts by isometries on the symmetric space with a contracting element.

\subsection*{Outline of the Paper}
In section \ref{Prelims}, we recall the relevant concepts from the work \cite{BCZZ1} of Blayac--Canary--Zhu--Zimmer, which is indispensable to this paper. Section \ref{ConstructionSection} is the heart of the paper, where we prove Theorem \ref{MainThm} and develop numerous useful properties of Bishop--Jones semigroups. In section \ref{BackgroundSection}, we recall the relevant notions from Lie theory, transverse groups, and Anosov semigroups for our applications. In section \ref{Applications}, we prove Theorems \ref{TransverseCritExpIntro} and \ref{NoGapRank1}. Lastly, we discuss the relevance of Bishop--Jones semigroups to studying the coarse geometry of the symmetric space in section \ref{CoarseGeom}.
\subsection*{Acknowledgements}
I would like to thank my advisors Andrew Zimmer and Sebastian Hurtado-Salazar for all their help. I am very grateful to Andrew for suggesting the original question that led to this work, for his generosity with his ideas and time during countless discussions, and for his comments and corrections on several drafts of this work. I am very grateful to Sebastian for many helpful discussions and for his interest in my work. I am indebted to both Andrew and Sebastian for their kindness and support, particularly during a difficult time in my personal life earlier in my graduate studies. \par 
I would also like to thank Feng Zhu and Fernando Al Assal for several nice conversations related to this work, Dongryul Kim for helpful discussions regarding his joint work \cite{DKO} with Subhadip Dey and Hee Oh, Subhadip Dey for helpful comments on an earlier version of this paper, and Wenyuan Yang for informing me of his work \cite{Y}. Finally, I thank the anonymous referees for their thorough reading and helpful comments on an earlier version of this paper.
\par 
This material is based upon work supported by the National Science Foundation under Grant No. DMS-2037851 and Grant No. DMS-2230900. 
\section{Preliminaries}
\label{Prelims}
\subsection{Convergence Groups and Expanding Coarse-Cocycles}
When $M$ is a compact metrizable space, a countable subgroup $\Gamma \subset \mathrm{Homeo}(M)$ is called a (discrete) $\textit{convergence group}$ if for every sequence of pairwise distinct elements $\{\gamma_{n}\} \subset \Gamma$, there exist points $a,b \in M$ and a subsequence $\{\gamma_{n_{k}}\}$ so that $\gamma_{n_{k}} \big|_{M \smallsetminus \{b\}}$ converges locally uniformly to $a$ as $k \to \infty$. Blayac--Canary--Zhu--Zimmer recently developed a theory of Patterson--Sullivan measures associated to expanding coarse cocycles of convergence groups and used it to establish counting, mixing, and equidistribution results for a broad class of groups, which, in particular, includes relatively Anosov groups; see \cite{BCZZ1} and \cite{BCZZ2}. In this section, we describe the relevant concepts and results from their work that we will use later in this paper.
\subsubsection{The Basics of Convergence Groups} Given a convergence group $\Gamma \subset \mathrm{Homeo}(M)$, we define the following:
\begin{itemize}
    \item[(1)] The $\textit{limit set}$ $\Lambda(\Gamma)$ is the set of points $x \in M$ for which there exist $y \in M$ and a sequence $\{\gamma_{n}\}$ in $\Gamma$ such that $\gamma_{n}\big|_{M \smallsetminus \{y\}}$ converges locally uniformly to $x$.
    \item[(2)] A point $x \in \Lambda(\Gamma)$ is called a $\textit{conical limit point}$ if there exist distinct points $a,b \in M$ and a sequence $\{\gamma_{n}\}$ in $\Gamma$ so that $\lim_{n \to \infty} \gamma_{n}x = a$ and $\lim_{n \to \infty} \gamma_{n}y = b$ for all $y \in M \smallsetminus \{x\}$. 
\end{itemize} A convergence group $\Gamma$ is said to be $\textit{non-elementary}$ if its limit set $\Lambda(\Gamma)$ contains at least three points. In this case, $\Lambda(\Gamma)$ is the smallest $\Gamma$-invariant closed subset of $M$ (see Theorem 2S of \cite{T}). For the remainder of this paper, we will only consider non-elementary convergence groups.
\par 
In Theorem 2B of \cite{T}, Tukia shows that the elements in a convergence group $\Gamma \subset \mathrm{Homeo}(M)$ can be classified as follows: every element $\gamma \in \Gamma$ is either
\begin{itemize}
    \item[(1)] $\textit{loxodromic}$, meaning that it has two fixed points $\gamma^{+}$ and $\gamma^{-}$ in the limit set $\Lambda(\Gamma) \subset M$ and $(\gamma^{\pm 1})^{n} \big|_{M \smallsetminus \{\gamma^{\mp}\}}$ converges locally uniformly to $\gamma^{\pm}$,
    \item[(2)] $\textit{parabolic}$, meaning that it has one fixed point $p \in \Lambda(\Gamma)$ and $(\gamma^{\pm 1})^{n} \big|_{M \smallsetminus \{p\}}$ converges locally uniformly to $p$, or
    \item[(3)] $\textit{elliptic}$, meaning it has finite order. 
\end{itemize}
In \cite{BCZZ1} it was observed that the set $\Gamma \sqcup M$ has a unique topology which makes it a compact metrizable space on which the natural action of $\Gamma$ is a convergence group action. Any metric $d$ that induces this topology is called a $\textit{compatible metric}$. Given any $x \in \Gamma \sqcup M$, we write $B_{\epsilon}(x)$ for the open ball in $\Gamma \sqcup M$ centered at $x$ of radius $\epsilon$ with respect to the metric $d$. The following result, phrased more generally than in \cite{BCZZ1}, in fact follows from their original proof. 
\begin{lem} [Part (3) of Proposition 2.3 of \cite{BCZZ1}]
\label{TreeLem0}
Let $\Gamma \subset \mathrm{Homeo}(M)$ be a convergence group. For any compatible metric $d$ and any $\epsilon > 0$, there exists a finite set $F \subset \Gamma$ such that 
\begin{align*}
    \gamma \big( (\Gamma \sqcup M) \smallsetminus B_{\epsilon}(\gamma^{-1}) \big) \subset B_{\epsilon} (\gamma)
\end{align*} for every $\gamma \in \Gamma \smallsetminus F$.
\end{lem}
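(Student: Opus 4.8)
The plan is to argue by contradiction, using the compactness of $\Gamma \sqcup M$ and the fact (recorded above) that $\Gamma$ acts as a convergence group on $\Gamma \sqcup M$. Suppose the conclusion fails for some compatible metric $d$ and some $\epsilon > 0$. Then no finite $F \subset \Gamma$ has the required property, so we can choose a sequence $\{\gamma_n\}$ of pairwise distinct elements of $\Gamma$ and points $x_n \in \Gamma \sqcup M$ with
\[
d(x_n, \gamma_n^{-1}) \geq \epsilon \qquad \text{but} \qquad d(\gamma_n x_n, \gamma_n) \geq \epsilon
\]
for every $n$. The goal is to derive a contradiction from this.

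Since the $\gamma_n$ are pairwise distinct, after passing to a subsequence the convergence group property provides points $a,b$ with $\gamma_n\big|_{(\Gamma \sqcup M) \setminus \{b\}}$ converging locally uniformly to $a$; by the standard structure of convergence actions one has $a,b \in M$ and, moreover, $\gamma_n \to a$ and $\gamma_n^{-1} \to b$ as points of $\Gamma \sqcup M$ (the latter because $\Gamma$ is an open discrete subset of $\Gamma \sqcup M$, so limits of pairwise distinct group elements can only land in $M$, and one evaluates the locally uniform convergence, and its inverse, at the identity). Because $\gamma_n^{-1} \to b$, for all large $n$ we have $d(\gamma_n^{-1}, b) < \epsilon/2$ and hence $B_{\epsilon/2}(b) \subset B_\epsilon(\gamma_n^{-1})$; together with $d(x_n, \gamma_n^{-1}) \geq \epsilon$ this forces $d(x_n, b) \geq \epsilon/2$ for all large $n$. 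So the $x_n$ eventually lie in the compact set $K := \{\, y \in \Gamma \sqcup M : d(y,b) \geq \epsilon/2 \,\}$, and $b \notin K$.

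On the compact set $K \subset (\Gamma \sqcup M)\setminus\{b\}$ the local uniform convergence is genuine uniform convergence, so $\sup_{y \in K} d(\gamma_n y, a) \to 0$; since $x_n \in K$ for all large $n$, this gives $d(\gamma_n x_n, a) \to 0$. Combining with $d(\gamma_n, a) \to 0$ (which holds since $\gamma_n \to a$) and the triangle inequality yields $d(\gamma_n x_n, \gamma_n) \to 0$, contradicting $d(\gamma_n x_n, \gamma_n) \geq \epsilon$. This contradiction establishes the lemma.

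This is really just a careful repackaging of the north--south dynamics of a single convergent subsequence of $\Gamma$, and I do not expect a serious obstacle. The only points needing a little care are the bookkeeping that keeps the ``bad'' points $x_n$ in a fixed compact set away from the repelling point $b$ (so that the merely local uniform convergence can be applied), and checking that the attracting and repelling limits $a,b$ actually lie in $M$ rather than in the $\Gamma$-part of $\Gamma \sqcup M$ --- both of which are routine once one unwinds the definition of the topology on $\Gamma \sqcup M$.
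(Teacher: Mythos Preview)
Your proof is correct and is the standard north--south dynamics argument. Note that the paper does not actually supply its own proof of this lemma: it cites Part~(3) of Proposition~2.3 of \cite{BCZZ1} and simply remarks that the slightly more general formulation ``in fact follows from their original proof.'' Your contradiction argument---extracting a subsequence with attracting/repelling points $a,b\in M$, trapping the offending points $x_n$ in a compact set away from $b$, and then invoking uniform convergence on that compact set---is exactly the kind of argument one expects the cited proof to run, and the paper's own use of Proposition~2.3 of \cite{BCZZ1} elsewhere (e.g.\ in the proof of Lemma~\ref{ConvGpLem}) confirms that this is the intended mechanism.
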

We will need the following convergence group analog of a well-known result of Abels$-$Margulis$-$Soifer, which, in particular, states that elements in a strongly irreducible linear group are uniformly close to proximal elements (see \cite{AMS}). We actually state a slightly strengthened version of Lemma 2.4 in \cite{BCZZ1}, but its proof is essentially exactly the same. 
\begin{lem} [Lemma 2.4 of \cite{BCZZ1}]
\label{ConvGpAMS}
Suppose $\Gamma \subset \mathrm{Homeo}(M)$ is a convergence group and $d$ is a compatible metric on $\Gamma \sqcup M$. Then there exists $\epsilon > 0$ and a finite set $F \subset \Gamma$ with the following property: for any $\gamma \in \Gamma$, there is some $f \in F$ so that $\gamma f$ is loxodromic and 
\begin{align*}
    \min \{d((\gamma f)^{+}, (\gamma f)^{-}), d(\gamma f, (\gamma f)^{-}), d((\gamma f)^{+}, (\gamma f)^{-1}), d(\gamma f, (\gamma f)^{-1}) \} > \epsilon.
\end{align*}
\end{lem}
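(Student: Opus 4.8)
The goal is to upgrade Lemma 2.4 of \cite{BCZZ1} so that the element $f$ can be chosen from a \emph{fixed} finite set $F$, and so that \emph{all four} of the listed distances — not merely, say, $d((\gamma f)^+,(\gamma f)^-)$ — exceed a uniform $\epsilon$. The plan is to reduce the strengthened statement to the original one, by combining it with the basic dynamics of convergence groups.

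First I would recall what the original Lemma 2.4 gives: a single $\epsilon_0 > 0$ and, for each $\gamma$, \emph{some} loxodromic conjugate $\gamma g$ (with $g$ ranging over a fixed finite $F_0$) for which the attracting and repelling fixed points $(\gamma g)^{\pm}$ are $\epsilon_0$-separated, and $\gamma g$ is $\epsilon_0$-far from $(\gamma g)^-$ in $\Gamma \sqcup M$. So the new content is: (i) making $\gamma g$ far from $(\gamma g)^{-1}$ as well, and (ii) making $(\gamma g)^+$ far from $(\gamma g)^{-1}$. The key observation is that for a loxodromic $\eta = \gamma g$, the point $\eta^{-1} \in \Gamma \sqcup M$ is, by the classification in Tukia's theorem quoted above, driven toward the \emph{repelling} fixed point $\eta^-$ under forward iteration of $\eta$ — more precisely, $\eta^n|_{M \setminus \{\eta^-\}} \to \eta^+$ locally uniformly, and one checks similarly that the group element $\eta^{-n}$, viewed as a point of $\Gamma \sqcup M$, converges to $\eta^-$. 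Hence replacing $\eta$ by a sufficiently high power $\eta^N$ moves the point $\eta^{-N}$ arbitrarily close to $\eta^-$, while the fixed points $(\eta^N)^\pm = \eta^\pm$ and the separation $d(\eta^+,\eta^-) \geq \epsilon_0$ are unchanged. Since $\eta^N$ is again loxodromic, $d(\eta^N,(\eta^N)^-)$ equals, up to the same kind of estimate, a quantity controlled by $\epsilon_0$; this handles the ``$-$'' fixed point side uniformly.

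To then get the distances to $(\eta^N)^{-1}$ under control, note that $d((\eta^N)^+, (\eta^N)^{-1}) = d(\eta^+, \eta^{-N})$ and, by the previous paragraph, $\eta^{-N} \to \eta^-$; so $d(\eta^+, \eta^{-N}) \to d(\eta^+,\eta^-) \geq \epsilon_0$, giving a lower bound of, say, $\epsilon_0/2$ for $N$ large. Likewise $d(\eta^N, (\eta^N)^{-1})$: here both $\eta^N$ and $\eta^{-N}$ are points of $\Gamma \sqcup M$ converging respectively to $\eta^+$ and $\eta^-$, so this distance also converges to $d(\eta^+,\eta^-) \geq \epsilon_0$. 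The only subtlety is uniformity in $\gamma$: a priori the power $N$ needed might blow up along a sequence $\gamma_k$. But the fixed-point configurations $(\eta^+, \eta^-)$ for $\eta = \gamma g$, $g \in F_0$, live in the compact set of $\epsilon_0$-separated pairs, and the convergence $\eta^{-N} \to \eta^-$ is governed by the uniform ``north-south'' dynamics coming from Lemma \ref{TreeLem0} applied to the compact space $\Gamma \sqcup M$: there is a single $N$ and a single finite exceptional set for which every loxodromic $\eta$ with $\epsilon_0$-separated fixed points satisfies the required closeness of $\eta^{\pm N}$ to $\eta^\pm$. Taking $F = \{ g^N : g \in F_0 \}$ ... more precisely $F = \{ (\gamma g)^{-1}\gamma g^{\,?}\}$ — one must be slightly careful that $\eta^N = \gamma(g (\gamma g)^{N-1})$ is still of the form $\gamma f$ with $f$ in a fixed finite set; this works because $(\gamma g)^{N-1}$ takes only finitely many values once we know $N$ is uniform and $g \in F_0$, but in fact the clean fix is to absorb: write $\eta^N = \gamma \cdot \big(g(\gamma g)^{N-1}\big)$ and observe that we do not need $f$ to be independent of $\gamma$ if we instead re-run the argument of \cite{BCZZ1} directly with the extra requirements built in; I would in practice just cite that ``the proof is essentially the same'' and indicate the two added closeness conditions.

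The main obstacle, and the only place where real care is needed, is precisely this uniformity of the exponent $N$ across all $\gamma \in \Gamma$ and the bookkeeping to keep the correcting element in a finite set. I expect to handle it exactly as \cite{BCZZ1} handle the analogous uniformity in their Lemma 2.4 — via the compactness of $\Gamma \sqcup M$ and the finite exceptional set furnished by Lemma \ref{TreeLem0} — so that the statement follows ``with essentially the same proof,'' as asserted. Everything else (the identities $d((\eta)^+,\eta^{-1}) = d(\eta^+,\eta^{-N})$ after replacing $\eta$ by $\eta^N$, the continuity of $d$, Tukia's loxodromic dynamics) is routine.
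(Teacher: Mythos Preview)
The paper gives no proof of this lemma at all: it simply remarks that the statement is ``a slightly strengthened version of Lemma 2.4 in \cite{BCZZ1}, but its proof is essentially exactly the same,'' and leaves it at that. So your fallback --- ``re-run the argument of \cite{BCZZ1} directly with the extra requirements built in'' --- is in fact exactly what the paper does, and is the correct thing to say here.

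Your attempted \emph{reduction}, however, does not work as written. Replacing $\eta = \gamma g$ by $\eta^N$ does indeed force all four distances to be close to $d(\eta^+,\eta^-) \geq \epsilon_0$, and uniformity of $N$ over loxodromics with $\epsilon_0$-separated fixed points is plausible via Lemma~\ref{TreeLem0}. But the bookkeeping fails: $\eta^N = (\gamma g)^N = \gamma \cdot \big(g(\gamma g)^{N-1}\big)$, and the element $g(\gamma g)^{N-1}$ depends on $\gamma$. Your claim that ``$(\gamma g)^{N-1}$ takes only finitely many values once we know $N$ is uniform and $g \in F_0$'' is false --- $\gamma$ ranges over all of $\Gamma$. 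Likewise $F = \{g^N : g \in F_0\}$ is irrelevant since $(\gamma g)^N \neq \gamma g^N$. So the power trick does not produce a finite correcting set $F$, and you cannot reduce the strengthened lemma to the original one this way; you really do have to go back into the proof in \cite{BCZZ1} (a compactness/contradiction argument on $\Gamma \sqcup M$) and add the two extra distance conditions to the list of things being controlled. That is harmless, which is why the paper is content to say ``essentially the same proof.''
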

\subsubsection{Properties of Expanding Coarse-Cocycles and GPS Systems}
We first recall the notion of a coarse-cocycle.
\begin{defn} [See page 2 of \cite{BCZZ1}]
Suppose $\Gamma \subset \mathrm{Homeo}(M)$ is a convergence group. A function $\sigma : \Gamma \times M \rightarrow \mathbb{R}$ is called a $\kappa$-$\textit{coarse-cocycle}$ if:
\begin{itemize}
    \item[(1)] For every $\gamma \in \Gamma$, the function $\sigma(\gamma, \cdot) : M \rightarrow \mathbb{R}$ is $\kappa$-$\textit{coarsely continuous}$; that is, for all $x_{0} \in M$ 
    \begin{align*}
        \limsup_{x \to x_{0}} |\sigma(\gamma, x) - \sigma (\gamma, x_{0})| \leq \kappa.
    \end{align*}
    \item[(2)] The function $\sigma$ satisfies a coarse version of the cocycle identity: for all $\gamma_{1}, \gamma_{2} \in \Gamma$ and $x \in M$, we have
    \begin{align*}
        \Big| \sigma (\gamma_{1} \gamma_{2}, x) - \Big( \sigma (\gamma_{1}, \gamma_{2}x) + \sigma(\gamma_{2},x) \Big) \Big| \leq \kappa.
    \end{align*}
\end{itemize}
\end{defn}
Notice that a $0$-coarse-cocycle is simply a continuous cocycle. We will be interested in coarse-cocycles satisfying certain additional properties.
\begin{defn} [Definition 1.2 of \cite{BCZZ1}]
Suppose $\Gamma \subset \mathrm{Homeo}(M)$ is a non-elementary convergence group and $d$ is a compatible metric on $\Gamma \sqcup M$. A coarse-cocycle $\sigma : \Gamma \times M \rightarrow \mathbb{R}$ is said to be:
\begin{itemize}
    \item[(1)] $\textit{Proper}$ if $\sigma(\gamma_{n}, \gamma_{n}^{+}) \rightarrow \infty$ whenever $\{\gamma_{n}\} \subset \Gamma$ is a sequence of pairwise distinct loxodromic elements so that the sequence of pairs of repelling/attracting fixed points satisfies
    \begin{align*}
        \liminf_{n \to \infty} d(\gamma_{n}^{-}, \gamma_{n}^{+}) > 0.
    \end{align*}
    \item[(2)] $\textit{Expanding}$ if it is proper and for every $\gamma \in \Gamma$, there is a number $||\gamma||_{\sigma} \in \mathbb{R}$, called the $\sigma$-$\textit{magnitude}$ of $\gamma$ satisfying the following: for every $\epsilon > 0$, there exists $C = C(\epsilon) > 0$ so that if $x \in M$, $\gamma \in \Gamma$, and $d(x, \gamma^{-1}) > \epsilon$, then
    \begin{align*}
        \big| \sigma(\gamma, x) - ||\gamma||_{\sigma} \big| \leq C.
    \end{align*}
\end{itemize}
\end{defn}
Given an expanding coarse-cocycle $\sigma : \Gamma \times M \rightarrow \mathbb{R}$, the $\sigma$-$\textit{Poincar\'e series}$ is 
\begin{align*}
Q_{\sigma}(s) = \sum_{\gamma \in \Gamma} e^{-s ||\gamma||_{\sigma}} \in [0, \infty]
\end{align*}
and the $\sigma$-$\textit{critical exponent}$ is the abscissa of convergence of this series, that is,
\begin{align*}
    \delta_{\sigma}(\Gamma) = \inf \{s > 0 : Q_{\sigma}(s) < \infty \} \in [0, \infty].
\end{align*} 
We will make frequent use of the following result in \cite{BCZZ1}, which establishes a number of useful properties related to expanding coarse-cocycles.
\begin{prop} [Proposition 3.3 of \cite{BCZZ1}]
\label{CocycleProperties}
Suppose $\Gamma \subset \mathrm{Homeo}(M)$ is a convergence group, $d$ is a compatible metric on $\Gamma \sqcup M$, and $\sigma$ is an expanding $\kappa$-coarse-cocycle. Then:
\begin{itemize}
    \item[(1)] For any finite subset $F \subset \Gamma$, there exists $C > 0$ such that: if $\gamma \in \Gamma$ and $f \in F$, then
    \begin{align*}
        ||\gamma||_{\sigma} - C \leq ||\gamma f||_{\sigma} \leq ||\gamma||_{\sigma} + C \ \ \ \mathrm{and} \ \ \ ||\gamma||_{\sigma} - C \leq ||f \gamma||_{\sigma} \leq ||\gamma||_{\sigma} + C.
    \end{align*}
    \item[(2)] $\lim_{n \to \infty} ||\gamma_{n}||_{\sigma} = \infty$ for every escaping sequence $\{\gamma_{n}\} \subset \Gamma$.
   \item[(3)] For any $\epsilon > 0$, there exists $C > 0$ such that: if $\alpha, \beta \in \Gamma$ and $d(\alpha^{-1}, \beta) \geq \epsilon$, then
    \begin{align*}
        ||\alpha||_{\sigma} + ||\beta||_{\sigma} - C \leq ||\alpha \beta||_{\sigma} \leq ||\alpha||_{\sigma} + ||\beta||_{\sigma} + C.
    \end{align*}
\end{itemize}
\end{prop}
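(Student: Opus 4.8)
The plan is to prove items (1), (3), (2) in that order: the argument for (3) invokes (1) as a black box on a finite exceptional set, and the argument for (2) combines (1) with properness. Two analytic inputs are used throughout --- the coarse cocycle identity $|\sigma(\alpha\beta,x)-\sigma(\alpha,\beta x)-\sigma(\beta,x)|\le\kappa$ and the expanding estimate $\big|\sigma(\gamma,x)-||\gamma||_{\sigma}\big|\le C(\epsilon)$, valid whenever $d(x,\gamma^{-1})>\epsilon$ --- together with two elementary observations. First, since $M$ is compact and each $\sigma(f,\cdot)$ is $\kappa$-coarsely continuous, $\sigma(f,\cdot)$ is bounded on $M$, hence $\sup_{f\in F,\,x\in M}|\sigma(f,x)|<\infty$ for every finite $F\subset\Gamma$. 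Second, since $\Gamma$ is non-elementary, $\Lambda(\Gamma)$ contains three points whose pairwise distances exceed some $d_1>0$; consequently, for any $z_1,z_2\in\Gamma\sqcup M$ and any $\eta<d_1/2$, at least one of those three limit points lies outside $B_\eta(z_1)\cup B_\eta(z_2)$. That limit point will be the evaluation point I substitute into the cocycle identity.

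For (1), fix a finite set $F$, an element $\gamma\in\Gamma$, and $f\in F$. For the right-multiplication estimate I would choose, via the second observation, a point $x\in\Lambda(\Gamma)$ far from $(\gamma f)^{-1}=f^{-1}\gamma^{-1}$; uniform continuity of the homeomorphism $f^{-1}$ of the compact space $\Gamma\sqcup M$, taken over the finite set $F$, then promotes this to ``$fx$ far from $\gamma^{-1}$'' because $f\big((\gamma f)^{-1}\big)=\gamma^{-1}$. The expanding estimate applied to $\gamma f$ at $x$ and to $\gamma$ at $fx$, combined with the cocycle identity and the boundedness of $\sigma(f,x)$, yields $\big|\,||\gamma f||_{\sigma}-||\gamma||_{\sigma}\,\big|\le C_F$. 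The left-multiplication estimate is the same computation applied to $\sigma(f\gamma,x)=\sigma(f,\gamma x)+\sigma(\gamma,x)+O(1)$, now choosing $x\in\Lambda(\Gamma)$ far from both $(f\gamma)^{-1}$ and $\gamma^{-1}$ and using that $\sigma(f,\gamma x)$ is bounded since $\gamma x\in M$.

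For (3), fix $\epsilon>0$, which I may assume is small since the statement only weakens as $\epsilon$ grows. Apply Lemma \ref{TreeLem0} at scale $\epsilon/4$ to obtain a finite exceptional set $F_0$. If $\beta\in F_0$, I conclude directly from (1) (right multiplication by $\beta$) together with the boundedness of $||\beta||_{\sigma}$ over $F_0$. If $\beta\notin F_0$, I choose $x\in\Lambda(\Gamma)$ with $d(x,\beta^{-1})$ and $d(x,(\alpha\beta)^{-1})$ both greater than $\epsilon/2$; then Lemma \ref{TreeLem0} forces $\beta x\in B_{\epsilon/4}(\beta)$, so $d(\beta x,\alpha^{-1})\ge d(\alpha^{-1},\beta)-\epsilon/4>\epsilon/2$. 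The expanding estimate is then legitimately applicable to $\alpha\beta$ at $x$, to $\beta$ at $x$, and to $\alpha$ at $\beta x$, and the cocycle identity assembles these into $\big|\,||\alpha\beta||_{\sigma}-||\alpha||_{\sigma}-||\beta||_{\sigma}\,\big|\le C(\epsilon)$. This last step is the one place where there is any real friction: one needs a single point $x$ at which all three invocations of the expanding property are simultaneously valid, and producing such a point is precisely what Lemma \ref{TreeLem0} buys --- it converts control of $x$ relative to $\beta^{-1}$ into control of $\beta x$ relative to $\alpha$ --- at the price of excluding a finite set of $\beta$, which is absorbed by (1). Everything else is bookkeeping with additive constants, which I would suppress behind $O(1)$ notation and collect only at the end.

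For (2), suppose toward a contradiction that an escaping sequence $\{\gamma_n\}$ satisfies $||\gamma_n||_{\sigma}\le R$ along a subsequence. Lemma \ref{ConvGpAMS} supplies $\epsilon>0$ and a finite $F$ such that, after passing to a subsequence, $\delta_n:=\gamma_n f$ is loxodromic for a fixed $f\in F$, with $d(\delta_n^+,\delta_n^-)>\epsilon$ and $d(\delta_n^+,\delta_n^{-1})>\epsilon$. The $\delta_n$ form an escaping, hence (after a further subsequence) pairwise distinct, sequence of loxodromic elements with $\liminf_n d(\delta_n^-,\delta_n^+)\ge\epsilon>0$, so properness of $\sigma$ gives $\sigma(\delta_n,\delta_n^+)\to\infty$; since $d(\delta_n^+,\delta_n^{-1})>\epsilon$, the expanding estimate converts this into $||\delta_n||_{\sigma}\to\infty$. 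But by (1), $||\delta_n||_{\sigma}=||\gamma_n f||_{\sigma}\le||\gamma_n||_{\sigma}+C\le R+C$, a contradiction.
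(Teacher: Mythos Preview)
The paper does not give its own proof of this proposition: it is quoted verbatim as Proposition~3.3 of \cite{BCZZ1} and used as a black box throughout. So there is no in-paper argument to compare against.

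That said, your proof is correct and self-contained. The logical ordering $(1)\Rightarrow(3)$, $(1)+\text{properness}\Rightarrow(2)$ is sound, and the key maneuvers all check out: the uniform-continuity step in (1) (converting $d(x,(\gamma f)^{-1})\ge\eta$ into $d(fx,\gamma^{-1})\ge\delta'$ uniformly over $f\in F$) is exactly the contrapositive of uniform continuity of $f^{-1}$ on the compactum $\Gamma\sqcup M$; the use of Lemma~\ref{TreeLem0} in (3) to manufacture a single evaluation point $x$ at which all three expanding estimates fire simultaneously, with the residual finite exceptional set of $\beta$'s absorbed by (1), is clean; and in (2) the invocation of Lemma~\ref{ConvGpAMS} followed by properness at $\delta_n^+$ and the expanding estimate (legitimate because $d(\delta_n^+,\delta_n^{-1})>\epsilon$ is one of the guaranteed separations) is exactly right. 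The only cosmetic point is that the expanding hypothesis uses strict inequality $d(x,\gamma^{-1})>\epsilon$, so when you write $\ge\epsilon/2$ you should strictly speaking invoke $C(\epsilon/4)$; this costs nothing.
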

We will use the following notion of a coarse Gromov--Patterson--Sullivan system to apply our construction of a Bishop--Jones semigroup in the abstract convergence group setting to the particular case of $P_{\theta}$-transverse groups acting on their limit sets.
\begin{defn} [Definition 1.7 of \cite{BCZZ1}]
Suppose $\Gamma \subset \mathrm{Homeo}(M)$ is a non-elementary convergence group and let $M^{(2)} = \{(x,y) \in M^2 : x \neq y \}$. We say that $(\sigma, \bar{\sigma}, G)$ is a $\kappa$-$\textit{coarse Gromov--Patterson--Sullivan system}$ (or $\textit{GPS system}$ for short) if $\sigma, \bar{\sigma} : \Gamma \times M \rightarrow \mathbb{R}$ are proper $\kappa$-coarse-cocycles, $G : M^{(2)} \rightarrow \mathbb{R}$ is a locally bounded function, and 
\begin{align*}
    \Big| \Big( \bar{\sigma}(\gamma, x) + \sigma(\gamma, y) \Big) - (G(\gamma x, \gamma y) - G(x,y) \Big) \Big| \leq \kappa
\end{align*} for all $\gamma \in \Gamma$ and distinct $x,y \in M$. When $\kappa = 0$ and $G$ is continuous, we call $(\sigma, \bar{\sigma}, G)$ a $\textit{continuous GPS system}$.
\end{defn}
The following result shows, among other things, that if $(\sigma, \bar{\sigma}, G)$ is a coarse GPS system, then $\sigma$ and $\bar{\sigma}$ are expanding coarse-cocycles.
\begin{prop} [Parts (1) and (2) of Proposition 3.4 of \cite{BCZZ1}]
\label{GPSProperties}
If $\Gamma \subset \mathrm{Homeo}(M)$ is a convergence group and $(\sigma, \bar{\sigma}, G)$ is a coarse GPS system, then: 
\begin{itemize}
    \item[(1)] The coarse-cocycles $\sigma$ and $\bar{\sigma}$ are expanding.
    \item[(2)] There exists $C > 0$ such that 
    \begin{align*}
        ||\gamma^{-1}||_{\bar{\sigma}} - C \leq ||\gamma||_{\sigma} \leq ||\gamma^{-1}||_{\bar{\sigma}} + C
    \end{align*} for all $\gamma \in \Gamma$.
\end{itemize}
\end{prop}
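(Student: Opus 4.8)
The plan is to reduce to a single cocycle, prove one uniform oscillation estimate for $\sigma(\gamma,\cdot)$ off a small ball around $\gamma^{-1}$ — this simultaneously produces the magnitudes $||\gamma||_{\sigma}$ and gives the expanding property — and then deduce (2) from the coarse cocycle identity together with one application of the GPS relation. For the reduction I would first observe that if $(\sigma,\bar\sigma,G)$ is a $\kappa$-coarse GPS system then so is $(\bar\sigma,\sigma,\hat G)$, where $\hat G(x,y):=G(y,x)$; this is immediate on interchanging $x$ and $y$ in the defining inequality, and $\hat G$ is locally bounded since $G$ is. Hence it suffices to prove that the first cocycle of an arbitrary $\kappa$-coarse GPS system is expanding, and applying this to $(\bar\sigma,\sigma,\hat G)$ then covers $\bar\sigma$. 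I would also record two elementary facts: $|\sigma(e,x)|\le\kappa$ for every $x$ (coarse cocycle identity with $\gamma_1=\gamma_2=e$), and that $\sigma(\gamma,\cdot)$ is bounded on $M$ for each fixed $\gamma$ (cover the compact space $M$ by finitely many open sets on which $\sigma(\gamma,\cdot)$ has oscillation at most $\kappa+1$, using $\kappa$-coarse continuity).

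The heart of the matter is the estimate: for all sufficiently small $\epsilon>0$ there is $C(\epsilon)>0$ such that $|\sigma(\gamma,x)-\sigma(\gamma,y)|\le C(\epsilon)$ whenever $\gamma\in\Gamma$ and $d(x,\gamma^{-1}),d(y,\gamma^{-1})\ge\epsilon$. I would argue by contradiction. If it fails there are $\gamma_n\in\Gamma$ and $x_n,y_n\in M$ with $d(x_n,\gamma_n^{-1}),d(y_n,\gamma_n^{-1})\ge\epsilon$ and $|\sigma(\gamma_n,x_n)-\sigma(\gamma_n,y_n)|\to\infty$; a constant subsequence of $\{\gamma_n\}$ contradicts boundedness of $\sigma(\gamma,\cdot)$, so $\{\gamma_n\}$ escapes, and we may assume it avoids the finite exceptional set of Lemma \ref{TreeLem0} for the parameter $\epsilon/3$. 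Passing to a subsequence, $\gamma_n\to a$ and $\gamma_n^{-1}\to b$ in $\Gamma\sqcup M$, and $x_n\to x_\infty$, $y_n\to y_\infty$ in $M$ with $x_\infty,y_\infty\ne b$, so $\gamma_n x_n\to a$ and $\gamma_n y_n\to a$. Fix $p\in\Lambda(\Gamma)$ with $d(p,a)\ge\tfrac12\mathrm{diam}\,\Lambda(\Gamma)>0$. Since $\gamma_n$ is a bijection of $M$ which by Lemma \ref{TreeLem0} carries $M\setminus B_{\epsilon/3}(\gamma_n^{-1})$ into $B_{\epsilon/3}(\gamma_n)$, it carries $B_{\epsilon/3}(\gamma_n^{-1})\cap M$ onto a superset of $M\setminus B_{\epsilon/3}(\gamma_n)$, which contains $p$ once $\epsilon$ is small and $n$ is large; so there is $w_n\in M$ with $d(w_n,\gamma_n^{-1})<\epsilon/3$ and $\gamma_n w_n=p$. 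Two applications of the GPS identity for $\gamma_n$, at the pairs $(w_n,y_n)$ and $(w_n,x_n)$, give upon subtraction — the terms $\bar\sigma(\gamma_n,w_n)$ cancelling —
\begin{align*}
\sigma(\gamma_n,y_n)-\sigma(\gamma_n,x_n)=\bigl(G(p,\gamma_n y_n)-G(p,\gamma_n x_n)\bigr)+\bigl(G(w_n,x_n)-G(w_n,y_n)\bigr)+O(\kappa).
\end{align*}
Since $d(w_n,x_n),d(w_n,y_n)\ge 2\epsilon/3$ while $d(p,\gamma_n x_n)$ and $d(p,\gamma_n y_n)$ are bounded below for large $n$, all four pairs eventually lie in a fixed compact subset of $M^{(2)}$, on which $G$ is bounded; the right-hand side is therefore $O_\epsilon(1)$, contradicting the divergence. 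Granting the estimate, I fix $\epsilon_1\in(0,\tfrac12\mathrm{diam}\,\Lambda(\Gamma))$, choose for each $\gamma$ a point $x_\gamma\in M$ with $d(x_\gamma,\gamma^{-1})\ge\epsilon_1$, and set $||\gamma||_{\sigma}:=\sigma(\gamma,x_\gamma)$; the estimate then gives $\bigl|\sigma(\gamma,x)-||\gamma||_{\sigma}\bigr|\le C(\min(\epsilon,\epsilon_1))$ whenever $d(x,\gamma^{-1})\ge\epsilon$, i.e. $\sigma$ is expanding. By the reduction, $\bar\sigma$ is expanding too, which proves (1).

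For (2), take $\gamma$ outside the relevant finite set (on its complement there are finitely many values and the claim is trivial) and choose $u,y\in M$ with $d(u,\gamma)\ge\epsilon_1$ and $d(y,\gamma^{-1})\ge\epsilon_1$. The GPS identity for $\gamma^{-1}$ at the pair $(u,\gamma y)$ reads $\bar\sigma(\gamma^{-1},u)+\sigma(\gamma^{-1},\gamma y)=G(\gamma^{-1}u,y)-G(u,\gamma y)+O(\kappa)$. Combining this with $\sigma(\gamma^{-1},\gamma y)=-\sigma(\gamma,y)+O(\kappa)$ (coarse cocycle identity and $|\sigma(e,y)|\le\kappa$) and with the expanding property now available for $\sigma$ and $\bar\sigma$ — so $\bar\sigma(\gamma^{-1},u)=||\gamma^{-1}||_{\bar\sigma}+O(1)$ and $\sigma(\gamma,y)=||\gamma||_{\sigma}+O(1)$ — yields $||\gamma^{-1}||_{\bar\sigma}-||\gamma||_{\sigma}=G(\gamma^{-1}u,y)-G(u,\gamma y)+O(1)$. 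By Lemma \ref{TreeLem0} (with parameter $\epsilon_1/3$), $\gamma^{-1}u\in B_{\epsilon_1/3}(\gamma^{-1})$ and $\gamma y\in B_{\epsilon_1/3}(\gamma)$, so both $(\gamma^{-1}u,y)$ and $(u,\gamma y)$ stay a definite distance from the diagonal of $M^{(2)}$, the $G$-terms are bounded, and (2) follows.

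The one genuine difficulty — and the only place where the GPS structure is used essentially, rather than through formal cocycle manipulation — is the behavior of $G$ near the diagonal of $M^{(2)}$: $G$ is merely locally bounded there, hence typically blows up along the diagonal, while the dynamics of an escaping convergence sequence collapse $M$ onto the single point $a$, so a naive auxiliary point would place every argument of $G$ next to the diagonal. The device that resolves this is to \emph{prescribe} the value $\gamma_n w_n$, which is possible because $\gamma_n$ is a surjection of $M$ that, by Lemma \ref{TreeLem0}, sends the complement of a small ball around $\gamma_n^{-1}$ into a small ball around $\gamma_n$; taking $\gamma_n w_n$ to be a fixed limit point at definite distance from $a$ keeps all four arguments of $G$ uniformly away from the diagonal, and the term $\bar\sigma(\gamma_n,w_n)$ drops out of the comparison. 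I expect setting up and justifying this choice of auxiliary point, together with the analogous bookkeeping in (2), to be the bulk of the work.
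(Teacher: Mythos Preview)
The paper does not prove this proposition; it is quoted without proof from \cite{BCZZ1} (Proposition~3.4 there). So there is no argument in the present paper against which to compare yours.

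That said, your proof is correct and is the natural one. The key device---prescribing the auxiliary point by setting $\gamma_n w_n = p$ for a fixed $p\in\Lambda(\Gamma)$ at definite distance from the attracting limit $a$, using surjectivity of $\gamma_n$ on $M$ together with Lemma~\ref{TreeLem0} to locate $w_n$ near $\gamma_n^{-1}$---is exactly what is needed to keep all four arguments of $G$ in a fixed compact subset of $M^{(2)}$, where local boundedness of $G$ gives a uniform bound. Your identification of this as the only genuinely delicate step is accurate. The symmetry reduction via $(\bar\sigma,\sigma,\hat G)$ is clean, the passage from the oscillation estimate to magnitudes is routine (properness being already part of the GPS hypotheses), and your derivation of (2) from one application of the GPS relation for $\gamma^{-1}$, combined with the coarse cocycle identity and Lemma~\ref{TreeLem0} to force $(\gamma^{-1}u,y)$ and $(u,\gamma y)$ uniformly off the diagonal, is correct.
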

\begin{exmp}
\label{GromovHypEx}
To better illustrate this setting, we describe one very important motivating example, which will allow our general result to be applied to prove Theorem $\ref{NoGapRank1}$. Let $X$ be a proper geodesic Gromov hyperbolic space and $\Gamma \subset \mathrm{Isom}(X)$ a discrete group of isometries of $X$. Then $\Gamma$ acts as a convergence group on the Gromov boundary $\partial X$ of $X$ (see \cite{F} or \cite{T}). Fix a base point $o \in X$. Then we may define, for each $x \in X$, a $\textit{Busemann function}$ $b_{x} : X \rightarrow \mathbb{R}$ via
\begin{align*}
    b_{x}(q) = \limsup_{p \to x} d(p,q)-d(p,o).
\end{align*} The $\textit{Busemann coarse-cocycle}$ $\beta : \Gamma \times \partial X \rightarrow \mathbb{R}$ is then defined by 
\begin{align*}
    \beta(\gamma,x) = b_{x}(\gamma^{-1}(o)).
\end{align*} In general, this is only a coarse-cocycle. The classical $\textit{Gromov product}$ is the map $G : \partial X^{(2)} \rightarrow \mathbb{R}$ defined by
\begin{align*}
G(x,y) = \limsup_{p \to x, q \to y} d(o,p) + d(o,q) - d(p,q).
\end{align*} We then have that $(\beta, \beta, G)$ is a coarse GPS system (which is not necessarily continuous). Moreover, one can take the $\beta$-magnitude function to be $||\gamma||_{\beta} = d(o,\gamma o)$, hence $\delta_{\beta}(\Gamma)$ is precisely the usual critical exponent $\delta(\Gamma)$ of the Poincar\'e series
\begin{align*}
    Q(s) = \sum_{\gamma \in \Gamma} e^{-s d(o,\gamma o)}.
\end{align*} If $X$ is CAT$(-1)$ (for instance, if $X$ is a rank one symmetric space), then $\beta$ is a continuous cocycle and $(\beta, \beta, G)$ is a continuous GPS system.
\end{exmp}
\subsection{Shadows and Coarse Patterson--Sullivan Measures} We now recall the definition of shadows from \cite{BCZZ1}. They will play an important role in our construction of Bishop--Jones semigroups.
\begin{defn} [Shadow; see Definition 1.15 of \cite{BCZZ1}]
Suppose $\Gamma \subset \mathrm{Homeo}(M)$ is a non-elementary convergence group and $d$ is a compatible metric on $\Gamma \sqcup M$. Given $\epsilon > 0$ and $\gamma \in \Gamma$, the associated $\textit{shadow}$ is defined by
\begin{align*}
\mathcal{S}_{\epsilon} (\gamma) := \gamma \big(M \smallsetminus B_{\epsilon}(\gamma^{-1}) \big).
\end{align*}
\end{defn} Readers familiar with shadows in the classical setting of Gromov hyperbolic spaces may find this definition strange at first. However, Proposition $5.5$ of \cite{BCZZ1} shows that when $X$ is a proper geodesic Gromov hyperbolic space and $\Gamma \subset \mathrm{Isom}(X)$ is discrete, then every shadow, in the sense of the above definition, in the Gromov boundary $\partial X$ is contained in a classically defined shadow; likewise any classically defined shadow in $\partial X$ is contained in a shadow in the convergence group sense. \par 
The following are some important properties of shadows which will be needed later on in the paper.
\begin{prop} [Proposition 5.1 of \cite{BCZZ1}]
\label{ShadowProperties}
If $\epsilon > 0$ and $\sigma : \Gamma \times M \rightarrow \mathbb{R}$ is an expanding coarse-cocycle, then:
\begin{itemize}
    \item[(1)] There exists $C_{1} > 0$ such that if $x \in \mathcal{S}_{\epsilon}(\gamma)$, then
    \begin{align*}
        ||\gamma||_{\sigma} - C_{1} \leq \sigma(\gamma, \gamma^{-1}(x)) \leq ||\gamma||_{\sigma} + C_{1}.
    \end{align*}
    \item[(2)] If $\{\gamma_{n}\} \subset \Gamma$ is an escaping sequence, then 
    \begin{align*}
        \lim_{n \to \infty} \mathrm{diam} \ \mathcal{S}_{\epsilon}(\gamma_{n}) = 0 \ \ \mathrm{and} \ \ \lim_{n \to \infty} \inf_{x \in \mathcal{S}_{\epsilon}(\gamma_{n})} d(\gamma_{n},x) = 0,
    \end{align*} where the diameter is with respect to $d$. In particular, the Hausdorff distance with respect to $d$ between the sets $\{\gamma_{n}\}$ and $\mathcal{S}_{\epsilon}(\gamma_{n})$ converges to zero.
    \item[(3)] There exists $C_{2} > 0$ such that: if $\alpha, \beta \in \Gamma$, $||\alpha||_{\sigma} \leq ||\beta||_{\sigma}$, and  $\mathcal{S}_{\epsilon}(\alpha) \cap \mathcal{S}_{\epsilon}(\beta) \neq \emptyset$,
then
    \begin{align*}
        ||\alpha^{-1} \beta||_{\sigma} + ||\alpha||_{\sigma} - C_{2} \leq ||\beta||_{\sigma} \leq ||\alpha^{-1} \beta||_{\sigma} + ||\alpha||_{\sigma} + C_{2}.
    \end{align*}
\end{itemize}
\end{prop}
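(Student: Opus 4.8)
The plan is to handle the three parts in order; parts (1) and (2) are quick, and part (3) carries the work. For (1) I would just unwind definitions: $x \in \mathcal{S}_{\epsilon}(\gamma) = \gamma\big(M \setminus B_{\epsilon}(\gamma^{-1})\big)$ means exactly that $d(\gamma^{-1}(x), \gamma^{-1}) \geq \epsilon$, so applying the defining estimate of an expanding coarse-cocycle to the point $\gamma^{-1}(x)$ (with parameter $\epsilon/2$, to upgrade the weak inequality to the strict one the definition wants) gives $\big| \sigma(\gamma, \gamma^{-1}(x)) - ||\gamma||_{\sigma} \big| \leq C(\epsilon/2) =: C_{1}$. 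For (2) I would iterate Lemma \ref{TreeLem0}: for each $\eta \in (0, \epsilon]$ let $F_{\eta}$ be the finite set it yields for radius $\eta$; since $B_{\eta}(\gamma^{-1}) \subseteq B_{\epsilon}(\gamma^{-1})$, for $\gamma \notin F_{\eta}$ we get $\mathcal{S}_{\epsilon}(\gamma) \subseteq \gamma\big( (\Gamma \sqcup M) \setminus B_{\eta}(\gamma^{-1}) \big) \subseteq B_{\eta}(\gamma)$, whence $\mathrm{diam}\, \mathcal{S}_{\epsilon}(\gamma) \leq 2\eta$ and $\inf_{y \in \mathcal{S}_{\epsilon}(\gamma)} d(\gamma, y) \leq \eta$ (and, when the shadow is nonempty, the Hausdorff distance between $\{\gamma\}$ and it is $\leq \eta$). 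Since an escaping sequence eventually leaves the finite set $F_{\eta}$, all of these quantities are eventually $\leq 2\eta$; letting $\eta \to 0$ proves (2).

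For (3), fix $x \in \mathcal{S}_{\epsilon}(\alpha) \cap \mathcal{S}_{\epsilon}(\beta)$ and set $\mu = \alpha^{-1}\beta$, $z = \alpha^{-1}(x)$, $w = \beta^{-1}(x)$, so that $w = \mu^{-1}(z)$. By part (1), $\sigma(\alpha, z) = ||\alpha||_{\sigma} + O(1)$ and $\sigma(\beta, w) = ||\beta||_{\sigma} + O(1)$; the coarse-cocycle identity for $\beta = \alpha\mu$, together with $\mu w = z$, then gives $||\beta||_{\sigma} = ||\alpha||_{\sigma} + \sigma(\mu, w) + O(1)$, all implied constants depending only on $\epsilon$ and $\kappa$. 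It remains to pin $\sigma(\mu, w)$ to $||\mu||_{\sigma}$, and here I would argue by a dichotomy on the distance in $\Gamma \sqcup M$ between $z$ and the point $\mu$. If $d(z, \mu) < \delta$ for a fixed small $\delta \in (0,\epsilon)$, then from $d(z, \alpha^{-1}) \geq \epsilon$ we get $d(\alpha^{-1}, \mu) \geq \epsilon - \delta$, and Proposition \ref{CocycleProperties}(3) applied to the product $\alpha \cdot \mu = \beta$ yields the desired two-sided bound with constant $C(\epsilon - \delta)$. If instead $d(z, \mu) \geq \delta$, then the expanding property applied to $\mu^{-1}$ (for which the relevant excluded ball is precisely $B_{\delta}(\mu)$) gives $\sigma(\mu^{-1}, z) = ||\mu^{-1}||_{\sigma} + O_{\delta}(1)$; running the coarse-cocycle identity through $e = \mu \mu^{-1}$, together with $|\sigma(e, \cdot)| \leq \kappa$, converts this into $\sigma(\mu, w) = -||\mu^{-1}||_{\sigma} + O_{\delta}(1)$, so the displayed identity becomes $||\mu^{-1}||_{\sigma} = ||\alpha||_{\sigma} - ||\beta||_{\sigma} + O_{\delta}(1) \leq O_{\delta}(1)$ by the hypothesis $||\alpha||_{\sigma} \leq ||\beta||_{\sigma}$. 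By Proposition \ref{CocycleProperties}(2) this bounded-magnitude condition forces $\mu$ into a fixed finite set, and then Proposition \ref{CocycleProperties}(1) finishes the estimate. Taking $\delta = \epsilon/2$ and letting $C_{2}$ be the maximum of the finitely many constants produced completes the proof.

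I expect the main obstacle to be precisely the second branch of this dichotomy: when the shadow overlap occurs in the degenerate configuration where $\alpha$, $\beta$, and $x$ all sit close together in $\Gamma \sqcup M$ — so that no separation estimate is directly available to feed Proposition \ref{CocycleProperties}(3) — one must still extract the conclusion, and the key realization is that it is exactly the magnitude ordering $||\alpha||_{\sigma} \leq ||\beta||_{\sigma}$ that forbids $||\mu^{-1}||_{\sigma}$ from being large and hence confines $\mu = \alpha^{-1}\beta$ to a finite set. Keeping the coarse constants honest across both branches — in particular making sure the resulting finite set and the bounds attached to it depend only on $\epsilon$, $\kappa$, $\sigma$, and $d$, and never on $\alpha$ or $\beta$ — is the bookkeeping that will need care.
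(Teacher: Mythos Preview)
The paper does not give its own proof of this proposition: it is quoted verbatim as Proposition~5.1 of \cite{BCZZ1} and used as a black box throughout, so there is nothing in the present paper to compare your argument against.

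That said, your argument is correct and stands on its own. Part~(1) is exactly the definition of expanding unwound, and your $\epsilon/2$ trick to pass from the weak to the strict inequality is the right fix. Part~(2) is the intended use of Lemma~\ref{TreeLem0}: the containment $\mathcal{S}_{\epsilon}(\gamma) \subset B_{\eta}(\gamma)$ for $\gamma$ outside a finite set is precisely what that lemma gives, and all three conclusions follow. For part~(3), your reduction to controlling $\sigma(\mu,w)$ via the coarse-cocycle identity is clean, and the dichotomy on $d(z,\mu)$ is the right move. In the near branch you correctly leverage the triangle inequality in $\Gamma\sqcup M$ to separate $\alpha^{-1}$ from $\mu$ and invoke Proposition~\ref{CocycleProperties}(3). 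In the far branch, the chain $\sigma(\mu,w) = -\sigma(\mu^{-1},z) + O(\kappa) = -\|\mu^{-1}\|_{\sigma} + O(1)$ is valid (your derivation of $|\sigma(e,\cdot)|\le\kappa$ from the cocycle identity with $\gamma_1=\gamma_2=e$ is the standard one-liner), and the hypothesis $\|\alpha\|_{\sigma}\le\|\beta\|_{\sigma}$ then bounds $\|\mu^{-1}\|_{\sigma}$ uniformly, forcing $\mu$ into a fixed finite set by properness; Proposition~\ref{CocycleProperties}(1) then closes the loop. All constants produced depend only on $\epsilon$, $\kappa$, and the cocycle, as you note, so the bookkeeping is sound.
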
 
Shadows are closely related to the limit set of $\Gamma$. We begin with the following definition.
\begin{defn} [Definition $5.2$ of \cite{BCZZ1}]
Given $\epsilon > 0$, the $\epsilon$-$\textit{uniform conical limit set}$, denoted $\Lambda^{\mathrm{con}}_{\epsilon}(\Gamma)$, is the set of points $x \in M$ such that there exist $a,b \in M$ and a sequence of elements $\{\gamma_{n}\}$ in $\Gamma$ so that $d(a,b) \geq \epsilon$, $\lim_{n \to \infty} \gamma_{n}x = b$, and $\lim_{n \to \infty} \gamma_{n}y = a$ for all $y \in M \smallsetminus \{x\}$.
\end{defn} By definition, we have 
\begin{align*}
\Lambda^{\mathrm{con}}(\Gamma) = \bigcup_{\epsilon > 0} \Lambda_{\epsilon}^{\mathrm{con}}(\Gamma) = \bigcup_{n=1}^{\infty} \Lambda_{\frac{1}{n}}^{\mathrm{con}} (\Gamma).
\end{align*} 
Moreover, these limit sets are $\Gamma$-invariant; see Observation $5.3$ of \cite{BCZZ1} for a proof. Shadows are related to uniformly conical limit sets by the following lemma. \\
\begin{lem} [Lemma $5.4$ of \cite{BCZZ1}] ~\
\label{ShadowsAndLimitSets} 
\begin{itemize}
    \item[(1)] If $x \in \Lambda_{\epsilon}^{\mathrm{con}}(\Gamma)$ and $0 < \epsilon' < \epsilon$, then there exists an escaping sequence $\{\gamma_{n}\} \subset \Gamma$ such that $x \in \bigcap_{n} \mathcal{S}_{\epsilon'}(\gamma_{n})$.
   \item[(2)] If there exists an escaping sequence $\{\gamma_{n}\} \subset \Gamma$ such that $x \in \bigcap_{n} \mathcal{S}_{\epsilon}(\gamma_{n})$, then $x \in \Lambda_{\epsilon}^{\mathrm{con}}(\Gamma)$. 
\end{itemize}
\end{lem}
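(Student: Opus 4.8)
The plan is to argue both inclusions directly from the definitions, the one tool beyond bookkeeping being the convergence-group action of $\Gamma$ on the compactification $\Gamma \sqcup M$ (Proposition 2.3 of \cite{BCZZ1}): for an escaping sequence $\{\gamma_n\}$, after passing to a subsequence $\gamma_n$ converges in $\Gamma \sqcup M$ to a point of $M$, and this point is the ``attracting'' point of the restricted dynamics on $M$ (that is, if $\gamma_n|_{M \setminus \{r\}} \to a$ locally uniformly, then $\gamma_n \to a$ in $\Gamma \sqcup M$, as one sees by evaluating the locally uniform convergence at the identity element $e \in \Gamma \subset \Gamma \sqcup M$). I will also repeatedly use the reformulation $x \in \mathcal{S}_{\epsilon}(\gamma) \iff d(\gamma^{-1}(x), \gamma^{-1}) \geq \epsilon$, valid because $\gamma$ acts bijectively on $M$ so that $\gamma^{-1}(x)$ is the unique preimage of $x$.

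For (1), let $x \in \Lambda_{\epsilon}^{\mathrm{con}}(\Gamma)$ be witnessed by $a, b \in M$ with $d(a,b) \geq \epsilon$ and a sequence $\{\gamma_n\}$ with $\gamma_n x \to b$ and $\gamma_n y \to a$ for all $y \neq x$; after passing to a subsequence the $\gamma_n$ may be taken pairwise distinct, hence $\{\gamma_n\}$ escaping. The key observation is that the relevant shadows are those of $\{\gamma_n^{-1}\}$, not $\{\gamma_n\}$: comparing the given dynamics on $M$ with the convergence-group property on $\Gamma \sqcup M$ shows that, after a further subsequence, $\gamma_n \to a$ in $\Gamma \sqcup M$ (the exceptional point of the restricted convergence is forced to be $x$, with attracting point $a$, since $\gamma_n x \to b \neq a$). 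As $\gamma_n x \to b$ as well and $d(a,b) \geq \epsilon > \epsilon'$, continuity of $d$ gives $d\big((\gamma_n^{-1})^{-1}(x),\, (\gamma_n^{-1})^{-1}\big) = d(\gamma_n x, \gamma_n) \to d(b,a) \geq \epsilon$, so this quantity exceeds $\epsilon'$ for all large $n$; that is, $x \in \mathcal{S}_{\epsilon'}(\gamma_n^{-1})$ for all large $n$. Discarding finitely many indices, $\{\gamma_n^{-1}\}$ is the desired escaping sequence.

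For (2), let $\{\gamma_n\}$ be escaping with $x \in \mathcal{S}_{\epsilon}(\gamma_n)$, i.e.\ $d(\gamma_n^{-1}(x), \gamma_n^{-1}) \geq \epsilon$, for every $n$; the candidate witnessing sequence is $\{\gamma_n^{-1}\}$. Passing to a subsequence, $\gamma_n^{-1} \to c$ in $\Gamma \sqcup M$ with $c \in M$, $\gamma_n^{-1}(x) \to b'$ in $M$, and $\gamma_n^{-1}|_{M \setminus \{r\}} \to c$ locally uniformly for some $r \in M$. The inequality $d(\gamma_n^{-1}(x), \gamma_n^{-1}) \geq \epsilon$ together with $\gamma_n^{-1} \to c$ forces $\gamma_n^{-1}(x)$ to remain bounded away from $c$, so $\gamma_n^{-1}(x) \not\to c$; hence the exceptional point satisfies $r = x$ and $\gamma_n^{-1}(y) \to c$ for every $y \neq x$. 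Passing to the limit in $d(\gamma_n^{-1}(x), \gamma_n^{-1}) \geq \epsilon$ yields $d(b', c) \geq \epsilon$. Thus $a := c$, $b := b'$, and the sequence $\{\gamma_n^{-1}\}$ witness $x \in \Lambda_{\epsilon}^{\mathrm{con}}(\Gamma)$.

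I expect the only genuine obstacle to be the translation, in both directions, between convergence of group elements in $\Gamma \sqcup M$ and locally uniform convergence of the induced homeomorphisms of $M$ — in particular, correctly matching which of $x$, $a$, $b$ is the attracting and which the repelling point for $\gamma_n$ versus $\gamma_n^{-1}$, and ruling out a ``parabolic'' degeneration (which is exactly where the hypothesis $d(a,b)\geq\epsilon$, respectively the shadow condition, is used). Once that dictionary from Proposition 2.3 of \cite{BCZZ1} is in place, the remainder is the two triangle-inequality estimates above.
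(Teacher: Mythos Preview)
The paper does not supply its own proof of this lemma; it is quoted verbatim as Lemma~5.4 of \cite{BCZZ1}. Judged on its own merits your argument is correct: the reformulation $x \in \mathcal{S}_{\epsilon}(\gamma) \iff d(\gamma^{-1}x,\gamma^{-1}) \geq \epsilon$ is exactly right, and in both directions you correctly identify that the witnessing sequence for the shadow condition is the \emph{inverse} of the sequence witnessing conicality. The one step that deserves a word more of care is in part~(1), where you assert $\gamma_n \to a$ in $\Gamma \sqcup M$: the definition of $\Lambda_\epsilon^{\mathrm{con}}$ only gives \emph{pointwise} convergence $\gamma_n y \to a$ for $y \neq x$, so one should first pass to a subsequence on which the convergence-group dichotomy holds, then argue (as you indicate) that the repelling point must be $x$ and the attracting point must be $a$ by comparing with the given pointwise limits and using $b \neq a$. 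With that small elaboration the proof is complete, and it is the natural argument one would expect in \cite{BCZZ1}.
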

We now consider the notion of Patterson--Sullivan measures associated to a coarse-cocycle.

\begin{defn} [Definition 1.1 of \cite{BCZZ1}]
Suppose $\Gamma \subset \mathrm{Homeo}(M)$ is a convergence group and $\sigma : \Gamma \times M \rightarrow \mathbb{R}$ is a coarse-cocycle. A probability measure $\mu$ is a $C$-$\textit{coarse}$ $\sigma$-$\textit{Patterson--Sullivan measure of dimension}$ $\delta$ if, for every $\gamma \in \Gamma$, the measures $\mu$ and $\gamma_{*}\mu$ are absolutely continuous with respect to each other and 
\begin{align*}
    e^{-C-\delta \sigma(\gamma^{-1}, \cdot)} \leq \frac{d(\gamma_{*}\mu)}{d\mu} \leq e^{C-\delta \sigma(\gamma^{-1}, \cdot)},
\end{align*} $\mu$-almost everywhere.
\end{defn}
The following result shows that coarse Patterson--Sullivan measures exist in the critical dimension.
\begin{thm} [Theorem 4.1 of \cite{BCZZ1}]
\label{PSmeasure}
If $\sigma$ is an expanding $\kappa$-coarse-cocycle for a non-elementary convergence group $\Gamma \subset \mathrm{Homeo}(M)$ and $\delta := \delta_{\sigma}(\Gamma) < \infty$, then there exists a $2 \kappa \delta$-coarse $\sigma$-Patterson--Sullivan measure of dimension $\delta$, which is supported on the limit set $\Lambda (\Gamma)$.
\end{thm}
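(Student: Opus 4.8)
The plan is to carry out the classical Patterson--Sullivan construction, adapted to coarse cocycles. The measure will arise as a weak-$*$ limit of normalized weighted sums of Dirac masses placed at the points of $\Gamma$ sitting inside the compactification $\Gamma \sqcup M$; the things to verify are that one can first arrange the relevant series to diverge at $s = \delta$, that the limit measure charges no point of $\Gamma$ and hence is carried by $\overline{\Gamma}\setminus\Gamma \subseteq \Lambda(\Gamma)$, and that it transforms under $\Gamma$ with the claimed Radon--Nikodym bound.

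First I would run Patterson's trick. If $Q_{\sigma}(\delta) = \infty$ there is nothing to do; otherwise, using Proposition \ref{CocycleProperties}(2) --- which guarantees $\|\gamma_{n}\|_{\sigma}\to\infty$ along every escaping sequence, so that only finitely many elements have $\sigma$-magnitude below any fixed level --- one produces a non-decreasing slowly increasing function $h\colon\mathbb{R}\to\mathbb{R}_{>0}$ (meaning $h(t+c)/h(t)\to 1$ as $t\to\infty$ for each fixed $c$) such that the modified series $\widetilde{Q}_{\sigma}(s) := \sum_{\gamma\in\Gamma} h(\|\gamma\|_{\sigma}) e^{-s\|\gamma\|_{\sigma}}$ still has abscissa of convergence $\delta$ but now diverges at $s=\delta$. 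For $s>\delta$ put
\[
\mu_{s} := \frac{1}{\widetilde{Q}_{\sigma}(s)} \sum_{\gamma\in\Gamma} h(\|\gamma\|_{\sigma}) e^{-s\|\gamma\|_{\sigma}}\,\mathcal{D}_{\gamma}\ \in\ \mathcal{P}(\Gamma\sqcup M),
\]
where $\mathcal{D}_{\gamma}$ is the unit mass at the point $\gamma\in\Gamma\subset\Gamma\sqcup M$ and $\mathcal{P}(\Gamma\sqcup M)$ is the space of Borel probability measures. Since $\Gamma\sqcup M$ is compact metrizable, $\mathcal{P}(\Gamma\sqcup M)$ is weak-$*$ compact, so I can pick $s_{k}\downarrow\delta$ with $\mu_{s_{k}}\to\mu$. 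Each singleton $\{\gamma\}$ is open in $\Gamma\sqcup M$, so by the portmanteau theorem $\mu(\{\gamma\}) \le \liminf_{k} h(\|\gamma\|_{\sigma}) e^{-s_{k}\|\gamma\|_{\sigma}}/\widetilde{Q}_{\sigma}(s_{k}) = 0$ (numerator bounded, denominator tending to $\infty$ by divergence), hence $\mu(\Gamma)=0$; since $\operatorname{supp}\mu_{s_{k}}\subseteq\Gamma$ we get $\operatorname{supp}\mu\subseteq\overline{\Gamma}$, and since points of $\overline{\Gamma}\setminus\Gamma$ are limits of escaping sequences, which lie in $\Lambda(\Gamma)$, we conclude $\operatorname{supp}\mu\subseteq\Lambda(\Gamma)$.

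It remains to establish the coarse $\sigma$-conformality. Fix $\gamma_{0}\in\Gamma$; reindexing by $\alpha = \gamma_{0}\gamma$ gives $(\gamma_{0})_{*}\mu_{s} = \widetilde{Q}_{\sigma}(s)^{-1}\sum_{\alpha} h(\|\gamma_{0}^{-1}\alpha\|_{\sigma}) e^{-s\|\gamma_{0}^{-1}\alpha\|_{\sigma}}\,\mathcal{D}_{\alpha}$, so $(\gamma_{0})_{*}\mu_{s}\ll\mu_{s}$ with density at the atom $\alpha$ equal to $\frac{h(\|\gamma_{0}^{-1}\alpha\|_{\sigma})}{h(\|\alpha\|_{\sigma})} e^{-s(\|\gamma_{0}^{-1}\alpha\|_{\sigma} - \|\alpha\|_{\sigma})}$. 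The heart of the matter is the estimate
\[
\bigl|\bigl(\|\gamma_{0}^{-1}\alpha\|_{\sigma} - \|\alpha\|_{\sigma}\bigr) - \sigma(\gamma_{0}^{-1}, x)\bigr| \le 2\kappa + o(1) \qquad \text{as } \alpha\to x\in M,
\]
which I would prove by taking an escaping sequence $\alpha_{n}\to x$, passing to a subsequence with $\alpha_{n}^{-1}\to z\in M$ (so that $\alpha_{n}$ acts on $\Gamma\sqcup M$ with source $z$ and sink $x$, and $\alpha_{n}^{-1}\gamma_{0}\to z$ as well), picking an auxiliary point $y_{0}\in M$ at a fixed positive distance from $z$ (possible as $M$ is compact with at least three points), and combining: the coarse cocycle identity $\sigma(\gamma_{0}^{-1}\alpha_{n}, y_{0}) = \sigma(\gamma_{0}^{-1}, \alpha_{n}y_{0}) + \sigma(\alpha_{n}, y_{0}) + O(\kappa)$; the convergence $\alpha_{n}y_{0}\to x$ together with the $\kappa$-coarse continuity of $\sigma(\gamma_{0}^{-1},\cdot)$; and the expanding property applied to both $\|\alpha_{n}\|_{\sigma}$ and $\|\gamma_{0}^{-1}\alpha_{n}\|_{\sigma}$ (legitimate because $\alpha_{n}^{-1}$ and $\alpha_{n}^{-1}\gamma_{0}$ stay near $z$, hence away from $y_{0}$). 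At the same time $\|\gamma_{0}^{-1}\alpha\|_{\sigma} - \|\alpha\|_{\sigma}$ stays bounded by Proposition \ref{CocycleProperties}(1) while both magnitudes go to infinity, so slow variation forces the $h$-ratio to $1$. Feeding this into the weak-$*$ limit, testing against continuous $f\ge 0$ on $\Gamma\sqcup M$ and discarding the contribution of a finite exceptional set of $\alpha$ (whose total $\mu_{s_{k}}$-mass tends to $0$), produces the inequality of measures $e^{-2\kappa\delta - \delta\sigma(\gamma_{0}^{-1},\cdot)}\,d\mu \le d(\gamma_{0})_{*}\mu \le e^{2\kappa\delta - \delta\sigma(\gamma_{0}^{-1},\cdot)}\,d\mu$, i.e.\ exactly the asserted bound on $\frac{d((\gamma_{0})_{*}\mu)}{d\mu}$.

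The main obstacle is the uniformity and the precise constant in this key estimate: the coarse cocycle identity and the coarse continuity each contribute a single $\kappa$, which is where the $2\kappa$ comes from, but the expanding-property comparisons a priori also contribute an error $C(\epsilon)$ governed by how far the auxiliary point $y_{0}$ can be kept from the moving source $z$; the delicate point is to argue that this error washes out in the limit --- exploiting compactness of $M$ to keep $y_{0}$ uniformly separated from $z$, and choosing the order of limits carefully --- so that only the $2\kappa$ survives. A secondary technical issue is the bookkeeping needed to pass from the atom-by-atom comparison of $\mu_{s_{k}}$ and $(\gamma_{0})_{*}\mu_{s_{k}}$ to an almost-everywhere statement for the weak-$*$ limit $\mu$, since weak-$*$ convergence does not by itself respect Radon--Nikodym derivatives; this is handled by proving the two measure inequalities directly against continuous test functions and then invoking inner/outer regularity.
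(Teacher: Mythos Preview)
The paper does not prove this theorem; it is quoted from \cite{BCZZ1} and used as a black box (only in the proof of Theorem~\ref{SemigroupExpGap}). The paper does, however, sketch a closely related construction just before Lemma~\ref{ShadowUpperBound}, and it is exactly the Patterson--Sullivan scheme you describe: Patterson's modifier $\chi$, Dirac masses at the points of the (semi)group inside $\Gamma\sqcup M$, weak-$*$ limit along $s_n\searrow\delta$. So your overall plan is the correct one and matches what the paper (and the cited source) do.

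The one point where your write-up is not yet a proof is precisely the one you flag: the constant. Your chain of estimates uses the expanding property twice, replacing $\sigma(\alpha_n,y_0)$ by $\|\alpha_n\|_\sigma$ and $\sigma(\gamma_0^{-1}\alpha_n,y_0)$ by $\|\gamma_0^{-1}\alpha_n\|_\sigma$; each of these introduces an error $C(\epsilon)$ depending on $d(y_0,z)$, and these two errors do not cancel and do not tend to zero as $\alpha_n\to x$ (the separation $d(y_0,z)$ is fixed, not growing). As written, your argument yields
\[
\bigl|\bigl(\|\gamma_0^{-1}\alpha\|_\sigma-\|\alpha\|_\sigma\bigr)-\sigma(\gamma_0^{-1},x)\bigr|\le 2\kappa + 2C(\epsilon) + o(1),
\]
hence a $(2\kappa+2C(\epsilon))\delta$-coarse measure rather than a $2\kappa\delta$-coarse one. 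Your sentence ``this error washes out in the limit \dots\ choosing the order of limits carefully'' is a hope, not an argument: the $C(\epsilon)$ term has nothing to limit against. For every use made of this theorem in the present paper the weaker constant is already enough (in the proof of Theorem~\ref{SemigroupExpGap} the constant $C_1$ just sits in a multiplicative factor $e^{-C_1}$), so your proposal would suffice here; but if you want the sharp $2\kappa\delta$ as stated you will need a different bookkeeping that bypasses the two magnitude-vs-cocycle comparisons, and for that one should consult the actual proof in \cite{BCZZ1}.
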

\section{A Bishop--Jones Semigroup for Convergence Groups with Expanding Coarse-Cocycles}
\label{ConstructionSection}
\subsection{The Main Construction}
Let $\Gamma \subset \mathrm{Homeo}(M)$ be a discrete non-elementary convergence group equipped with an expanding coarse-cocycle $\sigma : \Gamma \times M \rightarrow \mathbb{R}$. For $n \geq 0$, define
\begin{align*}
    \mathcal{A}_{\sigma, n} := \{ \gamma \in \Gamma \ : \ n \leq ||\gamma||_{\sigma} < n+1\}.
\end{align*} Given a finite subset $S \subset \Gamma$, define 
\begin{align*}
    S^{0} := \{\mathrm{id}\} \ \ \mathrm{and} \ \  S^{m} := \{\gamma_{1} \cdots \gamma_{m} : \gamma_{i} \in S \ \ \mathrm{for} \ \ 1 \leq i \leq m \} \ \mathrm{for} \ m \geq 1.
\end{align*}
For $\gamma \in \bigcup_{m=0}^{\infty} S^{m}$, let 
\begin{align*}
|\gamma|_{S} := \min \{k \geq 1 : \gamma = \gamma_{1} \cdots \gamma_{k}, \ \mathrm{where} \ \gamma_{i} \in S \ \mathrm{for \ all} \ 1 \leq i \leq k \},
\end{align*} if $\gamma \neq \mathrm{id}$ and set $|\mathrm{id}|_{S} := 0$. Our main result is the following theorem.
\begin{thm} [Main Theorem]
\label{MainThm}
For every $0 < \delta < \delta_{\sigma}(\Gamma)$, there exists a free finitely generated semigroup $\mathcal{T} = \mathcal{T}_{\delta} \subset \Gamma$ with finite generating set $S \subset \Gamma$, which we call a $\mathrm{Bishop}$--$\mathrm{Jones \ semigroup}$, with the following properties:
\begin{itemize}
    \item[(1)] Its critical exponent satisfies \begin{align*}
        \delta_{\sigma}(\mathcal{T}) \geq \delta.
    \end{align*}
    \item[(2)] Given any expanding coarse-cocycle $\phi : \Gamma \times M \rightarrow \mathbb{R}$, there exist constants $B_{1} = B_{1}(\phi) > 1$ and $b_{1} = b_{1}(\phi) > 0$ so that
\begin{align*}
   \frac{1}{B_{1}} |\gamma|_{S} - b_{1} \leq ||\gamma||_{\phi} \leq B_{1} |\gamma|_{S} + b_{1}
\end{align*} for all $\gamma \in \mathcal{T}$. 
    \item[(3)] With $B_{1} = B_{1}(\sigma)$ as above, we have
    \begin{align*}
        \frac{1}{B_{1}} \log (\# S) \leq \delta_{\sigma}(\mathcal{T}) \leq B_{1} \log (\# S).
    \end{align*} In particular, $\delta_{\sigma}(\mathcal{T}) < \infty$.
    \item[(4)] The critical exponent of a Bishop--Jones semigroup is strictly less than that of the ambient group, that is 
    \begin{align*}
        \delta_{\sigma}(\mathcal{T}) < \delta_{\sigma}(\Gamma).
    \end{align*}
    \item[(5)] Given any two expanding coarse-cocycles $\phi_{1}, \phi_{2} : \Gamma \times M \rightarrow \mathbb{R}$, there exist constants $B_{2} = B_{2}(\phi_{1}, \phi_{2}) > 1$ and $b_{2} = b_{2}(\phi_{1}, \phi_{2}) > 0$ so that
\begin{align*}
    \frac{1}{B_{2}} ||\gamma||_{\phi_{2}} - b_{2} \leq ||\gamma||_{\phi_{1}} \leq B_{2}||\gamma||_{\phi_{2}} + b_{2}
\end{align*} for all $\gamma \in \mathcal{T}$.
\end{itemize}
\end{thm}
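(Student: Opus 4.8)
The plan is to realize a semigroup-equivariant version of the Bishop--Jones tree as the Cayley tree of a free ping-pong subsemigroup. Fix $0<\delta<\delta_\sigma(\Gamma)$ and auxiliary exponents $\delta<\delta''<\delta'<\delta_\sigma(\Gamma)$. I will construct, at some large ``level'' $N$, a finite set $S\subset\Gamma$ of loxodromic elements with: (i) $N-C_0\le\|s\|_\sigma\le N+C_0$ for all $s\in S$, and the four points $s,s^{-1},s^{+},s^{-}$ pairwise $\epsilon_1$-separated in $\Gamma\sqcup M$ for a fixed $\epsilon_1>0$ not depending on $\delta$; (ii) for a fixed small $\epsilon<\epsilon_1/8$, the positions $\{s:s\in S\}\cup\{s^{-1}:s\in S\}$ in $\Gamma\sqcup M$ pairwise $10\epsilon$-separated, so that (via Lemma~\ref{TreeLem0} and the resulting nesting $\mathcal S_\epsilon(s_1\cdots s_{j+1})\subset\mathcal S_\epsilon(s_1\cdots s_j)$) every word $w=s_1\cdots s_k$ with letters in $S$ satisfies $d\big((s_1\cdots s_j)^{-1},s_{j+1}\big)\ge\epsilon$ for all $j<k$; and (iii) $e^{\delta(N+C+C_0)}\le\#S\le e^{\delta''N}$, where $C=C(\epsilon)$ is the constant of Proposition~\ref{CocycleProperties}(3). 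Granting this, $\mathcal T:=\langle S\rangle$ is free by ping-pong using the nesting in (ii), and iterating Proposition~\ref{CocycleProperties}(3) along $w$ gives $(N-C-C_0)|w|_S\le\|w\|_\sigma\le(N+C+C_0)|w|_S$. Counting the $(\#S)^k$ words of length $k$ then pins $\delta_\sigma(\mathcal T)$ between $\tfrac{\log\#S}{N+C+C_0}$ and $\tfrac{\log\#S}{N-C-C_0}$, so by (iii) we obtain $\delta\le\delta_\sigma(\mathcal T)\le\tfrac{\delta''N}{N-C-C_0}<\delta_\sigma(\Gamma)$ for $N$ large; this is parts (1), (3) (the displayed two-sided bound being part (2) for $\phi=\sigma$, with $B_1=N+C+C_0$) and (4) all at once.

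\textbf{Producing the raw material.} One first checks the standard identity $\delta_\sigma(\Gamma)=\limsup_{n\to\infty}\tfrac1n\log\#\mathcal A_{\sigma,n}$: each $\mathcal A_{\sigma,n}$ is finite because $\|\gamma_m\|_\sigma\to\infty$ along escaping sequences (Proposition~\ref{CocycleProperties}(2)), and $Q_\sigma(s)$ is comparable to $\sum_n\#\mathcal A_{\sigma,n}e^{-sn}$. Hence $\#\mathcal A_{\sigma,n}\ge e^{\delta'n}$ for infinitely many $n$. Applying Lemma~\ref{ConvGpAMS}, there are $\epsilon_1>0$ and a finite $F\subset\Gamma$ so that each $\gamma\in\mathcal A_{\sigma,n}$ may be replaced by a loxodromic $\gamma f(\gamma)$, $f(\gamma)\in F$, with $\gamma f(\gamma),(\gamma f(\gamma))^{-1},(\gamma f(\gamma))^{\pm}$ pairwise $\epsilon_1$-separated; by Proposition~\ref{CocycleProperties}(1) the new magnitude is $\|\gamma\|_\sigma+O(1)$, and $\gamma\mapsto\gamma f(\gamma)$ is at most $\#F$-to-one. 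After discarding the finitely many exceptional elements of Lemma~\ref{TreeLem0} for our $\epsilon$ (and those with exceptional inverse) — harmless for $n$ large — Proposition~\ref{ShadowProperties}(2) makes every surviving shadow $\mathcal S_\epsilon(g)$ have diameter $<\epsilon$. This leaves a set $\widehat{\mathcal A}_n$ of $\gtrsim e^{\delta'n}$ loxodromic, $\epsilon_1$-separated elements, all of $\sigma$-magnitude $N:=n+O(1)$.

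\textbf{The main obstacle.} The crux is to extract from $\widehat{\mathcal A}_n$ a subset $S$ satisfying (ii) of size at least $e^{\delta(N+C+C_0)}$ — that is, to arrange ``ample'' and ``well-separated'' simultaneously — while losing only a subexponential factor (subexponential suffices, since $\delta<\delta''<\delta'$). The danger is that the positions $\{g:g\in\widehat{\mathcal A}_n\}$, and separately the inverse-positions $\{g^{-1}\}$, may cluster inside a single $\epsilon$-ball of $\Gamma\sqcup M$, making every $10\epsilon$-separated subfamily tiny. I would handle this by a multi-scale renormalization in the spirit of Bishop--Jones: if the mass concentrates near a point $x_0\in M$, choose a group element $\gamma_0$ whose shadow $\mathcal S_\epsilon(\gamma_0)$ is the innermost one containing the cluster, and replace $\widehat{\mathcal A}_n\cap\mathcal S_\epsilon(\gamma_0)$ by $\gamma_0^{-1}\big(\widehat{\mathcal A}_n\cap\mathcal S_\epsilon(\gamma_0)\big)$, which $\gamma_0^{-1}$ spreads across $\Gamma\sqcup M$ (Lemma~\ref{TreeLem0}) while shifting $\sigma$-magnitudes by $\|\gamma_0\|_\sigma$ up to a bounded error (Proposition~\ref{ShadowProperties}(3)); iterating and bookkeeping the counts yields a family of the required size with separated positions, which one then thins once more to separate the inverse-positions. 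Alternatively one can sidestep clustering with the $\sigma$-Patterson--Sullivan measure $\mu$ of dimension $\delta_\sigma(\Gamma)$ (Theorem~\ref{PSmeasure}): a shadow lemma — deduced from the Patterson--Sullivan inequality and Proposition~\ref{ShadowProperties}(1) — gives $\mu(\mathcal S_\epsilon(g))\asymp e^{-\delta_\sigma(\Gamma)N}$, so a Vitali-type selection of disjoint shadows covering $\bigcup_{g\in\widehat{\mathcal A}_n}\mathcal S_\epsilon(g)$, a set of definite $\mu$-mass, produces $\gtrsim e^{\delta_\sigma(\Gamma)N}$ pairwise-separated generators, from which $S$ is read off. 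I expect this extraction to be the technical heart of the proof.

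\textbf{The remaining assertions.} For part (2), fix any expanding coarse-cocycle $\phi$. Since the separations $d\big((s_1\cdots s_j)^{-1},s_{j+1}\big)\ge\epsilon$ are built into $S$, iterating Proposition~\ref{CocycleProperties}(3) gives $\big|\,\|w\|_\phi-\sum_i\|s_i\|_\phi\,\big|\le C_\phi(|w|_S-1)$, which yields the upper bound at once. For the lower bound one uses that $d(s^{-k},s^{l})>\tfrac{\epsilon_1}{2}$ for all $k,l\ge1$ (forced by (i) and Lemma~\ref{TreeLem0}), so that $\|s^{k+l}\|_\phi=\|s^k\|_\phi+\|s^l\|_\phi+O(1)$ with uniform error; a Fekete argument then gives $\|s^k\|_\phi=k\,\ell_\phi(s)+O(1)$ with $\ell_\phi(s)=\lim_k\|s^k\|_\phi/k$, and $\ell_\phi(s)>0$ because $\|s^k\|_\phi\to\infty$ by properness of $\phi$ ($\{s^k\}$ being an escaping sequence of loxodromics with constant separation of fixed points); taking the generators deep enough makes the accumulated coarse error negligible against $\sum_i\ell_\phi(s_i)$, giving the linear lower bound. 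Part (5) is then immediate: apply part (2) to $\phi_1$ and to $\phi_2$ and eliminate $|\gamma|_S$. Part (4) was already obtained in the first paragraph, from the two-sided estimate for $\sigma$ and the size constraint (iii).
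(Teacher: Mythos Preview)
The proposal has a genuine gap in the core construction. Your condition (ii) requires the points $\{s:s\in S\}\cup\{s^{-1}:s\in S\}$ to be pairwise $10\epsilon$-separated in the compact metric space $\Gamma\sqcup M$; but any $10\epsilon$-separated subset of a compact metric space has cardinality bounded by a constant depending only on $\epsilon$ and the space, independent of $N$. This is flatly incompatible with your condition (iii), which demands $\#S\ge e^{\delta(N+C+C_0)}\to\infty$. Neither of your proposed fixes escapes this: a Vitali selection of disjoint shadows produces disjoint \emph{shadows}, not $10\epsilon$-separated positions, and your renormalization by $\gamma_0^{-1}$ can spread the points but cannot manufacture more than boundedly many $10\epsilon$-separated ones. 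So the ``ample and well-separated simultaneously'' step is not a technical heart to be filled in---it is impossible as formulated.

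The paper's construction sidesteps this by abandoning pairwise separation of the positions altogether. All generators are placed inside a single small ball $B_{t_0}(x)$ about a point $x\in\Lambda(\Gamma)$, with no geometric bound on $\#S$. Two mechanisms replace your (ii). First, the Abels--Margulis--Soifer lemma forces each $s^{-1}$ to be uniformly far from $s$, hence far from $x$, hence far from every element of $S$ and inductively from every $S^m$; this one-sided separation is all that is needed for $d\big((s_1\cdots s_j)^{-1},s_{j+1}\big)\ge\epsilon_0$ and thus for the coarse additivity of magnitudes. Second, freeness and disjointness of shadows are arranged not metrically but combinatorially, via a coloring lemma (Lemma~\ref{ColoringLem}): partition $\Gamma$ into finitely many color classes so that distinct $\alpha,\beta$ in the same class satisfy $\|\alpha^{-1}\beta\|_\sigma\ge C$, and then Lemma~\ref{TreeLem2} makes their shadows disjoint. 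The point is that the loss from coloring is a fixed factor $L$ not depending on the level, and the divergence of the \emph{localized} annular sum $\sum_{\eta\in\mathcal A_{\sigma,n}\cap B_{t_0/2}(x)}e^{-\delta\|\eta\|_\sigma}$ (Lemma~\ref{TreeLem4}) lets one pick $n$ with that sum as large as desired even after restricting to one color. A consequence is that your short route to (4) via an upper bound on $\#S$ is no longer available; the paper establishes (4) separately by showing the semigroup is of divergence type and running a Dal'bo--Otal--Peign\'e argument against the Patterson--Sullivan measure of $\Gamma$. Your sketch for (2) also needs adjustment: since $S$ is fixed before $\phi$ is given, you cannot ``take the generators deep enough'' for $\phi$; the paper instead groups letters into blocks of length $m=m(\phi)$ in $S^m$ so that $\|\beta\|_\phi\ge C_\phi+1$ for every block $\beta$, and applies coarse additivity at the block level.
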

Property (1) is established in Corollary \ref{CritExpApprox}. Property (2) is proved in Lemma \ref{TreeMag1}. Notice that property (5) is an immediate consequence of property (2). Property (3) is established in Proposition \ref{LogBounds}. By this property, to establish property (4) it suffices to consider the case when $\delta_{\sigma}(\Gamma) < \infty$. This is the content of Theorem \ref{SemigroupExpGap}.
\par 
In section \ref{PropertiesSubsection}, we provide the proofs of properties (1), (2), and (5), as well as discussing the relation between Bishop--Jones semigroups and the conical limit set of $\Gamma$. Section \ref{ExpGapSubsection} is devoted to the proofs of properties (3) and (4).
\par 
The heart of the argument is the following proposition, whose proof occupies the remainder of this section. It is the key technical result needed to establish Theorem \ref{MainThm}. We emphasize that not only the statement of the proposition, but also its proof, will be used later on. Henceforth we fix a compatible metric $d$ on $\Gamma \sqcup M$.
\begin{prop}
\label{BJtree}
For every $0 < \delta < \delta_{\sigma}(\Gamma)$, there exist a finite subset $S \subset \Gamma$, $D_{0} > 0$, and $t_{0} > 0$ so that: for any $m \geq 0$, if $\gamma \in S^{m}$, then the set $\gamma \cdot S \subset S^{m+1}$ has the following properties:
\begin{itemize}
    \item[(1)] If $\eta \in \gamma \cdot S$, then $\mathcal{S}_{\frac{t_{0}}{4}}(\eta) \subset \mathcal{S}_{\frac{t_{0}}{2}}(\gamma)$.
    \item[(2)] The sets $\{\mathcal{S}_{\frac{t_{0}}{4}}(\eta)\}_{\eta \in \gamma \cdot S}$ are pairwise disjoint.
    \item[(3)] If $\eta \in \gamma \cdot S$, then $||\eta^{-1} \gamma||_{\sigma} \leq D_{0}$.
    \item[(4)] $\sum_{\eta \in \gamma \cdot S} e^{-\delta||\eta||_{\sigma}} \geq e^{-\delta ||\gamma||_{\sigma}}$. 
\end{itemize} Moreover, the resulting subset $\mathcal{T} = \mathcal{T}_{\delta}$ of $\Gamma$ defined by
\begin{align*}
    \mathcal{T} := \bigcup_{m=0}^{\infty} S^{m}
\end{align*}
is a free subsemigroup of $\Gamma$ with finite generating set $S$.
\end{prop}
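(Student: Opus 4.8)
The plan is to take $\mathcal{T}$ to be the semigroup generated by a single, carefully chosen finite set $S$ of loxodromic elements lying in one very large $\sigma$-magnitude window $[n,n+1)$, with pairwise well-separated dynamics and pairwise disjoint shadows, and then to read off (1)--(4) and freeness from a ping-pong argument for the resulting tree of shadows. Several of the conclusions are formal once $S$ is in hand. Property (3) is immediate: if $\eta=\gamma s$ with $s\in S$ then $\eta^{-1}\gamma=s^{-1}$, so $D_{0}:=\max_{s\in S}\|s^{-1}\|_{\sigma}$ works. Property (4) is deduced from (1): for $\gamma\in S^{m}$ and $s\in S$, (1) gives $\varnothing\neq\mathcal{S}_{t_{0}/4}(\gamma s)\subset\mathcal{S}_{t_{0}/2}(\gamma)\subset\mathcal{S}_{t_{0}/4}(\gamma)$, so those two shadows meet and Proposition~\ref{ShadowProperties}(3) (using $\gamma^{-1}(\gamma s)=s$, $\|s\|_{\sigma}<n+1$) yields a \emph{universal} constant $C$ with $\|\gamma s\|_{\sigma}\le\|\gamma\|_{\sigma}+n+1+C$ for every $s\in S$ (the case $\|\gamma s\|_\sigma<\|\gamma\|_\sigma$ being trivial); hence $\sum_{\eta\in\gamma\cdot S}e^{-\delta\|\eta\|_{\sigma}}\ge(\#S)\,e^{-\delta(n+1+C)}e^{-\delta\|\gamma\|_{\sigma}}$, which is $\ge e^{-\delta\|\gamma\|_{\sigma}}$ as soon as $\#S\ge e^{\delta(n+1+C)}$. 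Freeness of $\mathcal{T}$ on $S$ follows from (1) and (2): iterating (1) makes the shadows $\mathcal{S}_{t_{0}/4}(s_{j_{1}}\cdots s_{j_{k}})$ nested and decreasing in $k$, and (2) makes the children disjoint and nonempty at each stage, so a relation $s_{j_{1}}\cdots s_{j_{m}}=s_{k_{1}}\cdots s_{k_{\ell}}$ forces $j_{1}=k_{1}$; cancelling on the left and repeating leaves an identity $\mathrm{id}=s_{k_{m+1}}\cdots s_{k_{\ell}}$, impossible because iterating (1) would give $M=\mathcal{S}_{t_{0}/4}(\mathrm{id})\subset\mathcal{S}_{t_{0}/4}(s_{k_{m+1}})$, whereas (having chosen $t_{0}<d(\mathrm{id},M)$ and discarded the finitely many $s$ with $d(s^{-1},M)\ge t_{0}/4$) every $\mathcal{S}_{t_{0}/4}(s)$, $s\in S$, is a proper subset of $M$.

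\textbf{Construction of $S$.} Fix a compatible metric $d$ on $\Gamma\sqcup M$. Using Lemmas~\ref{TreeLem0} and~\ref{ConvGpAMS}, fix a finite $F\subset\Gamma$ and $\epsilon_{0}>0$ valid for the relevant radii, and choose $t_{0}\in(0,\epsilon_{0})$ small, in particular $t_{0}<d(\mathrm{id},M)$. To produce abundantly many good generators, fix $\delta'\in(\delta,\delta_{\sigma}(\Gamma))$; since $Q_{\sigma}(\delta')=\infty$ we get $\sum_{n}(\#\mathcal{A}_{\sigma,n})e^{-\delta'n}=\infty$, and combining Lemma~\ref{ConvGpAMS} with a pigeonhole over $F$ and Proposition~\ref{CocycleProperties}(1) gives the same divergence for the loxodromic, $\epsilon_{0}$-separated elements of magnitude in $[n,n+1)$; hence this count exceeds $e^{\delta'n}/n^{2}$ for infinitely many arbitrarily large $n$. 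For such an $n$, thin greedily using Proposition~\ref{ShadowProperties}(3)---two elements of the same window whose $\tfrac{t_0}{4}$-shadows meet differ by an element of bounded $\sigma$-magnitude, hence each has boundedly many ``shadow-neighbours''---to obtain $S$ with pairwise disjoint $\tfrac{t_0}{4}$-shadows and still $\#S\ge e^{\delta'n}/(C'n^{2})$; discard the finitely many $s$ with $s\in F$, $s^{-1}\in F$, or $d(s^{-1},M)\ge t_{0}/4$. Taking $n$ large then secures $\#S\ge e^{\delta(n+1+C)}$ (hence (4)) and makes all the generators' shadows tiny.

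\textbf{The heart: (1) and (2) at every level, and the main obstacle.} It remains to verify (1) and (2) for all $\gamma\in S^{m}$ simultaneously, which I would do by induction on $m$. The idea is that $\sigma$-magnitudes grow along words in $S$ (carried along with the induction), so by Proposition~\ref{ShadowProperties}(2) the shadows $\mathcal{S}_{t_{0}/4}(\gamma)$ and $\mathcal{S}_{t_{0}/4}(\gamma s)$ for $m\ge 1$ are tiny and Hausdorff-close to the points $\gamma,\gamma s\in\Gamma\sqcup M$; one then unwinds $\mathcal{S}_{t_{0}/4}(\gamma s)=\gamma\big(s(M\setminus B_{t_{0}/4}(s^{-1}\gamma^{-1}))\big)$ and applies Lemma~\ref{TreeLem0} to the large-magnitude element $s$ to place this tiny set inside $\mathcal{S}_{t_{0}/2}(\gamma)$, the slack between $\tfrac{t_0}{4}$ and $\tfrac{t_0}{2}$ absorbing the Hausdorff errors; disjointness of $\{\mathcal{S}_{t_{0}/4}(\gamma s)\}_{s\in S}$ then reduces, by injectivity of $\gamma$, to the level-$0$ disjointness built into $S$. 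The catch---and the main obstacle---is that making all this work requires an inductive invariant controlling the location of the point $\gamma^{-1}$ (equivalently of $(\gamma s)^{-1}=s^{-1}\gamma^{-1}$), of the shape ``$\gamma^{-1}$ lies in a fixed closed region that is forward-invariant under every $s^{-1}$, contains $\mathrm{id}$, and stays uniformly $\epsilon_{0}$-away from the repelling data of $S$'', and \emph{because $\sigma$ is only a coarse-cocycle there is no control on $\|s^{-1}\|_{\sigma}$}, so one cannot invoke ``large-magnitude dynamics'' for the inverse generators: the separation that makes the tree self-similar and scale-independent must be extracted purely topologically from Lemma~\ref{ConvGpAMS}, which forces a more delicate choice of $S$ than merely ``disjoint shadows'' (arranging the attracting and repelling directions of the generators to be separated \emph{as sets} without destroying the exponential lower bound on $\#S$). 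Orchestrating the hierarchy of constants---$\epsilon_{0}$ first, then $t_{0}\ll\epsilon_{0}$, then $n$ large enough both to shrink the shadows and to make $\#S\gtrsim e^{\delta n}$---so that every estimate closes is the bookkeeping occupying the rest of the section.
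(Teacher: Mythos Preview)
Your outline is on the right track and you correctly identify the crux: one needs an inductive invariant placing $\gamma^{-1}$ (for $\gamma\in S^m$) in a fixed region uniformly separated from the ``attracting data'' of $S$. But you do not actually resolve this obstacle; the last paragraph describes what is needed without producing it, and the construction of $S$ you give (global count in a magnitude window, then greedy thinning for disjoint $t_0/4$-shadows) does not by itself supply such a region. The paper's key idea here is to \emph{localize} $S$ inside a small ball $B_{t_0}(x)$ around a single limit point $x\in\Lambda(\Gamma)$, using Lemma~\ref{TreeLem4} (divergence of the annular Poincar\'e sum inside every ball meeting $\Lambda(\Gamma)$) to guarantee there are enough elements there. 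Combined with the AMS-type Lemma~\ref{ConvGpAMS} (forcing $d(s,s^{-1})>\tilde\epsilon\gg t_0$ after right-multiplying by a fixed $f_i$), this makes $S\subset B_{t_0}(x)$ while $d(s^{-1},x)>3t_0$ for every $s\in S$. The invariant $d\big((\gamma_1\cdots\gamma_m)^{-1},x\big)>2t_0$ then follows by a one-line induction using only Lemma~\ref{TreeLem0} for the elements of $S^{-1}$ (a purely topological statement, so your worry about lacking control of $\|s^{-1}\|_\sigma$ dissolves: one simply takes $n$ large enough that $S\cup S^{-1}$ avoids the fixed finite exceptional set). Your construction never singles out such an $x$, so there is no obvious candidate for the invariant region; the extra pigeonhole needed to separate $\{s^+\}$ from $\{s^-\}$ \emph{as sets} is precisely what localization accomplishes.

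There is also a genuine gap in your reduction of (2). You write that disjointness of $\{\mathcal{S}_{t_0/4}(\gamma s)\}_{s\in S}$ ``reduces, by injectivity of $\gamma$, to the level-$0$ disjointness built into $S$'', but $\gamma^{-1}\mathcal{S}_{t_0/4}(\gamma s)=s\big(M\setminus B_{t_0/4}(s^{-1}\gamma^{-1})\big)$ is \emph{not} $\mathcal{S}_{t_0/4}(s)=s\big(M\setminus B_{t_0/4}(s^{-1})\big)$; the excised ball is centred at a different point. The paper handles exactly this via the geometric Lemma~\ref{TreeLem3}, which shows (under $d(\gamma^{-1},\gamma^{-1}\eta)\ge t_0$, itself a consequence of the localization invariant) that $\gamma^{-1}\mathcal{S}_{t_0/4}(\eta)\subset\mathcal{S}_{t_0/8}(\gamma^{-1}\eta)$; disjointness at level $m$ then follows from disjointness of the $t_0/8$-shadows of elements of $S$, which is arranged not by your greedy thinning but by the coloring Lemma~\ref{ColoringLem} together with Lemma~\ref{TreeLem2}. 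Your derivations of (3), (4), and freeness from (1)--(2) are fine (the paper proves (4) slightly differently, via $\sum_{s\in S}e^{-\delta\|s\|_\sigma}\ge A^{-\delta}$ and Lemma~\ref{TreeLem1} rather than a raw cardinality bound, but your route works once $\delta'>\delta$).
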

Before we can give the proof, we need to establish a few technical lemmas. 
\begin{lem}
\label{ColoringLem}
For any $C > 0$, there exists an integer $L = L(C) \geq 1$ and a partition $\Gamma = P_{1} \sqcup \cdots \sqcup P_{L}$ of $\Gamma$ so that for any $1 \leq j \leq L$ and any pair of distinct elements $\alpha, \beta \in P_{j}$, we have $||\alpha^{-1} \beta||_{\sigma} \geq C$.
\end{lem}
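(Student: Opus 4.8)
The plan is to recast the lemma as a graph–colouring statement. Define a graph $\mathcal{G}$ with vertex set $\Gamma$ by declaring two distinct elements $\alpha,\beta\in\Gamma$ adjacent precisely when $\min\{\|\alpha^{-1}\beta\|_{\sigma},\ \|\beta^{-1}\alpha\|_{\sigma}\}<C$. A partition $\Gamma=P_{1}\sqcup\cdots\sqcup P_{L}$ of the required type is exactly a proper $L$-colouring of $\mathcal{G}$: if $\alpha\neq\beta$ lie in the same colour class, they are non-adjacent, hence $\|\alpha^{-1}\beta\|_{\sigma}\geq C$ (and $\|\beta^{-1}\alpha\|_{\sigma}\geq C$ as well). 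So it suffices to bound the chromatic number of $\mathcal{G}$, for which it is enough to bound its maximal vertex degree and then colour greedily.

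First I would show that the ball $A:=\{\gamma\in\Gamma:\|\gamma\|_{\sigma}<C\}$ is finite, with cardinality $N=N(C)$ depending only on $C$. Indeed, were $A$ infinite, then, $\Gamma$ being countable, one could enumerate pairwise distinct elements $\gamma_{1},\gamma_{2},\dots$ of $A$; such a sequence is escaping (for any finite $F\subset\Gamma$ only finitely many $\gamma_{n}$ lie in $F$), so Proposition \ref{CocycleProperties}(2) forces $\|\gamma_{n}\|_{\sigma}\to\infty$, contradicting $\|\gamma_{n}\|_{\sigma}<C$ for all $n$.

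Next I would bound the degree of an arbitrary vertex $\alpha\in\Gamma$ in $\mathcal{G}$. Substituting $\beta=\alpha\gamma$ shows $\{\beta:\|\alpha^{-1}\beta\|_{\sigma}<C\}=\alpha A$, and substituting $\beta=\alpha\gamma^{-1}$ shows $\{\beta:\|\beta^{-1}\alpha\|_{\sigma}<C\}=\alpha A^{-1}$; both sets have cardinality at most $N$, so $\alpha$ has at most $2N$ neighbours in $\mathcal{G}$. Hence $\mathcal{G}$ has maximal degree at most $2N$, and the usual greedy argument — enumerate $\Gamma$ and colour each vertex in turn with the least colour avoided by its already-coloured neighbours — yields a proper colouring with $L:=2N+1$ colours. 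Taking $P_{j}$ to be the $j$-th colour class completes the proof.

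I do not expect a serious obstacle here; the two points that need care are (i) the finiteness of $A$, which is exactly the content of the "escaping sequence $\Rightarrow$ magnitude $\to\infty$" property of Proposition \ref{CocycleProperties}(2), and (ii) the failure of $\|\cdot\|_{\sigma}$ to be symmetric, which is why the conflict graph must be symmetrised by using $\min\{\|\alpha^{-1}\beta\|_{\sigma},\|\beta^{-1}\alpha\|_{\sigma}\}$ in the adjacency condition. One should also remark that greedy colouring of a countably infinite graph of bounded degree is legitimate, since at every step only finitely many colours are forbidden.
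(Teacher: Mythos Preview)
Your proposal is correct and follows essentially the same route as the paper's proof: both recast the statement as a graph-colouring problem, use Proposition~\ref{CocycleProperties}(2) to show that only finitely many $\gamma\in\Gamma$ satisfy $\|\gamma\|_{\sigma}<C$, symmetrise to obtain an undirected graph of bounded degree, and then apply greedy colouring. The only cosmetic difference is that the paper first fixes an enumeration of $\Gamma$ and phrases the forbidden set as $\{\gamma_{1},\dots,\gamma_{N-1},\gamma_{1}^{-1},\dots,\gamma_{N-1}^{-1}\}$, whereas you work directly with the set $A=\{\gamma:\|\gamma\|_{\sigma}<C\}$ and its inverse.
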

\begin{proof}
Fix an enumeration $\Gamma = \{\gamma_{n}\}_{n \geq 1}$ of the elements of $\Gamma$. By part (2) of Proposition \ref{CocycleProperties}, we have that $||\gamma_{n}||_{\sigma} \rightarrow \infty$ as $n \rightarrow \infty$. Thus there exists an integer $N \geq 1$ so that $||\gamma_{n}||_{\sigma} \geq C$ for all $n \geq N$. Define a graph $\mathcal{G}$ having vertex set the elements of $\Gamma$ as follows. There is an edge between two elements $\alpha$ and $\beta$ of $\Gamma$ if and only if $\alpha \neq \beta$ and 
\begin{align*}
    \alpha^{-1} \beta \in \{\gamma_{1}, \gamma_{2}, \dots, \gamma_{N-1}, \gamma_{1}^{-1}, \gamma_{2}^{-1}, \dots, \gamma_{N-1}^{-1} \}.
\end{align*} Thus the degree of the graph $\mathcal{G}$ is bounded above by $2N-2$. Applying a greedy coloring, we see that $\mathcal{G}$ can be colored by $L := 2N-1$ colors so that adjacent vertices have different colors. By construction, this coloring corresponds exactly to a partition $\Gamma = P_{1} \sqcup \cdots \sqcup P_{L}$ of $\Gamma$ so that for any $1 \leq j \leq L$, if $\alpha, \beta \in P_{j}$ are distinct elements, then 
\begin{align*}
    \alpha^{-1} \beta \notin \{\gamma_{1}, \gamma_{2}, \dots, \gamma_{N-1}, \gamma_{1}^{-1}, \gamma_{2}^{-1}, \dots, \gamma_{N-1}^{-1} \},
\end{align*} which implies that $||\alpha^{-1} \beta||_{\sigma} \geq C$, as desired.
\end{proof}
\begin{lem}
\label{ConvGpLem}
Fix $x \in M$, $f \in \Gamma$, and $0 < \epsilon' < \epsilon$. There exists $N = N(x,f,\epsilon,\epsilon') \geq 1$ so that for all $n \geq N$, if $\gamma \in \mathcal{A}_{\sigma, n} \cap B_{\epsilon'}(x)$, then $\gamma f \in B_{\epsilon}(x)$.
\end{lem}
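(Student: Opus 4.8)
The plan is to combine Lemma~\ref{TreeLem0} --- which controls where $\gamma$ sends a point: once $\gamma$ lies outside a fixed finite set, $\gamma$ pushes everything outside a small ball about $\gamma^{-1}$ into a small ball about $\gamma$ --- with the fact that $\gamma^{-1}$ is forced close to the boundary $M$, hence uniformly far from the fixed element $f \in \Gamma$, as soon as $||\gamma||_{\sigma}$ is large. I would set $\delta_{0} := d(f, M)$; since $M$ is compact, hence closed, in $\Gamma \sqcup M$ and $f \notin M$, we have $\delta_{0} > 0$. Then fix once and for all $\epsilon_{0} := \min\{\epsilon - \epsilon', \frac{1}{2}\delta_{0}\} > 0$: the inequality $\epsilon_{0} \le \epsilon - \epsilon'$ is what will let the triangle inequality close the argument, and $\epsilon_{0} \le \frac{1}{2}\delta_{0}$ is what will let us verify the hypothesis of Lemma~\ref{TreeLem0}.

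The first real step is the auxiliary claim: for every $\eta > 0$ there is $R = R(\eta) > 0$ such that $||\gamma||_{\sigma} \ge R$ implies $d(\gamma^{-1}, M) < \eta$. I would prove this by contradiction. If it failed there would be $\gamma_{k} \in \Gamma$ with $||\gamma_{k}||_{\sigma} \to \infty$ and $d(\gamma_{k}^{-1}, M) \ge \eta$ for all $k$. By Proposition~\ref{CocycleProperties}(2) the set $\{\gamma \in \Gamma : ||\gamma||_{\sigma} \le C\}$ is finite for every $C$, so $\{\gamma_{k}\}$ escapes every finite subset of $\Gamma$, and therefore so does $\{\gamma_{k}^{-1}\}$. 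Passing to a subsequence if necessary and applying Proposition~\ref{ShadowProperties}(2) to the escaping sequence $\{\gamma_{k}^{-1}\}$ yields $\inf_{y \in \mathcal{S}_{1}(\gamma_{k}^{-1})} d(\gamma_{k}^{-1}, y) \to 0$; since $\mathcal{S}_{1}(\gamma_{k}^{-1}) = \gamma_{k}^{-1}\big(M \setminus B_{1}(\gamma_{k})\big) \subseteq M$, this forces $d(\gamma_{k}^{-1}, M) \to 0$, contradicting $d(\gamma_{k}^{-1}, M) \ge \eta$. (If one is willing to import from \cite{BCZZ1} that $\Gamma$ is a discrete, open subset of $\Gamma \sqcup M$, this step becomes immediate: pick $r > 0$ with $B_{r}(f) \cap \Gamma = \{f\}$, so $d(\gamma^{-1}, f) \ge r$ for every $\gamma \ne f^{-1}$.)

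With the claim in hand I would assemble the constants. Apply Lemma~\ref{TreeLem0} with radius $\epsilon_{0}$ to get a finite set $F = F(\epsilon_{0}) \subset \Gamma$ with $\gamma\big((\Gamma \sqcup M) \setminus B_{\epsilon_{0}}(\gamma^{-1})\big) \subset B_{\epsilon_{0}}(\gamma)$ for all $\gamma \notin F$; by Proposition~\ref{CocycleProperties}(2) the finite set $F \cup \{f^{-1}\}$ has $\sigma$-magnitudes bounded by some $R_{2}$, and the claim supplies $R_{1} := R(\frac{1}{2}\delta_{0})$. Take $N := \lceil \max(R_{1}, R_{2}) \rceil + 1$. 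Now let $n \ge N$ and $\gamma \in \mathcal{A}_{\sigma, n} \cap B_{\epsilon'}(x)$ (an empty intersection leaves nothing to prove). Then $||\gamma||_{\sigma} \ge n > R_{2}$, so $\gamma \notin F$ and $\gamma \ne f^{-1}$; and $||\gamma||_{\sigma} \ge n \ge R_{1}$, so $d(\gamma^{-1}, M) < \frac{1}{2}\delta_{0}$ and hence $d(\gamma^{-1}, f) \ge \delta_{0} - d(\gamma^{-1}, M) > \frac{1}{2}\delta_{0} \ge \epsilon_{0}$. Thus $f \in (\Gamma \sqcup M) \setminus B_{\epsilon_{0}}(\gamma^{-1})$, so Lemma~\ref{TreeLem0} gives $\gamma f = \gamma \cdot f \in B_{\epsilon_{0}}(\gamma)$; together with $d(\gamma, x) < \epsilon'$ this gives $d(\gamma f, x) \le d(\gamma f, \gamma) + d(\gamma, x) < \epsilon_{0} + \epsilon' \le \epsilon$, i.e.\ $\gamma f \in B_{\epsilon}(x)$.

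I expect the only genuinely delicate point to be the auxiliary claim --- the assertion that inverses of elements of large $\sigma$-magnitude cluster on $M$, uniformly enough to stay a fixed distance away from the given $f$. Everything after that is bookkeeping with the triangle inequality and with the finiteness of $\{\gamma \in \Gamma : ||\gamma||_{\sigma} \le C\}$.
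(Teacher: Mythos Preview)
Your proof is correct and takes a somewhat different route from the paper's. The paper argues by contradiction: assuming failure, it extracts $\gamma_{n_j}$ with $\gamma_{n_j} \to a \in \overline{B_{\epsilon'}(x)} \subset B_\epsilon(x)$ and $\gamma_{n_j}^{-1} \to b \in M$, then invokes Proposition~2.3(1) of \cite{BCZZ1} to conclude that $\gamma_{n_j}|_{(\Gamma \sqcup M)\setminus\{b\}} \to a$ locally uniformly; since $f \in \Gamma$ forces $f \ne b$, this gives $\gamma_{n_j} f \to a \in B_\epsilon(x)$, the desired contradiction. You instead work directly with the quantitative form of the same phenomenon, namely Lemma~\ref{TreeLem0}, combined with the observation that $d(f,M) > 0$ keeps $f$ outside $B_{\epsilon_0}(\gamma^{-1})$ once $||\gamma||_\sigma$ is large. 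Both arguments rest on the single underlying fact---$f \notin M$, so $f$ cannot be the limit of $\gamma_n^{-1}$---but yours is constructive and produces an explicit $N$, while the paper's is a shorter soft argument. One cosmetic point: the shadow $\mathcal{S}_1(\gamma_k^{-1})$ in your auxiliary claim may be empty if $\mathrm{diam}(\Gamma \sqcup M) \le 1$; replace $1$ by any radius smaller than $\mathrm{diam}\,M$, or simply use the parenthetical alternative you already mention.
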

\begin{proof}
Suppose not. Then there is some sequence of integers $n_{j} \rightarrow \infty$ and a sequence $\{\gamma_{n_{j}}\} \subset \Gamma$ so that $\gamma_{n_{j}} \in \mathcal{A}_{\sigma, n_{j}} \cap B_{\epsilon'}(x)$ but $\gamma_{n_{j}}f \notin B_{\epsilon}(x)$ for all $j \geq 1$. By passing to a subsequence $\{\gamma_{n_{j_{k}}}\}$ of $\{\gamma_{n_{j}}\}$, we have that $\gamma_{n_{j_{k}}} \rightarrow a \in \overline{B_{\epsilon'}(x)} \subset B_{\epsilon}(x)$ and $\gamma_{n_{j_{k}}}^{-1} \rightarrow b \in M$ as $k \rightarrow \infty$. By part (1) of Proposition 2.3 of \cite{BCZZ1}, it follows that $\gamma_{n_{j_{k}}}|_{M \smallsetminus \{b\}}$ converges locally uniformly to $a$. Further, the proof of that proposition then implies that $\gamma_{n_{j_{k}}}|_{(\Gamma \sqcup M) \smallsetminus \{b\}}$ converges locally uniformly to a. Since $f \neq b$ (as $f \in \Gamma$), this implies that $\gamma_{n_{j_{k}}}f \in B_{\epsilon}(x)$ for some $k$ sufficiently large, which is a contradiction.
\end{proof}
The following lemma is immediate from part (3) of Proposition \ref{CocycleProperties}, but we record it below as it will be convenient later.
\begin{lem}
\label{TreeLem1}
For every $\epsilon > 0$, there exists $A = A(\epsilon) > 0$ so that if $\alpha, \beta \in \Gamma$ and $d(\alpha^{-1}, \beta) \geq \epsilon$, then 
\begin{align*}
e^{-||\alpha \beta||_{\sigma}} \geq A e^{-||\alpha||_{\sigma}}  e^{-||\beta||_{\sigma}}. 
\end{align*}
\end{lem}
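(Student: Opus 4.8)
The plan is to obtain the inequality directly from the upper bound in part (3) of Proposition \ref{CocycleProperties}. Given $\epsilon > 0$, that proposition supplies a constant $C = C(\epsilon) > 0$ such that whenever $\alpha, \beta \in \Gamma$ satisfy $d(\alpha^{-1}, \beta) \geq \epsilon$, one has the two-sided estimate $||\alpha||_{\sigma} + ||\beta||_{\sigma} - C \leq ||\alpha\beta||_{\sigma} \leq ||\alpha||_{\sigma} + ||\beta||_{\sigma} + C$. Only the rightmost of these two inequalities is needed here, and the hypothesis $d(\alpha^{-1},\beta) \geq \epsilon$ is precisely the separation condition under which that inequality is available.

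First I would take the upper bound $||\alpha\beta||_{\sigma} \leq ||\alpha||_{\sigma} + ||\beta||_{\sigma} + C$, multiply through by $-1$ to get $-||\alpha\beta||_{\sigma} \geq -||\alpha||_{\sigma} - ||\beta||_{\sigma} - C$, and then apply the monotone function $t \mapsto e^{t}$ to both sides. This yields $e^{-||\alpha\beta||_{\sigma}} \geq e^{-C}\, e^{-||\alpha||_{\sigma}}\, e^{-||\beta||_{\sigma}}$. Setting $A := A(\epsilon) = e^{-C(\epsilon)} > 0$ then gives exactly the claimed bound, with $A$ depending only on $\epsilon$ and not on the particular elements $\alpha, \beta$.

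There is essentially no obstacle to overcome: all the genuine analytic content — controlling the $\sigma$-magnitude of a product of two elements, using the expanding property of $\sigma$ together with the separation hypothesis — is already packaged into Proposition \ref{CocycleProperties}(3) (established in \cite{BCZZ1}). The present lemma is merely the exponentiated reformulation of its upper half, isolated here because this multiplicative form is the one that will be convenient in the proof of Proposition \ref{BJtree}, where the weights $e^{-\delta||\gamma||_{\sigma}}$ along the branches of the semigroup tree need to be compared multiplicatively rather than additively, and where the uniformity of $A$ in $\epsilon$ alone is what allows it to be applied simultaneously at every level $S^{m}$ of the construction.
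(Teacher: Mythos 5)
Your proposal is correct and is exactly the paper's argument: invoke the upper bound $||\alpha\beta||_{\sigma} \leq ||\alpha||_{\sigma} + ||\beta||_{\sigma} + C(\epsilon)$ from part (3) of Proposition \ref{CocycleProperties} and exponentiate, taking $A = e^{-C}$. Nothing further is needed.
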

\begin{proof}
By part (3) of Proposition \ref{CocycleProperties}, there exists a constant $C = C(\epsilon) > 0$ so that if $\alpha, \beta \in \Gamma$ satisfy $d(\alpha^{-1}, \beta) \geq \epsilon$, then
\begin{align*}
    ||\alpha \beta||_{\sigma} \leq ||\alpha||_{\sigma} + ||\beta||_{\sigma} + C.
\end{align*} Hence
\begin{align*}
   e^{-||\alpha \beta||_{\sigma}} \geq e^{-C} e^{-||\alpha||_{\sigma}} e^{-||\beta||_{\sigma}} = A e^{-||\alpha||_{\sigma}} e^{-||\beta||_{\sigma}},
\end{align*} where we define $A := e^{-C}$.
\end{proof}
\begin{lem}
\label{TreeLem2}
For every $\epsilon > 0$ and $M \in \mathbb{N}$, there exists a constant $C = C(\epsilon, M) > 0$ so that if $\alpha, \beta \in \Gamma$ satisfy
\begin{align*}
0 \leq ||\beta||_{\sigma} - ||\alpha||_{\sigma} \leq M \ \ \mathrm{and} \ \ ||\alpha^{-1} \beta||_{\sigma} > C,
\end{align*} then
\begin{align*}
\mathcal{S}_{\epsilon}(\alpha) \cap \mathcal{S}_{\epsilon}(\beta) = \emptyset.
\end{align*}
\end{lem}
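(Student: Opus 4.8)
The plan is to argue by contraposition: I will show that if the two shadows intersect, then $\|\alpha^{-1}\beta\|_\sigma$ is bounded above by a constant depending only on $\epsilon$ and $M$, so that demanding $\|\alpha^{-1}\beta\|_\sigma$ to exceed that constant forces the shadows to be disjoint.

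Concretely, suppose $\mathcal{S}_\epsilon(\alpha)\cap\mathcal{S}_\epsilon(\beta)\neq\emptyset$. Since by hypothesis $\|\alpha\|_\sigma\leq\|\beta\|_\sigma$, the triple $(\alpha,\beta)$ satisfies all the hypotheses of part (3) of Proposition \ref{ShadowProperties}. Let $C_2=C_2(\epsilon)>0$ be the constant provided there. Then
\begin{align*}
\|\alpha^{-1}\beta\|_\sigma+\|\alpha\|_\sigma-C_2\leq\|\beta\|_\sigma,
\end{align*}
and rearranging together with $\|\beta\|_\sigma-\|\alpha\|_\sigma\leq M$ gives
\begin{align*}
\|\alpha^{-1}\beta\|_\sigma\leq\|\beta\|_\sigma-\|\alpha\|_\sigma+C_2\leq M+C_2.
\end{align*}
Hence, setting $C:=C(\epsilon,M):=M+C_2$, whenever $\|\alpha^{-1}\beta\|_\sigma>C$ the intersection $\mathcal{S}_\epsilon(\alpha)\cap\mathcal{S}_\epsilon(\beta)$ must be empty, which is exactly the claim.

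There is essentially no obstacle here: the lemma is a direct bookkeeping consequence of Proposition \ref{ShadowProperties}(3), the only points to verify being that the hypothesis $0\leq\|\beta\|_\sigma-\|\alpha\|_\sigma$ supplies the ordering $\|\alpha\|_\sigma\leq\|\beta\|_\sigma$ needed to invoke that proposition, and that the constant $C_2$ depends only on $\epsilon$ (so that $C$ depends only on $\epsilon$ and $M$, as required). The reason this is recorded as a separate lemma is that in the construction of Proposition \ref{BJtree} it will be applied repeatedly to pairs of elements of comparable $\sigma$-magnitude, where the uniformity of $C$ in $\alpha,\beta$ is what matters.
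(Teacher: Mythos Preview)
Your proof is correct and follows essentially the same approach as the paper: both apply part (3) of Proposition \ref{ShadowProperties} to conclude that intersecting shadows force $\|\alpha^{-1}\beta\|_\sigma \le M + C_2$. The only cosmetic difference is that the paper packages this as a proof by contradiction with sequences, whereas your direct contrapositive is slightly cleaner and yields the same constant $C = M + C_2$.
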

\begin{proof}
Suppose not. Then there exists $\epsilon > 0$, an integer $M \geq 1$, and sequences $\{\alpha_{n}\}, \{ \beta_{n}\} \subset \Gamma$ so that 
\begin{align*}
0 \leq ||\beta_{n}||_{\sigma} - ||\alpha_{n}||_{\sigma} \leq M \ \ \mathrm{and} \ \ ||\alpha_{n}^{-1} \beta_{n}||_{\sigma} > n,
\end{align*}
but
\begin{align*}
\mathcal{S}_{\epsilon}(\alpha_{n}) \cap \mathcal{S}_{\epsilon}(\beta_{n}) \neq \emptyset.
\end{align*} Part (3) of Proposition \ref{ShadowProperties} then implies that there exists a constant $C > 0$, depending only on $\epsilon$, so that for all $n \geq 1$
\begin{align*}
||\alpha_{n}^{-1} \beta_{n}||_{\sigma} \leq ||\beta_{n}||_{\sigma} - ||\alpha_{n}||_{\sigma} + C \leq M+C,
\end{align*} which is impossible for all $n > M+C$, giving the desired contradiction.
\end{proof}
\begin{lem}
\label{TreeLem3}
Fix $t > 0$. If $\alpha, \beta \in \Gamma$ are such that $d(\alpha, \beta) \geq t$ and $||\beta||_{\sigma}$ is sufficiently large (depending on $t$), then
\begin{align*}
    \alpha \mathcal{S}_{\frac{t}{4}} (\alpha^{-1} \beta) \subset \alpha \mathcal{S}_{\frac{t}{2}}(\alpha^{-1}) \cap \mathcal{S}_{\frac{t}{8}}(\beta).
\end{align*}
\end{lem}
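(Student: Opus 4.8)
The plan is to unwind the definitions of the shadows and reduce everything to two applications of the basic shadow estimates from Proposition \ref{ShadowProperties}, together with Lemma \ref{TreeLem0}. First I would note that $\mathcal{S}_{t/4}(\alpha^{-1}\beta) = (\alpha^{-1}\beta)\big(M \setminus B_{t/4}((\alpha^{-1}\beta)^{-1})\big)$, so that $\alpha \mathcal{S}_{t/4}(\alpha^{-1}\beta) = \beta\big(M \setminus B_{t/4}(\beta^{-1}\alpha)\big)$. The containment in $\mathcal{S}_{t/8}(\beta) = \beta\big(M\setminus B_{t/8}(\beta^{-1})\big)$ will follow once I show that $B_{t/8}(\beta^{-1}) \subset B_{t/4}(\beta^{-1}\alpha)$, i.e., that $d(\beta^{-1},\beta^{-1}\alpha) \le t/8$; this is exactly where ``$\|\beta\|_\sigma$ sufficiently large'' enters, via part (2) of Proposition \ref{ShadowProperties} applied to the escaping behavior forced by $\|\beta\|_\sigma \to \infty$ (Proposition \ref{CocycleProperties}(2)), which says the Hausdorff distance between $\{\beta^{-1}\}$ and its shadow shrinks — but here I actually want a statement about $\beta^{-1}\alpha$ versus $\beta^{-1}$, which I would instead get from Lemma \ref{TreeLem0}: for $\beta$ outside a finite set, $\beta^{-1}$ pushes the complement of a small ball about $\beta$ into a small ball about $\beta^{-1}$, and since $d(\alpha,\beta)\ge t$ means $\alpha \notin B_{t}(\beta)$ (shrinking $t$ to $t/4$ as needed), we get $\beta^{-1}\alpha \in B_{t/8}(\beta^{-1})$ once $\|\beta\|_\sigma$, hence $\beta$ itself, is large enough.

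For the other containment, $\alpha\mathcal{S}_{t/4}(\alpha^{-1}\beta) \subset \alpha\mathcal{S}_{t/2}(\alpha^{-1})$, I would apply $\alpha^{-1}$ to both sides and reduce to showing $\mathcal{S}_{t/4}(\alpha^{-1}\beta) \subset \mathcal{S}_{t/2}(\alpha^{-1})$. Writing $\gamma := \alpha^{-1}\beta$, the left side is $\gamma\big(M\setminus B_{t/4}(\gamma^{-1})\big)$ and the right side is $\alpha^{-1}\big(M\setminus B_{t/2}(\alpha)\big)$. I would again use Lemma \ref{TreeLem0}, this time for the element $\gamma = \alpha^{-1}\beta$: since $\|\beta\|_\sigma$ large forces $\beta$ (and for fixed $\alpha$-normalization, $\gamma$) to escape, $\gamma$ lies outside the relevant finite set, so $\gamma\big((\Gamma\sqcup M)\setminus B_{t/4}(\gamma^{-1})\big) \subset B_{t/4}(\gamma)$. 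By part (2) of Proposition \ref{ShadowProperties} the shadow $\mathcal{S}_{t/4}(\gamma)$ is Hausdorff-close to the point $\gamma$, and $\gamma = \alpha^{-1}\beta$ is close to $\alpha^{-1}$ — because $d(\beta^{-1},\beta^{-1}\alpha)$ small (established above) is equivalent, after relabeling, to $\gamma = \alpha^{-1}\beta$ being close to $\alpha^{-1}$ — so $\mathcal{S}_{t/4}(\gamma)$ is contained in a ball of radius slightly more than $t/4 < t/2$ about $\alpha^{-1}$, giving the inclusion in $\mathcal{S}_{t/2}(\alpha^{-1}) = \alpha^{-1}\big(M\setminus B_{t/2}(\alpha)\big)$ provided the points of $\mathcal{S}_{t/4}(\gamma)$ avoid $B_{t/2}(\alpha)$; but $d(\alpha^{-1},\alpha)$ is bounded below for $\alpha$ outside a finite set (another application of Lemma \ref{ConvGpAMS} or \ref{TreeLem0}), so this holds.

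The main obstacle, and the step requiring the most care, is making precise the phrase ``$\|\beta\|_\sigma$ sufficiently large (depending on $t$)'': I need to translate the $\sigma$-magnitude growth into genuine escape in the compact space $\Gamma\sqcup M$, which is Proposition \ref{CocycleProperties}(2) run in reverse — an escaping sequence has magnitude $\to\infty$, so a magnitude bound $\|\beta\|_\sigma \ge R$ confines $\beta$ to a finite set complement — and then feed that into Lemma \ref{TreeLem0} and Proposition \ref{ShadowProperties}(2) uniformly. A subtlety is that $\alpha$ is \emph{not} assumed large, only $\beta$; so throughout I must be careful that the ``finite exceptional set'' and the metric estimates are uniform in $\alpha$ and only the largeness of $\beta$ (equivalently of $\alpha^{-1}\beta$, since $\alpha$ is the obstruction we are quotienting out via $d(\alpha,\beta)\ge t$) is used. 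Once the escape is set up, the rest is bookkeeping with the radii $t/2 > t/4 > t/8$, chosen with enough slack to absorb the vanishing Hausdorff errors.
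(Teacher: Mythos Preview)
Your argument for the inclusion $\alpha\mathcal{S}_{t/4}(\alpha^{-1}\beta)\subset\mathcal{S}_{t/8}(\beta)$ is correct and cleaner than the paper's proof: unwinding the shadows, the inclusion is equivalent to $B_{t/8}(\beta^{-1})\subset B_{t/4}(\beta^{-1}\alpha)$, and Lemma~\ref{TreeLem0} applied to $\beta^{-1}$ (with $\alpha\notin B_{t/8}(\beta)$ since $d(\alpha,\beta)\ge t$) gives $d(\beta^{-1},\beta^{-1}\alpha)<t/8$, which is exactly what is needed. The paper instead argues the whole lemma by contradiction, extracting subsequences and using convergence--group dynamics; your direct route is more transparent here.

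However, your treatment of the other inclusion $\alpha\mathcal{S}_{t/4}(\alpha^{-1}\beta)\subset\alpha\mathcal{S}_{t/2}(\alpha^{-1})$ has genuine gaps. The claim that ``$d(\beta^{-1},\beta^{-1}\alpha)$ small is equivalent, after relabeling, to $\alpha^{-1}\beta$ being close to $\alpha^{-1}$'' is false: the first statement came from Lemma~\ref{TreeLem0} applied to $\beta^{-1}$ and used only that $\beta$ is large, while the second would require applying Lemma~\ref{TreeLem0} to $\alpha^{-1}$, hence that $\alpha$ is large --- which is not assumed. For the same reason, your appeal to ``$d(\alpha^{-1},\alpha)$ bounded below for $\alpha$ outside a finite set'' is not available. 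In particular, $\mathcal{S}_{t/2}(\alpha^{-1})$ need not be a small set near $\alpha^{-1}$ at all (take $\alpha=\mathrm{id}$).

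There is an easy fix that stays within your direct approach. Observe that by definition
\[
\alpha\,\mathcal{S}_{t/2}(\alpha^{-1})=\alpha\cdot\alpha^{-1}\bigl(M\setminus B_{t/2}(\alpha)\bigr)=M\setminus B_{t/2}(\alpha),
\]
so the second inclusion amounts to showing $\alpha\mathcal{S}_{t/4}(\alpha^{-1}\beta)\cap B_{t/2}(\alpha)=\emptyset$. You have already proved $\alpha\mathcal{S}_{t/4}(\alpha^{-1}\beta)\subset\mathcal{S}_{t/8}(\beta)$; one further application of Lemma~\ref{TreeLem0}, this time to $\beta$ with $\epsilon=t/8$, gives $\mathcal{S}_{t/8}(\beta)\subset B_{t/8}(\beta)\cap M$ for $\|\beta\|_{\sigma}$ large. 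Since $t/8+t/2<t\le d(\alpha,\beta)$, the balls $B_{t/8}(\beta)$ and $B_{t/2}(\alpha)$ are disjoint, and the inclusion follows. Note that only largeness of $\beta$ is used, exactly as the lemma demands.
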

\begin{proof}
Suppose not. Then there exist sequences $\{ \alpha_{n}\}, \{ \beta_{n}\} \subset \Gamma$ with $d(\alpha_{n}, \beta_{n}) \geq t$, $||\beta_{n}||_{\sigma} \rightarrow \infty$, and such that there exist
\begin{align}
\label{Incl1}
    x_{n} \in \alpha_{n} \mathcal{S}_{\frac{t}{4}} (\alpha_{n}^{-1} \beta_{n}) \smallsetminus \Big( \alpha_{n} \mathcal{S}_{\frac{t}{2}}(\alpha_{n}^{-1}) \cap \mathcal{S}_{\frac{t}{8}}(\beta_{n}) \Big)
\end{align} for all $n \geq 1$. Passing to subsequences, we may assume without loss of generality that $\alpha_{n}^{\pm 1} \rightarrow a^{\pm}$, $\beta_{n}^{\pm 1} \rightarrow b^{\pm}$, and $x_{n} \rightarrow x$, where $a^{\pm} \in \Gamma \sqcup M$ and $b^{\pm}, x \in M$. Write 
\begin{align*}
x_{n} = \beta_{n} y_{n} \in \alpha_{n}\mathcal{S}_{\frac{t}{4}}(\alpha_{n}^{-1} \beta_{n}),
\end{align*}
where $d(y_{n}, \beta_{n}^{-1} \alpha_{n})  \geq t/4$. Since $d(\alpha_{n}, \beta_{n}) \geq t$, we have $\beta_{n}^{-1} \alpha_{n} \rightarrow b^{-}$, and therefore $x_{n} = \beta_{n} y_{n} \rightarrow b^{+}$. Thus $x = b^{+}$ and $d(x_{n}, \beta_{n}) \rightarrow 0$. So for all $n$ sufficiently large, we have $d(x_{n}, \alpha_{n}) \geq t/2$, that is
\begin{align}
\label{Incl2}
    x_{n} \in M \smallsetminus B_{\frac{t}{2}}(\alpha_{n}) = \alpha_{n} \mathcal{S}_{\frac{t}{2}}(\alpha_{n}^{-1})
\end{align}
for $n$ large enough. But since
\begin{align*}
d(y_{n}, \beta_{n}^{-1} \alpha_{n}) \geq \frac{t}{4} \ \ \mathrm{and} \ \  \beta_{n}^{-1} \alpha_{n} \rightarrow b^{-},
\end{align*}
we have 
\begin{align*}
\liminf_{n \to \infty} d(y_{n}, \beta_{n}^{-1}) \geq \frac{t}{4} \implies y_{n} \in M \smallsetminus B_{\frac{t}{8}}(\beta_{n}^{-1}),
\end{align*} for all $n$ sufficiently large. Hence also
\begin{align}
\label{Incl3}
x_{n} = \beta_{n}y_{n} \in \mathcal{S}_{\frac{t}{8}}(\beta_{n}),
\end{align} for $n$ sufficiently large. But then for $n$ large enough, (\ref{Incl2}) and (\ref{Incl3}) imply that $x_{n} \in \alpha_{n} \mathcal{S}_{\frac{t}{2}}(\alpha_{n}^{-1}) \cap \mathcal{S}_{\frac{t}{8}}(\beta_{n})$. This contradicts (\ref{Incl1}), concluding the proof.
\end{proof}
We now turn our attention to the Poincar\'e series $Q_{\sigma}(s) = \sum_{\gamma \in \Gamma} e^{-s||\gamma||_{\sigma}}$. Fix $0 < \delta < \delta + \epsilon < \delta_{\sigma}(\Gamma)$. By definition, we have
\begin{align*}
    Q_{\sigma}(\delta + \epsilon) = \sum_{\gamma \in \Gamma} e^{-(\delta + \epsilon) ||\gamma||_{\sigma}} = \infty.
\end{align*} Since $\Gamma \sqcup M$ is compact, for each $n \geq 1$, there exists $x_{n} \in M$ so that 
\begin{align*}
    \sum_{\eta \in \Gamma \cap B_{\frac{1}{n}}(x_{n})} e^{-(\delta + \epsilon) ||\eta||_{\sigma}} = \infty.
\end{align*} Passing to a subsequence, we can assume that $x_{n} \rightarrow x \in \Lambda(\Gamma) \subset M$. It follows that for all $t > 0$, we have
\begin{align*}
    \sum_{\gamma \in \Gamma \cap B_{t}(x)} e^{-(\delta + \epsilon) ||\gamma||_{\sigma}} = \infty.
\end{align*}
\begin{lem}
\label{TreeLem4}
For any $y \in \Lambda (\Gamma)$ and $t > 0$, we have
\begin{align*}
    \limsup_{n \to \infty} \sum_{\eta \in \mathcal{A}_{\sigma, n} \cap B_{t}(y)} e^{-\delta ||\eta||_{\sigma}} = \infty.
\end{align*}
\end{lem}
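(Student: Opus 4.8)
The plan is to bootstrap from the divergent series already produced in the paragraph preceding the lemma: there we obtained a point $x \in \Lambda(\Gamma)$ with $\sum_{\gamma \in \Gamma \cap B_s(x)} e^{-(\delta+\epsilon)||\gamma||_{\sigma}} = \infty$ for every $s > 0$. First I would upgrade this to the same statement at an \emph{arbitrary} $y \in \Lambda(\Gamma)$ and every $t > 0$. Since $\Gamma$ is non-elementary, $\Lambda(\Gamma)$ is the smallest closed $\Gamma$-invariant subset of $M$, so $\overline{\Gamma \cdot x} = \Lambda(\Gamma)$ and one can choose $g \in \Gamma$ with $d(gx, y) < t/2$. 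As the $\Gamma$-action on $\Gamma \sqcup M$ is by homeomorphisms, pick $r > 0$ with $g(B_r(x)) \subset B_{t/2}(gx) \subset B_t(y)$; since $g$ permutes $\Gamma$ this gives $g(\Gamma \cap B_r(x)) \subset \Gamma \cap B_t(y)$, and by part (1) of Proposition \ref{CocycleProperties} (with $F = \{g\}$) there is a constant $C = C(g)$ with $||g\gamma||_{\sigma} \le ||\gamma||_{\sigma} + C$ for all $\gamma$. Reindexing the sum over $\Gamma \cap B_t(y)$ by the injection $\gamma \mapsto g\gamma$ then yields
\[
\sum_{\gamma \in \Gamma \cap B_t(y)} e^{-(\delta+\epsilon)||\gamma||_{\sigma}} \ \geq\ e^{-(\delta+\epsilon)C}\sum_{\gamma \in \Gamma \cap B_r(x)} e^{-(\delta+\epsilon)||\gamma||_{\sigma}} \ =\ \infty .
\]

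Next I would fix $y$ and $t$ and decompose along the blocks $\mathcal{A}_{\sigma,n}$. Each $\mathcal{A}_{\sigma,n}$ is finite: an infinite subset of the countable group $\Gamma$ contains an escaping sequence, along which $||\cdot||_{\sigma} \to \infty$ by part (2) of Proposition \ref{CocycleProperties}, contradicting $||\cdot||_{\sigma} < n+1$; the same reasoning shows $\{\gamma : ||\gamma||_{\sigma} < 0\}$ is finite. Discarding this finite set and grouping the remaining terms by $n \ge 0$, the divergence above becomes $\sum_{n \ge 0} \sum_{\eta \in \mathcal{A}_{\sigma,n} \cap B_t(y)} e^{-(\delta+\epsilon)||\eta||_{\sigma}} = \infty$. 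On $\mathcal{A}_{\sigma,n}$ we have $||\eta||_{\sigma} \ge n$, so $e^{-(\delta+\epsilon)||\eta||_{\sigma}} \le e^{-\epsilon n}\, e^{-\delta||\eta||_{\sigma}}$; writing $a_n := \sum_{\eta \in \mathcal{A}_{\sigma,n} \cap B_t(y)} e^{-\delta||\eta||_{\sigma}}$, which is finite, this gives $\sum_{n \ge 0} e^{-\epsilon n} a_n = \infty$. Since $\sum_{n} e^{-\epsilon n} < \infty$ and every $a_n$ is finite, the sequence $(a_n)$ cannot be bounded, and an unbounded sequence of finite nonnegative reals has $\limsup_{n \to \infty} a_n = \infty$, which is precisely the assertion of the lemma.

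The only step that is more than bookkeeping is the transport in the first paragraph: one must remember that the concentration point $x$ coming from the Poincar\'e series is not at our disposal, so to reach an arbitrary $y \in \Lambda(\Gamma)$ one needs minimality of the limit set together with the uniform control of $||\cdot||_{\sigma}$ under left translation from Proposition \ref{CocycleProperties}(1). Everything after that — the decomposition into the blocks $\mathcal{A}_{\sigma,n}$ and converting the surplus factor $e^{-\epsilon||\eta||_{\sigma}}$ into a summable geometric series — is routine.
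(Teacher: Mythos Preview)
Your proof is correct and follows essentially the same approach as the paper: transport the divergent $(\delta+\epsilon)$-series from the concentration point $x$ to an arbitrary $y \in \Lambda(\Gamma)$ via minimality of the limit set together with Proposition~\ref{CocycleProperties}(1), then decompose into the annular blocks $\mathcal{A}_{\sigma,n}$ and use summability of $\sum_n e^{-\epsilon n}$ to force the block sums to be unbounded. Your write-up is slightly more streamlined---you treat all $y$ uniformly rather than splitting into the orbit and non-orbit cases, and you argue directly instead of by contradiction---but the content is the same.
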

\begin{proof}
The general strategy is as in the proof of Lemma 8.7 of \cite{CZZ1}, with some technical modifications. We first prove the lemma in the case when $y = \gamma x \in \Gamma x$. By part (1) of Proposition \ref{CocycleProperties}, there is a constant $C = C(\gamma) > 0$ so that for all $\xi \in \Gamma$, 
\begin{align*}
    ||\xi||_{\sigma} - C &\leq ||\gamma \xi||_{\sigma} \leq ||\xi||_{\sigma} + C, \ \mathrm{hence} \\ 
    e^{-C} e^{-||\xi||_{\sigma}} &\leq e^{-||\gamma \xi||_{\sigma}} \leq e^{C} e^{-||\xi||_{\sigma}}.
\end{align*} Setting $D := e^{C} > 1$, we have 
\begin{align*}
e^{-||\gamma \xi||_{\sigma}} \geq \frac{1}{D} e^{-||\xi||_{\sigma}}
\end{align*}
for all $\xi \in \Gamma$. Since the action of $\gamma$ on $\Gamma \sqcup M$ is continuous, $\gamma^{-1}(B_{\frac{t}{2}}(\gamma x))$ is an open subset of $\Gamma \sqcup M$ containing $x$. Therefore there exists some $t' > 0$ so that $B_{t'}(x) \subset \gamma^{-1}(B_{\frac{t}{2}}(\gamma x))$, and so
\begin{align*}
B_{t'}(x) \cap \Gamma \subset \gamma^{-1}(B_{\frac{t}{2}}(\gamma x)) \cap \Gamma = \gamma^{-1} (\Gamma \cap B_{\frac{t}{2}} (y)).
\end{align*} It follows that
\begin{align}
\label{OrbitIneq1}
    \sum_{\eta \in \Gamma \cap B_{\frac{t}{2}} (y)} e^{-(\delta + \epsilon)||\eta||_{\sigma}} &\geq \sum_{\eta \in \gamma (\Gamma \cap B_{t'}(x))} e^{-(\delta + \epsilon)||\eta||_{\sigma}} 
   = \sum_{\xi \in \Gamma \cap B_{t'}(x)} e^{-(\delta + \epsilon)||\gamma \xi||_{\sigma}} \\
   \nonumber &\geq \frac{1}{D^{\delta + \epsilon}} \sum_{\xi \in \Gamma \cap B_{t'}(x)} e^{-(\delta + \epsilon) ||\xi||_{\sigma}} = \infty.
\end{align} If
\begin{align}
\label{OrbitIneq2}
\limsup_{n \to \infty} \sum_{\eta \in \mathcal{A}_{\sigma, n} \cap B_{\frac{t}{2}}(y)} e^{-\delta ||\eta||_{\sigma}} < \infty,
\end{align} then there exists a constant $M' > 0$ so that 
\begin{align*}
\sum_{\eta \in \mathcal{A}_{\sigma, n} \cap B_{\frac{t}{2}}(y)} e^{-\delta ||\eta||_{\sigma}} \leq M'
\end{align*} for all $n \geq 0$. But then
\begin{align*}
\sum_{\eta \in \Gamma \cap B_{\frac{t}{2}}(y)} e^{-(\delta + \epsilon) ||\eta||_{\sigma}} &\leq \sum_{n = 0}^{\infty} \bigg( e^{-\epsilon n} \sum_{\eta \in \mathcal{A}_{\sigma, n} \cap B_{\frac{t}{2}}(y)} e^{-\delta ||\eta||_{\sigma}} \bigg) \\
&\leq M' \sum_{n=0}^{\infty} e^{-\epsilon n} < \infty,
\end{align*} which is a contradiction. Thus the lemma holds if $y \in \Gamma x$. 
\par 
Suppose now that $y \in \Lambda(\Gamma) \smallsetminus \Gamma x$. Since $\Gamma$ acts minimally on $\Lambda (\Gamma)$, there is a sequence of group elements $\{\gamma_{k}\} \subset \Gamma$ so that $x_{k} := \gamma_{k} x \rightarrow y$. Given any $t > 0$, we can choose $k$ sufficiently large so that $d(y, x_{k}) < t/2$. Therefore $B_{\frac{t}{2}}(x_{k}) \subset B_{t}(y)$, hence by (\ref{OrbitIneq1}) applied to the point $x_{k} \in \Gamma x$, we have
\begin{align*}
\sum_{\eta \in \Gamma \cap B_{t}(y)} e^{-(\delta + \epsilon) ||\eta||_{\sigma}} \geq \sum_{\eta \in \Gamma \cap B_{\frac{t}{2}}(x_{k})} e^{-(\delta + \epsilon) ||\eta||_{\sigma}} = \infty.
\end{align*} Then proceeding with the same argument beginning with (\ref{OrbitIneq2}) concludes the proof.
\end{proof}
We are now ready to give the proof of Proposition \ref{BJtree}.
\begin{proof} [Proof of Proposition \ref{BJtree}]
The proof is slightly technical and depends on carefully choosing constants so that we may apply to preceding lemmas. To make the proof easier to follow, we begin by listing these constants and describing their relations with each other.
\begin{itemize}
    \item[(i)] We let $\tilde{\epsilon} > 0$ be the constant and $F \subset \Gamma$ the finite subset of $\Gamma$ appearing in Lemma \ref{ConvGpAMS}.
    \item[(ii)] Fix a point $x \in \Lambda (\Gamma)$. Since $\Lambda(\Gamma)$ is perfect, there exists $y \in \Lambda(\Gamma) \smallsetminus \{x\}$ so that $d(y,x) < \frac{\tilde{\epsilon}}{4}$. Now set 
    \begin{align*}
        t_{0} := \frac{2}{3} d(y,x), \ s_{1} := \frac{3}{4} d(y,x), \ s_{2} := \frac{5}{4} d(y,x),
    \end{align*}
    and fix some $\epsilon_{0} < \frac{1}{12} d(y,x)$. Note that we then have
    \begin{align*}
    0 < t_{0} < s_{1} < d(y,x) < s_{2} < 2t_{0} \ \ \ \mathrm{and \ also} \ \ \ 0 < t_{0} < \frac{\tilde{\epsilon}}{4}.
    \end{align*}
    \item[(iii)] By Lemma \ref{TreeLem1}, there is a constant $0 < A = A(t_{0}) < 1$ so that if $\gamma, \eta \in \Gamma$ satisfy $d(\gamma^{-1}, \eta) \geq t_{0}$, then
    \begin{align}
    \label{TreePf1}
    e^{- ||\gamma \eta||_{\sigma}} \geq A e^{- ||\gamma||_{\sigma}} e^{- ||\eta||_{\sigma}}.
    \end{align} 
    \item[(iv)] By part  (1) of Proposition \ref{CocycleProperties}, there exists a constant $\tilde{C} = \tilde{C}(F \cup F^{-1}) > 0$ so that if $\gamma \in \Gamma$ and $f \in F \cup F^{-1}$, then
    \begin{align*}
        ||\gamma||_{\sigma} - \tilde{C} \leq ||\gamma f||_{\sigma} \leq ||\gamma||_{\sigma} + \tilde{C} \ \ \ \mathrm{and} \ \ \ ||\gamma||_{\sigma} - \tilde{C} \leq ||f \gamma||_{\sigma} \leq ||\gamma||_{\sigma} + \tilde{C}.
    \end{align*}
    \item[(v)] Lastly, we let $C = C(t_{0}/8, 2\tilde{C} + 1)$ be the constant given by Lemma \ref{TreeLem2}. 
\end{itemize}
By Lemma \ref{ColoringLem}, there exists an integer $L = L(C + 2\tilde{C}) \geq 1$ and a partition $\Gamma = P_{1} \sqcup \cdots \sqcup P_{L}$ of $\Gamma$ so that for any $1 \leq j \leq L$ and distinct elements $\alpha, \beta \in P_{j}$, we have
\begin{align}
\label{TreePf2}
||\alpha^{-1} \beta||_{\sigma} \geq C + 2 \tilde{C}.
\end{align}
Moreover, after possibly refining the partition, we may assume without loss of generality that for each set $P_{i}$ of the partition, there is a single element $f_{i} \in F$ so that the elements of $P_{i}f_{i}$ are uniformly $\tilde{\epsilon}$-loxodromic, in the sense of Lemma \ref{ConvGpAMS}.
Let $N \geq 1$ be the constant furnished by Lemma \ref{ConvGpLem} corresponding to $x \in M$, $F \subset \Gamma$, and $0 < t_{0}/2 < t_{0}$. By Lemma \ref{TreeLem4}, we have
\begin{align*}
\limsup_{n \to \infty} \sum_{\gamma \in \mathcal{A}_{\sigma, n} \cap B_{\frac{t_{0}}{2}}(x)} e^{-\delta ||\gamma||_{\sigma}} = \infty,
\end{align*} and therefore there exists $n \geq N$ so that
\begin{align*}
\sum_{\gamma \in \mathcal{A}_{\sigma, n} \cap B_{\frac{t_{0}}{2}}(x)} e^{-\delta ||\gamma||_{\sigma}} \geq \frac{L e^{\delta \tilde{C}}}{A^{\delta}}.
\end{align*}
Thus there exists $1 \leq i \leq L$ so that 
\begin{align*}
\sum_{\gamma \in P_{i} \cap \mathcal{A}_{\sigma, n} \cap B_{\frac{t_{0}}{2}}(x)} e^{-\delta ||\gamma||_{\sigma}} \geq \frac{e^{\delta \tilde{C}}}{A^{\delta}}.
\end{align*} Now let $f_{i} \in F$ be such that the elements of $P_{i}f_{i}$ are uniformly $\tilde{\epsilon}$-loxodromic. Define
\begin{align*}
S := \Big( P_{i} \cap \mathcal{A}_{\sigma, n} \cap B_{\frac{t_{0}}{2}}(x) \Big) f_{i}.
\end{align*} By choosing a possibly larger $n \geq N$, the $\sigma$-magnitudes of the elements of $S$ will be sufficiently large for Lemma \ref{TreeLem3} to hold, and furthermore Lemma \ref{TreeLem0} implies that for all $\gamma \in S \cup S^{-1}$, we have
\begin{align}
\label{inclu1}
\gamma \big( (\Gamma \sqcup M) \smallsetminus B_{\epsilon_{0}}(\gamma^{-1}) \big) \subset B_{\epsilon_{0}} (\gamma).  
\end{align} Since $\epsilon_{0} < t_{0}$, this immediately yields
\begin{align}
\label{inclu2}
\gamma \big( (\Gamma \sqcup M) \smallsetminus B_{t_{0}}(\gamma^{-1}) \big) \subset B_{t_{0}} (\gamma).  
\end{align} for all $\gamma \in S \cup S^{-1}$. 
\par 
Notice that the set $S$ contains at least two elements. Indeed, $\delta \tilde{C} > 0$ and $0 < A < 1$, so $e^{\delta \tilde{C}} / A^{\delta} > 1$, while $e^{-\delta ||\gamma||_{\sigma}} \leq e^{-\delta n} < 1$ for any $\gamma \in P_{i} \cap \mathcal{A}_{\sigma, n} \cap B_{\frac{t_{0}}{2}}(x)$. Hence the above inequality can hold only if $\#(P_{i} \cap \mathcal{A}_{\sigma, n} \cap B_{\frac{t_{0}}{2}}(x)) \geq 2$, which is equivalent to $\#S \geq 2$.
\par 
By Lemma \ref{ConvGpLem}, we have $S \subset B_{t_{0}}(x)$. If $\eta \in S$, then $\eta = \gamma f_{i}$ for some $\gamma \in  P_{i} \cap \mathcal{A}_{\sigma, n} \cap \mathcal{B}(x, \epsilon)$. Then part (1) of Proposition $\ref{CocycleProperties}$ yields
\begin{align*}
e^{-\delta ||\eta||_{\sigma}} &= e^{-\delta ||\gamma f_{i}||_{\sigma}} \geq e^{-\delta \tilde{C}} e^{-\delta ||\gamma||_{\sigma}}.
\end{align*}
Combining this with the above calculations, we obtain
\begin{align}
\label{TreePf3}
\nonumber \sum_{\eta \in S} e^{-\delta ||\eta||_{\sigma}} &= \sum_{\gamma \in P_{i} \cap \mathcal{A}_{\sigma, n} \cap B_{\frac{t_{0}}{2}}(x)} e^{-\delta ||\gamma f_{i}||_{\sigma}} \\
&\geq e^{-\delta \tilde{C}} \sum_{\gamma \in P_{i} \cap \mathcal{A}_{\sigma, n} \cap B_{\frac{t_{0}}{2}}(x)} e^{-\delta ||\gamma||_{\sigma}} \geq \frac{1}{A^{\delta}}.
\end{align}
We will now show that the set $S$ generates a finitely generated semigroup containing the identity and having all the desired properties. We claim that for any $m \geq 0$ and $\gamma \in S^{m}$, the set $\gamma \cdot S \subset S^{m+1}$ satisfies properties $(1) - (4)$ of Proposition \ref{BJtree}. The subset of $\Gamma$ defined by
\begin{align*}
    \mathcal{T} := \bigcup_{m=0}^{\infty} S^{m}
\end{align*} is a finitely generated subsemigroup of $\Gamma$, and after we establish the aforementioned properties, we will be able to explain why $\mathcal{T}$ is free (hence the sets in the above union are pairwise disjoint). Before proving by induction that $\gamma \cdot S$ has the desired properties, we need some preliminary facts.
\par 
Fix distinct elements $\eta$ and $\xi$ of $P_{i} \cap \mathcal{A}_{\sigma, n} \cap B_{\frac{t_{0}}{2}}(x)$. Then $\eta f_{i}$ and $\xi f_{i}$ are distinct elements of $S$ and without loss of generality we may assume $||\xi f_{i}||_{\sigma} \geq ||\eta f_{i}||_{\sigma}$. We have
\begin{align}
\label{TreePf5}
\nonumber || \xi f_{i} ||_{\sigma} - ||\eta f_{i}||_{\sigma}  &\leq \Big| ||\eta f_{i}||_{\sigma} - ||\eta||_{\sigma} \Big| + \Big| ||\eta||_{\sigma} - ||\xi||_{\sigma} \Big| + \Big| ||\xi||_{\sigma} - ||\xi f_{i}||_{\sigma} \Big| \\ 
 &\leq 2 \tilde{C} + 1.
\end{align} Moreover, using (\ref{TreePf2}) we obtain
\begin{align}
\label{TreePf6}
\nonumber ||(\eta f_{i})^{-1} (\xi f_{i})||_{\sigma} &= ||f_{i}^{-1} \eta^{-1} \xi f_{i}||_{\sigma} \\
\nonumber &\geq ||\eta^{-1} \xi f_{i}||_{\sigma} - \tilde{C} \\
\nonumber &\geq ||\eta^{-1} \xi||_{\sigma} - 2 \tilde{C} \\
 &\geq C.
\end{align} Combining (\ref{TreePf5}) and (\ref{TreePf6}), Lemma \ref{TreeLem2} tells us that 
\begin{align}
\label{EmptyInt}
\mathcal{S}_{\frac{t_{0}}{4}}(\eta f_{i}) \cap \mathcal{S}_{\frac{t_{0}}{4}}(\xi f_{i}) = \emptyset.
\end{align}
We now show that for all $m \geq 1$ and any collection of elements $\gamma_{1}, \dots, \gamma_{m} \in S$, we have
\begin{align}
\label{TreePf7}
d(\gamma_{m}^{-1} \cdots \gamma_{1}^{-1}, x) > 2t_{0}.
\end{align} By Lemma \ref{ConvGpAMS} and the fact that $S \subset B_{t_{0}}(x)$, we have
\begin{align}
\label{forobs1}
d(\gamma^{-1}, x) \geq d(\gamma^{-1}, \gamma) - d(\gamma, x) > \tilde{\epsilon} - t_{0} > 3t_{0} > 2t_{0},
\end{align} for any $\gamma \in S$. This proves the base case. Assume now that (\ref{TreePf7}) holds for $m \geq 1$ and let $\gamma_{m+1} \in S$. We want to show that $d(\gamma_{m+1}^{-1} \gamma_{m}^{-1} \cdots \gamma_{1}^{-1}, x) > 2t_{0}$. Set $\xi := \gamma_{m}^{-1} \cdots \gamma_{1}^{-1}$. By the inductive hypothesis,
\begin{align*}
d (\xi, \gamma_{m+1}) \geq d(\xi, x) - d(x, \gamma_{m+1}) > 2t_{0} - t_{0} = t_{0},
\end{align*} and therefore by (\ref{inclu2})
\begin{align*}
\gamma_{m+1}^{-1} \xi \in \gamma_{m+1}^{-1} \big( (\Gamma \sqcup M) \smallsetminus B_{t_{0}}(\gamma_{m+1}) \big) \subset B_{t_{0}} (\gamma_{m+1}^{-1}).
\end{align*} Along with the base case of the induction, this gives
\begin{align*}
d(\gamma_{m+1}^{-1} \cdots \gamma_{1}^{-1}, x) = d(\gamma_{m+1}^{-1} \xi, x) \geq d(x, \gamma_{m+1}^{-1}) - d(\gamma_{m+1}^{-1}, \gamma_{m+1}^{-1} \xi) > 3t_{0} - t_{0} = 2 t_{0},
\end{align*} which concludes the proof of (\ref{TreePf7}). 
\par 
We are now in the position to verify properties $(1)-(4)$ in the statement of the theorem. 
\par 
\begin{proof} [Proof of Property (1)]
Fix any $m \geq 0$ and $\gamma \in S^{m}$. Let $\eta \in \gamma \cdot S$. Using what we just proved, 
\begin{align}
\label{EstOnTree}
d(\gamma^{-1}, \gamma^{-1} \eta) \geq d(\gamma^{-1}, x) - d(x, \gamma^{-1} \eta) > 2t_{0} - t_{0} = t_{0}
\end{align}
Thus by Lemma \ref{TreeLem3},
\begin{align*}
\mathcal{S}_{\frac{t_{0}}{4}}(\eta) = \gamma \big( \gamma^{-1} \mathcal{S}_{\frac{t_{0}}{4}}(\gamma \cdot (\gamma^{-1} \eta)) \big) \subset \gamma (\gamma^{-1} \mathcal{S}_{\frac{t_{0}}{2}}(\gamma)) = S_{\frac{t_{0}}{2}}(\gamma),
\end{align*} which verifies part $(1)$. 
\end{proof}
\begin{proof} [Proof of Property (2)]
Let $\xi, \eta \in \gamma \cdot S$ be distinct children of $\gamma$. We want to show that 
\begin{align*}
\mathcal{S}_{\frac{t_{0}}{4}}(\xi) \cap \mathcal{S}_{\frac{t_{0}}{4}}(\eta) = \emptyset.
\end{align*}
Without loss of generality, assume that $||\gamma^{-1} \eta||_{\sigma} \leq ||\gamma^{-1} \xi||_{\sigma}$. By construction, $\gamma^{-1} \xi$ and $\gamma^{-1} \eta$ are both in $S$. Hence, 
\begin{align*}
\mathcal{S}_{\frac{t_{0}}{4}}(\eta) \cap \mathcal{S}_{\frac{t_{0}}{4}}(\xi) &= \mathcal{S}_{\frac{t_{0}}{4}}(\gamma \cdot \gamma^{-1} \eta) \cap \mathcal{S}_{\frac{t_{0}}{4}}(\gamma \cdot \gamma^{-1} \xi) \\
&= \gamma \cdot \Big( \gamma^{-1} \mathcal{S}_{\frac{t_{0}}{4}}(\gamma \cdot \gamma^{-1} \eta) \cap \gamma^{-1} \mathcal{S}_{\frac{t_{0}}{4}} (\gamma \cdot \gamma^{-1} \xi) \Big) \\
&\subset \gamma \cdot \Big( \mathcal{S}_{\frac{t_{0}}{8}}(\gamma^{-1} \eta) \cap \mathcal{S}_{\frac{t_{0}}{8}}(\gamma^{-1} \xi) \Big) = \emptyset,
\end{align*} where the inclusion in the third line follows from Lemma \ref{TreeLem3}, and where the last set being the empty set follows from Lemma \ref{TreeLem2}. This verifies part $(2)$. 
\end{proof}
\par 
\begin{proof} [Proof of Property (3)]
Define
\begin{align*}
D_{0} := \max_{\zeta \in S} ||\zeta^{-1}||_{\sigma}.
\end{align*} Given $\eta \in \gamma \cdot S$, we have $\gamma^{-1} \eta \in S$, hence
\begin{align*}
||\eta^{-1} \gamma||_{\sigma} = ||(\gamma^{-1} \eta)^{-1}||_{\sigma} \leq D_{0},
\end{align*} which proves part $(3)$. 
\end{proof} 
Finally, we prove part (4). 
\begin{proof} [Proof of Property (4)]
For any $\eta \in \gamma \cdot S$, (\ref{EstOnTree}) gives $d(\gamma^{-1}, \gamma^{-1} \eta) > t_{0}$. So by (\ref{TreePf1}), we have
\begin{align*}
e^{-||\eta||_{\sigma}} = e^{-||\gamma \cdot \gamma^{-1} \eta||_{\sigma}} \geq A e^{-||\gamma||_{\sigma}} e^{-||\gamma^{-1} \eta||_{\sigma}}.
\end{align*} Combining this with (\ref{TreePf3}), we conclude that
\begin{align*}
\sum_{\eta \in \gamma \cdot S} e^{-\delta ||\eta||_{\sigma}} \geq A^{\delta} e^{- \delta ||\gamma||_{\sigma}} \sum_{\eta \in \gamma \cdot S} e^{-\delta ||\gamma^{-1} \eta||_{\sigma}} = A^{\delta} e^{- \delta ||\gamma||_{\sigma}} \sum_{\xi \in S} e^{- \delta ||\xi||_{\sigma}} \geq e^{- \delta ||\gamma||_{\sigma}},
\end{align*} as desired. 
\end{proof}
Having established all the properties, it remains to check that $\mathcal{T}$ is a free semigroup. This means that if $\alpha \in \mathcal{T}$ is any element that can be written both as $\beta_{1} \cdots \beta_{k}$ and as $\gamma_{1} \cdots \gamma_{j}$ where $\beta_{i}, \gamma_{l} \in S$ for all $1 \leq i \leq k$ and $1 \leq l \leq j$, then we must have $k=j$ and $\beta_{i} = \gamma_{i}$ for all $1 \leq i \leq k$. There are two cases to consider:
\par 
$\textbf{Case 1.}$ In this case, we have $k=j$. Then there is a largest integer $0 \leq m \leq k$ so that $\beta_{i} = \gamma_{i}$ for all $0 \leq i \leq m$ (where we define $\beta_{0} = \gamma_{0} := \mathrm{id}$). If $m = k$, then there is nothing to show, so suppose $m < k$. Then we obtain
\begin{align}
\label{case1}
\beta_{m+1} \beta_{m+2} \cdots \beta_{k} = \gamma_{m+1} \gamma_{m+2} \cdots \gamma_{k},
\end{align} and $\beta_{m+1} \neq \gamma_{m+1}$. But then repeatedly applying property (1) (along with the fact that $\mathcal{S}_{\frac{t_{0}}{2}}(\alpha) \subset \mathcal{S}_{\frac{t_{0}}{4}}(\alpha)$ for any $\alpha \in \Gamma$) yields
\begin{align*}
\mathcal{S}_{\frac{t_{0}}{4}}(\beta_{m+1} \beta_{m+2} \cdots \beta_{k}) \subset \mathcal{S}_{\frac{t_{0}}{2}}(\beta_{m+1} \beta_{m+2} \cdots \beta_{k-1}) \subset \cdots \subset \mathcal{S}_{\frac{t_{0}}{2}} (\beta_{m+1}) \subset \mathcal{S}_{\frac{t_{0}}{4}}(\beta_{m+1})
\end{align*} and likewise
\begin{align*}
\mathcal{S}_{\frac{t_{0}}{4}}(\gamma_{m+1} \gamma_{m+2} \cdots \gamma_{k}) \subset \mathcal{S}_{\frac{t_{0}}{2}}(\gamma_{m+1}). 
\end{align*} But by property (2), we know that $\mathcal{S}_{\frac{t_{0}}{4}}(\beta_{m+1})$ and $\mathcal{S}_{\frac{t_{0}}{4}}(\gamma_{m+1})$ are disjoint, hence also $\mathcal{S}_{\frac{t_{0}}{4}}(\beta_{m+1} \beta_{m+2} \cdots \beta_{k})$ and $\mathcal{S}_{\frac{t_{0}}{4}}(\gamma_{m+1} \gamma_{m+2} \cdots \gamma_{k})$ are disjoint, which contradicts $(\ref{case1})$.
\par 
$\textbf{Case 2.}$ It remains to consider the case when $k \neq j$. Without loss of generality, suppose $k < j$. Let $m \geq 1$ be the smallest integer so that $\beta_{m} \neq \gamma_{m}$. If $m < k$, then we can argue just as in the previous case. If however $\beta_{i} = \gamma_{i}$ for $1 \leq i \leq k$, then we obtain
\begin{align*}
    \mathrm{id} = \gamma_{k+1} \cdots \gamma_{j}.
\end{align*} Again, property (1) allows us to conclude that 
\begin{align}
\label{case2}
\mathcal{S}_{\frac{t_{0}}{4}}(\mathrm{id}) = \mathcal{S}_{\frac{t_{0}}{4}}(\gamma_{k+1} \cdots \gamma_{j}) \subset \cdots \subset \mathcal{S}_{\frac{t_{0}}{4}}(\gamma_{k+1}).
\end{align} By property (2), we know that the elements of $\{\mathcal{S}_{\frac{t_{0}}{4}}(\eta)\}_{\eta \in S}$ are pairwise disjoint. Recalling that the set $S$ has at least two elements, property (1) then implies that the shadow $\mathcal{S}_{\frac{t_{0}}{4}}(\gamma_{k+1})$ is a proper subset of $\mathcal{S}_{\frac{t_{0}}{4}}(\mathrm{id})$. Hence $(\ref{case2})$ implies that $\mathcal{S}_{\frac{t_{0}}{4}}(\mathrm{id})$ is a $\textit{proper}$ subset of itself, which is impossible. This concludes the proof of the second case, and with it the proof of the proposition.
\end{proof}
\subsection{Properties of Bishop--Jones Semigroups}
\label{PropertiesSubsection}
In this section, we establish numerous useful properties of Bishop--Jones semigroups and prove properties (1), (2), and (5) of Theorem \ref{MainThm}. 
\par 
One immediate corollary of Proposition \ref{BJtree}, which establishes property (1) in Theorem \ref{MainThm}, is the following result. Fix $0 < \delta < \delta_{\sigma}(\Gamma)$ and let $\mathcal{T} = \mathcal{T}_{\delta}$ be the associated Bishop--Jones semigroup constructed in Proposition \ref{BJtree}.
\begin{cor}
\label{CritExpApprox}
The critical exponent of $\mathcal{T}$ satisfies
\begin{align*}
    \delta_{\sigma}(\mathcal{T}) \geq \delta.
\end{align*}
\end{cor}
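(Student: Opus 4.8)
The plan is to show that the $\sigma$-Poincar\'e series of $\mathcal{T}$, namely $s \mapsto \sum_{\gamma \in \mathcal{T}} e^{-s ||\gamma||_{\sigma}}$, diverges at $s = \delta$; by the definition of the critical exponent this immediately gives $\delta_{\sigma}(\mathcal{T}) \geq \delta$. The only inputs needed are the freeness of $\mathcal{T}$ and part (4) of Proposition \ref{BJtree}.

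First I would record the bookkeeping afforded by freeness. Since $\mathcal{T} = \bigcup_{m \geq 0} S^m$ is free on $S$, the sets $S^0, S^1, S^2, \dots$ are pairwise disjoint, so
\begin{align*}
    \sum_{\gamma \in \mathcal{T}} e^{-\delta ||\gamma||_{\sigma}} = \sum_{m=0}^{\infty} \sum_{\gamma \in S^m} e^{-\delta ||\gamma||_{\sigma}}.
\end{align*}
Moreover, freeness says that every $\eta \in S^{m+1}$ has a unique factorization $\eta = \gamma_1 \cdots \gamma_{m+1}$ with $\gamma_i \in S$, hence a unique ``parent'' $\gamma_1 \cdots \gamma_m \in S^m$; equivalently $S^{m+1} = \bigsqcup_{\gamma \in S^m} \gamma \cdot S$ is a \emph{disjoint} union. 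Summing the inequality in part (4) of Proposition \ref{BJtree} over all $\gamma \in S^m$ then yields
\begin{align*}
    \sum_{\eta \in S^{m+1}} e^{-\delta ||\eta||_{\sigma}} = \sum_{\gamma \in S^m} \sum_{\eta \in \gamma \cdot S} e^{-\delta ||\eta||_{\sigma}} \geq \sum_{\gamma \in S^m} e^{-\delta ||\gamma||_{\sigma}}
\end{align*}
for every $m \geq 0$.

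Iterating this inequality down to $m = 0$, where $S^0 = \{\mathrm{id}\}$, gives $\sum_{\gamma \in S^m} e^{-\delta ||\gamma||_{\sigma}} \geq e^{-\delta ||\mathrm{id}||_{\sigma}}$ for all $m \geq 0$, a fixed positive constant independent of $m$. Hence each term of the outer sum above is bounded below by $e^{-\delta ||\mathrm{id}||_{\sigma}} > 0$, so $\sum_{\gamma \in \mathcal{T}} e^{-\delta ||\gamma||_{\sigma}} = \infty$. Since $\delta > 0$ and the set of $s > 0$ for which this series converges is an interval of the form $(\delta_{\sigma}(\mathcal{T}), \infty)$ or $[\delta_{\sigma}(\mathcal{T}), \infty)$ that does not contain $\delta$, we conclude $\delta_{\sigma}(\mathcal{T}) \geq \delta$. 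There is no serious obstacle here; the one point that deserves care is that the disjointness of the union $S^{m+1} = \bigsqcup_{\gamma \in S^m} \gamma \cdot S$ — which is exactly what freeness of $\mathcal{T}$ provides — is what makes summing part (4) legitimate rather than an overcount, and everything else is a one-line induction.
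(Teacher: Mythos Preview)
Your proof is correct and follows essentially the same route as the paper: both argue by summing property (4) of Proposition \ref{BJtree} over $\gamma \in S^m$ and iterating to obtain $\sum_{\gamma \in S^m} e^{-\delta ||\gamma||_{\sigma}} \geq e^{-\delta ||\mathrm{id}||_{\sigma}}$, then conclude that the Poincar\'e series diverges at $\delta$. You are simply more explicit than the paper about why freeness makes the decomposition $S^{m+1} = \bigsqcup_{\gamma \in S^m} \gamma \cdot S$ a disjoint union, which is exactly what legitimizes the summation step.
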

\begin{proof}
We need to show that 
\begin{align*}
    \sum_{\gamma \in \mathcal{T}} e^{-\delta ||\gamma||_{\sigma}} = \infty.
\end{align*} Inductively applying property (4) of Proposition \ref{BJtree}, we see that 
\begin{align*}
    \sum_{\gamma \in S^{m}} e^{-\delta ||\gamma||_{\sigma}} \geq e^{-\delta ||\mathrm{id}||_{\sigma}} > 0.
\end{align*} It follows that 
\begin{align*}
    \sum_{\gamma \in \mathcal{T}} e^{-\delta ||\gamma||_{\sigma}} = \sum_{m=0}^{\infty} \sum_{\gamma \in S^{m}} e^{-\delta ||\gamma||_{\sigma}} \geq \sum_{m=0}^{\infty} e^{-\delta ||\mathrm{id}||_{\sigma}} = \infty,
\end{align*} which concludes the proof. 
\end{proof}
The following technical lemma will be very useful when attempting to control the asymptotic behavior of $\mathcal{T}$ and of $\mathcal{T}^{-1} := \{ \gamma^{-1} : \gamma \in \mathcal{T}\}$ (see in particular Lemma \ref{TreeMag1} and Lemma \ref{propersubset}). Let $\epsilon_{0}$ be as in item (ii) in the proof of Proposition \ref{BJtree}.
\begin{lem}
\label{obs1}
For every $m \geq 1$ and $\gamma \in S$, we have
\begin{align*}
S^{m} \subset (\Gamma \sqcup M) \smallsetminus B_{\epsilon_{0}}(\gamma^{-1}).
\end{align*}
\end{lem}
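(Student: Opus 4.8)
The plan is to show by induction on $m$ that $S^{m} \subset B_{t_{0}+\epsilon_{0}}(x)$ for every $m \geq 1$, and then to read off the lemma from a single triangle-inequality estimate using the bound $d(\gamma^{-1},x) > 3t_{0}$ for $\gamma \in S$ that was already established as (\ref{forobs1}) in the proof of Proposition \ref{BJtree}. The only facts about the constants I need are that $\epsilon_{0} < \tfrac{1}{8}t_{0}$ (so that $t_{0}+\epsilon_{0} < 2t_{0}$ and $t_{0} > \epsilon_{0}$), both immediate from item (ii) of that proof, together with $S \subset B_{t_{0}}(x)$ from Lemma \ref{ConvGpLem}.

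For the induction, the base case $m=1$ is just $S \subset B_{t_{0}}(x) \subset B_{t_{0}+\epsilon_{0}}(x)$. For the inductive step I write an arbitrary element of $S^{m+1}$ as $\gamma_{1}\xi$ with $\gamma_{1} \in S$ and $\xi \in S^{m}$. The inductive hypothesis gives $d(\xi,x) < t_{0}+\epsilon_{0}$, so by (\ref{forobs1}) and the triangle inequality,
\[
d(\xi,\gamma_{1}^{-1}) \;\geq\; d(\gamma_{1}^{-1},x)-d(x,\xi) \;>\; 3t_{0}-(t_{0}+\epsilon_{0}) \;=\; 2t_{0}-\epsilon_{0} \;>\; \epsilon_{0},
\]
so $\xi \in (\Gamma \sqcup M)\setminus B_{\epsilon_{0}}(\gamma_{1}^{-1})$. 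Applying (\ref{inclu1}) to $\gamma_{1} \in S$ then gives $\gamma_{1}\xi \in B_{\epsilon_{0}}(\gamma_{1})$, and since $\gamma_{1} \in B_{t_{0}}(x)$ we conclude $\gamma_{1}\xi \in B_{t_{0}+\epsilon_{0}}(x)$, closing the induction.

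Finally, for $m \geq 1$, $\eta \in S^{m}$, and $\gamma \in S$, the claim just proved gives $d(\eta,x) < t_{0}+\epsilon_{0}$, so (\ref{forobs1}) and the triangle inequality yield $d(\eta,\gamma^{-1}) \geq d(\gamma^{-1},x)-d(x,\eta) > 2t_{0}-\epsilon_{0} > \epsilon_{0}$, i.e., $\eta \notin B_{\epsilon_{0}}(\gamma^{-1})$; this is the asserted inclusion. I do not expect a genuine obstacle here — the argument is the ``forward'' analogue of the estimate (\ref{TreePf7}) already carried out for inverses in the proof of Proposition \ref{BJtree}. The one subtlety is bookkeeping: the induction must be phrased as a bound on the distance from $S^{m}$ to the \emph{fixed} point $x$, not to the (varying) initial letter of the word, so that (\ref{inclu1}) can be applied with the same ball $B_{\epsilon_{0}}(\gamma_{1}^{-1})$ at every stage.
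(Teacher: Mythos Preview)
Your proof is correct and follows essentially the same approach as the paper's: both argue by induction on $m$, write an element of $S^{m+1}$ as a first letter times a tail, use the inductive hypothesis plus (\ref{forobs1}) and the triangle inequality to show the tail lies outside $B_{\epsilon_{0}}$ of the inverse of the first letter, and then apply (\ref{inclu1}) to push the product into $B_{\epsilon_{0}}$ of the first letter, hence into $B_{t_{0}+\epsilon_{0}}(x)$. The only organizational difference is that you take $S^{m} \subset B_{t_{0}+\epsilon_{0}}(x)$ as the inductive hypothesis and deduce the lemma at the end, whereas the paper inducts directly on the statement of the lemma and derives the containment in $B_{t_{0}+\epsilon_{0}}(x)$ as an intermediate step; the computations are otherwise identical.
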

\begin{proof}
We prove the claim by induction. Let $\alpha, \beta \in S$ be arbitrary. By (\ref{forobs1}), we have $d(\alpha^{-1},x) > 3t_{0}$. By construction, we have $S \subset B_{t_{0}}(x)$. Hence,
\begin{align*}
d(\alpha^{-1}, \beta) \geq d(\alpha^{-1}, x) - d(x, \beta) > 3t_{0} - t_{0} = 2t_{0} > \epsilon_{0}.
\end{align*} Since $\alpha$ and $\beta$ were arbitrary, this proves the claim for $m=1$. Suppose now that $S^{m} \subset (\Gamma \sqcup M) \smallsetminus B_{\epsilon_{0}}(\gamma^{-1})$ for all $\gamma \in S$. Let $\alpha \in S^{m+1}$ be arbitrary. Then $\alpha = \eta \beta$ for some $\eta \in S$ and $\beta \in S^{m}$. By the inductive hypothesis, $\beta \in S^{m} \subset (\Gamma \sqcup M) \smallsetminus B_{\epsilon_{0}}(\eta^{-1})$, so by (\ref{inclu1}) we obtain
\begin{align*}
\alpha = \eta \beta \in \eta \big( (\Gamma \sqcup M) \smallsetminus B_{\epsilon_{0}}(\eta^{-1}) \big) \subset B_{\epsilon_{0}}(\eta) \subset B_{t_{0} + \epsilon_{0}}(x).
\end{align*} Thus for any $\gamma \in S$ we have
\begin{align*}
d(\gamma^{-1}, \alpha) \geq d(\gamma^{-1},x) - d(x,\alpha) \geq 3t_{0} - t_{0} - \epsilon_{0} > \epsilon_{0}.
\end{align*} Since $\alpha \in S^{m+1}$ was arbitrary, this shows that 
\begin{align*}
    S^{m+1} \subset (\Gamma \sqcup M) \smallsetminus B_{\epsilon_{0}}(\gamma^{-1})
\end{align*} for all $\gamma \in S$, as desired.
\end{proof}
In the following lemma, we establish property (2) of Theorem \ref{MainThm}. 
\begin{lem}
\label{TreeMag1}
Given any expanding coarse-cocycle $\phi : \Gamma \times M \rightarrow \mathbb{R}$, there exist constants $B_{1} = B_{1}(\phi) > 1$ and $b_{1} = b_{1}(\phi) > 0$ so that
\begin{align*}
   \frac{1}{B_{1}} |\gamma|_{S} - b_{1} \leq ||\gamma||_{\phi} \leq B_{1} |\gamma|_{S} + b_{1}
\end{align*} for all $\gamma \in \mathcal{T}$. 
\end{lem}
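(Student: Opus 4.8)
The plan is to establish both inequalities by telescoping along the minimal word decomposition of $\gamma$, using part (3) of Proposition \ref{CocycleProperties} to compare $||\cdot||_{\phi}$ of a product with the sum of the $\phi$-magnitudes of its factors. The geometric fact that makes this legitimate is that at any ``cut point'' of such a product the two pieces are uniformly $d$-separated: the inverse of any initial subword $w \in S^{k}$ satisfies $d(w^{-1},x) > 2t_{0}$ by the estimate \eqref{TreePf7} from the proof of Proposition \ref{BJtree}, while every terminal subword lies in $B_{t_{0}+\epsilon_{0}}(x)$ (the inclusion $S^{k} \subset B_{t_{0}+\epsilon_{0}}(x)$ for $k \geq 1$ is what is shown in the proof of Lemma \ref{obs1}), so the two pieces are $d$-separated by more than $\epsilon := t_{0}-\epsilon_{0} > 0$. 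Fix $\gamma \in \mathcal{T}$; since $\mathcal{T}$ is free (Proposition \ref{BJtree}) the sets $S^{k}$ are pairwise disjoint, so there is a unique $m$ with $\gamma \in S^{m}$, and then $|\gamma|_{S} = m$; write $\gamma = \gamma_{1}\cdots\gamma_{m}$ with $\gamma_{i} \in S$. The upper bound is the easy half: part (1) of Proposition \ref{CocycleProperties} with $F = S$ gives $C_{1} = C_{1}(\phi)$ with $||\alpha s||_{\phi} \leq ||\alpha||_{\phi} + C_{1}$ for all $\alpha \in \Gamma$ and $s \in S$, and iterating over $\gamma = (\gamma_{1}\cdots\gamma_{m-1})\gamma_{m}$ yields $||\gamma||_{\phi} \leq C_{1}|\gamma|_{S} + \max_{s\in S}||s||_{\phi}$.

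The lower bound is the substantive half, and the main obstacle is that $\phi$ is an arbitrary expanding cocycle chosen after $S$, so $\min_{s\in S}||s||_{\phi}$ may be small or even negative; a generator-by-generator telescoping therefore cannot produce a positive linear rate, and one must instead batch generators into long blocks. The key observation I would record is that $\min_{w\in S^{\ell}}||w||_{\phi} \to \infty$ as $\ell \to \infty$: otherwise there would be $\ell_{j}\to\infty$ and $w_{j}\in S^{\ell_{j}}$ with $||w_{j}||_{\phi}$ bounded, and since the $S^{\ell_{j}}$ are pairwise disjoint the $w_{j}$ are pairwise distinct, making $\{w_{j}\}$ an escaping sequence, which contradicts part (2) of Proposition \ref{CocycleProperties}. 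So I fix $\ell = \ell(\phi) \geq 1$ with $\min_{w\in S^{\ell}}||w||_{\phi} \geq C + 2$, where $C = C(\phi)$ is the constant from part (3) of Proposition \ref{CocycleProperties} associated to the separation $\epsilon = t_{0}-\epsilon_{0}$.

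To finish, write $m = q\ell + r$ with $0 \leq r < \ell$ and group $\gamma = W_{1}\cdots W_{q}W_{q+1}$, where each $W_{j}$ $(1 \leq j \leq q)$ is a block of $\ell$ consecutive generators (so $W_{j}\in S^{\ell}$) and $W_{q+1}\in S^{r}$ is the remainder. Peeling off one block at a time and invoking the $d$-separation recalled above — at the $j$-th step the initial factor $W_{j}^{-1}$ has $d(W_{j}^{-1},x) > 2t_{0}$, and the terminal factor $W_{j+1}\cdots W_{q+1}$ lies in some $S^{k}$ with $k \geq 1$, hence in $B_{t_{0}+\epsilon_{0}}(x)$ — part (3) of Proposition \ref{CocycleProperties} gives $||\gamma||_{\phi} \geq \sum_{j=1}^{q}||W_{j}||_{\phi} + ||W_{q+1}||_{\phi} - qC \geq 2q - b_{0}$, where $b_{0} := \max_{0\leq k<\ell}\max_{w\in S^{k}}|\,||w||_{\phi}\,|$ is finite. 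Since $q \geq |\gamma|_{S}/\ell - 1$, this yields $||\gamma||_{\phi} \geq \tfrac{1}{\ell}|\gamma|_{S} - (2+b_{0})$, the remaining cases $m < \ell$ being immediate from $||\gamma||_{\phi}\geq -b_{0}$. Combining the two halves, the lemma holds with $B_{1} := \max(C_{1},\ell) + 1 > 1$ and $b_{1}$ any positive constant exceeding $\max_{s\in S}||s||_{\phi}$, $2+b_{0}$, and $|\,||\mathrm{id}||_{\phi}\,|$. Beyond this bookkeeping, the only delicate point is the uniform $d$-separation at the cut points, which is precisely where \eqref{TreePf7} and the inclusion $S^{k}\subset B_{t_{0}+\epsilon_{0}}(x)$ enter.
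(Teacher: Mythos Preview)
Your proof is correct and follows essentially the same route as the paper's: both establish the lower bound by grouping the word into blocks of a fixed length $\ell$ (the paper calls it $m$) chosen large enough that $\min_{w\in S^{\ell}}\|w\|_{\phi}$ dominates the additive loss from part (3) of Proposition \ref{CocycleProperties}, with the uniform $d$-separation at cut points supplied by \eqref{TreePf7} together with Lemma \ref{obs1}. The only cosmetic difference is in handling the remainder: the paper strips it off via part (1) of Proposition \ref{CocycleProperties} (the constant $C'$ for the finite set $\bigcup_{i=0}^{m-1}S^{i}$), whereas you simply absorb its contribution into the constant $b_{0}$.
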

\begin{proof}
By Lemma \ref{obs1}, we know that for any $\gamma \in S$, $n \geq 1$, and $\alpha \in S^{n}$, we have $d(\gamma^{-1}, \alpha) \geq \epsilon_{0}$. Let $C_{\phi} = C_{\phi}(\epsilon_{0}) > 0$ be the constant provided by part (3) of Proposition \ref{CocycleProperties} associated to the constant $\epsilon_{0}$ and the coarse-cocycle $\phi$. Fix $m$ sufficiently large so that 
\begin{align*}
    ||\beta||_{\phi} \geq C_{\phi} + 1
\end{align*} for all $\beta \in S^{m}$. For any $n \geq 0$, we can write $\gamma \in S^{n}$ in the form 
\begin{align*}
    \gamma = \gamma_{1} \cdots \gamma_{q} \alpha,
\end{align*} where $n = qm + r$, $q = \left \lfloor{n/m}\right \rfloor $, $0 \leq r \leq m-1$, $\gamma_{j} \in S^{m}$, and $\alpha \in S^{r}$. Using Lemma \ref{obs1}, a calculation of the same sort as in the proof of Proposition \ref{BJtree} shows that 
\begin{align*}
d(\gamma_{i}^{-1}, \gamma_{i+1} \cdots \gamma_{q}) \geq t_{0} - \epsilon_{0} > \epsilon_{0}
\end{align*} for all $1 \leq i \leq q-1$. Let $C' > 0$ be the constant provided by part (1) of Proposition~\ref{CocycleProperties} associated to the finite subset $\bigcup_{i=0}^{m-1} S^{i}$ of $\Gamma$ and the coarse-cocycle $\phi$. Combining all of the above facts, we obtain
\begin{align*}
    ||\gamma||_{\phi} &= ||\gamma_{1} \cdots \gamma_{q} \alpha||_{\phi} \\
    &\geq ||\gamma_{1} \cdots \gamma_{q}||_{\phi} - C' \\
    &\geq ||\gamma_{1}||_{\phi} + ||\gamma_{2} \cdots \gamma_{q}||_{\phi} - C_{\phi} - C' \\
    &\geq \cdots \geq ||\gamma_{1}||_{\phi} + \cdots + ||\gamma_{q}||_{\phi} - ((q-1)C_{\phi} + C') \\
    &\geq q(C_{\phi} + 1) - ((q-1)C_{\phi} + C') \\
    &= q + C_{\phi} - C' \geq q - C' \\
    &= \frac{n}{m} - \Big( \frac{r}{m} + C' \Big) \\
    & \geq \frac{1}{m} |\gamma|_{S} - (C'+1).
\end{align*} Now let $C'' = C''(S)$ be the constant provided by part (1) of Proposition \ref{CocycleProperties} corresponding to the finite set $S$ and coarse-cocycle $\phi$. Since $\gamma = \xi_{1} \cdots \xi_{n}$ for some $\xi_{i} \in S$, we obtain
\begin{align*}
    ||\gamma||_{\phi} &= ||\xi_{1} \cdots \xi_{n}||_{\phi} \\
    &\leq ||\xi_{1} \cdots \xi_{n-1}||_{\phi} + C'' \\
    &\leq \cdots \leq ||\mathrm{id}||_{\phi} + nC'' \\
    &= C''|\gamma|_{S} + ||\mathrm{id}||_{\phi}.
\end{align*} Thus the lemma holds with $B_{1} = \max \{m, C''\}$ and $b_{1} = \max \{||\mathrm{id}||_{\phi}, C'+1\}$.
\end{proof}
As we remarked earlier, property (5) in Theorem \ref{MainThm} is an immediate consequence of this lemma. 
\par 
We conclude this section by examining the relation between Bishop--Jones semigroups and the conical limit set of the ambient group. This will be essential in the coming results. The next lemma shows that any two elements of the Bishop--Jones semigroup which differ starting from the first generation are a definite distance apart.
\begin{lem}
\label{DefiniteDistance}
There exists $r > 0$ so that for any two elements $\gamma_{1} = \alpha_{1} \cdots \alpha_{k} \in S^{k}$ and $\gamma_{2} = \beta_{1} \cdots \beta_{j} \in S^{j}$ of the Bishop--Jones semigroup with $\alpha_{1} \neq \beta_{1}$, we have $d(\gamma_{1}, \gamma_{2}) \geq r$.
\end{lem}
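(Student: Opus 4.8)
The plan is to exploit the nested-shadow structure of Proposition~\ref{BJtree} together with the fact (Proposition~\ref{ShadowProperties}(2)) that for all but finitely many $\gamma \in \Gamma$ the shadow $\mathcal{S}_{\frac{t_{0}}{4}}(\gamma)$ lies within any prescribed distance of the point $\gamma$ itself. First I would record the elementary observation that if $\gamma = \alpha_{1} \cdots \alpha_{k} \in S^{k}$ with $k \geq 1$, then iterating part~(1) of Proposition~\ref{BJtree} and using the monotonicity $\mathcal{S}_{\frac{t_{0}}{2}}(\cdot) \subseteq \mathcal{S}_{\frac{t_{0}}{4}}(\cdot)$ (a larger $\epsilon$ gives a smaller shadow) yields, exactly as in the verification that $\mathcal{T}$ is free,
\[
\mathcal{S}_{\frac{t_{0}}{4}}(\gamma) \subseteq \mathcal{S}_{\frac{t_{0}}{4}}(\alpha_{1}).
\]
Since $S$ is finite, the shadows $\{\mathcal{S}_{\frac{t_{0}}{4}}(\alpha)\}_{\alpha \in S}$ are compact (closed subsets of the compact space $M$, transported by a homeomorphism) and pairwise disjoint by part~(2) of Proposition~\ref{BJtree} applied with $\gamma = \mathrm{id} \in S^{0}$, so that $\gamma \cdot S = S$. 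Hence
\[
\rho := \min\bigl\{\, d\bigl(\mathcal{S}_{\frac{t_{0}}{4}}(\alpha),\, \mathcal{S}_{\frac{t_{0}}{4}}(\beta)\bigr) \ : \ \alpha, \beta \in S,\ \alpha \neq \beta \,\bigr\}
\]
is a well-defined positive number; here one may assume each $\mathcal{S}_{\frac{t_{0}}{4}}(\alpha)$ is nonempty (by enlarging $n$ in the construction), or else simply absorb the finitely many $\gamma \in \mathcal{T}$ whose first generator has empty shadow into the exceptional set $E$ introduced below.

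Next, using Proposition~\ref{ShadowProperties}(2) together with the fact that every sequence of pairwise distinct elements of $\Gamma$ is escaping, the set
\[
E := \bigl\{\, \gamma \in \Gamma \ : \ \inf_{z \in \mathcal{S}_{\frac{t_{0}}{4}}(\gamma)} d(\gamma, z) \geq \rho/4 \,\bigr\}
\]
is finite. Suppose first that $\gamma_{1} = \alpha_{1} \cdots \alpha_{k}$ and $\gamma_{2} = \beta_{1} \cdots \beta_{j}$ both lie outside $E$. Then there exist $p \in \mathcal{S}_{\frac{t_{0}}{4}}(\gamma_{1}) \subseteq \mathcal{S}_{\frac{t_{0}}{4}}(\alpha_{1})$ and $q \in \mathcal{S}_{\frac{t_{0}}{4}}(\gamma_{2}) \subseteq \mathcal{S}_{\frac{t_{0}}{4}}(\beta_{1})$ with $d(\gamma_{1}, p) < \rho/4$ and $d(\gamma_{2}, q) < \rho/4$, and since $\alpha_{1} \neq \beta_{1}$ we have $d(p, q) \geq \rho$; the triangle inequality then gives
\[
d(\gamma_{1}, \gamma_{2}) \geq d(p, q) - d(\gamma_{1}, p) - d(\gamma_{2}, q) > \rho/2 .
\]

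It remains to treat the case in which $\gamma_{1}$ or $\gamma_{2}$ lies in the finite set $E \cap \mathcal{T}$. Here I would invoke the fact, built into the bordification $\Gamma \sqcup M$ of \cite{BCZZ1}, that every element of $\Gamma$ is an isolated point of $\Gamma \sqcup M$: for each of the finitely many $\zeta \in E \cap \mathcal{T}$ fix $\epsilon(\zeta) > 0$ with $B_{\epsilon(\zeta)}(\zeta) = \{\zeta\}$, and set $\epsilon_{E} := \min_{\zeta \in E \cap \mathcal{T}} \epsilon(\zeta) > 0$. Since $\alpha_{1} \neq \beta_{1}$ forces $\gamma_{1} \neq \gamma_{2}$ (freeness of $\mathcal{T}$), whichever of $\gamma_{1}, \gamma_{2}$ lies in $E \cap \mathcal{T}$ is at distance at least $\epsilon_{E}$ from the other. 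Taking $r := \min\{\rho/2,\, \epsilon_{E}\}$ then works in all cases.

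The only genuinely delicate point is this last step: the shadow estimates say nothing about the ``short'', non-escaping elements of $\mathcal{T}$, so those must be dispatched separately, and the single ingredient that makes this possible is the discreteness of $\Gamma$ inside $\Gamma \sqcup M$. Everything else is a routine combination of the shadow-nesting of Proposition~\ref{BJtree}(1)--(2) with the escaping-sequence estimate of Proposition~\ref{ShadowProperties}(2).
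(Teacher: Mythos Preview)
Your proof is correct and uses essentially the same ingredients as the paper: the nested-shadow structure of Proposition~\ref{BJtree}(1)--(2), the escape estimate Proposition~\ref{ShadowProperties}(2), and the discreteness of $\Gamma$ inside $\Gamma \sqcup M$. The only difference is organizational---the paper runs a contradiction argument (extracting subsequences with fixed first letters $\alpha_1\neq\beta_1$ and a common limit point, which is then forced into both $\mathcal{S}_{t_0/4}(\alpha_1)$ and $\mathcal{S}_{t_0/4}(\beta_1)$), whereas you argue directly and produce an explicit $r$; your explicit treatment of the finitely many ``short'' elements via isolation is in fact a bit more careful than the paper's unannotated assertion that the subsequences are escaping.
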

\begin{proof}
Suppose not. Then there exist sequences $\{\gamma_{n}\}$ and $\{\eta_{n}\}$ of elements in $\mathcal{T}$ with $\gamma_{n} = \alpha_{1,n} \cdots, \alpha_{k(n),n}$ and $\eta_{n} = \beta_{1,n} \cdots \beta_{j(n),n}$ so that $\alpha_{1,n} \neq \beta_{1,n}$ for each $n$, but $\lim_{n \to \infty} d(\gamma_{n}, \eta_{n}) = 0$. After passing to sufficiently many subsequences, we may assume without loss of generality that: 
\begin{itemize}
    \item[(1)] there are elements $\alpha_{1} \neq \beta_{1}$ of $S$ so that $\alpha_{1,n} = \alpha_{1}$ and $\beta_{1,n} = \beta_{1}$ for all $n$, and
    \item[(2)] $\lim_{n \to \infty} \gamma_{n} = \lim_{n \to \infty} \eta_{n}$. 
\end{itemize} By property (2) of Proposition $\ref{BJtree}$, we have $\mathcal{S}_{\frac{t_{0}}{4}} (\alpha_{1}) \cap \mathcal{S}_{\frac{t_{0}}{4}}(\beta_{1}) = \emptyset$. Repeatedly applying property (1) of Proposition $\ref{BJtree}$ yields
\begin{align*}
\mathcal{S}_{\frac{t_{0}}{4}}(\gamma_{n}) \subset \mathcal{S}_{\frac{t_{0}}{2}}(\alpha_{1}) \subset \mathcal{S}_{\frac{t_{0}}{4}}(\alpha_{1}) \ \ \mathrm{and} \ \ \mathcal{S}_{\frac{t_{0}}{4}}(\eta_{n}) \subset \mathcal{S}_{\frac{t_{0}}{2}}(\beta_{1}) \subset \mathcal{S}_{\frac{t_{0}}{4}}(\beta_{1}),
\end{align*} for all $n$. But as $\{\gamma_{n}\}, \{\eta_{n}\} \subset \Gamma$ are escaping sequences, part (2) of Proposition \ref{ShadowProperties} implies that
\begin{align*}
    &\lim_{n \to \infty} \Big( \inf_{q \in \mathcal{S}_{\frac{t_{0}}{4}}(\alpha_{1})} d(\gamma_{n},q) \Big) \leq \lim_{n \to \infty} \Big( \inf_{q \in \mathcal{S}_{\frac{t_{0}}{4}}(\gamma_{n})} d(\gamma_{n},q) \Big) = 0, \ \ \mathrm{and} \\
    &\lim_{n \to \infty} \Big( \inf_{s \in \mathcal{S}_{\frac{t_{0}}{4}}(\beta_{1})} d(\eta_{n},s) \Big) \leq \lim_{n \to \infty} \Big( \inf_{s \in \mathcal{S}_{\frac{t_{0}}{4}}(\eta_{n})} d(\eta_{n},s) \Big) = 0.
\end{align*} Since $\mathcal{S}_{\frac{t_{0}}{4}}(\alpha_{1})$ and $\mathcal{S}_{\frac{t_{0}}{4}}(\beta_{1})$ are closed subsets of $M$, we have $\lim_{n \to \infty} \gamma_{n} \in \mathcal{S}_{\frac{t_{0}}{4}}(\alpha_{1})$ and $\lim_{n \to \infty} \eta_{n} \in \mathcal{S}_{\frac{t_{0}}{4}}(\beta_{1})$. But $\lim_{n \to \infty} \gamma_{n} = \lim_{n \to \infty} \eta_{n}$, so 
\begin{align*}
\mathcal{S}_{\frac{t_{0}}{4}}(\alpha_{1}) \cap \mathcal{S}_{\frac{t_{0}}{4}}(\beta_{1}) \neq \emptyset,
\end{align*} which is a contradiction. This concludes the proof.
\end{proof}
The next lemma shows that any geodesic sequence in $\mathcal{T}$ limits to a point in $M$ which is $\frac{t_{0}}{2}$-uniformly conical.
\begin{lem}
\label{GeodesicConicalPoint}
Let $\{\gamma_{n}\} \subset \mathcal{T}$ be a geodesic sequence in a Bishop--Jones semigroup $\mathcal{T}$, that is, $\gamma_{n+1} \in \gamma_{n} \cdot S$ for all $n \geq 1$. Then $\lim_{n \to \infty} \gamma_{n}$ exists. Moreover, denoting $w \in M$ by this limit, we have $w \in \Lambda_{\frac{t_{0}}{2}}^{\mathrm{con}}(\Gamma)$.
\end{lem}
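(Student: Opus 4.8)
The plan is to track the nested shadows that property~(1) of Proposition~\ref{BJtree} attaches to the sequence and to use the shrinking-diameter statement of Proposition~\ref{ShadowProperties}. First I would note that $\{\gamma_n\}$ is an escaping sequence: writing $\gamma_{n+1}=\gamma_n s_{n+1}$ with $s_{n+1}\in S$ and using that $\mathcal{T}$ is free on $S$, the word length satisfies $|\gamma_{n+1}|_S=|\gamma_n|_S+1$, so $|\gamma_n|_S\to\infty$ and the $\gamma_n$ are pairwise distinct, hence leave every finite subset of $\Gamma$.

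Next, set $\epsilon:=t_0/4$. Property~(1) of Proposition~\ref{BJtree} gives $\mathcal{S}_{t_0/4}(\gamma_{n+1})\subseteq\mathcal{S}_{t_0/2}(\gamma_n)$, and since a shadow only grows as its radius decreases we also have $\mathcal{S}_{t_0/2}(\gamma_n)\subseteq\mathcal{S}_{t_0/4}(\gamma_n)$; thus $\bigl(\mathcal{S}_{\epsilon}(\gamma_n)\bigr)_n$ is a decreasing sequence of closed (indeed compact) subsets of $M$. By Proposition~\ref{ShadowProperties}(2) these sets are nonempty for $n$ large, their diameters tend to $0$, and the Hausdorff distance between $\{\gamma_n\}$ and $\mathcal{S}_{\epsilon}(\gamma_n)$ tends to $0$. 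Hence $\bigcap_n\mathcal{S}_{\epsilon}(\gamma_n)$ consists of a single point $w\in M$.

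For convergence: since $w\in\mathcal{S}_{\epsilon}(\gamma_n)$ for all large $n$, the distance $d(\gamma_n,w)$ is bounded by the Hausdorff distance above, so $\gamma_n\to w$ in $\Gamma\sqcup M$ and $\lim_n\gamma_n=w$ exists. For the conical statement, the same nesting $\mathcal{S}_{t_0/4}(\gamma_{n+1})\subseteq\mathcal{S}_{t_0/2}(\gamma_n)$ shows $w\in\mathcal{S}_{t_0/2}(\gamma_n)$ for all large $n$; applying Lemma~\ref{ShadowsAndLimitSets}(2) with $\epsilon=t_0/2$ to a tail of the escaping sequence $\{\gamma_n\}$ gives $w\in\Lambda^{\mathrm{con}}_{t_0/2}(\Gamma)$.

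The only delicate point — and the reason property~(1) of Proposition~\ref{BJtree} carries the two radii $t_0/4$ and $t_0/2$ — is to land inside $\mathcal{S}_{t_0/2}(\gamma_n)$, not merely $\mathcal{S}_{t_0/4}(\gamma_n)$, so that Lemma~\ref{ShadowsAndLimitSets}(2) yields membership in the sharper set $\Lambda^{\mathrm{con}}_{t_0/2}(\Gamma)$; apart from that, one should be careful to record that the shadows are nonempty for $n$ large (via Proposition~\ref{ShadowProperties}(2)) so that the nested-intersection argument producing $w$ is legitimate.
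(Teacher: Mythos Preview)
Your proposal is correct and takes essentially the same approach as the paper: nested shadows from property~(1) of Proposition~\ref{BJtree}, shrinking diameters and Hausdorff-distance control from Proposition~\ref{ShadowProperties}(2), then Lemma~\ref{ShadowsAndLimitSets}(2). The only cosmetic difference is that the paper first extracts a convergent subsequence by compactness of $\Gamma\sqcup M$ and then identifies its limit with the singleton intersection $\bigcap_n\mathcal{S}_{t_0/2}(\gamma_n)$, whereas you identify the intersection point directly and deduce $\gamma_n\to w$ from the Hausdorff distance --- a minor streamlining, not a different argument.
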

\begin{proof}
By repeatedly applying property (1) in Proposition \ref{BJtree}, we obtain the following decreasing nested sequence of shadows:
\begin{align*}
    \mathcal{S}_{\frac{t_{0}}{2}}(\gamma_{1}) &\supset \mathcal{S}_{\frac{t_{0}}{2}}(\gamma_{2}) \supset \cdots \supset \mathcal{S}_{\frac{t_{0}}{2}}(\gamma_{n}) \supset \mathcal{S}_{\frac{t_{0}}{2}}(\gamma_{n+1}) \supset \cdots.
\end{align*} We claim that the intersection
\begin{align}
\label{Conical1}
\bigcap_{n=1}^{\infty} \mathcal{S}_{\frac{t_{0}}{2}}(\gamma_{n}).
\end{align} is a singleton. Since each shadow $\mathcal{S}_{\frac{t_{0}}{2}}(\gamma_{n})$ is closed and compact, their intersection is nonempty. But by part (2) of Proposition \ref{ShadowProperties}, $\lim_{n \to \infty} \mathrm{diam} \ \mathcal{S}_{\frac{t_{0}}{2}}(\gamma_{n}) = 0$, hence $\bigcap_{n=1}^{\infty} \mathcal{S}_{\frac{t_{0}}{2}}(\gamma_{n})$ is a singleton, which we denote by $\{w\} \subset M$. Again by part (2) of Proposition \ref{ShadowProperties}, there exist $x_{n} \in \mathcal{S}_{\frac{t_{0}}{2}}(\gamma_{n})$ so that $d(\gamma_{n}, x_{n}) \rightarrow 0$ as $n \rightarrow \infty$. Notice also that $d(x_{n}, w) \leq \mathrm{diam} \ \mathcal{S}_{\frac{t_{0}}{2}}(\gamma_{n}) \rightarrow 0$ as $n \rightarrow \infty$. Therefore,
\begin{align*}
    d(\gamma_{n}, w) \leq d(\gamma_{n}, x_{n}) + d(x_{n}, w) \rightarrow 0 \ \ \mathrm{as} \ \ n \rightarrow \infty,
\end{align*} which shows that $\lim_{n \to \infty} \gamma_{n} = w$. We conclude by Lemma \ref{ShadowsAndLimitSets} that $w \in \Lambda_{\frac{t_{0}}{2}}^{\mathrm{con}}(\Gamma)$.
\end{proof} 
Let $E = E_{\delta} \subset \Lambda(\Gamma)$ denote the set of accumulation points of $\mathcal{T}$. Since $\mathcal{T}$ is infinite, $E$ is nonempty. We now show that $E$ is uniformly conical.
\begin{lem}
\label{UniformlyConical}
We have $E \subset \Lambda_{\frac{t_{0}}{2}}^{\mathrm{con}}(\Gamma)$.
\end{lem}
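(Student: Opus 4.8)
The plan is to reduce to the geodesic-ray case already handled in Lemma \ref{GeodesicConicalPoint}. Let $w \in E$, so there is a sequence of distinct elements $\{\eta_{n}\} \subset \mathcal{T}$ with $\eta_{n} \to w$ in $\Gamma \sqcup M$. Since each $S^{m}$ is finite (as $S$ is finite) and the $\eta_{n}$ are distinct, only finitely many of them lie in any $\bigcup_{p \le N} S^{p}$, so $|\eta_{n}|_{S} \to \infty$; using that $\mathcal{T}$ is free, write each $\eta_{n} = \alpha_{1,n} \cdots \alpha_{k(n),n}$ uniquely with $\alpha_{i,n} \in S$ and $k(n) = |\eta_{n}|_{S}$.

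Next I would perform a König's-lemma / diagonal extraction. Because $S$ is finite, pass successively to subsequences along which $\alpha_{1,n}$ stabilizes to some $\alpha_{1} \in S$, then $\alpha_{2,n}$ stabilizes to some $\alpha_{2} \in S$, and so on; the diagonal subsequence $\{\eta_{n_{l}}\}$ still converges to $w$, and it yields a geodesic ray $\gamma_{m} := \alpha_{1} \cdots \alpha_{m} \in S^{m}$ with the feature that, for each fixed $m$, all sufficiently large $\eta_{n_{l}}$ have word beginning with that of $\gamma_{m}$, i.e. $\eta_{n_{l}} \in \gamma_{m} \cdot \bigcup_{p \ge 0} S^{p}$. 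By Lemma \ref{GeodesicConicalPoint}, $\lim_{m \to \infty} \gamma_{m}$ exists and equals the unique point $w^{*}$ of $\bigcap_{m} \mathcal{S}_{\frac{t_{0}}{2}}(\gamma_{m})$, and $w^{*} \in \Lambda_{\frac{t_{0}}{2}}^{\mathrm{con}}(\Gamma)$.

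It then remains to show $w = w^{*}$. Fix $m$. For $l$ large the word of $\eta_{n_{l}}$ extends that of $\gamma_{m}$, so iterating property (1) of Proposition \ref{BJtree} together with the elementary inclusion $\mathcal{S}_{\frac{t_{0}}{2}}(\cdot) \subset \mathcal{S}_{\frac{t_{0}}{4}}(\cdot)$ gives $\mathcal{S}_{\frac{t_{0}}{4}}(\eta_{n_{l}}) \subset \mathcal{S}_{\frac{t_{0}}{2}}(\gamma_{m})$ (this is the same chaining used in Lemmas \ref{DefiniteDistance} and \ref{GeodesicConicalPoint}). Since the $\eta_{n_{l}}$ are distinct they form an escaping sequence, so by part (2) of Proposition \ref{ShadowProperties} the Hausdorff distance between $\{\eta_{n_{l}}\}$ and $\mathcal{S}_{\frac{t_{0}}{4}}(\eta_{n_{l}})$ tends to $0$; as $\eta_{n_{l}} \to w$, any choice of points $x_{l} \in \mathcal{S}_{\frac{t_{0}}{4}}(\eta_{n_{l}})$ satisfies $x_{l} \to w$. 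Picking $x_{l} \in \mathcal{S}_{\frac{t_{0}}{4}}(\eta_{n_{l}}) \subset \mathcal{S}_{\frac{t_{0}}{2}}(\gamma_{m})$ and using that shadows are closed (they are continuous images of closed subsets of the compact space $M$), we get $w \in \mathcal{S}_{\frac{t_{0}}{2}}(\gamma_{m})$. Since $m$ was arbitrary, $w \in \bigcap_{m} \mathcal{S}_{\frac{t_{0}}{2}}(\gamma_{m}) = \{w^{*}\}$, so $w = w^{*} \in \Lambda_{\frac{t_{0}}{2}}^{\mathrm{con}}(\Gamma)$. As $w \in E$ was arbitrary, this yields $E \subset \Lambda_{\frac{t_{0}}{2}}^{\mathrm{con}}(\Gamma)$.

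The main point requiring care is the diagonal extraction of the geodesic ray from an arbitrary accumulation-point sequence, and then verifying that the limit of that ray actually coincides with the given point $w$; in other words, upgrading conicality from the geodesic rays treated in Lemma \ref{GeodesicConicalPoint} to all accumulation points of $\mathcal{T}$. Once that is set up, everything else is routine bookkeeping with the nested-shadow inclusions of Proposition \ref{BJtree}(1) and the shrinking-shadow estimate of Proposition \ref{ShadowProperties}(2).
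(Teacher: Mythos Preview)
Your proof is correct and follows essentially the same approach as the paper: extract a geodesic ray from the accumulating sequence via a diagonal/König argument, apply Lemma \ref{GeodesicConicalPoint} to its limit, and then identify that limit with the given accumulation point using the nested-shadow inclusions of Proposition \ref{BJtree}(1) together with Proposition \ref{ShadowProperties}(2). The only cosmetic difference is in the final identification step, where the paper compares points in $\mathcal{S}_{\frac{t_{0}}{2}}(\alpha_{n})$ via the shrinking diameter while you pass to the limit directly using that shadows are closed; both arguments are equivalent.
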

\begin{proof}
Fix $u \in E$ and let $\{\gamma_{n}\}$ be a distinct sequence of elements of $\mathcal{T}$ so that $\gamma_{n} \rightarrow u$. By a diagonal extraction argument, let us find a subsequence $\{\eta_{n}\}$ of $\{\gamma_{n}\}$ so that for all $k \leq n$, $\eta_{k}$ and $\eta_{n}$ share the same first, second, ..., and $k$-th ancestors; that is, they are descendents of the same elements in $S$, $S^{2}$, all the way up to $S^{k}$. For clarity, we describe how we construct this subsequence. 
\par 
In what follows, for each $k \geq 1$, $\phi_{k} : \mathbb{N} \rightarrow \mathbb{N}$ denotes a strictly increasing function. We first pass to a subsequence $\{\gamma_{\phi_{1}(1)}, \gamma_{\phi_{1}(2)}, \gamma_{\phi_{1}(3)}, \dots \}$ of $\{\gamma
_{n}\}$ so that all elements of this subsequence descend from the same element of $S$ (or perhaps an element of this subsequence is this element of $S$). Next we pass to a subsequence $\{\gamma_{\phi_{2} \circ \phi_{1}(1)}, \gamma_{\phi_{2} \circ \phi_{1}(2)}, \gamma_{\phi_{2} \circ \phi_{1}(3)}, \dots \}$ of $\{\gamma_{\phi_{1}(n)}\}_{n \geq 1}$ so that all elements of this subsequence descend from the same element of $S^2$ (or perhaps an element of this subsequence is this element of $S^2$). Continue passing to such subsequences and then define
\begin{align*}
    \eta_{n} := \gamma_{\phi_{n} \circ \phi_{n-1} \circ \cdots \circ \phi_{1}(n)}.
\end{align*} Then the sequence $\{\eta_{n}\}$ has the aforementioned property. Now consider the sequence $\{\alpha_{n}\} \subset \mathcal{T}$, where $\alpha_{n} \in S^{n}$ is the ancestor of $\eta_{n}$ in $S^{n}$. By construction, $\{\alpha_{n}\}$ is a geodesic sequence. By Lemma \ref{GeodesicConicalPoint}, $w := \lim_{n \to \infty} \alpha_{n}$ exists and moreover $w \in \Lambda_{\frac{t_{0}}{2}}^{\mathrm{con}}(\Gamma)$. It therefore suffices to show that $w = u$. Recall that $\eta_{n} \rightarrow u$ and, by property (1) of Proposition \ref{BJtree}, that $\mathcal{S}_{\frac{t_{0}}{4}}(\eta_{n}) \subset \mathcal{S}_{\frac{t_{0}}{2}}(\alpha_{n})$. Part (2) of Proposition \ref{ShadowProperties} then yields
\begin{align*}
    &\lim_{n \to \infty} \inf_{z \in \mathcal{S}_{\frac{t_{0}}{2}}(\alpha_{n})} d(u,z) \leq  \lim_{n \to \infty} \inf_{z \in \mathcal{S}_{\frac{t_{0}}{4}}(\eta_{n})} d(u,z) = 0, \\
     &\lim_{n \to \infty} \inf_{z \in \mathcal{S}_{\frac{t_{0}}{2}}(\alpha_{n})} d(w,z) = 0, \ \ \mathrm{and} \\
     &\lim_{n \to \infty} \mathrm{diam} \ \mathcal{S}_{\frac{t_{0}}{2}} (\alpha_{n}) = 0.
\end{align*} Therefore there are $u_{n}, w_{n} \in \mathcal{S}_{\frac{t_{0}}{2}}(\alpha_{n})$ so that $d(u,u_{n}) \rightarrow 0$, $d(w,w_{n}) \rightarrow 0$, and $d(u_{n}, w_{n}) \rightarrow 0$. It follows that 
\begin{align*}
    d(u,w) \leq d(u,u_{n}) + d(u_{n}, w_{n}) + d(w_{n}, w) \rightarrow 0,
\end{align*} which shows that $u = w$. This concludes the proof.
\end{proof}
\subsection{Critical Exponent Gap for Bishop--Jones Semigroups}
\label{ExpGapSubsection}
In this section, we prove properties (3) and (4) of Theorem \ref{MainThm}. We begin by establishing property (3).
\begin{prop}
\label{LogBounds}
With $B_{1} = B_{1}(\sigma) > 1$ and $b_{1} = b_{1}(\sigma) > 0$ as in part (2) of Theorem \ref{MainThm}, we have  
\begin{align*}
        \frac{1}{B_{1}} \log (\# S) \leq \delta_{\sigma}(\mathcal{T}) \leq B_{1} \log (\# S).
    \end{align*} In particular, $\delta_{\sigma}(\mathcal{T}) < \infty$.
\end{prop}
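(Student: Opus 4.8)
The plan is to compare the $\sigma$-Poincar\'e series of $\mathcal{T}$ with an explicit geometric series, using freeness of $\mathcal{T}$ together with the quasi-isometric estimate of part (2) of Theorem \ref{MainThm}. Recall that $\delta_{\sigma}(\mathcal{T})$ is by definition the abscissa of convergence of $\sum_{\gamma \in \mathcal{T}} e^{-s\|\gamma\|_{\sigma}}$. The first observation is that, because $\mathcal{T}$ is free on $S$, every $\gamma \in S^{m}$ has a unique expression as a product of $m$ elements of $S$; hence the sets $S^{m}$ are pairwise disjoint, $\#S^{m} = (\#S)^{m}$, and $|\gamma|_{S} = m$ for all $\gamma \in S^{m}$. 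Recall also that $\#S \geq 2$, as was shown in the proof of Proposition \ref{BJtree}, so that $\log(\#S) > 0$ and the asserted bounds are meaningful.

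For the lower bound, I would fix $s$ with $0 < s < \frac{1}{B_{1}}\log(\#S)$ and apply the upper estimate $\|\gamma\|_{\sigma} \leq B_{1}|\gamma|_{S} + b_{1}$ from part (2) of Theorem \ref{MainThm}, which gives
\begin{align*}
\sum_{\gamma \in \mathcal{T}} e^{-s\|\gamma\|_{\sigma}} \;\geq\; e^{-sb_{1}}\sum_{m=0}^{\infty} \#S^{m}\, e^{-sB_{1}m} \;=\; e^{-sb_{1}}\sum_{m=0}^{\infty} \big((\#S)\,e^{-sB_{1}}\big)^{m} \;=\; \infty,
\end{align*}
the last equality because $(\#S)e^{-sB_{1}} > 1$ for such $s$. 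Hence $\delta_{\sigma}(\mathcal{T}) \geq s$ for every $s < \frac{1}{B_{1}}\log(\#S)$, which yields the left-hand inequality. For the upper bound I would instead fix $s > B_{1}\log(\#S)$ and use the lower estimate $\|\gamma\|_{\sigma} \geq \frac{1}{B_{1}}|\gamma|_{S} - b_{1}$ to obtain
\begin{align*}
\sum_{\gamma \in \mathcal{T}} e^{-s\|\gamma\|_{\sigma}} \;\leq\; e^{sb_{1}}\sum_{m=0}^{\infty} \#S^{m}\, e^{-sm/B_{1}} \;=\; e^{sb_{1}}\sum_{m=0}^{\infty} \big((\#S)\,e^{-s/B_{1}}\big)^{m} \;<\; \infty,
\end{align*}
since now $(\#S)e^{-s/B_{1}} < 1$. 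Thus $\delta_{\sigma}(\mathcal{T}) \leq s$ for all such $s$, so $\delta_{\sigma}(\mathcal{T}) \leq B_{1}\log(\#S)$, which is finite because $S$ is; in particular $\delta_{\sigma}(\mathcal{T}) < \infty$.

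I do not anticipate any real obstacle: once freeness is used to identify $\#S^{m}$ with $(\#S)^{m}$, part (2) of Theorem \ref{MainThm} converts the purely exponential growth of the ``spheres'' $S^{m}$ into two-sided control of $\|\cdot\|_{\sigma}$, and the remainder is just the convergence criterion for a geometric series. The only subtleties worth flagging explicitly are that freeness is precisely what rules out collapsing in the count $\#S^{m} = (\#S)^{m}$, and that $\#S \geq 2$ is needed so that $\log(\#S) > 0$, making both bounds nontrivial.
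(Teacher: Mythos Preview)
Your proof is correct and follows essentially the same approach as the paper's: both use freeness to obtain $\#S^{m}=(\#S)^{m}$ and then feed the two-sided estimate of part (2) of Theorem \ref{MainThm} into a geometric-series comparison. The only cosmetic difference is that the paper passes through an auxiliary word-length critical exponent $\delta_{S}(\mathcal{T})$, computes it to be $\log(\#S)$, and then proves $\frac{1}{B_{1}}\delta_{S}(\mathcal{T})\leq \delta_{\sigma}(\mathcal{T})\leq B_{1}\delta_{S}(\mathcal{T})$, whereas you compare the $\sigma$-Poincar\'e series with the geometric series directly.
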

\begin{proof}
Consider the Poincar\'e series of $\mathcal{T}$ with respect to the word-length $|\cdot|_{S}$, that is the series
\begin{align*}
    Q_{S}^{\mathcal{T}}(t) := \sum_{\gamma \in \mathcal{T}} e^{-t|\gamma|_{S}},
\end{align*} and let 
\begin{align*}
\delta_{S}(\mathcal{T}) &:= \inf \{t > 0 : Q_{S}^{\mathcal{T}}(t) < \infty \} = \limsup_{k \to \infty} \frac{1}{k} \log \{ \gamma \in \mathcal{T} : |\gamma|_{S} \leq k \}
\end{align*}
denote its critical exponent.  Recall from the proof of Proposition \ref{BJtree} that $\#S \geq 2$. Since $\mathcal{T}$ is a free semigroup, the number of elements of $\mathcal{T}$ of word length $k \geq 0$ is $(\# S)^k$. Thus we obtain
\begin{align*}
\delta_{S}(\mathcal{T}) &= \limsup_{k \to \infty} \frac{1}{k} \log \big(\# \{ \gamma \in \mathcal{T} : |\gamma|_{S} \leq k \} \big) \\
&= \limsup_{k \to \infty} \frac{1}{k} \Big( \log \sum_{i=0}^{k} (\# S)^{i} \Big) \\
&= \limsup_{k \to \infty} \frac{1}{k} \log \bigg( \frac{(\# S)^{k+1} - 1}{\# S - 1} \bigg) \\
&= \limsup_{k \to \infty} \frac{1}{k} \log \big( (\# S)^{k+1} - 1 \big) - \frac{1}{k} \log (\#S - 1) \\
&= \log (\#S).
\end{align*}
It thus suffices to show that 
\begin{align}
\label{LogBounds1}
\frac{1}{B_{1}} \delta_{S}(\mathcal{T}) \leq \delta_{\sigma}(\mathcal{T}) \leq B_{1} \delta_{S}(\mathcal{T}). 
\end{align} Fix an arbitrary $t > \delta_{S}(\mathcal{T})$. Then, $Q_{S}^{\mathcal{T}}(t) < \infty$. We claim that $Q_{\sigma}^{\mathcal{T}}(B_{1}t) < \infty$. By part (2) of Theorem \ref{MainThm}
\begin{align*}
    -B_{1}t||\gamma||_{\sigma} \leq -B_{1}t \bigg( \frac{1}{B_{1}}|\gamma|_{S} - b_{1} \bigg) = b_{1}B_{1}t - t |\gamma|_{S}
\end{align*} for all $\gamma \in \mathcal{T}$, hence
\begin{align*}
    Q_{\sigma}^{\mathcal{T}}(B_{1}t) = \sum_{\gamma \in \mathcal{T}} e^{-B_{1} t ||\gamma||_{\sigma}} \leq e^{b_{1}B_{1}t} Q_{S}^{\mathcal{T}}(t) < \infty,
\end{align*} as desired. We conclude that $\delta_{\sigma}(\mathcal{T}) \leq B_{1} \delta_{S}(\mathcal{T})$, and the other inequality in (\ref{LogBounds1}) is proven entirely analogously. This concludes the proof.
\end{proof}
If $\delta_{\sigma}(\Gamma) = \infty$, then the above result gives $\delta_{\sigma}(\mathcal{T}) < \delta_{\sigma}(\Gamma)$. Hence to conclude the proof of property (4) of Theorem \ref{MainThm} (and thus the proof of the entire theorem), it remains to consider the case when $\delta_{\sigma}(\Gamma) < \infty$. This is precisely the content of the following result.
\begin{thm}
\label{SemigroupExpGap}
Suppose $\Gamma \subset \mathrm{Homeo}(M)$ is a convergence group, $\sigma$ is an expanding coarse-cocycle, and $\delta_{\sigma}(\Gamma) < \infty$. Fix $0 < \delta < \delta_{\sigma}(\Gamma)$ and let $\mathcal{T} = \mathcal{T}_{\delta}$ be the corresponding Bishop--Jones subsemigroup of $\Gamma$. Then,
\begin{align*}
    \delta_{\sigma}(\mathcal{T}) < \delta_{\sigma}(\Gamma).
\end{align*}
\end{thm}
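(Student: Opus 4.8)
Since $\delta_{\sigma}(\Gamma)=:\delta_{0}$ is finite and $\mathcal{T}\subset\Gamma$, we automatically have $\delta_{\sigma}(\mathcal{T})\leq\delta_{0}$, so it suffices to produce a single $s_{1}<\delta_{0}$ with $Q_{\sigma}^{\mathcal{T}}(s_{1})<\infty$. The plan is to get this from a submultiplicative recursion for the Poincar\'e series of $\mathcal{T}$ that exploits freeness of $\mathcal{T}$ together with the coarse additivity of the $\sigma$-magnitude along the tree. For $s>0$ and $N\geq 0$ write $Q_{\sigma,N}^{\mathcal{T}}(s):=\sum_{\gamma\in\mathcal{T},\,|\gamma|_{S}\leq N}e^{-s||\gamma||_{\sigma}}$. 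Freeness gives the disjoint decomposition $\mathcal{T}=\{\mathrm{id}\}\sqcup\bigsqcup_{\eta\in S}\eta\mathcal{T}$ with $|\eta\gamma'|_{S}=1+|\gamma'|_{S}$; Lemma~\ref{obs1} gives $d(\eta^{-1},\gamma')\geq\epsilon_{0}$ for all $\eta\in S$ and $\gamma'\in\mathcal{T}\setminus\{\mathrm{id}\}$; and part~(3) of Proposition~\ref{CocycleProperties} then furnishes $C=C(\epsilon_{0})>0$ (enlarged if needed to absorb the single term $\gamma'=\mathrm{id}$) with $||\eta\gamma'||_{\sigma}\geq||\eta||_{\sigma}+||\gamma'||_{\sigma}-C$. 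Writing $Z(s):=\sum_{\eta\in S}e^{-s||\eta||_{\sigma}}$, this yields
\begin{align*}
Q_{\sigma,N}^{\mathcal{T}}(s)\ \leq\ e^{-s||\mathrm{id}||_{\sigma}}\ +\ e^{sC}\,Z(s)\,Q_{\sigma,N-1}^{\mathcal{T}}(s),
\end{align*}
so that whenever $e^{sC}Z(s)<1$, iterating bounds $Q_{\sigma,N}^{\mathcal{T}}(s)$ uniformly in $N$ and hence $Q_{\sigma}^{\mathcal{T}}(s)<\infty$.

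Thus everything reduces to finding $s_{1}\in(\delta,\delta_{0})$ with $e^{s_{1}C}Z(s_{1})<1$, and the key will be to keep $\#S$ small while $C$ stays independent of the large parameter $n$ chosen in the construction. I would revisit the proof of Proposition~\ref{BJtree}: there the generating set satisfies $\sum_{\eta\in S}e^{-\delta||\eta||_{\sigma}}\geq A^{-\delta}$, and this lower bound is used only to secure property~(4). So I would prune $S$, discarding generators one at a time until $A^{-\delta}\leq\sum_{\eta\in S}e^{-\delta||\eta||_{\sigma}}<A^{-\delta}+1$ (which still forces $\#S\geq 2$, since one generator contributes at most $e^{-\delta(n-\tilde{C})}<1<A^{-\delta}$); all of properties~(1)--(4) and the freeness argument persist verbatim for the pruned $S$. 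Every $\eta\in S$ has $||\eta||_{\sigma}\in[\,n-\tilde{C},\,n+1+\tilde{C})$ by part~(1) of Proposition~\ref{CocycleProperties}, so the pruning bound gives $\#S\leq(A^{-\delta}+1)e^{\delta(n+1+\tilde{C})}$, and therefore, for every $s>0$,
\begin{align*}
Z(s)\ \leq\ \#S\cdot e^{-s(n-\tilde{C})}\ \leq\ (A^{-\delta}+1)\,e^{\delta(1+\tilde{C})+s\tilde{C}}\,e^{(\delta-s)n}.
\end{align*}

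Now fix $\eta_{0}\in(0,\delta_{0}-\delta)$ and set $s_{1}:=\delta_{0}-\eta_{0}\in(\delta,\delta_{0})$, so $\delta-s_{1}<0$ and hence $e^{s_{1}C}Z(s_{1})\to 0$ as $n\to\infty$. Since Lemma~\ref{TreeLem4} makes the $\limsup$ infinite, the parameter $n$ in the construction may be taken arbitrarily large, so I would choose $n$ large enough that, beyond the requirements already imposed in the proof of Proposition~\ref{BJtree}, one also has $e^{s_{1}C}Z(s_{1})<1$. The recursion above then gives $Q_{\sigma}^{\mathcal{T}}(s_{1})<\infty$, whence $\delta_{\sigma}(\mathcal{T})\leq s_{1}<\delta_{0}=\delta_{\sigma}(\Gamma)$. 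The step I expect to need the most care is exactly this constant bookkeeping: one must verify that $C$, $\tilde{C}$ and $A$ depend only on the fixed data ($\epsilon_{0}$, the finite set $F$, and $t_{0}$) and not on $n$, so that the factor $e^{(\delta-s_{1})n}$ genuinely dominates; the hypothesis $\delta<\delta_{\sigma}(\Gamma)$ is used precisely to leave room for $s_{1}\in(\delta,\delta_{0})$, while the pruning and the small enlargement of $C$ for the identity term are routine.

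Alternatively, the setup also supports a Patterson--Sullivan argument: let $\mu$ be the dimension-$\delta_{0}$ coarse $\sigma$-Patterson--Sullivan measure from Theorem~\ref{PSmeasure}, note $\mathrm{supp}\,\mu=\Lambda(\Gamma)$ and that $\{\gamma^{-1}:\gamma\in\mathcal{T}\}$ stays a distance $>2t_{0}$ from the point $x$ of the construction (as in the proof of Proposition~\ref{BJtree}), deduce via the expanding property a uniform lower shadow estimate $\mu(\mathcal{S}_{\epsilon}(\gamma))\gtrsim e^{-\delta_{0}||\gamma||_{\sigma}}$ for $\gamma\in\mathcal{T}$ (a matching upper bound is automatic), and then show that $\mu\big(\bigcup_{\gamma\in S^{m}}\mathcal{S}_{t_{0}/4}(\gamma)\big)$ decays geometrically in $m$ because, by the contraction estimates and property~(1) of Proposition~\ref{BJtree}, the descendants of each $\gamma\in S^{m}$ cluster after translation by $\gamma^{-1}$ in a fixed small neighbourhood of $x$ and so omit a definite $\mu$-proportion of $\mathcal{S}_{t_{0}/4}(\gamma)$; combined with Lemma~\ref{TreeMag1} this again yields the gap. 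In that route the main obstacle is making the geometric rate of mass loss uniform over all vertices of the tree, which forces one to choose the construction parameters $d(y,x)$, $\epsilon_{0}$, $n$ with care.
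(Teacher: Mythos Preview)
Your primary argument is correct and genuinely different from the paper's. The paper proceeds in the Dal'bo--Otal--Peign\'e style: it first builds a Patterson--Sullivan--type measure $\nu$ supported on the accumulation set $E$ of $\mathcal{T}$, uses a shadow upper bound to prove $n(R)\gtrsim e^{\delta_{\sigma}(\mathcal{T})R}$ and hence that $Q_{\sigma}^{\mathcal{T}}$ diverges at its critical exponent; separately it shows $E\cup E'\subsetneq\Lambda(\Gamma)$, picks an open $U$ meeting $\Lambda(\Gamma)$ but avoiding $E\cup E'$, observes that only finitely many $\gamma\in\mathcal{T}$ can satisfy $\gamma U\cap U\neq\emptyset$, and then derives a contradiction from the dimension-$\delta_{\sigma}(\Gamma)$ Patterson--Sullivan measure $\mu$ for $\Gamma$ if $\delta_{\sigma}(\mathcal{T})=\delta_{\sigma}(\Gamma)$. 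Your route bypasses all of this measure theory with an elementary submultiplicative recursion, at the cost of modifying the construction (pruning $S$ so that $\sum_{\eta\in S}e^{-\delta\|\eta\|_{\sigma}}\in[A^{-\delta},A^{-\delta}+1)$ and enlarging $n$); since Proposition~\ref{BJtree} already leaves these parameters free and properties (1)--(4) and freeness survive pruning verbatim, this is legitimate for the purposes of Theorem~\ref{MainThm}. What the paper's approach buys is that it applies to \emph{any} $\mathcal{T}_{\delta}$ produced by Proposition~\ref{BJtree} without further tuning, and along the way it establishes divergence type and $E\cup E'\subsetneq\Lambda(\Gamma)$, which are of independent interest; what your approach buys is a much shorter, self-contained argument that even yields the explicit quantitative bound $\delta_{\sigma}(\mathcal{T})\leq s_{1}$ for any preassigned $s_{1}\in(\delta,\delta_{\sigma}(\Gamma))$. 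Your bookkeeping concern is well placed but harmless: $C$, $\tilde{C}$, $A$, and $\epsilon_{0}$ are all fixed once $x$, $y$, $t_{0}$, and $F$ are chosen, before $n$ enters. Your sketched alternative via geometric mass loss under $\mu$ is in the same spirit as the paper's argument but would require more work to make the decay uniform; the paper's version sidesteps this by using divergence of $Q_{\sigma}^{\mathcal{T}}$ rather than a rate.
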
 
\begin{rem}
The proof of this theorem will be in the spirit of classical work of Dal'bo--Otal--Peign\'e (see Proposition 2 of \cite{DOP}, and, for additional examples of this technique, Theorem 4.1 of \cite{CZZ2} and Theorem 4.3 of \cite{BCZZ1}). When one proves such critical exponent gap results for subgroups, the following hypotheses are typically needed: the first is that the Poincar\'e series of the subgroup diverges at the critical exponent (in which case, the subgroup is said to be of \emph{divergence type}), and the second is that the limit set of the subgroup is a proper subset of the limit set of the ambient group. As a step towards proving this theorem, we will show that the $\sigma$-Poincar\'e series of $\mathcal{T}$ indeed diverges at $\delta_{\sigma}(\mathcal{T})$ (that is, $\mathcal{T}$ is a semigroup of divergence type). Moreover, denoting by $E$ the set of accumulation points of $\mathcal{T}$ and $E'$ the set of accumulation points of $\mathcal{T}^{-1} := \{ \gamma^{-1} : \gamma \in \mathcal{T} \}$, we show that $E \cup E'$ is a proper subset of $\Lambda(\Gamma)$. Thus no such assumptions are required in the statement of Theorem $\ref{SemigroupExpGap}$. Establishing these facts is the crux of the proof of the theorem. 
\end{rem}
We begin by first showing that $\mathcal{T}$ is of divergence type. The main idea is the following. For $R > 0$, define 
\begin{align*}
    n(R) := \# \{\gamma \in \mathcal{T} : ||\gamma||_{\sigma} \leq R \}.
\end{align*} Since the set $E$ is uniformly conical, we can adapt an argument of Coornaert (see Theorem 7.2 of \cite{Coo}) to show that there exists a constant $C \geq 1$ so that 
\begin{align}
\label{asymptotic}
    n(R) \geq \frac{1}{C} e^{\delta_{\sigma}(\mathcal{T})R}
\end{align} for all $R \geq ||\mathrm{id}||_{\sigma}$. We are then able to show that the Poincar\'e series 
\begin{align*}
    Q_{\sigma}^{\mathcal{T}}(s) := \sum_{\gamma \in \mathcal{T}} e^{-s ||\gamma||_{\sigma}}
\end{align*} for $\mathcal{T}$ diverges at the critical exponent $s = \delta_{\sigma}(\mathcal{T})$, which allows us to modify classical arguments to prove Theorem \ref{SemigroupExpGap}. 
\par 
Before we can adapt Coornaert's argument, we need to address the following technical issue. In Coornaert's proof, the Patterson--Sullivan measure of his subgroup is supported on its limit set. However in our case, a coarse-Patterson--Sullivan measure for $\Gamma$ might not even assign positive measure to the set $E \subset \Lambda(\Gamma)$. Hence, we need to construct a measure $\nu$, supported on $E$, with some of the properties of a Patterson--Sullivan measure, in the sense which we now describe. 
\par 
We obtain such a measure by performing a similar construction to that of Theorem $4.1$ of \cite{BCZZ1}, which is an adaptation of the standard construction due to Patterson \cite{P} and Sullivan \cite{S}. Let $\delta := \delta_{\sigma}(\mathcal{T}) < \infty$ and for $x \in \Gamma \sqcup M$ let $\mathcal{D}_{x}$ denote the Dirac measure at $x$. By Lemma 3.1 in \cite{P}, there exists a non-decreasing function $\chi : \mathbb{R} \rightarrow \mathbb{R}_{\geq 1}$ so that 
\begin{itemize}
    \item[(a)] For every $\epsilon > 0$, there exists $R > 0$ such that $\chi(r+t) \leq e^{\epsilon t} \chi(r)$ for any $r \geq R$ and $t \geq 0$,
    \item[(b)] $\sum_{g \in \mathcal{T}} \chi(||g||_{\sigma}) e^{-\delta ||g||_{\sigma}} = \infty$. 
\end{itemize} (when $\sum_{g \in \mathcal{T}} e^{-\delta ||g||_{\sigma}} = \infty$, we may take $\chi \equiv 1$). 
\par 
For $s > \delta$, we define a Borel probability measure on $\Gamma \sqcup M$ by 
\begin{align*}
    \nu_{s} := \frac{1}{Q_{\sigma}^{\chi}(s)} \sum_{g \in \mathcal{T}} \chi(||g||_{\sigma}) e^{-s||g||_{\sigma}} \mathcal{D}_{g},
\end{align*} where $Q_{\sigma}^{\chi}(s) := \sum_{g \in \mathcal{T}} \chi(||g||_{\sigma}) e^{-s ||g||_{\sigma}}$, which is finite by property (a) above. Now fix $s_{n} \searrow \delta$ so that $\nu_{s_{n}} \overset{\ast}{\rightharpoonup} \nu$. 
Notice that $\nu$ is a Borel probability measure supported on $E$. The following lemma provides an upper bound on the $\nu$-measure of certain shadows of elements of the Bishop--Jones semigroup $\mathcal{T}$.
\begin{lem}
\label{ShadowUpperBound}
There exists a constant $C_{1} > 1$ so that 
\begin{align*}
    \nu(\mathcal{S}_{\frac{t_{0}}{2}}(\gamma)) \leq C_{1} e^{-\delta ||\gamma||_{\sigma}}
\end{align*} for all $\gamma \in \mathcal{T}$.
\end{lem}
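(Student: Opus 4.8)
\emph{Strategy.} The plan is to first reduce the shadow to a set on which $\nu$ is genuinely supported, then compare with the approximating measures $\nu_{s_n}$, and finally exploit the tree structure of $\mathcal{T}$ together with the near-additivity $||\gamma\eta||_{\sigma}=||\gamma||_{\sigma}+||\eta||_{\sigma}+O(1)$, which comes from Proposition \ref{CocycleProperties}(3) and Lemma \ref{obs1} (since $d(\gamma^{-1},\eta)\ge\epsilon_{0}$ for $\eta\in S^{k}$, $k\ge1$).

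\emph{Step 1: reduction to the limit set of $\gamma\mathcal{T}$.} Let $E_{\gamma}\subseteq M$ be the set of accumulation points of $\gamma\mathcal{T}=\{\gamma\eta:\eta\in\mathcal{T}\}$; it is a nonempty compact subset of $M$. I claim $\mathcal{S}_{\frac{t_{0}}{2}}(\gamma)\cap E=E_{\gamma}$, so that $\nu(\mathcal{S}_{\frac{t_{0}}{2}}(\gamma))=\nu(E_{\gamma})$ because $\nu$ is supported on $E$. The inclusion $E_{\gamma}\subseteq\mathcal{S}_{\frac{t_{0}}{2}}(\gamma)$ follows by iterating property (1) of Proposition \ref{BJtree} (so that $\mathcal{S}_{\frac{t_{0}}{4}}(\gamma\eta)\subseteq\mathcal{S}_{\frac{t_{0}}{2}}(\gamma)$ for $\eta\in\mathcal{T}\setminus\{\mathrm{id}\}$), together with part (2) of Proposition \ref{ShadowProperties} and the closedness of $\mathcal{S}_{\frac{t_{0}}{2}}(\gamma)$. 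For the reverse inclusion, any $u\in E\setminus E_{\gamma}$ is a limit of a sequence of distinct elements of $\mathcal{T}\setminus\gamma\mathcal{T}$; after passing to a subsequence these all branch off the path to $\gamma$ at a single generation $j<|\gamma|_{S}$ (there are finitely many possible generations, and they cannot all be distinct ancestors of $\gamma$), so by the argument of Lemma \ref{DefiniteDistance} the point $u$ lies in a shadow $\mathcal{S}_{\frac{t_{0}}{2}}(\beta')$ for some sibling $\beta'$ of $\gamma^{(j+1)}$, which by property (2) of Proposition \ref{BJtree} is disjoint from $\mathcal{S}_{\frac{t_{0}}{4}}(\gamma^{(j+1)})\supseteq\mathcal{S}_{\frac{t_{0}}{2}}(\gamma)$. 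The same reasoning shows $E_{\gamma}$ and $\overline{\mathcal{T}\setminus\gamma\mathcal{T}}$ are disjoint; as both are closed in the compact metric space $\Gamma\sqcup M$, they lie a positive distance apart, so there is an open set $U_{\gamma}\supseteq E_{\gamma}$ with $U_{\gamma}\cap\mathcal{T}\subseteq\gamma\mathcal{T}$.

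\emph{Step 2: passing to the approximating measures.} Since $U_{\gamma}$ is open and $\nu_{s_{n}}\overset{\ast}{\rightharpoonup}\nu$, the portmanteau theorem and $U_{\gamma}\cap\mathcal{T}\subseteq\gamma\mathcal{T}$ give
\begin{align*}
\nu(\mathcal{S}_{\tfrac{t_{0}}{2}}(\gamma))=\nu(E_{\gamma})\le\nu(U_{\gamma})\le\liminf_{n\to\infty}\nu_{s_{n}}(U_{\gamma})\le\liminf_{n\to\infty}\frac{1}{Q_{\sigma}^{\chi}(s_{n})}\sum_{g\in\gamma\mathcal{T}}\chi(||g||_{\sigma})\,e^{-s_{n}||g||_{\sigma}}.
\end{align*}
It therefore suffices to bound $\sum_{g\in\gamma\mathcal{T}}\chi(||g||_{\sigma})e^{-s_{n}||g||_{\sigma}}$ by $C_{1}e^{-\delta||\gamma||_{\sigma}}Q_{\sigma}^{\chi}(s_{n})$ plus an error that is $o(Q_{\sigma}^{\chi}(s_{n}))$ as $n\to\infty$ for fixed $\gamma$.

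\emph{Step 3: the core estimate.} Write $g=\gamma\eta$, so $|\,||\gamma\eta||_{\sigma}-||\gamma||_{\sigma}-||\eta||_{\sigma}\,|\le C$ for a uniform $C$. Put $\epsilon_{n}:=s_{n}-\delta>0$ and let $R_{n}=R(\epsilon_{n})$ be furnished by property (a) of $\chi$, so $\chi(r+t)\le e^{\epsilon_{n}t}\chi(r)$ when $r\ge R_{n}$, $t\ge0$. Split $\eta\in\mathcal{T}$ according to $||\eta||_{\sigma}\ge R_{n}$ or $||\eta||_{\sigma}<R_{n}$. For the ``tail'' part one applies property (a) with $r=||\eta||_{\sigma}$ and $t=||\gamma||_{\sigma}+C$, and the cancellation $e^{\epsilon_{n}||\gamma||_{\sigma}}e^{-s_{n}||\gamma||_{\sigma}}=e^{-(s_{n}-\epsilon_{n})||\gamma||_{\sigma}}=e^{-\delta||\gamma||_{\sigma}}$ produces exactly
\begin{align*}
\sum_{||\eta||_{\sigma}\ge R_{n}}\chi(||\gamma\eta||_{\sigma})\,e^{-s_{n}||\gamma\eta||_{\sigma}}\le e^{(\epsilon_{n}+s_{n})C}\,e^{-\delta||\gamma||_{\sigma}}\,Q_{\sigma}^{\chi}(s_{n}),
\end{align*}
which is $\le C_{1}e^{-\delta||\gamma||_{\sigma}}Q_{\sigma}^{\chi}(s_{n})$ for large $n$. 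For the ``head'' part one groups the remaining $\eta$ by the windows $||\eta||_{\sigma}\in[\ell,\ell+1)$ and applies property (a) again (with $t$ of size $O(\ell)$, after a routine case split according as $||\gamma||_{\sigma}$ or $||\gamma\eta||_{\sigma}$ exceeds $R_{n}$); this bounds the head by a multiple of $\chi(||\gamma||_{\sigma})e^{-s_{n}||\gamma||_{\sigma}}$ times $\sum_{\ell\ge0}\#(\mathcal{A}_{\sigma,\ell}\cap\mathcal{T})e^{-\delta\ell}$, a series dominated by $Q_{\sigma}^{\mathcal{T}}(\delta)$, which is finite exactly when $\mathcal{T}$ is of convergence type at $\delta$ — and if it is not, one takes $\chi\equiv1$ and the whole argument reduces to the elementary bound $\sum_{g\in\gamma\mathcal{T}}e^{-s_{n}||g||_{\sigma}}\le e^{s_{n}C}e^{-s_{n}||\gamma||_{\sigma}}Q_{\sigma}^{\mathcal{T}}(s_{n})$. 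In either case the head is $\le K_{\gamma}$ for a constant $K_{\gamma}$ independent of $n$, and since $Q_{\sigma}^{\chi}(s_{n})\to\infty$ by property (b) its contribution vanishes in the $\liminf$. Combining the two parts gives $\nu(\mathcal{S}_{\frac{t_{0}}{2}}(\gamma))\le C_{1}e^{-\delta||\gamma||_{\sigma}}$, after enlarging $C_{1}$ to absorb the finitely many $\gamma\in\mathcal{T}$ whose $\sigma$-magnitude lies below any fixed bound.

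\emph{Main obstacle.} The delicate point is the head term in Step 3: one must rule out that $\nu$ concentrates mass immediately around $\gamma$ itself. This is precisely where both hypotheses on $\chi$ — the subexponential growth (a) and the divergence of $Q_{\sigma}^{\chi}$ at $\delta$ (b) — are indispensable, and it is why the whole argument must be carried out through the approximating measures $\nu_{s_{n}}$ rather than directly with $\nu$, whose quasi-invariance under $\mathcal{T}$ is unavailable because $\mathcal{T}$ is only a semigroup.
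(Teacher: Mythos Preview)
Your overall architecture matches the paper's: pass from the shadow to an open set that meets $\mathcal{T}$ essentially only in $\gamma\mathcal{T}$, push $\nu$ back to the approximating measures $\nu_{s_n}$, and then exploit near-additivity $||\gamma\eta||_{\sigma}=||\gamma||_{\sigma}+||\eta||_{\sigma}+O(1)$. Your Steps~1--2 are correct; the paper reaches the same point slightly differently, by choosing an open $U\supset\mathcal{S}_{t_0/2}(\gamma)$ with $\overline{U}$ disjoint from the sibling shadows at level $|\gamma|_S$, showing $\#\{g\in\mathcal{T}\cap U:g\notin\gamma\mathcal{T}\}<\infty$, and then using Urysohn rather than portmanteau. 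Your tail estimate in Step~3 is actually neat: applying property~(a) with base $r=||\eta||_{\sigma}$ and the specific choice $\epsilon_n=s_n-\delta$ gives the uniform constant $e^{\delta C}$ directly.

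The gap is in the head. Coupling $\epsilon_n=s_n-\delta$ forces the threshold $R_n=R(\epsilon_n)$ to depend on $n$, and nothing prevents $R_n\to\infty$. To bound $\chi(||\gamma\eta||_{\sigma})$ by a multiple of $\chi(||\gamma||_{\sigma})$ via property~(a) you need $||\gamma||_{\sigma}\ge R_n$; for a fixed $\gamma$ this may fail for all large $n$. Your ``routine case split according as $||\gamma||_{\sigma}$ or $||\gamma\eta||_{\sigma}$ exceeds $R_n$'' does not cover this: when $||\gamma||_{\sigma}<R_n$, neither branch yields a bound of the form $\chi(||\gamma\eta||_{\sigma})\lesssim\chi(||\gamma||_{\sigma})\,e^{\epsilon_n(||\eta||_{\sigma}+C)}$, because you have no control on $\chi(R_n)/\chi(||\gamma||_{\sigma})$. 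So the claim that the head is $\le K_\gamma$ independently of $n$ is unjustified.

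The paper avoids this by decoupling: fix an arbitrary $\epsilon>0$ once and for all, set $F_\epsilon=\{h\in\mathcal{T}:||h||_{\sigma}<R(\epsilon)\}$ (a \emph{fixed finite} set), and use Proposition~\ref{CocycleProperties}(1) to get $C'=C'(\gamma)$ with $||\gamma h||_{\sigma}\le||h||_{\sigma}+C'$, so that $\chi(||\gamma h||_{\sigma})\le e^{\epsilon C'}\chi(||h||_{\sigma})$ for $h\notin F_\epsilon$. The head is then a fixed finite sum, hence $o(Q_\sigma^\chi(s_n))$; the tail gives $e^{\epsilon C'}e^{\delta C}e^{-\delta||\gamma||_{\sigma}}$ after $n\to\infty$; finally one sends $\epsilon\to 0$ to kill the $\gamma$-dependent factor $e^{\epsilon C'}$ and obtain the uniform constant $C_1=e^{\delta C}$. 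Replacing your moving $\epsilon_n$ by a fixed $\epsilon$ and taking the limits in this order fixes your argument.
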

\begin{proof}
Fix any $\gamma \in S^{n} \subset \mathcal{T}$. Recall from part (2) of Proposition \ref{BJtree} that if $\eta \in S^{n} \smallsetminus \{\gamma\}$, then $\mathcal{S}_{\frac{t_{0}}{2}}(\gamma) \cap \mathcal{S}_{\frac{t_{0}}{2}}(\eta) = \emptyset$. Thus there exists an open set $U \subset \Gamma \sqcup M$ so that $\mathcal{S}_{\frac{t_{0}}{2}}(\gamma) \subset U$ and $\overline{U} \cap \mathcal{S}_{\frac{t_{0}}{2}}(\eta) = \emptyset$ for all $\eta \in S^{n} \smallsetminus \{\gamma\}$. Notice that 
\begin{align}
\label{finite}
\# \{g \in \mathcal{T} \cap U : g \notin \gamma \mathcal{T} \} < \infty.
\end{align} Indeed, if not, then after passing to a subsequence, there exists a sequence of distinct elements $\{g_{n}\} \subset \mathcal{T} \cap U$, all descending from the same element $\eta \in S^{n} \smallsetminus \{\gamma\}$, and such that $g_{n} \rightarrow x \in \overline{U}$. By part (2) of Proposition \ref{ShadowProperties}, there exist $x_{n} \in \mathcal{S}_{\frac{t_{0}}{2}}(g_{n})$ so that $x_{n} \rightarrow x$. Repeatedly applying part (1) of Proposition \ref{BJtree} gives $\mathcal{S}_{\frac{t_{0}}{2}}(g_{n}) \subset \mathcal{S}_{\frac{t_{0}}{2}}(\eta)$, and since $\mathcal{S}_{\frac{t_{0}}{2}}(\eta)$ is closed this implies that $x \in \overline{U} \cap \mathcal{S}_{\frac{t_{0}}{2}}(\eta)$, which is a contradiction. Thus (\ref{finite}) holds.
\par 
We now will collect various estimates which will be needed to give an upper bound on the $\nu$-measure of $\mathcal{S}_{\frac{t_{0}}{2}}(\gamma)$.
\par 
As in Lemma \ref{TreeMag1}, we have $d(\gamma^{-1}, h) \geq \epsilon_{0}$ for all $h \in \mathcal{T}$ (where $\epsilon_{0} > 0$ is defined in the start of the proof of Proposition \ref{BJtree}). By part (3) of Proposition \ref{CocycleProperties}, there is a constant $C = C(\epsilon_{0}) > 0$ so that 
\begin{align}
\label{PoincIn1}
e^{-s_{n}||\gamma h||_{\sigma}} \leq e^{\delta C} \cdot e^{-\delta ||\gamma||_{\sigma}} \cdot e^{-s_{n} ||h||_{\sigma}}
\end{align} for all $h \in \mathcal{T}$ and all $s_{n}$.
\par 
Now fix $\epsilon > 0$ and let $R = R(\epsilon) > 0$ be as in the first property of the function $\chi$ defined above. By part (1) of Proposition \ref{CocycleProperties}, there is a constant $C' = C'(\gamma) > 0$ so that 
\begin{align*}
    ||h||_{\sigma} - C' \leq ||\gamma h||_{\sigma} \leq ||h||_{\sigma} + C'
\end{align*} for all $h \in \mathcal{T}$. Defining $F_{\epsilon}$ to be the finite set $F_{\epsilon} := \{ h \in \mathcal{T} : ||h||_{\sigma} < R(\epsilon)\}$, we then obtain
\begin{align}
\chi(||\gamma h||_{\sigma}) \leq \chi (||h||_{\sigma} + C') \leq e^{\epsilon C'} \chi(||h||_{\sigma})
\end{align} for all $h \in \mathcal{T} \smallsetminus F_{\epsilon}$. Set 
\begin{align*}
T_{\epsilon} := \{ g \in \mathcal{T} \cap U : g \notin \gamma (\mathcal{T} \smallsetminus F_{\epsilon}) \},
\end{align*} and observe from (\ref{finite}) and the fact that $F_{\epsilon}$ is finite that this set $T_{\epsilon}$ is also finite. We are now in a position to obtain an upper bound on $\nu(\mathcal{S}_{\frac{t_{0}}{2}}(\gamma))$.
\par 
By Urysohn's Lemma, there exists a continuous function $f : \Gamma \sqcup M \rightarrow [0,1]$ so that $f \equiv 1$ on $\mathcal{S}_{\frac{t_{0}}{2}}(\gamma)$ and $f \equiv 0$ on $(\Gamma \sqcup M) \smallsetminus U$. Using the above established facts, we obtain
\begin{align*}
\nu(\mathcal{S}_{\frac{t_{0}}{2}}(\gamma)) &\leq \int f \ d\nu = \lim_{s_{n} \searrow \delta} \int f \ d \nu_{s_{n}} \\
&= \lim_{s_{n} \searrow \delta} \frac{1}{Q_{\sigma}^{\chi}(s_{n})} \sum_{g \in \mathcal{T} \cap U} \chi (||g||_{\sigma}) e^{-s_{n}||g||_{\sigma}} f(g) \\
&\leq \lim_{s_{n} \searrow \delta} \frac{1}{Q_{\sigma}^{\chi}(s_{n})} \sum_{g \in \mathcal{T} \cap U} \chi (||g||_{\sigma}) e^{-s_{n}||g||_{\sigma}} \\
&\leq \lim_{s_{n} \searrow \delta} \frac{1}{Q_{\sigma}^{\chi}(s_{n})} \bigg( \sum_{g \in T_{\epsilon}} \chi (||g||_{\sigma}) e^{-s_{n}||g||_{\sigma}} + \sum_{h \in \mathcal{T} \smallsetminus F_{\epsilon}} \chi (||\gamma h||_{\sigma}) e^{-s_{n}||\gamma h||_{\sigma}}     \bigg) \\
&\leq \lim_{s_{n} \searrow \delta} \frac{1}{Q_{\sigma}^{\chi}(s_{n})} \bigg( \sum_{g \in T_{\epsilon}} \chi (||g||_{\sigma}) e^{-\delta ||g||_{\sigma}} + \sum_{h \in \mathcal{T} \smallsetminus F_{\epsilon}} \chi (||\gamma h||_{\sigma}) e^{-s_{n}||\gamma h||_{\sigma}} \bigg) \\
&= \lim_{s_{n} \searrow \delta} \frac{1}{Q_{\sigma}^{\chi}(s_{n})} \sum_{h \in \mathcal{T} \smallsetminus F_{\epsilon}} \chi (||\gamma h||_{\sigma}) e^{-s_{n}||\gamma h||_{\sigma}} \\
&\leq \lim_{s_{n} \searrow \delta} \frac{1}{Q_{\sigma}^{\chi}(s_{n})} e^{\epsilon C'} \cdot e^{\delta C} \cdot e^{-\delta ||\gamma||_{\sigma}} \sum_{h \in \mathcal{T} \smallsetminus F_{\epsilon}} \chi(||h||_{\sigma}) e^{-s_{n} ||h||_{\sigma}} \\
&\leq \lim_{s_{n} \searrow \delta} e^{\epsilon C'} \cdot e^{\delta C} \cdot e^{-\delta ||\gamma||_{\sigma}} \cdot \frac{1}{Q_{\sigma}^{\chi}(s_{n})} \cdot Q_{\sigma}^{\chi}(s_{n}) \\
&= e^{\epsilon C'} \cdot e^{\delta C} \cdot e^{-\delta ||\gamma||_{\sigma}}.
\end{align*} Since $\epsilon > 0$ was arbitrary and the constant $C'$ depends only on $\gamma$ and not on $\epsilon$, sending $\epsilon \rightarrow 0$ gives
\begin{align*}
\nu( \mathcal{S}_{\frac{t_{0}}{2}}(\gamma)) \leq e^{\delta C} \cdot e^{-\delta ||\gamma||_{\sigma}}.
\end{align*} Notice that the constant $C$ depends only on $\epsilon_{0}$, not on the particular choice of $\gamma \in \mathcal{T}$. Since $\gamma \in \mathcal{T}$ was arbitrary, the lemma holds with $C_{1} := e^{\delta C}$. 
\end{proof}
We need one more observation before we can establish (\ref{asymptotic}). Let $\{\gamma_{n}\}_{n \geq 0} \subset \mathcal{T}$ be any geodesic ray in the Bishop--Jones semigroup $\mathcal{T}$ starting at the identity element $\gamma_{0} = \mathrm{id}$. Let $t_{0} > 0$ be as in the proof of Proposition \ref{BJtree} and recall from (\ref{EstOnTree}) that we have $d(\gamma_{n}^{-1}, \gamma_{n}^{-1} \gamma_{n+1}) > t_{0}$ for all $n \geq 1$, where $\gamma_{n}^{-1} \gamma_{n+1} \in S$ (the set generating $\mathcal{T}$), by construction. Let $C = C(t_{0})$ be the constant furnished by part (3) of Proposition \ref{CocycleProperties} and define $D := \big( \max_{s \in S} ||s||_{\sigma} + C \big) / 2$. Then,
\begin{align*}
    ||\gamma_{n+1}||_{\sigma} \leq ||\gamma_{n}||_{\sigma} + ||\gamma_{n}^{-1} \gamma_{n+1}||_{\sigma} + C \leq ||\gamma_{n}||_{\sigma} + 2D.
\end{align*} Given $R > ||\gamma_{0}||_{\sigma} + D$, define
\begin{align*}
    \mathcal{T}(R) := \{ \gamma \in \mathcal{T} : R-D \leq ||\gamma||_{\sigma} \leq R+D \}.
\end{align*} Let $n_{0}$ be the largest integer so that $||\gamma_{n_{0}}||_{\sigma} < R-D$. Then,
\begin{align*}
R-D \leq ||\gamma_{n_{0}+1}||_{\sigma} \leq ||\gamma_{n_{0}}||_{\sigma} + 2D < R+D,
\end{align*} hence $\gamma_{n_{0}+1} \in \mathcal{T}(R)$. In other words, we have shown that the set $\mathcal{T}(R)$ contains an element of any geodesic ray in $\mathcal{T}$ starting at $\gamma_{0} = \mathrm{id}$. We now prove the aforementioned estimate (\ref{asymptotic}) needed for the proof of Theorem \ref{SemigroupExpGap}.
\begin{prop}
\label{SemigroupExpGrowth}
There exists a constant $C \geq 1$ so that
\begin{align*}
    n(R) \geq \frac{1}{C} e^{\delta_{\sigma}(\mathcal{T})R}
\end{align*} for all $R \geq ||\mathrm{id}||_{\sigma}$.
\end{prop}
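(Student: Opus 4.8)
The plan is to run a Coornaert-type covering argument built on the auxiliary measure $\nu$ supported on $E$, the shadow bound of Lemma \ref{ShadowUpperBound}, and the observation recorded just before the statement that the shell
\[
\mathcal{T}(R) = \{\gamma \in \mathcal{T} : R - D \leq ||\gamma||_{\sigma} \leq R + D\}
\]
meets every geodesic ray in $\mathcal{T}$ issuing from $\mathrm{id}$.

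\emph{Step 1: the shadows of $\mathcal{T}(R)$ cover $E$.} For $R > ||\mathrm{id}||_{\sigma} + D$ I would first show
\[
E \subset \bigcup_{\gamma \in \mathcal{T}(R)} \mathcal{S}_{\frac{t_{0}}{2}}(\gamma).
\]
Given $u \in E$, the diagonal extraction in the proof of Lemma \ref{UniformlyConical} produces a geodesic ray $\{\alpha_{n}\}_{n \geq 0} \subset \mathcal{T}$ with $\alpha_{0} = \mathrm{id}$ (legitimate since $\mathrm{id}\cdot S = S$), $\alpha_{n} \in S^{n}$, and $\alpha_{n} \to u$; by Lemma \ref{GeodesicConicalPoint} together with property (1) of Proposition \ref{BJtree} the shadows $\mathcal{S}_{\frac{t_{0}}{2}}(\alpha_{n})$ are nested with intersection $\{u\}$, so $u \in \mathcal{S}_{\frac{t_{0}}{2}}(\alpha_{n})$ for every $n$. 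Since, by the discussion preceding the statement, this ray has a term in $\mathcal{T}(R)$, the covering follows.

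\emph{Step 2: integrate.} Each $\mathcal{S}_{\frac{t_{0}}{2}}(\gamma)$ is compact hence Borel, the shell $\mathcal{T}(R)$ is finite because $||\cdot||_{\sigma}$ is proper on $\Gamma$ by part (2) of Proposition \ref{CocycleProperties}, and $\nu(E) = 1$ as $\nu$ is a probability measure supported on $E$. Writing $\delta = \delta_{\sigma}(\mathcal{T})$ and using $||\gamma||_{\sigma} \geq R - D$ on $\mathcal{T}(R)$ together with Lemma \ref{ShadowUpperBound},
\[
1 = \nu(E) \leq \sum_{\gamma \in \mathcal{T}(R)} \nu\big( \mathcal{S}_{\frac{t_{0}}{2}}(\gamma) \big) \leq C_{1} \sum_{\gamma \in \mathcal{T}(R)} e^{-\delta ||\gamma||_{\sigma}} \leq C_{1}\, \#\mathcal{T}(R)\, e^{-\delta(R-D)},
\]
so $\#\mathcal{T}(R) \geq C_{1}^{-1} e^{\delta(R-D)}$. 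Since $\mathcal{T}(R) \subset \{\gamma \in \mathcal{T} : ||\gamma||_{\sigma} \leq R+D\}$, this yields $n(R+D) \geq C_{1}^{-1} e^{\delta(R-D)}$, i.e. $n(R) \geq C_{1}^{-1} e^{-2\delta D} e^{\delta R}$ for all $R > ||\mathrm{id}||_{\sigma} + 2D$. On the remaining range $||\mathrm{id}||_{\sigma} \leq R \leq ||\mathrm{id}||_{\sigma} + 2D$ one has $n(R) \geq 1$ because $\mathrm{id} \in \mathcal{T}$, so the estimate holds throughout with $C := \max\{ C_{1} e^{2\delta D},\, e^{\delta(||\mathrm{id}||_{\sigma} + 2D)}\} \geq 1$.

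\emph{Main obstacle.} Essentially everything here is assembly, the constants $t_{0}$, $\epsilon_{0}$, $D$ having been fixed in the construction; the one point to handle carefully is Step 1 — that the geodesic ray attached to $u \in E$ may be taken based at $\mathrm{id}$ and that it actually converges to $u$ — which is exactly what Lemma \ref{GeodesicConicalPoint} and the argument of Lemma \ref{UniformlyConical} supply.
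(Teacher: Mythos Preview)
Your proposal is correct and follows essentially the same route as the paper: cover $E$ by the shadows $\mathcal{S}_{\frac{t_{0}}{2}}(\gamma)$ with $\gamma\in\mathcal{T}(R)$ using the geodesic ray from Lemma~\ref{UniformlyConical} and the shell-crossing observation, then apply $\nu(E)=1$ together with the shadow bound of Lemma~\ref{ShadowUpperBound} to get $n(R+D)\ge C_{1}^{-1}e^{\delta(R-D)}$, and finally enlarge the constant to handle the initial range. Your treatment of the residual interval $[||\mathrm{id}||_{\sigma},\,||\mathrm{id}||_{\sigma}+2D]$ is slightly more explicit than the paper's, but the argument is otherwise identical.
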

\begin{proof}
Fix $R > ||\gamma_{0}||_{\sigma} + D = ||\mathrm{id}||_{\sigma} + D$. By the proof of Lemma \ref{UniformlyConical}, given any $u \in E$, there exists a geodesic ray $\{\gamma_{n}\} \subset \mathcal{T}$ so that $\{u\} = \bigcap_{n=1}^{\infty} \mathcal{S}_{\frac{t_{0}}{2}}(\gamma_{n})$. By the previous discussion, $\gamma_{n} \in \mathcal{T}(R)$ for some $n$. Since $u \in E$ was arbitrary, we have $E \subset \bigcup_{\gamma \in \mathcal{T}(R)} \mathcal{S}_{\frac{t_{0}}{2}}(\gamma)$. 
Let $\nu$ be the Borel probability measure and $C_{1} > 1$ be the constant provided by Lemma \ref{ShadowUpperBound}. Then $\nu$ is supported on $E$ and,
\begin{align*}
    \nu(\mathcal{S}_{\frac{t_{0}}{2}}(\gamma)) \leq C_{1}e^{-\delta_{\sigma}(\mathcal{T})||\gamma||_{\sigma}}
\end{align*} for all $\gamma \in \mathcal{T}$. Hence,
\begin{align*}
C_{1} n(R+D) e^{-\delta_{\sigma}(\mathcal{T})(R-D)} &\geq C_{1} \sum_{\gamma \in \mathcal{T}(R)} e^{-\delta_{\sigma}(\mathcal{T}) (R-D)} \\
&\geq \sum_{\gamma \in \mathcal{T}(R)} \nu \big( \mathcal{S}_{\frac{t_{0}}{2}}(\gamma) \big) \\
&\geq \nu(E) = 1,
\end{align*} and therefore
\begin{align*}
n(R+D) \geq \frac{1}{C_{1}} e^{\delta_{\sigma}(\mathcal{T})(R-D)} = \frac{1}{C_{1} e^{2D\delta_{\sigma}(\mathcal{T})}} e^{\delta_{\sigma}(\mathcal{T})(R+D)} \geq \frac{1}{C} e^{\delta_{\sigma}(\mathcal{T})(R+D)},
\end{align*} where $C = C_{1}e^{2D\delta_{\sigma}(\mathcal{T})}$. By enlarging $C$ if necessary, we obtain 
\begin{align*}
n(R) \geq \frac{1}{C}e^{\delta_{\sigma}(\mathcal{T})R}
\end{align*}
for all $R \geq ||\mathrm{id}||_{\sigma}$, as desired.
\end{proof}
\begin{cor}
\label{PoincareSeriesDiv}
The Poincar\'e series $ Q_{\sigma}^{\mathcal{T}}(s) = \sum_{\gamma \in \mathcal{T}} e^{-s ||\gamma||_{\sigma}}$ diverges at $s = \delta_{\sigma}(\mathcal{T})$.
\end{cor}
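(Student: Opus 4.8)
The plan is to derive the divergence of $Q_{\sigma}^{\mathcal{T}}$ at $s = \delta_{\sigma}(\mathcal{T})$ directly from the exponential lower bound on the counting function $n(R)$ furnished by Proposition \ref{SemigroupExpGrowth}, via a routine Abel summation (summation by parts). Write $\delta := \delta_{\sigma}(\mathcal{T})$; this is finite by Proposition \ref{LogBounds}, and strictly positive since $\delta_{\sigma}(\mathcal{T}) \geq \tfrac{1}{B_{1}}\log(\#S) > 0$ because $\#S \geq 2$. Note also that $n(0) = \#\{\gamma \in \mathcal{T} : ||\gamma||_{\sigma} \leq 0\}$ is finite, e.g.\ by Lemma \ref{TreeMag1} applied with $\phi = \sigma$.

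First I would discretize the series. For each integer $k \geq 0$, every $\gamma \in \mathcal{T}$ with $k < ||\gamma||_{\sigma} \leq k+1$ satisfies $e^{-\delta||\gamma||_{\sigma}} \geq e^{-\delta(k+1)}$, and the number of such $\gamma$ is exactly $n(k+1) - n(k)$. Discarding the finitely many $\gamma$ with $||\gamma||_{\sigma} \leq 0$, this gives
\[
Q_{\sigma}^{\mathcal{T}}(\delta) = \sum_{\gamma \in \mathcal{T}} e^{-\delta||\gamma||_{\sigma}} \;\geq\; \sum_{k=0}^{\infty} \big(n(k+1) - n(k)\big)\, e^{-\delta(k+1)}.
\]
Then I would rearrange the partial sums: for any $N \geq 1$, reindexing yields
\[
\sum_{k=0}^{N}\big(n(k+1)-n(k)\big)e^{-\delta(k+1)} = (1-e^{-\delta})\sum_{j=1}^{N} n(j)e^{-\delta j} + n(N+1)e^{-\delta(N+1)} - e^{-\delta}n(0).
\]
All terms on the right other than $-e^{-\delta}n(0)$ are non-negative, so letting $N \to \infty$ gives $Q_{\sigma}^{\mathcal{T}}(\delta) \geq (1-e^{-\delta})\sum_{j=1}^{\infty} n(j)e^{-\delta j} - e^{-\delta}n(0)$.

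Finally, Proposition \ref{SemigroupExpGrowth} provides a constant $C \geq 1$ with $n(j) \geq \tfrac{1}{C}e^{\delta j}$ for all $j \geq ||\mathrm{id}||_{\sigma}$, so every term of $\sum_{j} n(j)e^{-\delta j}$ with $j \geq ||\mathrm{id}||_{\sigma}$ is bounded below by $1/C$; hence that series diverges, and since $1 - e^{-\delta} > 0$ we conclude $Q_{\sigma}^{\mathcal{T}}(\delta) = \infty$. There is no real obstacle here: the substantive content---the lower bound on $n(R)$, obtained from the measure $\nu$ of Lemma \ref{ShadowUpperBound} and the uniform conicality of the accumulation set $E$, in the spirit of Coornaert's argument---has already been established, and the present corollary is a purely formal consequence.
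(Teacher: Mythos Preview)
Your proof is correct and follows essentially the same approach as the paper: both reduce the divergence of $Q_{\sigma}^{\mathcal{T}}$ at $\delta_{\sigma}(\mathcal{T})$ to the divergence of $\sum_{j} n(j)e^{-\delta j}$ via Abel summation, and then invoke the lower bound $n(j) \geq \tfrac{1}{C}e^{\delta j}$ from Proposition~\ref{SemigroupExpGrowth}. The only cosmetic difference is that the paper argues through the equivalences $Q_{\sigma}^{\mathcal{T}} \Leftrightarrow F \Leftrightarrow G$ and lets $s \searrow \delta$, whereas you carry out the summation-by-parts identity explicitly at $s = \delta$; your version is arguably cleaner, and your care in noting $\delta > 0$ (needed for $1 - e^{-\delta} > 0$) and $n(0) < \infty$ is appropriate.
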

\begin{proof}
Notice that the Poincar\'e series 
\begin{align*}
Q_{\sigma}^{\mathcal{T}}(s) = \sum_{\gamma \in \mathcal{T}} e^{-s ||\gamma||_{\sigma}}
\end{align*} converges if and only if the series
\begin{align*}
F(s) = \sum_{k=0}^{\infty} \big(n(k) - n(k-1) \big) e^{-sk}
\end{align*} converges (since the ratio of the two series is bounded between two strictly positive constants). The series $F(s)$ converges if and only if the series 
\begin{align*}
G(s) = \sum_{k=0}^{\infty} n(k) e^{-sk}
\end{align*} converges (because $n(k) = \sum_{i=1}^{k-1} n(i+1) - n(i)$). Thus it suffices to show that the series $G(s)$ diverges at $s = \delta_{\sigma}(\mathcal{T})$. Let $s > \delta_{\sigma}(\mathcal{T})$. By Proposition \ref{SemigroupExpGrowth}, 
\begin{align*}
G(s) &= \sum_{k=0}^{\infty} n(k) e^{-sk}  \\
&\geq \sum_{k = \lfloor ||\mathrm{id}||_{\sigma} \rfloor + 1}^{\infty} n(k) e^{-sk} \\
&\geq \frac{e^{-(s-\delta_{\sigma}(\mathcal{T}))(\lfloor ||\mathrm{id}||_{\sigma} \rfloor + 1)}}{C} \sum_{k=0}^{\infty} e^{-(s-\delta_{\sigma}(\mathcal{T}))k}
\\
&= e^{-(s-\delta_{\sigma}(\mathcal{T}))(\lfloor ||\mathrm{id}||_{\sigma} \rfloor + 1)} \cdot \frac{1}{C(1 - e^{-(s-\delta_{\sigma}(\mathcal{T}))})}.
\end{align*} 
Hence $Q_{\sigma}^{\mathcal{T}}(s)$ diverges at $s = \delta_{\sigma}(\mathcal{T})$, as desired.
\end{proof}
We now turn to showing that $E \cup E'$ is a proper subset of $\Lambda(\Gamma)$. 
\begin{lem}
\label{propersubset}
The set $E \cup E'$ is a proper subset of $\Lambda(\Gamma)$.
\end{lem}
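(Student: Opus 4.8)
The plan is to produce a single point of $\Lambda(\Gamma)$ missed by $E\cup E'$, and the natural candidate is the point $y\in\Lambda(\Gamma)$ selected in item (ii) of the proof of Proposition~\ref{BJtree}. Recall that there $t_{0}=\tfrac23 d(y,x)$ and $\epsilon_{0}<\tfrac1{12}d(y,x)$, so that
\[
t_{0}+\epsilon_{0}<\tfrac34 d(y,x)<d(y,x)<\tfrac43 d(y,x)=2t_{0}.
\]
Thus it will suffice to show that $E$ is contained in the closed ball of radius $t_{0}+\epsilon_{0}$ about $x$ while $E'$ avoids the open ball of radius $2t_{0}$ about $x$; the gap between these radii is exactly what pins $y$ outside both.

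First I would confine $\mathcal{T}$ near $x$. The induction carried out in the proof of Lemma~\ref{obs1} already shows that every $\alpha\in S^{m+1}$ with $m\geq1$ satisfies $\alpha\in B_{\epsilon_{0}}(\eta)$ for its leading generator $\eta\in S$, hence $\alpha\in B_{t_{0}+\epsilon_{0}}(x)$ because $S\subset B_{t_{0}}(x)$; since also $S\subset B_{t_{0}}(x)\subset B_{t_{0}+\epsilon_{0}}(x)$, we obtain $\mathcal{T}\setminus\{\mathrm{id}\}\subset B_{t_{0}+\epsilon_{0}}(x)$. Any $u\in E$ is the limit of a sequence of pairwise distinct elements of $\mathcal{T}$, all but at most one of which (namely $\mathrm{id}$) lie in $B_{t_{0}+\epsilon_{0}}(x)$, so $E\subset\overline{B_{t_{0}+\epsilon_{0}}(x)}\subseteq\{z\in M: d(z,x)\leq t_{0}+\epsilon_{0}\}$. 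As $d(y,x)>t_{0}+\epsilon_{0}$, this gives $y\notin E$.

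Next I would run the symmetric argument for $\mathcal{T}^{-1}$. The inductive estimate~(\ref{TreePf7}) in the proof of Proposition~\ref{BJtree} says precisely that $d\big((\gamma_{1}\cdots\gamma_{m})^{-1},x\big)>2t_{0}$ for every $m\geq1$ and all $\gamma_{i}\in S$; equivalently $\mathcal{T}^{-1}\setminus\{\mathrm{id}\}\subset(\Gamma\sqcup M)\setminus B_{2t_{0}}(x)$, which is a closed subset of $\Gamma\sqcup M$. Hence $E'\subset\{z\in M: d(z,x)\geq 2t_{0}\}$, and since $d(y,x)<2t_{0}$ we conclude $y\notin E'$. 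Combining the two paragraphs, $y\in\Lambda(\Gamma)\setminus(E\cup E')$, so $E\cup E'$ is a proper subset of $\Lambda(\Gamma)$.

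As for difficulty, there is essentially no obstacle here beyond bookkeeping: the two confinement estimates are exactly the ones already proved by induction in Section~\ref{ConstructionSection}, and the only thing to verify is that the chain of inequalities $t_{0}+\epsilon_{0}<d(y,x)<2t_{0}$ forced by the choices of $t_{0}$ and $\epsilon_{0}$ really leaves $y$ strictly outside both regions. One should also note in passing that $E$ and $E'$ do lie in $\Lambda(\Gamma)$, which is immediate since each is the set of accumulation points of an infinite subset of the discrete group $\Gamma$.
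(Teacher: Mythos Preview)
Your proof is correct and follows essentially the same approach as the paper's: both arguments single out the point $y$ from item~(ii) of the proof of Proposition~\ref{BJtree}, confine $E$ inside $\overline{B_{t_{0}+\epsilon_{0}}(x)}$ via the induction in Lemma~\ref{obs1}, confine $E'$ outside $B_{2t_{0}}(x)$ via~(\ref{TreePf7}), and then check that the numerical choices of $t_{0}$ and $\epsilon_{0}$ trap $y$ strictly between these two regions. The paper routes the final comparison through the auxiliary constants $s_{1}=\tfrac34 d(y,x)$ and $s_{2}=\tfrac54 d(y,x)$ from item~(ii), whereas you compare directly against $d(y,x)$; this is a cosmetic difference only.
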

\begin{proof}
We use the same notation as we did throughout the proof of Proposition \ref{BJtree}. Let $y \in \Lambda(\Gamma)$ and $\epsilon_{0} > 0$ be as in point (ii) in the beginning of the proof of Proposition \ref{BJtree}. We will show that $y \notin E \cup E'$. By construction (see once again point (ii) in the proof of the aforementioned proposition), we have $y \in B_{s_{2}}(x) \smallsetminus \overline{B_{s_{1}}(x)}$. Notice that (\ref{TreePf7}) says exactly that the set $\mathcal{T}^{-1}$ lies a distance greater than $2t_{0}$ from the point $x$. Thus $E' \subset M \smallsetminus B_{2t_{0}}(x)$, and since $s_{2} < 2t_{0}$, we have $y \notin E'$. \par 
It remains to show that $y \notin E$. By construction,
\begin{align*}
    t_{0} + \epsilon_{0} < \frac{2}{3} d(y,x) + \frac{1}{12} d(y,x) = s_{1}. 
\end{align*} We claim that
\begin{align*}
E \subset \overline{B_{t_{0} + \epsilon_{0}}(x)} \subset B_{s_{1}}(x). 
\end{align*} 
Indeed, given $\alpha \in S^{m+1}$, $m \geq 1$, write $\alpha = \gamma \beta$ where $\gamma \in S$ and $\beta \in S^{m}$. By Lemma \ref{obs1} and (\ref{inclu1}), we have
\begin{align*}
\alpha \in \gamma \big( (\Gamma \sqcup M) \smallsetminus B_{\epsilon_{0}}(\gamma^{-1}) \big) \subset B_{\epsilon_{0}} (\gamma) \subset B_{t_{0} + \epsilon_{0}}(x).
\end{align*} It follows that $E \subset \overline{B_{t_{0} + \epsilon_{0}}(x)} \subset B_{s_{1}}(x)$. But as $y \notin \overline{B_{s_{1}}(x)}$, we see that $y \notin E$. Thus $y \in \Lambda(\Gamma) \smallsetminus (E \cup E')$, concluding the proof.  
\end{proof}
We now give the proof of Theorem \ref{SemigroupExpGap}.
\begin{proof} [Proof of Theorem \ref{SemigroupExpGap}]
By the preceding lemma, $E \cup E'$ is a proper closed subset of $\Lambda(\Gamma)$. Fix an open set $U \subset M$ so that $U \cap \Lambda(\Gamma) \neq \emptyset$ and $\overline{U} \cap (E \cup E') = \emptyset$. We claim that
\begin{align*}
N := \# \{\gamma \in \mathcal{T} : \gamma U \cap U \neq \emptyset \} < \infty.
\end{align*} If not, then there exist a sequence of distinct elements $\{\gamma_{n}\} \subset \mathcal{T}$ and a sequence $\{x_{n}\} \subset U$ so that $\{\gamma_{n}x_{n}\} \subset U$. Passing to subsequences, we may assume without loss of generality that $\gamma_{n} \rightarrow a \in E$ and $\gamma_{n}^{-1} \rightarrow b \in E'$. Then $\gamma_{n} \big|_{M \smallsetminus \{b\}} \rightarrow a$ locally uniformly. Since $b \notin \overline{U}$ and $\overline{U} \subset M$ is compact, we conclude that $\gamma_{n}(\overline{U}) \rightarrow a$. But then $\gamma_{n} (\overline{U}) \cap \overline{U} = \emptyset$ for all $n$ sufficiently large, which gives the desired contradiction. 
\par 
By Theorem $\ref{PSmeasure}$, there exists a constant $C_{1} := 2 \kappa \delta_{\sigma}(\Gamma) > 0$ and a $C_{1}$-coarse $\sigma$-Patterson--Sullivan measure $\mu$ of dimension $\delta_{\sigma}(\Gamma)$ for $\Gamma$ which is supported on the limit set $\Lambda(\Gamma)$. We therefore have 
\begin{align*}
    e^{-C_{1}-\delta_{\sigma}(\Gamma) \sigma(\gamma^{-1}, \cdot)} \leq \frac{d(\gamma_{*}\mu)}{d \mu} \leq e^{C_{1}-\delta_{\sigma}(\Gamma) \sigma(\gamma^{-1}, \cdot)}
\end{align*} $\mu$-almost everywhere. Suppose for a contradiction that $\delta_{\sigma}(\mathcal{T}) = \delta_{\sigma}(\Gamma)$. Since $\Gamma$ acts minimally on $\Lambda(\Gamma)$, we have $\mu(U) > 0$. As the Hausdorff distance between $\overline{U}$ and $\mathcal{T}$ in $\Gamma \sqcup M$ is positive, the expanding property of the coarse-cocycle $\sigma$ guarantees the existence of a constant $C_{2} > 0$ so that $\big| \sigma(\gamma,x) - ||\gamma||_{\sigma} \big| \leq C_{2}$ for all $x \in \overline{U}$ and $\gamma \in \mathcal{T}$. It follows that
\begin{align*}
N+1 &\geq \sum_{\gamma \in \mathcal{T}} \mu(\gamma U) = \sum_{\gamma \in \mathcal{T}} (\gamma^{-1})_{*} \mu(U) \geq e^{-C_{1}} \sum_{\gamma \in \mathcal{T}} \int_{U} e^{-\delta_{\sigma}(\Gamma) \sigma(\gamma, x)} \ d\mu(x) \\
&\geq e^{-C_{1}} \sum_{\gamma \in \mathcal{T}} \int_{U} e^{-\delta_{\sigma}(\Gamma) (||\gamma||_{\sigma} + C_{2})} \ d\mu(x) \\
&= \frac{\mu(U)}{e^{C_{1} + \delta_{\sigma}(\mathcal{T})C_{2}}}  \sum_{\gamma \in \mathcal{T}} e^{-\delta_{\sigma}(\mathcal{T}) ||\gamma||_{\sigma}} = \infty,
\end{align*} where the last equality follows from Corollary \ref{PoincareSeriesDiv}. This gives the desired contradiction, and therefore we must have $\delta_{\sigma}(\mathcal{T}) < \delta_{\sigma}(\Gamma)$.
\end{proof}
\section{Background on Lie Theory}
\label{BackgroundSection}
In this section, we record the necessary background on Lie theory, transverse groups, and Anosov semigroups that we will need in order to apply the results of the previous section to prove Theorems \ref{TransverseCritExp} and \ref{NoGapRank1'} in the coming section.
\subsection{Basics of Lie Theory}
Let $G$ be a connected semisimple real Lie group without compact factors and with finite center and let $\mathfrak{g}$ denote its Lie algebra. Let $b$ denote the Killing form of $\mathfrak{g}$ and fix a Cartan involution $\tau$ of $\mathfrak{g}$; that is, an involution of $\mathfrak{g}$ for which the bilinear pairing $\langle \cdot, \cdot \rangle$ on $\mathfrak{g}$ defined by $\langle X, Y \rangle := -b(X, \tau (Y))$ is an inner product. Then $\mathfrak{g}$ decomposes as $\mathfrak{g} = \mathfrak{k} \oplus \mathfrak{p}$, where $\mathfrak{k}$ and $\mathfrak{p}$ are the 1 and -1 eigenspaces of $\tau$. The subalgebra $\mathfrak{k}$ is a maximal compact Lie subalgbra of $\mathfrak{g}$ and we denote by $K \subset G$ the maximal compact Lie subgroup of $G$ whose Lie algebra is $\mathfrak{k}$.

Fix a maximal abelian subspace $\mathfrak{a} \subset \mathfrak{p}$, known as a $\textit{Cartan subspace}$, which is unique up to conjugation. The Lie algebra $\mathfrak{g}$ then decomposes as 
\begin{align*}
    \mathfrak{g} = \mathfrak{g}_{0} \oplus \bigoplus_{\alpha \in \Sigma} \mathfrak{g}_{\alpha},
\end{align*} which is called the $\textit{restricted root space decomposition}$ associated to $\mathfrak{a}$; in this decomposition, for $\alpha \in \mathfrak{a}^{*}$, we define
\begin{align*}
    \mathfrak{g}_{\alpha} := \{X \in \mathfrak{g} : [H,X] = \alpha (H)X \ \ \mathrm{for \ all} \ H \in \mathfrak{a} \},
\end{align*} and call
\begin{align*}
    \Sigma := \{ \alpha \in \mathfrak{a}^{*} \smallsetminus \{0\} : \mathfrak{g}_{\alpha} \neq 0 \}
\end{align*} the set of $\textit{restricted roots}$. Now fix an element $H_{0} \in \mathfrak{a}$ so that $\alpha (H_{0}) \neq 0$ for all $\alpha \in \Sigma$, and let
\begin{align*}
\Sigma^{+} := \{ \alpha \in \Sigma : \alpha (H_{0}) > 0 \} \ \ \mathrm{and} \ \ \Sigma^{-} := - \Sigma^{+}.
\end{align*} Notice that $\Sigma = \Sigma^{+} \sqcup \Sigma^{-}$. We write $\Delta \subset \Sigma^{+}$ for the set of $\textit{simple restricted roots}$, which, by definition, consists of all the elements of $\Sigma^{+}$ which cannot be written as a non-trivial positive integer linear combination of elements in $\Sigma^{+}$. As $\Sigma$ is an abstract root system on $\mathfrak{a}^{*}$, it follows that $\Delta$ is a basis of $\mathfrak{a}^{*}$ and every $\alpha \in \Sigma^{+}$ is a non-negative integral linear combination of elements in $\Delta$. See for instance Chapter II of Knapp's book \cite{K} for more details.
\subsubsection{The Weyl Group, Opposition Involution, and Cartan Projection}
 The \emph{Weyl group} of $\mathfrak{a}$ is given by $\mathcal{W} := N_{K}(\mathfrak{a}) / Z_{K}(\mathfrak{a})$, where $N_{K}(\mathfrak{a}) \subset K$ is the normalizer of $\mathfrak{a}$ in $K$ and $Z_{K}(\mathfrak{a}) \subset K$ is the centralizer of $\mathfrak{a}$ in $K$. The Weyl group is a finite group generated by reflections of $\mathfrak{a}$ (with respect to the inner product $\langle \cdot, \cdot \rangle$) about the kernels of the simple restricted roots in $\Delta$. Hence, $\mathcal{W}$ acts transitively on the set of $\textit{Weyl chambers}$, which are the closures of the connected components of 
\begin{align*}
    \mathfrak{a} - \bigcup_{\alpha \in \Sigma} \ker \alpha.
\end{align*} We call the Weyl chamber 
\begin{align*}
    \mathfrak{a}^{+} := \{X \in \mathfrak{a} : \alpha (X) \geq 0 \ \ \mathrm{for \ all} \ \alpha \in \Delta \},
\end{align*} the $\textit{positive Weyl chamber}$. In the Weyl group $\mathcal{W}$, there is a unique element $w_{0}$, called the $\textit{longest element}$, with the property that $w_{0}(\mathfrak{a}^{+}) = - \mathfrak{a}^{+}$. Thus the longest element allows us to define an involution $\iota : \mathfrak{a} \rightarrow \mathfrak{a}$, $H \mapsto - w_{0} \cdot H$, which is called the $\textit{opposition involution}$. It induces an involution of $\Sigma$ preserving $\Delta$, denoted by $\iota^{*}$, defined by $\iota^{*}(\alpha) = \alpha \circ \iota$ for all $\alpha \in \Delta$. Moreover, if $k_{0} \in N_{K}(\mathfrak{a})$ denotes a representative of the longest element $w_{0} \in \mathcal{W}$, then
\begin{align}
\label{OppInvAct}
\mathrm{Ad}(k_{0}) \mathfrak{g}_{\alpha} = \mathfrak{g}_{-\iota^{*}(\alpha)}
\end{align} for all $\iota \in \Sigma$.
\par 
Let $\kappa : G \rightarrow \mathfrak{a}^{+}$ denote the $\textit{Cartan projection}$, that is $\kappa (g) \in \mathfrak{a}^{+}$ is the unique element so that 
\begin{align*}
    g = k \exp (\kappa (g)) k'
\end{align*} for some $k,k' \in K$. We note that $k, k' \in K$ need not be unique. Such a decomposition of $g \in G$ is called a $KA^{+}K$ decomposition (see Theorem $7.39$ of \cite{K}). Notice that since $\iota(-\mathfrak{a}^{+}) = \mathfrak{a}^{+}$, we have $\iota(\kappa(g)) = \kappa(g^{-1})$ for all $g \in G$. Recall also that when $G = \mathrm{SL}(d, \mathbb{R})$, the Cartan projection is given by 
\begin{align*}
    \kappa(g) = \mathrm{diag} (\log \sigma_{1}(g), \dots, \log \sigma_{d}(g)),
\end{align*} where $\sigma_{1}(g) \geq \cdots \geq \sigma_{d}(g) > 0$ are the singular values of $g$.

\subsubsection{Parabolic Subgroups and Flag Manifolds}
Given a subset $\theta \subset \Delta$, consider the subset of positive restricted roots $\Sigma_{\theta}^{+} := \Sigma^{+} \smallsetminus (\mathrm{Span} (\Delta \smallsetminus \theta))$. We then define the $\textit{standard parabolic subgroup associated to} \ \theta$, denoted by $P_{\theta} = P_{\theta}^{+} \subset G$, to be the normalizer in $G$ of
\begin{align*}
\mathfrak{u}_{\theta} = \mathfrak{u}_{\theta}^{+} := \bigoplus_{\alpha \in \Sigma_{\theta}^{+}} \mathfrak{g}_{\alpha}.
\end{align*} The $\textit{standard partial flag variety associated to} \ \theta$ is 
\begin{align*}
\mathcal{F}_{\theta} = \mathcal{F}_{\theta}^{+} := G / P_{\theta}.
\end{align*} We similarly define the $\textit{standard parabolic subgroup opposite to} \ P_{\theta}$, denoted by $P_{\theta}^{-}$, to be the normalizer in $G$ of 
\begin{align*}
\mathfrak{u}_{\theta}^{-} := \bigoplus_{\alpha \in \Sigma_{\theta}^{+}} \mathfrak{g}_{- \iota^{*}(\alpha)},
\end{align*} and the $\textit{standard partial flag variety opposite to} \ \mathcal{F}_{\theta}$ by
\begin{align*}
\mathcal{F}_{\theta}^{-} := G / P_{\theta}^{-}.
\end{align*} Using (\ref{OppInvAct}), we see that if $k_{0} \in N_{K}(\mathfrak{a})$ is a representative of the longest element $w_{0} \in \mathcal{W}$, then
\begin{align*}
k_{0}P_{\theta}^{\pm}k_{0}^{-1} = k_{0}^{-1}P_{\theta}^{\pm}k_{0} = P_{\iota^{*}(\theta)}^{\mp}
\end{align*}
We say that two flags $F_{1} \in \mathcal{F}_{\theta}^{+}$ and $F_{2} \in \mathcal{F}_{\theta}^{-}$ are $\textit{transverse}$ if the pair $(F_{1}, F_{2})$ is contained in the $G$-orbit of $(P_{\theta}^{+}, P_{\theta}^{-})$ in $\mathcal{F}_{\theta}^{+} \times \mathcal{F}_{\theta}^{-}$. Given any flag $F \in \mathcal{F}_{\theta}^{\pm}$, we denote by $\mathcal{Z}_{F} \subset \mathcal{F}_{\theta}^{\mp}$ the set of flags that are not transverse to $F$. As the set of transverse pairs in $\mathcal{F}_{\theta}^{+} \times \mathcal{F}_{\theta}^{-}$ is an open and dense subset, $\mathcal{Z}_{F}$ is a closed subset with empty interior. Furthermore, $\mathcal{Z}_{F} = \mathcal{Z}_{F'}$ if and only if $F = F'$.
\par 
If $\theta \subset \Delta$ is $\textit{symmetric}$, that is $\iota^{*}(\theta) = \theta$, then the fact that $k_{0} P_{\theta} k_{0}^{-1} = P_{\theta}^{-}$ allows us to identify $\mathcal{F}_{\theta}$ and $\mathcal{F}_{\theta}^{-}$ via the map
\begin{align*}
gP_{\theta}^{-} \mapsto gk_{0}^{-1}P_{\theta}.
\end{align*} Thus it makes sense to say whether or not two elements in $\mathcal{F}_{\theta}$ are transverse. Precisely, two flags $g_{1}P_{\theta}$ and $g_{2}P_{\theta}$ in $\mathcal{F}_{\theta}$ are $\textit{transverse}$ if and only if there exists some $g \in G$ so that $gg_{1} \in P_{\theta}$ and $gg_{2}k_{0} \in P_{\theta}^{-}$. Abusing notation, given a flag $F \in \mathcal{F}_{\theta}$, we write $\mathcal{Z}_{F} \subset \mathcal{F}_{\theta}$ for the set of flags that are not transverse to $F$. 
\par 
When $\theta = \Delta$ is the full set of simple restricted roots, its associated parabolic subgroup $P_{\Delta}$, which we denote simply by $P$, is a minimal (with respect to inclusion) parabolic subgroup called the $\textit{Borel subgroup}$. The associated flag variety $\mathcal{F}_{\Delta}$, henceforth written simply $\mathcal{F} = G/P$, is the $\textit{Furstenberg boundary}$.
\subsubsection{Partial Cartan Subspaces and the Partial Iwasawa Cocycle}
Given a non-empty subset $\theta \subset \Delta$, we define
\begin{align*}
    \mathfrak{a}_{\theta} &= \bigcap_{\alpha \in \Delta \smallsetminus \theta} \ker \alpha, \ \ \ \ \mathfrak{a}_{\theta}^{+} = \mathfrak{a}_{\theta} \cap \mathfrak{a}^{+}, \\
    A_{\theta} &= \exp(\mathfrak{a}_{\theta}), \ \mathrm{and} \ \ A_{\theta}^{+} = \exp(\mathfrak{a}_{\theta}^{+}).
\end{align*} Let
\begin{align*}
    p_{\theta} : \mathfrak{a} \rightarrow \mathfrak{a}_{\theta}
\end{align*} denote the projection of $\mathfrak{a}$ onto $\mathfrak{a}_{\theta}$ invariant under all $w \in \mathcal{W}$ fixing $\mathfrak{a}_{\theta}$ pointwise. The dual space $\mathfrak{a}_{\theta}^{*} = \mathrm{Hom}(\mathfrak{a}_{\theta}, \mathbb{R})$ can in fact be identified with the subspace of $p_{\theta}$-invariant linear functionals on $\mathfrak{a}$:
\begin{align*}
\mathfrak{a}_{\theta}^{*} = \{ \phi \in \mathfrak{a}^{*} : \phi \circ p_{\theta} = \phi \}.
\end{align*} Thus the $\textit{partial Cartan projection associated to} \ \theta$
\begin{align*}
\kappa_{\theta} = p_{\theta} \circ \kappa : G \rightarrow \mathfrak{a}_{\theta}
\end{align*} has the property that, for all $\phi \in \mathfrak{a}_{\theta}^{*}$ and $g \in G$,
\begin{align}
\label{ComposeCartan}
\phi(\kappa_{\theta}(g)) = \phi(\kappa(g)).
\end{align} Let $N := \mathrm{exp}(\mathfrak{u}_{\Delta})$. The $\textit{Iwasawa decomposition}$ is the statement that the map
\begin{align*}
K \times A \times N &\rightarrow G \\
(k,a,n) &\mapsto kan,
\end{align*} is a diffeomorphism; see Proposition 6.46 of \cite{K}. In \cite{Q1}, Quint used the Iwasawa decomposition to define the $\textit{Iwasawa cocycle}$
\begin{align*}
B : G \times \mathcal{F} \rightarrow \mathfrak{a},
\end{align*} which has the defining property that $gk \in K \cdot \mathrm{exp}(B(g,F)) \cdot N$ for all $(g,F) \in G \times \mathcal{F}$, where $k \in K$ is such that $F = kP$. Intuitively, the Iwasawa cocycle can be thought of as a vector valued analog of the Busemann cocycle in the rank 1 setting, and was used by Quint to define higher-rank Patterson--Sullivan measures. \par Given $\theta \subset \Delta$, we have $P \subset P_{\theta}$, hence the identity map on $G$ induces a surjection $\Pi_{\theta} : \mathcal{F} \rightarrow \mathcal{F}_{\theta}$. We can then define the $\textit{partial Iwasawa cocycle}$ to be the map 
\begin{align*}
B_{\theta} : G \times \mathcal{F}_{\theta} \rightarrow \mathfrak{a}_{\theta}
\end{align*} given by $B_{\theta}(g,F) = p_{\theta}(B(g,F'))$ for some (all) $F' \in \Pi_{\theta}^{-1}(F)$. Quint showed that this is a well-defined cocycle, meaning that 
\begin{align*}
B_{\theta}(gh,F) = B_{\theta}(g,hF) + B_{\theta}(h,F)
\end{align*} for all $g,h \in G$ and $F \in \mathcal{F}_{\theta}$. See section $6$ of \cite{Q1} for more details. 
\par 
Suppose now that $\theta \subset \Delta$ is symmetric, that is, $\iota^{*}(\theta) = \theta$. Let $\mathcal{F}_{\theta}^{(2)} \subset \mathcal{F}_{\theta} \times \mathcal{F}_{\theta}$ denote the subset of transverse flags. There is a smooth map $G_{\theta} : \mathcal{F}_{\theta}^{(2)} \rightarrow \mathfrak{a}_{\theta}$ that satisfies
\begin{align*}
G_{\theta}(gF, gF') - G_{\theta}(F, F') = \iota \circ B_{\theta}(g,F) + B_{\theta}(g,F'),
\end{align*} for all $g \in G$ and $(F,F') \in \mathcal{F}_{\theta}^{(2)}$. See for instance the end of section 2 of \cite{KOW2}.
\subsubsection{Critical Exponents}
Let $\Gamma \subset G$ be any discrete sub(semi)group and let $\theta \subset \Delta$. In higher rank, the (partial) Cartan projection should intuitively be thought of as a ``vector valued distance,'' and as such allows us to measure asymptotic growth rates of certain quantities related to $\Gamma$. More precisely, given $\phi \in \mathfrak{a}_{\theta}^{*}$, let $Q_{\Gamma}^{\phi}(s)$ denote the Poincar\'e series
\begin{align*}
    Q_{\Gamma}^{\phi}(s) = \sum_{\gamma \in \Gamma} e^{-s \phi(\kappa_{\theta}(\gamma))}.
\end{align*} As before, the $\textit{critical exponent}$ $\delta^{\phi}(\Gamma)$ is the abscissa of convergence of this Poincar\'e series, i.e.,
\begin{align*}
    \delta^{\phi}(\Gamma) = \inf \{s > 0 : Q_{\Gamma}^{\phi}(s) < \infty \} \in [0, \infty].
\end{align*} Equivalently, 
\begin{align*}
\delta^{\phi}(\Gamma) = \limsup_{T 
\rightarrow \infty} \frac{1}{T} \log \# \{\gamma \in \Gamma : \phi(\kappa_{\theta}(\gamma)) < T \}.
\end{align*}
\subsection{Transverse Groups}
In this section, we recall the definition of a transverse subgroup of a connected semisimple real Lie group $G$ with finite center and no compact factors. Transverse groups are a family of discrete subgroups of semisimple Lie groups, which contain both the class of Anosov and relatively Anosov groups, as well as all discrete subgroups of rank one Lie groups. 
\par 
For the remainder of this section, we assume that $\theta \subset \Delta$ is symmetric. Following the notation of \cite{GGKW}, we define a map
\begin{align*}
U_{\theta} : G \rightarrow \mathcal{F}_{\theta}
\end{align*} by fixing a $KA^{+}K$-decomposition $g = k_{g} \mathrm{exp}(\kappa(g)) k_{g}'$ for each $g \in G$ and setting $U_{\theta}(g) = k_{g}P_{\theta}$. If $\alpha (\kappa (g)) > 0$ for all $\alpha \in \theta$, then $U_{\theta}(g)$ is independent of the choice of $KA^{+}K$-decomposition (see for instance Theorem 1.1 in Chapter IX of \cite{H}), and so $U_{\theta}$ is continuous on the set 
\begin{align*}
\{g \in G : \alpha (\kappa (g)) > 0 \ \mathrm{for \ all} \ \alpha \in \theta \}.
\end{align*} 
\par 
We say that a subgroup $\Gamma$ of $G$ is $P_{\theta}$-$\textit{divergent}$ if
\begin{align*}
\lim_{n \to \infty} \min_{\alpha \in \theta} \alpha(\kappa(\gamma_{n})) = \infty
\end{align*} for any sequence $\{\gamma_{n}\}$ of distinct elements of $\Gamma$. A $P_{\theta}$-divergent group is discrete and its $\theta$-$\textit{limit set}$ is the set of accumulation points in $\mathcal{F}_{\theta}$ of $\{U_{\theta}(\gamma) : \gamma \in \Gamma\}$. Using Proposition 2.6 of \cite{CZZ2}, one can show that $\Lambda_{\theta}(\Gamma)$ is a closed, $\Gamma$-invariant subset of $\mathcal{F}_{\theta}$. A $P_{\theta}$-divergent subgroup $\Gamma \subset G$ is called $P_{\theta}$-$\textit{transverse}$ if in addition any two distinct flags $F,G \in \Lambda_{\theta}(\Gamma)$ are transverse. We remark that in the literature, divergent and transverse groups are sometimes called $\textit{regular}$ and $\textit{regular antipodal}$, respectively (for instance, in the work of Kapovich--Leeb--Porti \cite{KLP}). An essential feature of transverse groups is that they act on their limit sets as convergence groups.
\begin{prop} [Section 5.1 of \cite{KLP} or Proposition 3.3 of \cite{CZZ1}]
If $\Gamma$ is $P_{\theta}$-transverse, then $\Gamma$ acts on $\Lambda_{\theta}(\Gamma)$ as a convergence group. Moreover, if $\Gamma$ is non-elementary, then $\Gamma$ acts on $\Lambda_{\theta}(\Gamma)$ minimally, and $\Lambda_{\theta}(\Gamma)$ is perfect.
\end{prop}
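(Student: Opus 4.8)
The plan is to deduce the statement from a standard contraction (proximality) lemma for the action of $G$ on the partial flag manifold $\mathcal{F}_{\theta}$, and then feed its output into the general convergence-group theory recalled in Section \ref{Prelims}. The lemma I would use is: if $\{g_{n}\} \subset G$ satisfies $\min_{\alpha \in \theta} \alpha(\kappa(g_{n})) \to \infty$, then, after passing to a subsequence, there are flags $F^{+}, F^{-} \in \mathcal{F}_{\theta}$ with $U_{\theta}(g_{n}) \to F^{+}$, $U_{\theta}(g_{n}^{-1}) \to F^{-}$, and $g_{n}$ converging to the constant map at $F^{+}$ uniformly on compact subsets of $\mathcal{F}_{\theta} \setminus \mathcal{Z}_{F^{-}}$. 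I would prove this by fixing $KA^{+}K$-decompositions $g_{n} = k_{n} \exp(\kappa(g_{n})) k_{n}'$, passing to a subsequence with $k_{n} \to k$ and $k_{n}' \to k'$ in $K$, and analysing the dynamics of $\exp(\kappa(g_{n}))$ on $\mathcal{F}_{\theta}$: since $\alpha(\kappa(g_{n})) \to \infty$ for every $\alpha \in \theta$, the element $\exp(\kappa(g_{n}))$ converges to the constant map at the base point $P_{\theta}$, uniformly on compact subsets of the (open dense) set of flags transverse to the standard opposite flag; conjugating by $k_{n}, k_{n}'$ and using the symmetry $\iota^{*}(\theta) = \theta$ to recognise the repelling flag as $U_{\theta}(g_{n}^{-1})$ then yields the claim. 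This is entirely standard (compare \cite{GGKW}, \cite{KLP}, \cite{CZZ2}); the one genuine point is that $\alpha(\kappa(g_{n})) > 0$ for $\alpha \in \theta$ makes $U_{\theta}(g_{n})$ well defined, which is exactly what renders the bookkeeping with the non-unique $k_{n}, k_{n}'$ harmless.

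Granting the lemma, the convergence-group property is a direct translation through the definitions. Let $\{\gamma_{n}\} \subset \Gamma$ be pairwise distinct; $P_{\theta}$-divergence gives $\min_{\alpha \in \theta} \alpha(\kappa(\gamma_{n})) \to \infty$, so after passing to a subsequence we obtain $F^{\pm}$ as in the lemma, and both $F^{+} = \lim U_{\theta}(\gamma_{n})$ and $F^{-} = \lim U_{\theta}(\gamma_{n}^{-1})$ lie in $\Lambda_{\theta}(\Gamma)$ by definition of the $\theta$-limit set (the latter because $\{\gamma_{n}^{-1}\}$ is again a sequence of distinct elements of $\Gamma$). Now the transversality hypothesis is exactly what is needed: any $F \in \Lambda_{\theta}(\Gamma)$ with $F \neq F^{-}$ is transverse to $F^{-}$, i.e.\ $F \notin \mathcal{Z}_{F^{-}}$, so $\Lambda_{\theta}(\Gamma) \setminus \{F^{-}\} \subset \mathcal{F}_{\theta} \setminus \mathcal{Z}_{F^{-}}$. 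Since $\Lambda_{\theta}(\Gamma)$ is a closed, hence compact and metrizable, subset of $\mathcal{F}_{\theta}$, compact subsets of $\Lambda_{\theta}(\Gamma) \setminus \{F^{-}\}$ are compact subsets of $\mathcal{F}_{\theta} \setminus \mathcal{Z}_{F^{-}}$, and the lemma gives $\gamma_{n}|_{\Lambda_{\theta}(\Gamma) \setminus \{F^{-}\}} \to F^{+}$ locally uniformly; this is precisely the convergence-group axiom with attracting point $F^{+}$ and repelling point $F^{-}$. Running the same argument starting from an arbitrary $F \in \Lambda_{\theta}(\Gamma)$, which equals $\lim_{k} U_{\theta}(\gamma_{n_{k}})$ for some sequence of distinct elements, shows that $F$ arises as such an attracting point $F^{+}$; hence $\Lambda_{\theta}(\Gamma)$ is precisely the limit set of the convergence action of $\Gamma$ on it.

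For the final sentence, assume $\Gamma$ is non-elementary, so $\Lambda_{\theta}(\Gamma)$ has at least three points; by the previous paragraph $\Gamma \curvearrowright \Lambda_{\theta}(\Gamma)$ is then a non-elementary convergence action whose limit set is all of $\Lambda_{\theta}(\Gamma)$. Consequently $\Lambda_{\theta}(\Gamma)$ is the smallest nonempty closed $\Gamma$-invariant subset of itself (Theorem 2S of \cite{T}), which forces minimality; and the limit set of a non-elementary convergence group is perfect, by the standard argument that if $x$ were isolated one could choose distinct $z_{1}, z_{2} \in \Lambda_{\theta}(\Gamma) \setminus \{b\}$ and a sequence $\gamma_{n}$ with $\gamma_{n}|_{\Lambda_{\theta}(\Gamma) \setminus \{b\}} \to x$, whereupon the $\gamma_{n} z_{1}, \gamma_{n} z_{2}$ are distinct points converging to $x$, so infinitely many of them are points of $\Lambda_{\theta}(\Gamma)$ distinct from $x$ accumulating at $x$. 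The main obstacle in all of this is the contraction lemma of the first paragraph; everything after it is bookkeeping with the definitions together with the convergence-group facts recalled in Section \ref{Prelims}.
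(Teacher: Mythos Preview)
The paper does not supply its own proof of this proposition; it is quoted as a known result with attribution to \cite{KLP} and \cite{CZZ1}. Your proof sketch follows exactly the standard route taken in those references: the proximality/contraction lemma for $P_{\theta}$-divergent sequences on $\mathcal{F}_{\theta}$, followed by the observation that the transversality hypothesis collapses $\mathcal{Z}_{F^{-}} \cap \Lambda_{\theta}(\Gamma)$ to the single point $\{F^{-}\}$, and then an appeal to general convergence-group facts (Tukia) for minimality and perfectness. This is correct and is essentially how the cited sources argue.

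One small wrinkle: in your perfectness paragraph the symbol $b$ appears without being introduced (you presumably mean the repelling point of the sequence witnessing $x$ as a limit point), and the claim that ``infinitely many of them are points of $\Lambda_{\theta}(\Gamma)$ distinct from $x$'' deserves one more line---e.g.\ note that $\gamma_{n} z_{1} \neq \gamma_{n} z_{2}$ for all $n$, so at most one of the two equals $x$. Alternatively, you can bypass this entirely by invoking the standard fact (also in \cite{T}) that the limit set of a non-elementary convergence group is perfect, which is what the paper implicitly relies on. Either way, there is no substantive gap.
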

The following proposition will allow us to apply Theorem \ref{MainThm} to the setting of transverse groups. 
\begin{prop} [Part (1) of Proposition 10.3 of \cite{BCZZ2}]
\label{ConvtoTransv}
Suppose $\Gamma$ is non-elementary $P_{\theta}$-transverse, $\phi \in \mathfrak{a}_{\theta}^{*}$, and 
\begin{align*}
    \lim_{n \to \infty} \phi (\kappa (\gamma_{n})) = \infty
\end{align*} for any sequence $\{\gamma_{n}\}$ of pairwise distinct elements of $\Gamma$. Define cocycles $\sigma_{\phi}, \bar{\sigma}_{\phi} : \Gamma \times \Lambda_{\theta}(\Gamma) \rightarrow \mathbb{R}$ by 
\begin{align*}
\sigma_{\phi}(\gamma, F) = \phi (B_{\theta}(\gamma, F)) \ \ \mathrm{and} \ \ \bar{\sigma}_{\phi} (\gamma, F) = \iota^{*}(\phi)(B_{\theta}(\gamma, F)).
\end{align*} Then $(\sigma_{\phi}, \bar{\sigma}_{\phi}, \phi \circ G_{\theta})$ is a continuous GPS system for the action of $\Gamma$ on $\Lambda_{\theta}(\Gamma)$. Moreover, one can take the magnitude functions to be
\begin{align*}
    ||\gamma||_{\sigma_{\phi}} = \phi (\kappa (\gamma)) \ \ \mathrm{and} \ \ ||\gamma||_{\bar{\sigma}_{\phi}} = \iota^{*}(\phi)(\kappa(\gamma)).
\end{align*} In particular, $\delta^{\phi}(\Gamma) = \delta_{\sigma_{\phi}}(\Gamma)$.
\end{prop}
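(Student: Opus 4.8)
The plan is to verify the three defining conditions of a continuous GPS system one at a time, to extract the magnitude functions from a general-position comparison between the partial Iwasawa cocycle and the partial Cartan projection, and then to read off the critical-exponent identity.

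The first two conditions are essentially formal. By Quint's theorem the partial Iwasawa cocycle $B_{\theta} : G \times \mathcal{F}_{\theta} \to \mathfrak{a}_{\theta}$ is a genuine continuous cocycle, and since $\theta$ is symmetric the functional $\iota^{*}(\phi)$ again lies in $\mathfrak{a}_{\theta}^{*}$; composing the linear functionals $\phi$ and $\iota^{*}(\phi)$ with $B_{\theta}$ therefore shows that $\sigma_{\phi}$ and $\bar{\sigma}_{\phi}$ are genuine continuous ($0$-coarse) cocycles on $\Gamma \times \Lambda_{\theta}(\Gamma)$. Because $\Gamma$ is $P_{\theta}$-transverse, distinct points of $\Lambda_{\theta}(\Gamma)$ are transverse, so $\Lambda_{\theta}(\Gamma)^{(2)} \subset \mathcal{F}_{\theta}^{(2)}$ and $\phi \circ G_{\theta}$ is defined and continuous (hence locally bounded) there, $G_{\theta}$ being smooth. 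For the GPS cocycle identity I would apply $\phi$ to the relation $G_{\theta}(gF, gF') - G_{\theta}(F, F') = \iota \circ B_{\theta}(g,F) + B_{\theta}(g,F')$ recalled in the text and use $\phi \circ \iota = \iota^{*}(\phi)$ together with linearity to obtain
\[ (\phi \circ G_{\theta})(gF, gF') - (\phi \circ G_{\theta})(F,F') = \bar{\sigma}_{\phi}(g,F) + \sigma_{\phi}(g,F'), \]
for all $g \in \Gamma$ and transverse $F, F' \in \Lambda_{\theta}(\Gamma)$, which is exactly the $\kappa = 0$ GPS identity.

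The heart of the matter is the following general-position estimate: for every $\epsilon > 0$ there is $C = C(\epsilon) > 0$ so that $\| B_{\theta}(\gamma, F) - \kappa_{\theta}(\gamma) \| \le C$ whenever $F \in \mathcal{F}_{\theta}$ stays at $\epsilon$-transversality distance from the appropriate opposite flag determined by the $KA^{+}K$-decomposition of $\gamma^{-1}$. Granting this, properness of $\sigma_{\phi}$ and $\bar{\sigma}_{\phi}$ follows by applying it along a sequence of pairwise distinct loxodromics whose attracting fixed points remain uniformly transverse to their repelling fixed points, combined with the hypothesis $\phi(\kappa(\gamma_{n})) \to \infty$ (noting $\iota^{*}(\phi)(\kappa(\gamma_{n})) = \phi(\kappa(\gamma_{n}^{-1})) \to \infty$ as well, since $\{\gamma_{n}^{-1}\}$ is also escaping). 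Next, translating the $\epsilon$-transversality hypothesis into the condition $d(F, \gamma^{-1}) > \epsilon$ in $\Gamma \sqcup \Lambda_{\theta}(\Gamma)$ — using that $P_{\theta}$-transversality makes $\Gamma$ act as a convergence group on $\Lambda_{\theta}(\Gamma)$ with the dynamics of an escaping sequence $\{\gamma_{n}\}$ governed by $U_{\theta}(\gamma_{n})$ and $U_{\theta}(\gamma_{n}^{-1})$ — and then setting $\|\gamma\|_{\sigma_{\phi}} := \phi(\kappa(\gamma))$ and $\|\gamma\|_{\bar{\sigma}_{\phi}} := \iota^{*}(\phi)(\kappa(\gamma))$, and using $\phi(\kappa_{\theta}(\gamma)) = \phi(\kappa(\gamma))$ from (\ref{ComposeCartan}), applying $\phi$ resp. $\iota^{*}(\phi)$ to the estimate yields the defining inequality of the expanding property with these magnitudes. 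To prove the estimate itself I would invoke the now-standard uniform comparison between the Iwasawa cocycle and the Cartan projection for flags in general position (as in Benoist--Quint or Gu\'eritaud--Guichard--Kassel--Wienhard), in its quantitative form over the $KA^{+}K$-decomposition. The main obstacle is precisely this translation step: relating the Bowditch-type compactification topology on $\Gamma \sqcup \Lambda_{\theta}(\Gamma)$ to transversality distances in $\mathcal{F}_{\theta}$ uniformly over all $\gamma \in \Gamma$, so that the general-position bound can be fed into the definition of an expanding coarse-cocycle.

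Finally, with the magnitude function identified the critical-exponent identity is immediate: $\sum_{\gamma \in \Gamma} e^{-s \|\gamma\|_{\sigma_{\phi}}} = \sum_{\gamma \in \Gamma} e^{-s \phi(\kappa(\gamma))} = Q_{\Gamma}^{\phi}(s)$, so the two series share an abscissa of convergence and $\delta_{\sigma_{\phi}}(\Gamma) = \delta^{\phi}(\Gamma)$; one also checks the consistency relation $\|\gamma\|_{\bar{\sigma}_{\phi}} = \iota^{*}(\phi)(\kappa(\gamma)) = \phi(\kappa(\gamma^{-1})) = \|\gamma^{-1}\|_{\sigma_{\phi}}$, matching part (2) of Proposition \ref{GPSProperties}.
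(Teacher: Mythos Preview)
The paper does not give its own proof of this proposition: it is stated as a citation, namely Part (1) of Proposition 10.3 of \cite{BCZZ2}, and is used as a black box in Section \ref{Applications}. So there is no in-paper argument to compare against.

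That said, your sketch is a faithful outline of how the result is actually established in the literature. The formal verification that $(\sigma_{\phi},\bar{\sigma}_{\phi},\phi\circ G_{\theta})$ satisfies the $\kappa=0$ GPS identity by applying $\phi$ to the $G_{\theta}$ relation is correct, and the identification of the magnitude functions via the uniform comparison $\|B_{\theta}(\gamma,F)-\kappa_{\theta}(\gamma)\|\le C(\epsilon)$ for $F$ in $\epsilon$-general position relative to $U_{\theta}(\gamma^{-1})$ is exactly the mechanism used (this estimate goes back to Quint and appears in the quantitative form you cite in \cite{GGKW}). You also correctly flag the one genuinely nontrivial point: matching the abstract condition $d(F,\gamma^{-1})>\epsilon$ in the Bowditch compactification $\Gamma\sqcup\Lambda_{\theta}(\Gamma)$ with $\epsilon'$-transversality of $F$ to the flag $U_{\theta}(\gamma^{-1})$, uniformly in $\gamma$. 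This is handled in \cite{BCZZ2} (and implicitly in \cite{CZZ2}) by showing that for a $P_{\theta}$-transverse group the convergence-group dynamics on $\Lambda_{\theta}(\Gamma)$ are witnessed precisely by $U_{\theta}(\gamma_n)\to a$ and $U_{\theta}(\gamma_n^{-1})\to b$, so that accumulation of $\gamma^{-1}$ in the compatible metric tracks $U_{\theta}(\gamma^{-1})$; once this is in hand your argument goes through verbatim. The final paragraph on the critical exponent and the consistency check $\|\gamma\|_{\bar\sigma_{\phi}}=\|\gamma^{-1}\|_{\sigma_{\phi}}$ is correct and immediate.
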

\subsection{Anosov Semigroups}
In this section, we discuss the notion of Anosov semigroups recently introduced by Kassel and Potrie in \cite{KP}. The motivation of their paper was to show how certain techniques in the study of discrete subgroups of Lie groups could be fruitful in understanding linear cocycles in dynamics, and vice-versa. One of the main topics of their paper was semigroup representations with a uniform gap in the exponential growth rate of eigenvalues and singular values. In particular, the authors wished to extend the notion of Anosov representations of discrete subgroups of Lie groups -- initially introduced by Labourie in \cite{La} and further developed by Guichard--Wienhard in \cite{GW} -- to semigroups. However, it is not clear how to adapt the original definition of Anosov representations to this more general setting. Instead, motivated by the notion of dominated splittings for linear cocycles, Kassel--Potrie came up with the definition detailed below.
\par 
Let $\Lambda$ be a semigroup with a finite generating subset $S$. That is, any element of $\Lambda$ can be written as a product of elements of elements of $S$. A semigroup may or may not admit an identity element $\mathrm{id}$. For $\gamma \in \Lambda \smallsetminus \{\mathrm{id}\}$, define the $\textit{word length of} \ \gamma \ \textit{with respect to} \ S$ to be
\begin{align*}
|\gamma|_{S} := \min \{k \geq 1 : \gamma = f_{1} \cdots f_{k}, \ \mathrm{where} \ f_{i} \in S \ \mathrm{for \ all} \ 1 \leq i \leq k \},
\end{align*} and set $|\mathrm{id}|_{S} := 0$. Note that if $S'$ is another finite generating set of $\Lambda$, then there exists a constant $M \geq 1$ so that
\begin{align}
\label{genset}
M^{-1} |\gamma|_{S'} \leq |\gamma|_{S} \leq M |\gamma|_{S'}
\end{align}
for all $\gamma \in \Lambda$.
\begin{defn}
Let $G$ be a connected semisimple real Lie group and $\theta \subset \Delta$ a collection of simple restricted roots. A semigroup homomorphism $\rho : \Lambda \rightarrow G$ is said to be $P_{\theta}$-$\textit{Anosov}$ if there exists constants $C,c > 0$ so that 
\begin{align}
\label{AnosovSemi}
\alpha (\kappa (\rho (\gamma))) \geq C |\gamma|_{S} - c 
\end{align} for all $\gamma \in \Lambda$ and $\alpha \in \theta$.
\end{defn} If $\Lambda \subset G$ is a semigroup and satisfies (\ref{AnosovSemi}) with $\rho$ being the inclusion $\Lambda \hookrightarrow G$, then we say that $\Lambda$ is a $P_{\theta}$-$\textit{Anosov subsemigroup of} \ G$. Note that by (\ref{genset}), this definition is independent of the choice of finite generating set for $\Lambda$.
\section{Critical Exponents of Transverse Groups and Relations to Corlette's Gap Theorem}
\label{Applications}
In this section, we provide two applications of Theorem \ref{MainThm}. The first proves Theorem \ref{TransverseCritExpIntro} from the introduction.
\begin{thm}
\label{TransverseCritExp}
Suppose $\Gamma$ is a non-elementary $P_{\theta}$-transverse group and $\phi \in \mathfrak{a}_{\theta}^{*}$ is such that $\phi(\kappa(\gamma_{n})) \rightarrow \infty$ for any sequence $\{\gamma_{n}\}$ of pairwise distinct elements of $\Gamma$. Then there exists a sequence $\{\Gamma_{n}\}_{n \geq 1}$ of free $P_{\theta}$-Anosov subsemigroups of $\Gamma$ so that 
\begin{itemize}
    \item[(1)] $\delta^{\phi}(\Gamma_{n}) < \delta^{\phi}(\Gamma)$ for all $n \geq 1$, and
    \item[(2)] $\lim_{n \to \infty} \delta^{\phi}(\Gamma_{n}) = \delta^{\phi}(\Gamma)$.
\end{itemize} 
\end{thm}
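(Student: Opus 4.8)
The plan is to reduce to the abstract setting of Theorem \ref{MainThm} via Proposition \ref{ConvtoTransv}, and then to establish the Anosov property by a ping-pong argument with the Bishop--Jones generators. First I would apply Proposition \ref{ConvtoTransv}: since $\Gamma$ is a non-elementary $P_\theta$-transverse group and $\phi(\kappa(\gamma_n)) \to \infty$ along every sequence of pairwise distinct elements, the triple $(\sigma_\phi, \bar{\sigma}_\phi, \phi \circ G_\theta)$ is a continuous GPS system for the convergence-group action of $\Gamma$ on $M := \Lambda_\theta(\Gamma)$, with magnitude function $||\gamma||_{\sigma_\phi} = \phi(\kappa(\gamma))$ and $\delta_{\sigma_\phi}(\Gamma) = \delta^\phi(\Gamma)$; by Proposition \ref{GPSProperties} the cocycle $\sigma_\phi$ is expanding. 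Then for each $0 < \delta < \delta^\phi(\Gamma)$ (note $\delta^\phi(\Gamma) > 0$, since a non-elementary convergence group contains a free subgroup while $\phi \circ \kappa$ grows at most linearly in word length), Theorem \ref{MainThm} applied to $\Gamma \subset \mathrm{Homeo}(M)$ with the expanding cocycle $\sigma_\phi$ produces a free finitely generated Bishop--Jones semigroup $\mathcal{T}_\delta \subset \Gamma$, with finite generating set $S = S_\delta$, satisfying $\delta \le \delta_{\sigma_\phi}(\mathcal{T}_\delta) < \delta_{\sigma_\phi}(\Gamma)$ (by properties (1) and (4) of Theorem \ref{MainThm}; property (3) covers the case $\delta^\phi(\Gamma) = \infty$, taking $\delta = n \to \infty$). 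Since $\phi \circ \kappa_\theta = \phi \circ \kappa$ by (\ref{ComposeCartan}), the $(\phi \circ \kappa_\theta)$-Poincar\'e series of $\mathcal{T}_\delta$ coincides with $Q_{\sigma_\phi}^{\mathcal{T}_\delta}$, so $\delta^\phi(\mathcal{T}_\delta) = \delta_{\sigma_\phi}(\mathcal{T}_\delta)$. Choosing $\delta_n \nearrow \delta^\phi(\Gamma)$ and setting $\Gamma_n := \mathcal{T}_{\delta_n}$ then immediately gives conclusions (1) and (2).

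It remains to verify that each $\Gamma_n$ is a free $P_\theta$-Anosov subsemigroup of $\Gamma$. Freeness is part of Theorem \ref{MainThm}, so the only real content is the estimate $\alpha(\kappa(\gamma)) \ge C|\gamma|_S - c$ for all $\gamma \in \Gamma_n$ and $\alpha \in \theta$. Here I would exploit the geometry of the generating set $S$ built in the proof of Proposition \ref{BJtree}: its elements are loxodromic for the $\Gamma$-action on $\Lambda_\theta(\Gamma)$ (via Lemma \ref{ConvGpAMS}), hence $P_\theta$-proximal in $G$, with attracting flags clustered near the basepoint $x$ (recall $S \subset B_{t_0/2}(x)$) and repelling flags uniformly far from $x$ (by (\ref{TreePf7}) the set $S^{-1}$, and hence the collection of repelling flags, lies outside $B_{2t_0}(x)$). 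Since any two distinct points of the limit set of a transverse group are transverse and the separation here is metrically uniform, the attracting flag of an arbitrary product of generators is uniformly transverse to the repelling flag of any $s \in S$. Moreover, by enlarging the integer $n$ in the construction of $S$ we may assume $\min_{\alpha \in \theta}\alpha(\kappa(s))$ is as large as we please for every $s \in S$: only finitely many elements of $\Gamma$ violate a fixed such bound, by $P_\theta$-divergence, and these have bounded $\phi \circ \kappa$, hence lie outside $\mathcal{A}_{\sigma_\phi, n}$ once $n$ is large. These are precisely the hypotheses of the standard ping-pong estimate for Cartan projections (cf. \cite{GGKW}, \cite{KP}): writing $\gamma = s_1 \cdots s_k \in \Gamma_n$ with $s_i \in S$ and $k = |\gamma|_S$, an induction using the subadditivity $\alpha(\kappa(gh)) \ge \alpha(\kappa(g)) + \alpha(\kappa(h)) - c_0$ (with $c_0$ depending only on the transversality gap), valid whenever $U_\theta(g^{-1})$ is transverse to $U_\theta(h)$, yields $\alpha(\kappa(\gamma)) \ge k(\min_{s \in S}\alpha(\kappa(s)) - c_0)$, which is $\ge Ck - c$ with $C > 0$ once $n$ is large. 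This is the Anosov property.

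The main obstacle is exactly this last step — making rigorous the passage from the abstract convergence-dynamical data that Proposition \ref{BJtree} supplies (loxodromicity on $\Lambda_\theta(\Gamma)$, nested and pairwise disjoint shadows, and the separation of the accumulation sets $E \subset \overline{B_{t_0+\epsilon_0}(x)}$ and $E' \subset M \setminus B_{2t_0}(x)$ obtained in the proof of Lemma \ref{propersubset}) to honest statements in $G$ about $P_\theta$-proximality, flag transversality, and Cartan-projection subadditivity. I would bridge this using the dictionary between the compactification $\Gamma \sqcup \Lambda_\theta(\Gamma)$ and the maps $U_\theta$ — closeness of a group element to a limit point in the compactification forces $U_\theta$ of that element to be close to the same point — together with the classical transversality and Cartan-projection estimates from the theory of Anosov representations. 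Everything else, namely the critical-exponent comparison, freeness, and the limit $\delta^\phi(\Gamma_n) \to \delta^\phi(\Gamma)$, is a direct consequence of Theorem \ref{MainThm} and Proposition \ref{ConvtoTransv}.
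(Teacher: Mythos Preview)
Your reduction to Theorem \ref{MainThm} via Proposition \ref{ConvtoTransv}, and the deduction of (1) and (2) from properties (1) and (4) of Theorem \ref{MainThm}, is exactly what the paper does.

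For the $P_\theta$-Anosov property, however, you take a substantially harder route than necessary and miss the key observation that makes the paper's argument a one-liner. You propose a direct ping-pong/proximality argument in $G$: translate the convergence-group separation data for $S$ back into flag-transversality statements, invoke Cartan-projection subadditivity under transversality, and iterate. You correctly identify this translation as the main obstacle, and it is genuinely delicate to carry out. The paper bypasses all of this by observing that \emph{each simple root $\alpha \in \theta$ is itself a functional to which Proposition \ref{ConvtoTransv} applies}: since $\Gamma$ is $P_\theta$-divergent, $\alpha(\kappa(\gamma_m)) \to \infty$ along any escaping sequence, so $\sigma_\alpha$ is an expanding coarse-cocycle on $\Gamma \times \Lambda_\theta(\Gamma)$ with magnitude $\|\gamma\|_{\sigma_\alpha} = \alpha(\kappa(\gamma))$. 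Now property (2) of Theorem \ref{MainThm} --- which you quote but apply only for $\sigma_\phi$ --- holds for \emph{any} expanding coarse-cocycle, not just the one used to build $\mathcal{T}$. Applying it with each $\sigma_\alpha$ gives $\alpha(\kappa(\gamma)) \ge A_\alpha^{-1}|\gamma|_{S_n} - a_\alpha$ for all $\gamma \in \Gamma_n$, and taking $C = \min_\alpha A_\alpha^{-1}$, $c = \max_\alpha a_\alpha$ finishes. In effect, the abstract Lemma \ref{TreeMag1} already encodes the ping-pong estimate you are trying to reprove in $G$; the point is that it was stated and proved for arbitrary expanding cocycles precisely so that it could be invoked here with $\sigma_\alpha$ in place of $\sigma_\phi$.
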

\begin{proof}
By Proposition $\ref{ConvtoTransv}$, $(\sigma_{\phi}, \bar{\sigma}_{\phi}, \phi \circ G_{\theta})$ is a continuous GPS system for the action of $\Gamma$ on its limit set $\Lambda_{\theta}(\Gamma)$. In particular, $\sigma_{\phi} : \Gamma \times \Lambda_{\theta}(\Gamma) \rightarrow \mathbb{R}$ is an expanding coarse-cocycle and we may consider the corresponding $\sigma_{\phi}$-magnitude defined by $||\gamma||_{\sigma_{\phi}} = \phi (\kappa (\gamma))$ for all $\gamma \in \Gamma$. Moreover, for any subset $H \subset \Gamma$, we have $\delta^{\phi}(H) = \delta_{\sigma_{\phi}}(H).$ Let $\{\delta_{n}\}_{n \geq 1}$ be a sequence of real numbers so that $\delta_{n} < \delta^{\phi}(\Gamma)$ for all $n$ and $\lim_{n \to \infty} \delta_{n} = \delta^{\phi}(\Gamma)$. For each $n \geq 1$, let $\Gamma_{n}$ be the free Bishop--Jones subsemigroup of $\Gamma$ associated to $\delta_{n}$ provided by Theorem $\ref{MainThm}$. By properties (1) and (4) of Theorem \ref{MainThm}, we have 
\begin{align*}
 \delta_{n} \leq   \delta^{\phi}(\Gamma_{n}) < \delta^{\phi}(\Gamma),
\end{align*}
for all $n \geq 1$, and therefore
\begin{align*}
    \lim_{n \to \infty} \delta^{\phi}(\Gamma_{n}) = \delta^{\phi}(\Gamma).
\end{align*} This proves items (1) and (2) in the statement of the theorem. It only remains to prove that each $\Gamma_{n}$ is $P_{\theta}$-Anosov. Fix an arbitrary $n \geq 1$ and let $S_{n}$ be the corresponding finite generating subset of $\Gamma_{n}$. Since $\Gamma$ is $P_{\theta}$-transverse, it is in particular $P_{\theta}$-divergent, so $\lim_{m \to \infty} \alpha (\kappa(\gamma_{m})) = \infty$ for any $\alpha \in \theta$ and any sequence $\{\gamma_{m}\}$ of distinct elements in $\Gamma$. Thus Proposition $\ref{ConvtoTransv}$ tells us once again that $\sigma_{\alpha} : \Gamma \times \Lambda_{\theta}(\Gamma) \rightarrow \mathbb{R}$ is an expanding coarse-cocycle for each $\alpha \in \theta$, and that we can choose the corresponding magnitude functions to be $||\gamma||_{\sigma_{\alpha}} = \alpha(\kappa(\gamma))$. By property (2) of Theorem \ref{MainThm}, for each $\alpha \in \theta$, there exist constants $A_{\alpha}, a_{\alpha} > 0$ so that 
\begin{align*}
\alpha(\kappa(\gamma)) = ||\gamma||_{\sigma_{\alpha}} \geq A_{\alpha} |\gamma|_{S_{n}} - a_{\alpha}
\end{align*} for all $\gamma \in \Gamma_{n}$. Setting $C := \min \{A_{\alpha} : \alpha \in \theta \} > 0$ and $c := \max \{a_{\alpha} : \alpha \in \theta \} > 0$, we obtain 
\begin{align*}
    \alpha(\kappa(\gamma)) \geq C |\gamma|_{S_{n}} - c
\end{align*} for all $\gamma \in \Gamma_{n}$ and $\alpha \in \theta$, which says exactly that $\Gamma_{n}$ is $P_{\theta}$-Anosov. 
\end{proof}
We now prove Theorem \ref{NoGapRank1}, which shows that there is no critical exponent gap for subsemigroups of lattices in $\mathrm{Sp}(n,1)$ and $\mathrm{F}_{4}^{-20}$. Since the class of transverse groups contains all discrete subgroups of rank one Lie groups, this result follows immediately from the previous result. However, there is a proof which does not require the theory of transverse groups, which is the one we present below. \\
\begin{thm} ~\
\label{NoGapRank1'}
\begin{itemize}
    \item[(1)] Let $\Gamma \subset \mathrm{Sp}(n,1)$, $n \geq 2$, be a lattice. Then there exists a sequence $\{\Gamma_{m}\}_{m \geq 1}$ of finitely generated free subsemigroups of $\Gamma$ so that $\delta(\Gamma_{m}) < \delta(\Gamma) = 4n+2$ for all $m \geq 1$ and $\lim_{m \to \infty} \delta(\Gamma_{m}) = 4n+2$.
    \item[(2)] Let $\Gamma \subset \mathrm{F}_{4}^{-20}$ be a lattice. Then there exists a sequence $\{\Gamma_{m}\}_{m \geq 1}$ of finitely generated free subsemigroups of $\Gamma$ so that $\delta(\Gamma_{m}) < \delta(\Gamma) = 22$ for all $m \geq 1$ and $\lim_{m \to \infty} \delta(\Gamma_{m}) = 22$.
\end{itemize}
\end{thm}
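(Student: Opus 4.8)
The plan is to deduce Theorem \ref{NoGapRank1'} directly from the main result Theorem \ref{MainThm} applied to the Busemann coarse-cocycle. First I would set up the geometric framework: let $G$ denote $\mathrm{Sp}(n,1)$ (resp. $\mathrm{F}_4^{-20}$) and let $X = \mathbf{H}_{\mathbb{H}}^n$ (resp. $\mathbf{H}_{\mathbb{O}}^2$) be the associated rank-one symmetric space, which is $\mathrm{CAT}(-1)$ and in particular a proper geodesic Gromov hyperbolic space. By Example \ref{GromovHypEx}, the lattice $\Gamma \subset G = \mathrm{Isom}^+(X)$ acts as a convergence group on $\partial X$, the Busemann cocycle $\beta : \Gamma \times \partial X \to \mathbb{R}$ is a continuous cocycle, $(\beta,\beta,G)$ is a continuous GPS system, and one may take $\|\gamma\|_\beta = d(o,\gamma o)$, so that $\delta_\beta(\Gamma)$ is exactly the classical critical exponent $\delta(\Gamma)$ of the Poincar\'e series $\sum_{\gamma \in \Gamma} e^{-s\, d(o,\gamma o)}$. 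By Proposition \ref{GPSProperties}(1), $\beta$ is an expanding coarse-cocycle, so Theorem \ref{MainThm} applies to $(\Gamma,\beta)$. One also needs $\Gamma$ non-elementary: a lattice in a rank-one Lie group is a non-elementary convergence group on $\partial X$ since its limit set is all of $\partial X$, which contains infinitely many points (and certainly at least three); this is where the hypothesis $n \geq 2$ plays no special role beyond guaranteeing $G$ is one of the two exceptional rank-one groups in Corlette's theorem.

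Next I would invoke Corlette's gap theorem (Theorem \ref{CorletteGapThm}) to identify the value of $\delta(\Gamma)$: since $\Gamma$ is a lattice, $\delta(\Gamma) = 4n+2$ in case (1) and $\delta(\Gamma) = 22$ in case (2). In particular $\delta_\beta(\Gamma) = \delta(\Gamma) < \infty$. Now fix a sequence of real numbers $\delta_m$ with $0 < \delta_m < \delta_\beta(\Gamma)$ and $\delta_m \to \delta_\beta(\Gamma)$ as $m \to \infty$; concretely one can take $\delta_m = \delta(\Gamma) - \tfrac{1}{m}$ for $m$ large. For each $m$, let $\Gamma_m := \mathcal{T}_{\delta_m}$ be the Bishop--Jones semigroup furnished by Theorem \ref{MainThm} applied to $(\Gamma,\beta)$ with parameter $\delta = \delta_m$. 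By construction $\Gamma_m$ is a finitely generated free subsemigroup of $\Gamma$. Property (1) of Theorem \ref{MainThm} gives $\delta_\beta(\Gamma_m) \geq \delta_m$, and since $\|\cdot\|_\beta$ restricted to any subset of $\Gamma$ is $d(o,\cdot\, o)$, we have $\delta(\Gamma_m) = \delta_\beta(\Gamma_m) \geq \delta_m$. Property (4) of Theorem \ref{MainThm} gives $\delta_\beta(\Gamma_m) < \delta_\beta(\Gamma)$, i.e. $\delta(\Gamma_m) < \delta(\Gamma)$. Combining these two bounds:
\begin{align*}
\delta_m \leq \delta(\Gamma_m) < \delta(\Gamma)
\end{align*}
for every $m$, and since $\delta_m \to \delta(\Gamma)$ the squeeze forces $\lim_{m \to \infty} \delta(\Gamma_m) = \delta(\Gamma)$. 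Substituting the value $\delta(\Gamma) = 4n+2$ (resp. $22$) from Corlette's theorem yields both assertions.

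The argument is essentially a bookkeeping exercise once the main theorem is in hand, so I do not expect a substantial obstacle; the only points requiring a bit of care are (a) verifying that the Busemann cocycle of a lattice really fits the hypotheses of the preliminaries — this is precisely the content of Example \ref{GromovHypEx}, which records that $X$ being $\mathrm{CAT}(-1)$ makes $\beta$ a genuine continuous cocycle and $(\beta,\beta,G)$ a continuous GPS system, and that $\|\gamma\|_\beta = d(o,\gamma o)$ so that $\delta_\beta = \delta$; and (b) confirming that a lattice in these rank-one groups is non-elementary as a convergence group, which follows because its limit set on $\partial X$ is the whole sphere. The mildly delicate conceptual point worth flagging in the proof is that $\delta_\beta$ for a \emph{subsemigroup} is still just the abscissa of convergence of $\sum_{\gamma} e^{-s\,d(o,\gamma o)}$ over that subsemigroup, because the $\beta$-magnitude $\|\gamma\|_\beta = d(o,\gamma o)$ is an intrinsic quantity attached to each individual element $\gamma \in \Gamma$ and does not depend on whether we regard $\gamma$ as living in $\Gamma$ or in $\Gamma_m$; hence Theorem \ref{MainThm}'s conclusions about $\delta_\sigma(\mathcal{T})$ translate verbatim into statements about $\delta(\Gamma_m)$. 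Everything else is immediate from Theorem \ref{MainThm} and Corlette's theorem.
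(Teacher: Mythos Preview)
Your proof is correct and follows essentially the same route as the paper: set up the Busemann GPS system via Example \ref{GromovHypEx}, identify $\delta_\beta(\Gamma)=\delta(\Gamma)=4n+2$ (resp.\ $22$), and then apply parts (1) and (4) of Theorem \ref{MainThm} to a sequence $\delta_m \nearrow \delta(\Gamma)$. The paper's version is terser but the content is identical; your additional remarks on non-elementarity and on why $\delta_\beta$ restricts correctly to subsemigroups are accurate and merely make explicit what the paper leaves implicit.
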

\begin{proof}
We prove only (1) as the proof of (2) is entirely analogous. Fix $n \geq 2$, let $\Gamma \subset \mathrm{Sp}(n,1) = \mathrm{Isom}^{+}(\mathbf{H}_{\mathbb{H}}^{n})$ be a lattice, and fix a basepoint $o \in \mathbf{H}_{\mathbb{H}}^{n}$. The quaternionic hyperbolic space $\mathbf{H}_{\mathbb{H}}^{n}$ is a proper geodesic Gromov hyperbolic space, and therefore, as in Example \ref{GromovHypEx}$, (\beta, \beta, G)$ is a GPS system. Taking the $\beta$-magnitude function to be $||\gamma||_{\beta} = d(o, \gamma o)$, we have $\delta_{\beta}(\Gamma) = \delta(\Gamma) = 4n+2$. The result then follows from parts (1) and (4) of Theorem \ref{MainThm}.
\end{proof}
\section{Quasi-Isometric Embeddings of Bishop--Jones Semigroups Into the Symmetric Space}
\label{CoarseGeom}
Fix a non-empty, symmetric subset $\theta \subset \Delta$ and let $\Gamma \subset G$ be $P_{\theta}$-transverse. Write $X = G/K$ for the symmetric space associated to $G$ and let $o = [K] \in G/K$. Fix a $K$-invariant norm $||\cdot||$ on $\mathfrak{g}$ and a metric $d_{X}$ on $X$ induced from the Killing form on $\mathfrak{g}$, so that 
\begin{align*}
d_{X}(go, ho) = ||\kappa(g^{-1}h)||
\end{align*} for all $g,h \in G$. Given a linear functional $\phi \in \mathfrak{a}_{\theta}^{*}$, define a function $\textbf{d}_{\phi} : X \times X \rightarrow \mathbb{R}$ by 
\begin{align*}
    \textbf{d}_{\phi}(go,ho) := \phi (\kappa(g^{-1}h)) = \phi(\kappa_{\theta}(g^{-1}h)).
\end{align*} Since the Cartan projection $\kappa : \mathfrak{a}^{+} \rightarrow \mathbb{R}$ is bi-$K$-invariant, this is a well-defined left $G$-invariant function. We remark that sometimes the function $\mathbf{d}_{\phi}$ is a distance function on the whole symmetric space, and sometimes it is not; see for example \cite{KL1} and \cite{KL2}.
\par 
Under some additional assumptions on $\phi$, we show that we may quasi-isometrically embed Bishop--Jones semigroups of $\Gamma$ into $X$ in such a way that, when restricted to the image of these semigroups, the function $\mathbf{d}_{\phi}$ behaves like a metric, up to a uniform additive error. This is inspired by work of Dey--Kim--Oh in \cite{DKO} and heavily relies on Theorem \ref{GeodCoarseTriangleIneq} due to the same authors. In what follows, for some fixed $\psi \in \theta$, let $\mathcal{T}$ be a Bishop--Jones subsemigroup of $\Gamma$ with respect to the expanding coarse-cocycle $\sigma_{\psi} : \Gamma \times \Lambda_{\theta}(\Gamma) \rightarrow \mathbb{R}$ of Proposition \ref{ConvtoTransv}. Delaying certain definitions for the moment, the main result of this section is the following. 
\begin{thm} [Coarse Triangle Inequality]
\label{CoarseTriangleInequality}
There exists a $\theta$-admissible cone $\mathcal{C} \subset \mathfrak{a}^{+}$ so that: if $\phi \in \mathfrak{a}_{\theta}^{*}$ is positive on $\mathcal{C}_{\theta} \smallsetminus \{0\}$ and $\phi(\kappa(\gamma_{n})) \rightarrow \infty$ for any distinct sequence of elements $\{\gamma_{n}\}$ in $\Gamma$, then there exists a constant $D = D (\phi, \mathcal{T}) > 0$ so that for all $\gamma_{1}, \gamma_{2}, \gamma_{3} \in \mathcal{T}$, we have
\begin{align*}
\mathbf{d}_{\phi}(\gamma_{1} o, \gamma_{3} o) \leq \mathbf{d}_{\phi}(\gamma_{1} o, \gamma_{2} o) + \mathbf{d}_{\phi}(\gamma_{2} o, \gamma_{3} o) + D.
\end{align*}
\end{thm}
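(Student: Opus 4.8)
The plan is to reduce the claimed coarse triangle inequality for $\mathbf{d}_\phi$ on the orbit of a Bishop--Jones semigroup $\mathcal{T}$ to the geometric coarse triangle inequality of Dey--Kim--Oh (Theorem \ref{GeodCoarseTriangleIneq}), whose precise hypotheses involve a $\theta$-admissible cone $\mathcal{C}$ and a condition ensuring that the Cartan projections of the relevant elements stay deep inside $\mathcal{C}$. The key point is that, because $\mathcal{T}$ is a Bishop--Jones semigroup, the elements $\gamma_1^{-1}\gamma_2$, $\gamma_2^{-1}\gamma_3$, $\gamma_1^{-1}\gamma_3$ that arise are all of a very controlled form, and the various properties of $\mathcal{T}$ from Theorem \ref{MainThm} and Proposition \ref{BJtree} let us verify the Dey--Kim--Oh hypotheses uniformly over $\mathcal{T}$.

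\textbf{Step 1: Choosing the cone.} First I would use the $P_\theta$-transversality of $\Gamma$ together with uniform conicality of the limit set of $\mathcal{T}$ (Lemma \ref{UniformlyConical}) to produce a $\theta$-admissible cone $\mathcal{C} \subset \mathfrak{a}^+$ with the property that $\kappa(\gamma) \in \mathcal{C}$, and more precisely stays at uniformly bounded distance from the ``core'' directions of $\mathcal{C}$, for all $\gamma \in \mathcal{T}$ and also for all $\gamma_1^{-1}\gamma_2$ with $\gamma_1, \gamma_2 \in \mathcal{T}$ sharing no common first generation. The mechanism here is that shadows of elements of $\mathcal{T}$ are nested and shrinking (Proposition \ref{BJtree}(1), Proposition \ref{ShadowProperties}(2)), so the flags $U_\theta(\gamma)$ converge to the uniformly conical limit set, which forces $\kappa(\gamma)$ to be uniformly regular in the $\theta$-directions; combined with transversality this gives membership in a fixed admissible cone.

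\textbf{Step 2: Reducing to Dey--Kim--Oh.} Given $\gamma_1, \gamma_2, \gamma_3 \in \mathcal{T}$, I would write $\mathbf{d}_\phi(\gamma_i o, \gamma_j o) = \phi(\kappa(\gamma_i^{-1}\gamma_j))$ and apply Theorem \ref{GeodCoarseTriangleIneq} to the points $\gamma_1 o, \gamma_2 o, \gamma_3 o$ in $X$, using the cone $\mathcal{C}$ from Step 1 and the hypothesis that $\phi$ is positive on $\mathcal{C}_\theta \setminus \{0\}$. The main work is checking that the quantities $\gamma_i^{-1}\gamma_j$ satisfy whatever regularity/alignment hypothesis Dey--Kim--Oh require — this is exactly where the Bishop--Jones structure is used. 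If two of the $\gamma_i$ agree up to their first several generations there is a common prefix which can be cancelled, using left-$G$-invariance of $\mathbf{d}_\phi$ and the fact that the constant $C$ in Proposition \ref{CocycleProperties}(1) depends only on the (finite) prefix; so without loss of generality the $\gamma_i$ already diverge at the first generation, and then Lemma \ref{DefiniteDistance} gives that the associated limit points $\gamma_i^+$ are a definite distance apart in $\partial$, which is precisely the ``transversality with margin'' input needed.

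\textbf{Step 3: Assembling the constant.} Finally I would collect the additive errors: the coarse-cocycle error from cancelling prefixes, the uniform additive constant $D_{\mathrm{DKO}}$ from Theorem \ref{GeodCoarseTriangleIneq} (which depends only on $G$, $\mathcal{C}$, and $\phi$), and the boundedness of $\phi \circ \kappa$ on the finitely many ``short'' elements, to get a single $D = D(\phi, \mathcal{T})$ valid for all triples. \textbf{I expect the main obstacle} to be Step 1: matching the qualitative ``uniformly conical'' statement about $\mathcal{T}$ to the quantitative admissible-cone hypothesis of Dey--Kim--Oh, i.e.\ showing that a single cone $\mathcal{C}$ works uniformly for \emph{all} products $\gamma_i^{-1}\gamma_j$ and not just for the generators — this requires carefully tracking how the nested-shadow estimates of Proposition \ref{BJtree} translate into a uniform lower bound on $\alpha(\kappa(\gamma_i^{-1}\gamma_j))$ for $\alpha \in \theta$, which is the content that ultimately makes the Dey--Kim--Oh machinery applicable.
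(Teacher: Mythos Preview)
Your overall strategy---reduce to Dey--Kim--Oh's Theorem \ref{GeodCoarseTriangleIneq} by exhibiting a $\theta$-admissible cone containing the relevant Cartan projections---is the paper's approach, and you correctly identify the core difficulty as a uniform lower bound on $\alpha(\kappa(\gamma_1^{-1}\gamma_2))$ for $\alpha\in\theta$ (this is the paper's Lemma \ref{ConeEstLem}). The common-prefix reduction and the appeal to Lemma \ref{DefiniteDistance} are also used in the paper.

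There is, however, a real gap. Theorem \ref{GeodCoarseTriangleIneq} does not take as input three points with pairwise $\mathcal{C}$-regular Cartan projections; its hypothesis is that the points lie in the image of a $\mathcal{C}$-regular \emph{quasi-isometric embedding} $f:Z\to X$ from a \emph{geodesic} metric space. Your proposal never sets up such a $Z$ or verifies the QI property. The paper equips $\mathcal{T}$ with its tree metric $d_{\mathcal{T}}$ and proves that the orbit map $\gamma\mapsto\gamma o$ is a quasi-isometric embedding $(\mathcal{T},d_{\mathcal{T}})\to(X,d_X)$; the lower QI bound is nontrivial and consumes a large part of the argument, using Lemma \ref{DefiniteDistance}, Proposition \ref{CocycleProperties}(3), Proposition \ref{GPSProperties}, and Theorem \ref{MainThm}(5). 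Crucially, the paper's proof of Lemma \ref{ConeEstLem} \emph{uses} the QI constants $Q,q$ already obtained, so the cone cannot be produced before the QI embedding in this approach.

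Your Step~1 mechanism also needs adjustment. Uniform conicality of the accumulation set of $\mathcal{T}$ (Lemma \ref{UniformlyConical}) is a statement about boundary dynamics of elements of $\mathcal{T}$; it does not directly control $\kappa(\gamma_1^{-1}\gamma_2)$, since $\gamma_1^{-1}\gamma_2$ is generally not in $\mathcal{T}$ (which is only a semigroup). The paper instead obtains the quantitative bound $\min_{\alpha\in\theta}\alpha(\kappa(\gamma_1^{-1}\gamma_2))\geq b\|\kappa(\gamma_1^{-1}\gamma_2)\|$ purely from the coarse-cocycle comparison in Theorem \ref{MainThm}(5), the GPS relation $\|\gamma^{-1}\|_{\sigma_\phi}\asymp\|\gamma\|_{\bar\sigma_\phi}$, and the QI estimate, then builds $\mathcal{C}$ as (essentially) the asymptotic cone of the convex hull of $p_\theta(\{\kappa(\gamma_1^{-1}\gamma_2)\})$.
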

Before giving the proof, we recall some relevant notions and results from \cite{DKO}. Set $\mathcal{W}_{\theta} = \{ w \in \mathcal{W} : w(\theta) = \theta \}$.
\begin{defn} [See Section 4 of \cite{DKO}]
Define a closed cone $\mathcal{C}$ in $\mathfrak{a}^{+}$ to be $\theta$-$\textit{admissible}$ if the following three conditions hold:
\begin{itemize}
    \item[(1)] The cone $\mathcal{C}$ is opposition involution invariant: $\iota (\mathcal{C}) = \mathcal{C}$,
    \item[(2)] $\mathcal{W}_{\theta} \cdot \mathcal{C} = \bigcup_{w \in \mathcal{W}_{\theta}} \mathrm{Ad}_{w}\mathcal{C}$ is convex, 
    \item[(3)] $\mathcal{C} \bigcap \big( \bigcup_{\alpha \in \theta} \ker \alpha \big) = \{0\}$.
\end{itemize}
\end{defn}
For a $\theta$-admissible cone $\mathcal{C} \subset \mathfrak{a}^{+}$, we say that an ordered pair of distinct points $(x_{1}, x_{2})$ in $X$ is $\mathcal{C}$-$\textit{regular}$ if for $g_{1}, g_{2} \in G$ with $x_{1} = g_{1}o$ and $x_{2} = g_{2}o$, we have $\kappa(g_{1}^{-1}g_{2}) \in \mathcal{C}$.  
\begin{defn} [Definition 4.2 of \cite{DKO}]
Let $(Z, d_{Z})$ be a metric space and $f : Z \rightarrow X$ a map. For a cone $\mathcal{C} \subset \mathfrak{a}^{+}$ and a constant $B \geq 0$, we say that $f$ is $(\mathcal{C},B)$-$\textit{regular}$ if the pair $(f(z_{1}), f(z_{2}))$ is $\mathcal{C}$-regular for all $z_{1}, z_{2} \in Z$ such that $d_{Z}(z_{1}, z_{2}) \geq B$. We simply say that $f$ is $\mathcal{C}$-$\textit{regular}$ if it is $(\mathcal{C}, B)$-regular for some $B \geq 0$.
\end{defn} 
The importance of this definition comes from the following result of Dey--Kim--Oh in \cite{DKO} which roughly states that the image of a quasi-isometric embedding from a geodesic metric space into the symmetric space $X$ that is $\mathcal{C}$-regular for a $\theta$-admissible cone $\mathcal{C}$ has a metric-like function which satisifes a coarse version of the triangle inequality. This will play a crucial role in the proof of Theorem \ref{CoarseTriangleInequality}; we write $\mathcal{C}_{\theta} = p_{\theta}(\mathcal{C})$.
\begin{thm} [Theorem 4.3 of \cite{DKO}]
\label{GeodCoarseTriangleIneq}
Let $Z$ be a geodesic metric space and $\mathcal{C} \subset \mathfrak{a}^{+}$ a $\theta$-admissible cone. Let $f : Z \rightarrow X$ be a $\mathcal{C}$-regular quasi-isometric embedding. If $\phi \in \mathfrak{a}_{\theta}^{*}$ is positive on $\mathcal{C}_{\theta} \smallsetminus \{0\}$, then there exists a constant $D = D_{\phi} \geq 0$ such that for all $x_{1}, x_{2}, x_{3} \in f(Z)$, 
\begin{align*}
\mathbf{d}_{\phi}(x_{1}, x_{3}) \leq \mathbf{d}_{\phi}(x_{1}, x_{2}) + \mathbf{d}_{\phi}(x_{2}, x_{3}) + D.
\end{align*}
\end{thm}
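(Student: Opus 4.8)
The plan is to realise the orbit $\mathcal{T}o$ inside a uniformly regular, quasi-isometrically embedded tree and then quote Theorem \ref{GeodCoarseTriangleIneq}. First I would let $Z$ be the Cayley graph of the free semigroup $\mathcal{T}$ with respect to $S$: the vertex set is $\mathcal{T}$, and $\gamma$ and $\gamma s$ are joined by an edge of length $1$ for each $\gamma\in\mathcal{T}$ and $s\in S$. Since $\mathcal{T}$ is free on $S$, the graph $Z$ is a rooted $(\#S)$-regular tree, hence a geodesic metric space, and its combinatorial metric $d_{Z}$ satisfies $d_{Z}(\gamma_{1},\gamma_{2})=|p|_{S}+|q|_{S}$ whenever $\gamma_{1}=up$ and $\gamma_{2}=uq$ with $u\in\mathcal{T}$ the longest common prefix (so $p,q\in\mathcal{T}$ have distinct first letters, or one of them is $\mathrm{id}$); in particular $d_{Z}(\mathrm{id},\gamma)=|\gamma|_{S}$. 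Then define $f\colon Z\to X$ by $f(\gamma)=\gamma o$ on vertices and by a choice of geodesic from $\gamma o$ to $\gamma so$ on each edge. Writing $m_{0}:=\min_{s\in S}\|\kappa(s)\|>0$ and $M_{0}:=\max_{s\in S}\|\kappa(s)\|$, every edge is sent to a geodesic of length in $[m_{0},M_{0}]$, so $f$ is well defined and maps edges to uniformly bounded subsets of $X$.

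Next I would verify that $f$ is a quasi-isometric embedding. The upper bound $d_{X}(f(z_{1}),f(z_{2}))\le M_{0}\,d_{Z}(z_{1},z_{2})+M_{0}$ is immediate from subadditivity of $g\mapsto d_{X}(o,go)$ along the edges of a $d_{Z}$-geodesic. For the lower bound it suffices, up to the bounded edge lengths, to treat vertices. As in the proof of Theorem \ref{TransverseCritExp}, for each $\alpha\in\theta$ the cocycle $\sigma_{\alpha}$ (from Proposition \ref{ConvtoTransv}, using that the $P_{\theta}$-transverse group $\Gamma$ is $P_{\theta}$-divergent) is an expanding coarse-cocycle with magnitude $\|\gamma\|_{\sigma_{\alpha}}=\alpha(\kappa(\gamma))$, and Theorem \ref{MainThm}(2) applied to these $\sigma_{\alpha}$ gives constants $B_{1}>1$, $b_{1}>0$ with $\alpha(\kappa(\gamma))\ge\tfrac{1}{B_{1}}|\gamma|_{S}-b_{1}$ for all $\gamma\in\mathcal{T}$ and $\alpha\in\theta$. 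Now fix $\gamma_{1}=up$, $\gamma_{2}=uq\in\mathcal{T}$ with $p,q$ nonempty and distinct first letters, so $\gamma_{1}^{-1}\gamma_{2}=p^{-1}q$. By Lemma \ref{DefiniteDistance} there is a uniform $r>0$ with $d(p,q)\ge r$; hence Proposition \ref{CocycleProperties}(3), applied to $p^{-1},q\in\Gamma$ (for which $d((p^{-1})^{-1},q)=d(p,q)\ge r$), yields for any fixed $\alpha\in\theta$
\[
\alpha(\kappa(p^{-1}q)) \ge \alpha(\kappa(p^{-1})) + \alpha(\kappa(q)) - C = \iota^{*}(\alpha)(\kappa(p)) + \alpha(\kappa(q)) - C,
\]
using $\kappa(\gamma^{-1})=\iota(\kappa(\gamma))$. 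As $\theta$ is symmetric, $\iota^{*}(\alpha)\in\theta$, so both terms are at least $\tfrac{1}{B_{1}}|p|_{S}-b_{1}$ and $\tfrac{1}{B_{1}}|q|_{S}-b_{1}$ respectively, whence $\alpha(\kappa(\gamma_{1}^{-1}\gamma_{2}))\ge\tfrac{1}{B_{1}}d_{Z}(\gamma_{1},\gamma_{2})-(2b_{1}+C)$ and so $d_{X}(f(\gamma_{1}),f(\gamma_{2}))=\|\kappa(\gamma_{1}^{-1}\gamma_{2})\|\ge\|\alpha\|^{-1}\bigl(\tfrac{1}{B_{1}}d_{Z}(\gamma_{1},\gamma_{2})-(2b_{1}+C)\bigr)$. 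The case where one of $\gamma_{1},\gamma_{2}$ is a prefix of the other is identical but easier, since then $\gamma_{1}^{-1}\gamma_{2}\in\mathcal{T}$ and Theorem \ref{MainThm}(2) applies directly.

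Then I would show $f$ is $\mathcal{C}$-regular for a $\theta$-admissible cone $\mathcal{C}$ depending only on $\mathcal{T}$. Combining the lower estimates above with $\|\kappa(\gamma_{1}^{-1}\gamma_{2})\|\le\|\kappa(p)\|+\|\kappa(q)\|\le M_{0}\,d_{Z}(\gamma_{1},\gamma_{2})$ shows that there are $\epsilon>0$ and $R_{0}>0$ so that $\kappa(\gamma_{1}^{-1}\gamma_{2})$ lies in the convex cone $\mathcal{K}_{\epsilon}:=\{H\in\mathfrak{a}^{+}:\alpha(H)\ge\epsilon\|H\|\text{ for all }\alpha\in\theta\}$ whenever $\|\kappa(\gamma_{1}^{-1}\gamma_{2})\|\ge R_{0}$, equivalently (by the lower quasi-isometry bound) whenever $d_{Z}(\gamma_{1},\gamma_{2})$ is large; the prefix case is the same. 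To pass from vertices to arbitrary points of $Z$ I would invoke the standard submultiplicativity estimate $\|\kappa(g_{1}^{-1}g_{2})-\kappa(h_{1}^{-1}h_{2})\|\le c_{G}\bigl(d_{X}(g_{1}o,h_{1}o)+d_{X}(g_{2}o,h_{2}o)\bigr)$: any point of $Z$ lies within $d_{Z}$-distance $1$ of a vertex, hence within $M_{0}$ of an orbit point, so this estimate shows $f$ is $(\mathcal{K}_{\epsilon/2},B_{0})$-regular for suitable $B_{0}$. Finally, by the construction of $\theta$-admissible cones in Section 4 of \cite{DKO} one may fix a $\theta$-admissible cone $\mathcal{C}\subset\mathfrak{a}^{+}$ with $\mathcal{K}_{\epsilon/2}\subseteq\mathcal{C}$; this $\mathcal{C}$ depends only on $\Gamma$ and $\mathcal{T}$, and $f\colon Z\to X$ is a $\mathcal{C}$-regular quasi-isometric embedding. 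Now, given $\phi\in\mathfrak{a}_{\theta}^{*}$ positive on $\mathcal{C}_{\theta}\setminus\{0\}$ (the hypothesis $\phi(\kappa(\gamma_{n}))\to\infty$ serving to keep $\sigma_{\phi}$ an expanding coarse-cocycle, so that $\mathbf{d}_{\phi}$ on $\mathcal{T}o$ is comparable to the word metric), Theorem \ref{GeodCoarseTriangleIneq} furnishes $D=D_{\phi}\ge0$ with $\mathbf{d}_{\phi}(x_{1},x_{3})\le\mathbf{d}_{\phi}(x_{1},x_{2})+\mathbf{d}_{\phi}(x_{2},x_{3})+D$ for all $x_{1},x_{2},x_{3}\in f(Z)$; since $\{\gamma o:\gamma\in\mathcal{T}\}\subset f(Z)$ and the quasi-isometry constants of $f$ depend only on $\mathcal{T}$, this is exactly the asserted inequality with $D=D(\phi,\mathcal{T})$.

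I expect the main obstacle to be the $\mathcal{C}$-regularity step: pinning down a genuine $\theta$-admissible cone $\mathcal{C}$ (in particular verifying the $\mathcal{W}_{\theta}$-convexity requirement) that contains the Cartan directions of all deep pairs, and carrying out the perturbation bookkeeping that upgrades regularity from the vertex set of $Z$ to all of $Z$. By contrast, the quasi-isometric-embedding step is comparatively routine once one sees to combine Lemma \ref{DefiniteDistance} with Proposition \ref{CocycleProperties}(3), and the final step is a direct application of \cite{DKO}.
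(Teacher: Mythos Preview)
Your proposal does not prove the stated theorem. Theorem~\ref{GeodCoarseTriangleIneq} is quoted verbatim from \cite{DKO} (it is their Theorem~4.3) and the paper does not prove it; it is invoked as a black box. What you have written is instead a proof of Theorem~\ref{CoarseTriangleInequality}, which \emph{applies} Theorem~\ref{GeodCoarseTriangleIneq} to the orbit map of a Bishop--Jones semigroup. You even say so yourself: ``and then quote Theorem~\ref{GeodCoarseTriangleIneq}.''

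That said, if the intended target was Theorem~\ref{CoarseTriangleInequality}, your outline is correct and follows the same route as the paper: equip $\mathcal{T}$ with its tree metric, show the orbit map $\gamma\mapsto\gamma o$ is a quasi-isometric embedding, produce a $\theta$-admissible cone containing the Cartan projections of deep pairs, and invoke \cite{DKO}. Your lower-bound argument for the quasi-isometry is slightly cleaner than the paper's: you work directly with $\alpha(\kappa(\cdot))$ for $\alpha\in\theta$ and exploit $\iota^{*}(\theta)=\theta$ to handle $\alpha(\kappa(p^{-1}))=\iota^{*}(\alpha)(\kappa(p))$, whereas the paper routes through the $\sigma_{\phi}$-magnitude and the GPS comparison $\|\gamma^{-1}\|_{\sigma_{\phi}}\ge\tfrac{1}{D}\|\gamma\|_{\sigma_{\phi}}-M$ (Proposition~\ref{GPSProperties} together with Theorem~\ref{MainThm}(5)). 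For the cone, the paper is more explicit than you: rather than positing a $\theta$-admissible cone containing your $\mathcal{K}_{\epsilon/2}$ by appeal to \cite{DKO}, it takes $\mathcal{C}_{0}\subset\mathfrak{a}_{\theta}^{+}$ to be the asymptotic cone of the convex hull of $p_{\theta}(U_{0})$ and sets $\mathcal{C}=p_{\theta}^{-1}(\mathcal{C}_{0})\cap\mathfrak{a}^{+}$, so that $\mathcal{W}_{\theta}\cdot\mathcal{C}=p_{\theta}^{-1}(\mathcal{C}_{0})\cap(\mathcal{W}_{\theta}\cdot\mathfrak{a}^{+})$ is visibly an intersection of convex sets. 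Your deferral of this step to ``Section~4 of \cite{DKO}'' is exactly the point you correctly identified as the main obstacle.
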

We can now give the proof of Theorem $\ref{CoarseTriangleInequality}$.
\begin{proof} [Proof of Theorem \ref{CoarseTriangleInequality}]
We view the Bishop--Jones semigroup $\mathcal{T}$ as a geodesic metric space by equipping it with its ``tree metric'' $d_{\mathcal{T}}$, that is, the natural analog in the case of a finitely generated free semigroup of the metric defined on the Cayley graph of a finitely generated free group. By Theorem \ref{GeodCoarseTriangleIneq}, it suffices to construct a $\theta$-admissible cone $\mathcal{C} \subset \mathfrak{a}^{+}$ and a $\mathcal{C}$-$\mathrm{regular}$ quasi-isometric embedding $f : \mathcal{T} \rightarrow X$. Define $f : \mathcal{T} \rightarrow X$ by $f(\gamma) = \gamma o$. We first show that $f$ is a quasi-isometric embedding. That is, we need to show there exist constants $Q > 1$ and $q > 0$ so that 
\begin{align}
\label{QI1}
\frac{1}{Q} d_{\mathcal{T}}(\gamma_{1}, \gamma_{2}) - q \leq d_{X}(\gamma_{1} o, \gamma_{2} o) \leq Q d_{\mathcal{T}}(\gamma_{1}, \gamma_{2}) + q. 
\end{align} Let $S$ denote the finite subset of $\Gamma$ which generates $\mathcal{T}$ as a free semigroup. Given $\gamma_{1}, \gamma_{2} \in \mathcal{T}$, let $\eta$ be the most recent ``ancestor'' of both $\gamma_{1}$ and $\gamma_{2}$. More precisely, $\eta \in S^{m}$ where $m \geq 0$ is the largest integer so that we may write both $\gamma_{1}$ and $\gamma_{2}$ as $\gamma_{1} = \eta \alpha_{1} \cdots \alpha_{k}$ and $\gamma_{2} = \eta \beta_{1} \cdots \beta_{j}$, where the $\alpha_{i}$ and $\beta_{l}$ are in $S$. Notice that $\alpha_{1} \neq \beta_{1}$ (as otherwise $m$ would not be the largest integer with this property). Furthermore, $d_{\mathcal{T}}(\gamma_{1}, \gamma_{2}) = d_{\mathcal{T}}(\eta^{-1} \gamma_{1}, \eta^{-1} \gamma_{2})$ and
\begin{align*}
    d_{X}(\gamma_{1}o, \gamma_{2}o) = ||\kappa (\gamma_{1}^{-1} \gamma_{2})|| = ||\kappa ( (\eta^{-1} \gamma_{1})^{-1} \cdot (\eta^{-1} \gamma_{2})) || = d_{X} (\eta^{-1} \gamma_{1} o, \eta^{-1} \gamma_{2} o).
\end{align*} Hence to prove (\ref{QI1}) in general, it suffices to prove it in the case when $\gamma_{1} = \alpha_{1} \cdots \alpha_{k}$ and $\gamma_{2} = \beta_{1} \cdots \beta_{j}$ are two arbitrary elements of the semigroup with $\alpha_{1} \neq \beta_{1}$. We begin with the easier inequality in (\ref{QI1}), which is the upper bound on $d_{X}(\gamma_{1}o, \gamma_{2}o)$. First, notice that 
\begin{align*}
d_{X}(o, \gamma_{1}o) &= d_{X}(o, \alpha_{1} \cdots \alpha_{k}o) \\
&\leq d_{X}(o, \alpha_{1}o) + d_{X}(\alpha_{1}o, \alpha_{1}\alpha_{2}o) + \cdots + d_{X}(\alpha_{1} \cdots \alpha_{k-1}o, \alpha_{1} \cdots \alpha_{k}o) \\
&= ||\kappa(\alpha_{1})|| + ||\kappa(\alpha_{1}^{-1} \alpha_{1}\alpha_{2})|| + \cdots + ||\kappa(\alpha_{k-1}^{-1} \cdots \alpha_{1}^{-1} \alpha_{1} \cdots \alpha_{k-1} \alpha_{k})|| \\
&= ||\kappa(\alpha_{1})|| + ||\kappa(\alpha_{2})|| + \cdots + ||\kappa(\alpha_{k})|| \\
&\leq \Big( \max_{\zeta \in S} ||\kappa(\zeta)|| \Big) \cdot k \\
&= \Big( \max_{\zeta \in S} ||\kappa(\zeta)|| \Big) \cdot d_{\mathcal{T}}(\mathrm{id}, \gamma_{1}).
\end{align*} Set $L = \max_{\zeta \in S} ||\kappa (\zeta)||$. Likewise, $d_{X}(o, \gamma_{2}o) \leq L d_{\mathcal{T}}(\mathrm{id}, \gamma_{2})$, hence
\begin{align*}
d_{X}(\gamma_{1}o, \gamma_{2}o) \leq d_{X}(\gamma_{1}o,o) + d_{X}(o, \gamma_{2}o) \leq L \big( d_{\mathcal{T}}(\gamma_{1}, \mathrm{id}) + d_{\mathcal{T}}(\mathrm{id}, \gamma_{2}) \big) = L d_{\mathcal{T}}(\gamma_{1}, \gamma_{2}),
\end{align*} which shows that the right-most inequality in (\ref{QI1}) holds with $Q=L$ and $q=0$. It remains to prove the other inequality. First, notice that the magnitude functions $|| \cdot ||_{\sigma_{\phi}}$ and the symmetric space distance $d_{X}$ are related via 
\begin{align*}
||\alpha^{-1} \beta||_{\sigma_{\phi}} = |\phi(\kappa(\alpha^{-1} \beta))| \leq ||\phi||_{\mathrm{op}} \cdot ||\kappa(\alpha^{-1} \beta)|| = ||\phi||_{\mathrm{op}} \cdot d_{X}(\alpha o, \beta o),
\end{align*} hence
\begin{align}
\label{CoarTri1}
d_{X}(\alpha o, \beta o) \geq \frac{1}{||\phi||_{\mathrm{op}}} ||\alpha^{-1} \beta||_{\sigma_{\phi}}
\end{align} for all $\alpha, \beta \in \mathcal{T}$. By part (2) of Theorem \ref{MainThm}, there exist constants $A = A(\sigma_{\phi}) > 1$ and $a = a(\sigma_{\phi}) > 0$ so that 
\begin{align}
\label{CoarTri2}
||\gamma||_{\sigma_{\phi}} \geq \frac{1}{A} |\gamma|_{S} - a = \frac{1}{A} d_{\mathcal{T}}(\mathrm{id}, \gamma) - a
\end{align} for all $\gamma \in \mathcal{T}$ (we call these constants $A$ and $a$ instead of $B_{1}$ and $b_{1}$ as in part (2) of Theorem \ref{MainThm} to avoid causing confusion with the constants $B$ and $b$ of Lemma \ref{ConeEstLem}). As before, let $\gamma_{1} = \alpha_{1} \cdots \alpha_{k}$ and $\gamma_{2} = \beta_{1} \cdots \beta_{j}$ be two arbitrary elements of $\mathcal{T}$ with $\alpha_{1} \neq \beta_{1}$. Recall that we denote by $d$ the metric on the compact space $\Gamma \sqcup \Lambda(\Gamma)$, and that this metric is \emph{not} $\Gamma$-invariant. By Lemma \ref{DefiniteDistance}, there is a constant $r > 0$ (independent of $\gamma_{1}$ and $\gamma_{2}$) so that $d (\gamma_{1}, \gamma_{2}) \geq r$. Let $C = C(r)$ be the constant provided by part (3) of Proposition \ref{CocycleProperties} when taking $\epsilon = r$. Then $d(\gamma_{1}, \gamma_{2}) \geq r$ implies that 
\begin{align}
\label{CoarTri3}
||\gamma_{1}^{-1} \gamma_{2}||_{\sigma_{\phi}} \geq ||\gamma_{1}^{-1}||_{\sigma_{\phi}} + ||\gamma_{2}||_{\sigma_{\phi}} - C.
\end{align} Applying Proposition \ref{GPSProperties} and part (5) of Theorem \ref{MainThm} to the GPS system $(\sigma_{\phi}, \bar{\sigma}_{\phi}, \phi \circ G_{\theta})$, we obtain a constants $D > 1$ and $M > 0$ so that 
\begin{align*}
||\gamma^{-1}||_{\sigma_{\phi}} \geq \frac{1}{D} ||\gamma||_{\sigma_{\phi}} - M
\end{align*} for all $\gamma \in \mathcal{T}$. Thus, (\ref{CoarTri3}) becomes
\begin{align}
\label{CoarTri4}
||\gamma_{1}^{-1}\gamma_{2}||_{\sigma_{\phi}} \geq \frac{1}{D} ||\gamma_{1}||_{\sigma_{\phi}} + ||\gamma_{2}||_{\sigma_{\phi}} - (M + C).
\end{align} Combining (\ref{CoarTri1}), (\ref{CoarTri2}), and (\ref{CoarTri4}), we obtain
\begin{align*}
d_{X}(\gamma_{1}o, \gamma_{2}o) &\geq \frac{1}{||\phi||_{\mathrm{op}}} ||\gamma_{1}^{-1} \gamma_{2}||_{\sigma_{\phi}} \\
&\geq \frac{1}{||\phi||_{\mathrm{op}}} \bigg( \frac{1}{D} ||\gamma_{1}||_{\sigma_{\phi}} + ||\gamma_{2}||_{\sigma_{\phi}} - (M+C) \bigg) \\
&\geq \frac{1}{||\phi||_{\mathrm{op}}} \bigg( \frac{1}{D} \Big( \frac{1}{A} d_{\mathcal{T}}(\mathrm{id}, \gamma_{1}) - a \Big) + \Big(\frac{1}{A} d_{\mathcal{T}}(\mathrm{id}, \gamma_{2}) - a \Big)  - (M+C) \bigg) \\
&\geq \frac{1}{||\phi||_{\mathrm{op}}} \bigg( \frac{1}{AD} \big( d_{\mathcal{T}}(\gamma_{1}, \mathrm{id}) + d_{\mathcal{T}}(\mathrm{id}, \gamma_{2}) \big) - \Big(\frac{a}{D} + a + M + C \Big) \bigg) \\
&= \frac{1}{AD \cdot ||\phi||_{\mathrm{op}}} \cdot d_{\mathcal{T}} (\gamma_{1}, \gamma_{2}) - \frac{\frac{a}{D} + a + M + C}{||\phi||_{\mathrm{op}}}.
\end{align*} Thus (\ref{QI1}) holds with 
\begin{align*}
Q = \max \{L, AD \cdot ||\phi||_{\mathrm{op}} \} \  \mathrm{and} \ q = \frac{\frac{a}{D} + a + M + C}{||\phi||_{\mathrm{op}}},
\end{align*} which shows that the map $f : \mathcal{T} \rightarrow X$ is indeed a quasi-isometric embedding.
\par 
In order to finish the proof, we need the following technical lemma.
\begin{lem}
\label{ConeEstLem}
There exist $B,b > 0$ so that, for any two elements $\gamma_{1}, \gamma_{2} \in \mathcal{T}$ with $d_{\mathcal{T}}(\gamma_{1}, \gamma_{2}) \geq B$, we have
\begin{align}
\label{ConeEst1}
\min_{\alpha \in \theta} ||\gamma_{1}^{-1} \gamma_{2}||_{\sigma_{\alpha}} = \min_{\alpha \in \theta} \alpha (\kappa (\gamma_{1}^{-1} \gamma_{2})) \geq b ||\kappa (\gamma_{1}^{-1} \gamma_{2})||.
\end{align}
\end{lem}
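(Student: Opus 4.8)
The plan is to show that $\gamma_1^{-1}\gamma_2$ is \emph{uniformly $\theta$-regular} once $d_{\mathcal{T}}(\gamma_1,\gamma_2)$ is large: its Cartan projection lies in a cone on which every $\alpha\in\theta$ is comparable to the full norm. Concretely, I would bound $\min_{\alpha\in\theta}\alpha(\kappa(\gamma_1^{-1}\gamma_2))$ linearly from below in $d_{\mathcal{T}}(\gamma_1,\gamma_2)$, bound $\|\kappa(\gamma_1^{-1}\gamma_2)\|$ linearly from above in $d_{\mathcal{T}}(\gamma_1,\gamma_2)$, and then absorb the additive discrepancy by taking $d_{\mathcal{T}}(\gamma_1,\gamma_2)\geq B$ with $B$ large.

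First I would reduce to the case where $\gamma_1$ and $\gamma_2$ have distinct first generators, exactly as in the quasi-isometric embedding argument above: writing $\eta$ for their most recent common ancestor and $\gamma_1=\eta\gamma_1'$, $\gamma_2=\eta\gamma_2'$, one has $\gamma_1^{-1}\gamma_2=(\gamma_1')^{-1}\gamma_2'$, $d_{\mathcal{T}}(\gamma_1,\gamma_2)=|\gamma_1'|_S+|\gamma_2'|_S$, and $\gamma_1',\gamma_2'\in\mathcal{T}$ have distinct first generators (unless one is trivial, a case I handle directly at the end). So I may assume $\gamma_1=\alpha_1\cdots\alpha_k$, $\gamma_2=\beta_1\cdots\beta_j$ with $\alpha_1\neq\beta_1$. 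Fixing $\alpha\in\theta$, I would invoke Lemma \ref{DefiniteDistance} to get a uniform $r>0$ with $d(\gamma_1,\gamma_2)\geq r$, and then part (3) of Proposition \ref{CocycleProperties} applied to the expanding coarse-cocycle $\sigma_\alpha$ (which is expanding by Proposition \ref{ConvtoTransv}, since $P_\theta$-divergence of $\Gamma$ forces $\alpha(\kappa(\gamma_n))\to\infty$) to obtain, for a constant $C_\alpha=C_\alpha(r)$,
\[
\alpha(\kappa(\gamma_1^{-1}\gamma_2))=\|\gamma_1^{-1}\gamma_2\|_{\sigma_\alpha}\geq\|\gamma_1^{-1}\|_{\sigma_\alpha}+\|\gamma_2\|_{\sigma_\alpha}-C_\alpha.
\]
The crucial step is to rewrite the inverse term: since $\kappa(\gamma_1^{-1})=\iota(\kappa(\gamma_1))$ and $\theta$ is symmetric, $\iota^{*}(\alpha)\in\theta$ and $\|\gamma_1^{-1}\|_{\sigma_\alpha}=\alpha(\iota(\kappa(\gamma_1)))=\iota^{*}(\alpha)(\kappa(\gamma_1))=\|\gamma_1\|_{\sigma_{\iota^{*}(\alpha)}}$. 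Now $\gamma_1,\gamma_2\in\mathcal{T}$, so part (2) of Theorem \ref{MainThm} applied to the finitely many expanding cocycles $\{\sigma_\alpha\}_{\alpha\in\theta}$ (with constants $B_1,b_1$ uniform over $\theta$) gives $\|\gamma_1\|_{\sigma_{\iota^{*}(\alpha)}}\geq B_1^{-1}|\gamma_1|_S-b_1$ and $\|\gamma_2\|_{\sigma_\alpha}\geq B_1^{-1}|\gamma_2|_S-b_1$, whence, setting $c':=2b_1+\max_{\alpha\in\theta}C_\alpha$,
\[
\min_{\alpha\in\theta}\alpha(\kappa(\gamma_1^{-1}\gamma_2))\geq\tfrac{1}{B_1}\bigl(|\gamma_1|_S+|\gamma_2|_S\bigr)-c'=\tfrac{1}{B_1}\,d_{\mathcal{T}}(\gamma_1,\gamma_2)-c'.
\]

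Finally I would combine this with the two bounds from the quasi-isometric embedding argument above, namely $\|\kappa(\gamma_1^{-1}\gamma_2)\|=d_X(\gamma_1 o,\gamma_2 o)\leq L\,d_{\mathcal{T}}(\gamma_1,\gamma_2)$ (with $L=\max_{\zeta\in S}\|\kappa(\zeta)\|$) and $d_X(\gamma_1 o,\gamma_2 o)\geq Q^{-1}d_{\mathcal{T}}(\gamma_1,\gamma_2)-q$ from (\ref{QI1}), to get
\[
\min_{\alpha\in\theta}\alpha(\kappa(\gamma_1^{-1}\gamma_2))\geq\tfrac{1}{B_1 L}\|\kappa(\gamma_1^{-1}\gamma_2)\|-c'.
\]
Then taking $b:=\tfrac{1}{2B_1 L}$ and $B:=Q(2B_1 L c'+q)$, any $\gamma_1,\gamma_2$ with $d_{\mathcal{T}}(\gamma_1,\gamma_2)\geq B$ satisfy $\|\kappa(\gamma_1^{-1}\gamma_2)\|\geq 2B_1 L c'$, hence $\tfrac{1}{2B_1 L}\|\kappa(\gamma_1^{-1}\gamma_2)\|\geq c'$, and therefore $\min_{\alpha\in\theta}\alpha(\kappa(\gamma_1^{-1}\gamma_2))\geq b\,\|\kappa(\gamma_1^{-1}\gamma_2)\|$, as desired. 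The degenerate case where $\gamma_1'$ or $\gamma_2'$ is trivial is immediate from Theorem \ref{MainThm}(2) together with the identity $\|h^{-1}\|_{\sigma_\alpha}=\|h\|_{\sigma_{\iota^{*}(\alpha)}}$ applied to the single element $\gamma_1^{-1}\gamma_2\in\mathcal{T}\cup\mathcal{T}^{-1}$. I expect the only genuinely delicate point to be handling the inverse $\gamma_1^{-1}$, which does not lie in $\mathcal{T}$; the symmetry $\iota^{*}(\theta)=\theta$ is precisely what transports the linear lower bounds of Theorem \ref{MainThm}(2) — valid only on $\mathcal{T}$ — across to $\gamma_1^{-1}$, and everything else is bookkeeping with constants already produced.
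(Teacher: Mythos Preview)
Your proof is correct and takes a genuinely different, and in fact cleaner, route than the paper's.

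The paper first reduces to a single functional $\phi$ and then, to handle the inverse term $\|\gamma_1^{-1}\|_{\sigma_\phi}$, invokes the GPS comparison of Proposition~\ref{GPSProperties}(2) together with Theorem~\ref{MainThm}(5) to obtain only $\|\gamma^{-1}\|_{\sigma_\phi}\geq \tfrac{1}{D}\|\gamma\|_{\sigma_\phi}-M$ with a multiplicative loss $D>1$. This loss forces the subsequent case analysis (inequalities (\ref{option1}) and (\ref{option2})): one must argue that either $\|\gamma_1\|_{\sigma_\phi}$ or $\|\gamma_2\|_{\sigma_\phi}$ is large enough to bridge the gap between $\tfrac{1}{D}$ and $\tfrac{1}{H}$ for some auxiliary $H>D$. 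Your observation that $\|\gamma_1^{-1}\|_{\sigma_\alpha}=\iota^{*}(\alpha)(\kappa(\gamma_1))=\|\gamma_1\|_{\sigma_{\iota^{*}(\alpha)}}$ is an \emph{exact} identity, and since $\iota^{*}(\alpha)\in\theta$ by symmetry, Theorem~\ref{MainThm}(2) applies to $\gamma_1\in\mathcal{T}$ with the cocycle $\sigma_{\iota^{*}(\alpha)}$ directly, incurring no multiplicative loss. This eliminates the case analysis entirely and yields the linear lower bound $\min_{\alpha\in\theta}\alpha(\kappa(\gamma_1^{-1}\gamma_2))\geq B_1^{-1}d_{\mathcal{T}}(\gamma_1,\gamma_2)-c'$ in one step. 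The remainder of your argument (combining with the upper bound $\|\kappa(\gamma_1^{-1}\gamma_2)\|\leq L\,d_{\mathcal{T}}(\gamma_1,\gamma_2)$ and absorbing the additive error by taking $B$ large via (\ref{QI1})) is straightforward and matches what the paper does at the end. Your handling of the degenerate case where one of $\gamma_1',\gamma_2'$ is trivial is also correct. In short, you identified and exploited the symmetry of $\theta$ more sharply than the paper does, and the payoff is a proof with fewer constants and no branching.
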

\begin{proof} [Proof of Lemma \ref{ConeEstLem}]
As in the verification of (\ref{QI1}), it suffices to prove the lemma in the case when $\gamma_{1} = \alpha_{1} \cdots \alpha_{k}$ and $\gamma_{2} = \beta_{1} \cdots \beta_{j}$ are two arbitrary elements of the semigroup with $\alpha_{1} \neq \beta_{1}$. By part (5) of Theorem \ref{MainThm}, there exist constants $P > 1$ and $p > 0$ so that for all $\gamma_{1}, \gamma_{2} \in \mathcal{T}$, we have
\begin{align*}
\min_{\alpha \in \theta} ||\gamma_{1}^{-1} \gamma_{2}||_{\sigma_{\alpha}} \geq \frac{1}{P} ||\gamma_{1}^{-1} \gamma_{2}||_{\sigma_{\phi}} - p.
\end{align*} Hence it suffices to show that there exist $B,b > 0$ so that if $d_{\mathcal{T}}(\gamma_{1}, \gamma_{2}) \geq B$, then
\begin{align}
\label{ConeEst2}
||\gamma_{1}^{-1} \gamma_{2}||_{\sigma_{\phi}} = \phi(\kappa(\gamma_{1}^{-1} \gamma_{2})) \geq bP ||\kappa(\gamma_{1}^{-1} \gamma_{2})|| + Pp.
\end{align} Let $A,a,M,C,D$ be the constants of inequalities (\ref{CoarTri2}), (\ref{CoarTri3}), and (\ref{CoarTri4}), and let $Q,q$ be the quasi-isometric embedding constants of (\ref{QI1}). Note that $\gamma_{1}$ and $\gamma_{2}$ satisfy the assumptions of Lemma \ref{DefiniteDistance}, so we may apply each of the aforementioned inequalities. By (\ref{QI1}), (\ref{CoarTri2}), and the fact that $\gamma_{1}$ and $\gamma_{2}$ only have the identity as a common ancestor, we have
\begin{align}
\label{ConeEst3}
\begin{split}
||\gamma_{1}||_{\sigma_{\phi}} + ||\gamma_{2}||_{\sigma_{\phi}} + \Big( 2a + \frac{q}{AQ} \Big) &\geq \frac{1}{A} d_{\mathcal{T}} (\gamma_{1}, \mathrm{id}) - a + \frac{1}{A} d_{\mathcal{T}} (\mathrm{id}, \gamma_{2}) - a + 2a + \frac{q}{AQ} \\
&= \frac{1}{A} d_{\mathcal{T}}(\gamma_{1}, \gamma_{2}) + \frac{q}{AQ} \\
&\geq \frac{1}{A} \bigg( \frac{1}{Q} d_{X}(\gamma_{1}o, \gamma_{2}o) - \frac{q}{Q} \bigg) + \frac{q}{AQ} \\
&= \frac{1}{AQ} ||\kappa (\gamma_{1}^{-1} \gamma_{2})||.
\end{split}
\end{align} Fix a constant $H > D > 1$. We claim that there exists a constant $B \geq 1$ so that if $d_{\mathcal{T}}(\gamma_{1}, \gamma_{2}) \geq B$, then either
\begin{align}
\label{option1}
||\gamma_{1}||_{\sigma_{\phi}} \geq \frac{2a + \frac{q}{AQ} + M + C + HPp}{\frac{1}{D} - \frac{1}{H}},
\end{align} or
\begin{align}
\label{option2}
||\gamma_{2}||_{\sigma_{\phi}} \geq \frac{2a + \frac{q}{AQ} + M + C + HPp}{1 - \frac{1}{H}}. 
\end{align} Indeed, if not, then for every integer $n \geq 1$, there exist elements $\alpha_{n}$ and $\beta_{n}$ of $\mathcal{T}$ having the identity as their only common ancestor, satisfying $d_{\mathcal{T}}(\alpha_{n}, \beta_{n}) \geq n$, but such that the sequence $\{||\alpha_{n}||_{\sigma_{\phi}}\}$ is bounded above by the right-hand side of (\ref{option1}), while the sequence  $\{||\beta_{n}||_{\sigma_{\phi}}\}$ is bounded above by the right-hand side of (\ref{option2}). Since $d_{\mathcal{T}}(\alpha_{n}, \beta_{n}) \rightarrow \infty$ as $n \rightarrow \infty$, there either exists a subsequence of $\{\alpha_{n}\}$ or a subsequence of $\{\beta_{n}\}$ which is escaping. But any such subsequence must have $\sigma_{\phi}$-magnitudes tending to infinity, which is a contradiction. We conclude that there must exist some $B \geq 1$ so that $d_{\mathcal{T}}(\gamma_{1}, \gamma_{2}) \geq B$ implies that either (\ref{option1}) or (\ref{option2}) holds. \par Now if $d_{\mathcal{T}}(\gamma_{1}, \gamma_{2}) \geq B$, then combining (\ref{CoarTri4}), (\ref{ConeEst3}), (\ref{option1}), and (\ref{option2}), a direct computation shows that
\begin{align*}
||\gamma_{1}^{-1} \gamma_{2}||_{\sigma_{\phi}} &\geq \frac{1}{D} ||\gamma_{1}||_{\sigma_{\phi}} + ||\gamma_{2}||_{\sigma_{\phi}} - (M+C) \\
&> \frac{1}{H} \bigg( ||\gamma_{1}||_{\sigma_{\phi}} + ||\gamma_{2}||_{\sigma_{\phi}} + 2a + \frac{q}{AQ} + HPp \bigg) \\
&> \frac{1}{AHQ} ||\kappa (\gamma_{1}^{-1} \gamma_{2})|| + Pp.
\end{align*} Thus (\ref{ConeEst2}) holds with $B$ as above and $b = \frac{1}{AHQP}$, concluding the proof of the lemma.
\end{proof}
We now construct a $\theta$-admissible cone $\mathcal{C} \subset \mathfrak{a}^{+}$ so that the quasi-isometric embedding $f : \mathcal{T} \rightarrow X$ is $\mathcal{C}$-regular. Let $B,b > 0$ be the constants of Lemma \ref{ConeEstLem}. Define
\begin{align*}
U_{0} := \{ \kappa (\gamma_{1}^{-1} \gamma_{2}) : \gamma_{1}, \gamma_{2} \in \mathcal{T}, d_{\mathcal{T}}(\gamma_{1}, \gamma_{2}) \geq B \} \subset \mathfrak{a}^{+}.
\end{align*} Let $U \subset \mathfrak{a}_{\theta}^{+}$ denote the convex hull of $p_{\theta}(U_{0})$ and consider the asymptotic cone of $U$, that is
\begin{align*}
    \mathcal{C}_{0} := \overline{\bigcup_{u \in U} \mathbb{R}^{+}u}.
\end{align*} Since $U_{0}$ is invariant under the opposition involution, so is $\mathcal{C}_{0}$. We claim that $\mathcal{C}_{0} - \{0\} \subset \mathrm{Int}(\mathfrak{a}_{\theta}^{+})$. Let $u \in U$. Then $u$ is of the form $u = \sum_{i=1}^{k} t_{i}u_{i}$ where $t_{i} \geq 0$ for all $i$, $\sum_{i=1}^{k} t_{i} = 1$, and $u_{i} \in p_{\theta}(U_{0})$ for all $i$. By Lemma \ref{ConeEstLem}, for any $\alpha \in \theta$ we have
\begin{align}
\label{ConeEst6}
\alpha (u) = \sum_{i=1}^{k} t_{i} \alpha (u_{i}) \geq b \sum_{i=1}^{k} t_{i} ||u_{i}|| \geq b \Big| \Big| \sum_{i=1}^{k} t_{i} u_{i} \Big| \Big| = b ||u||.
\end{align} Now let $x \in \mathcal{C}_{0} \smallsetminus \{0\}$ be arbitrary. Then there exist sequences $\{\lambda_{n}\} \subset \mathbb{R}^{+}$ and $\{u_{n}\} \subset U$ so that $\lambda_{n} u_{n} \to x$ as $n \to \infty$. For any $\alpha \in \theta$, (\ref{ConeEst6}) implies that
\begin{align}
\label{ConeEst7}
\alpha(x) = \lim_{n \to \infty} \alpha (\lambda_{n} u_{n}) \geq \lim_{n \to \infty} \lambda_{n} \cdot b ||u_{n}|| = b \lim_{n \to \infty} ||\lambda_{n} u_{n}|| = b ||x|| > 0.
\end{align} So we indeed have $\mathcal{C}_{0} - \{0\} \subset \mathrm{Int}(\mathfrak{a}_{\theta}^{+})$. We now follow the argument of the proof of Theorem 4.1 in \cite{DKO} to construct the desired $\theta$-admissible cone. Define
\begin{align*}
    \mathcal{C} := p_{\theta}^{-1}(\mathcal{C}_{0}) \cap \mathfrak{a}^{+}.
\end{align*} By construction, $U_{0} \subset \mathcal{C}$. Since the projection $p_{\theta} : \mathfrak{a} \rightarrow \mathfrak{a}_{\theta}$ is $\mathcal{W}_{\theta}$-equivariant, we have
\begin{align*}
    \mathcal{W}_{\theta} \cdot \mathcal{C} = \mathcal{W}_{\theta} \cdot (p_{\theta}^{-1}(\mathcal{C}_{0}) \cap \mathfrak{a}^{+}) = p_{\theta}^{-1}(\mathcal{C}_{0}) \cap (\mathcal{W}_{\theta} \cdot \mathfrak{a}^{+}).
\end{align*} As $\mathcal{W}_{\theta} \cdot \mathfrak{a}^{+}$ and $p_{\theta}^{-1}(\mathcal{C}_{0})$ are both convex cones, their intersection $\mathcal{W}_{\theta} \cdot \mathcal{C}$ is also convex. By (\ref{ConeEst7}), we can remove some sufficiently small conical neighborhoods of $\bigcup_{\alpha \in \theta} \ker \alpha$ from $\mathcal{C}$ so that we may assume that 
\begin{align*}
    \mathcal{C} \cap \bigcup_{\alpha \in \theta} \ker \alpha = \{0\},
\end{align*} while still having $U_{0} \subset \mathcal{C}$ and still preserving the convexity of $\mathcal{W}_{\theta} \cdot \mathcal{C}$. Thus $\mathcal{C}$ is $\theta$-admissible and, by the definition of $U_{0}$, the quasi-isometric embedding $f : \mathcal{T} \rightarrow X$ is $(\mathcal{C}, B)$-regular. This concludes the proof.
\end{proof}

\end{document}